\documentclass[nosumlimits,twoside]{amsart}
\usepackage{amsfonts, amsmath, amssymb}
\usepackage{graphicx}
\usepackage{float}
\usepackage{srcltx}
\usepackage[all]{xy}
\usepackage{version}
\usepackage[T1]{fontenc}
\usepackage{pifont}
\usepackage[dvipsnames,table]{xcolor}
\usepackage[final]{showlabels}

\usepackage{enumerate}
\usepackage[normalem]{ulem}
\usepackage{bm}
\usepackage{latexsym}
\usepackage[2emode]{psfrag}
\usepackage{yhmath}
\usepackage{array}
\usepackage{dsfont}
\usepackage{mathrsfs}
\usepackage{tikz-cd}
\usepackage{comment}
\usepackage[colorinlistoftodos,prependcaption,textsize=tiny]{todonotes}
\usepackage{mathtools}
\usepackage{hyperref}
\usepackage{enumitem}
\usepackage{stmaryrd}
\usepackage{diagbox}
\usepackage{tabularx}
\usepackage[toc,page]{appendix}

\newtheorem{theorem}{\sc Theorem}[section]
\newtheorem{proposition}[theorem]{\sc Proposition}

\newtheorem{lemma}[theorem]{\sc Lemma}
\newtheorem{corollary}[theorem]{\sc Corollary}

\theoremstyle{definition}
\newtheorem{definition}[theorem]{\sc Definition}

\newtheorem{example}[theorem]{\sc Example}

\theoremstyle{remark}
\newtheorem{remark}[theorem]{\sc Remark}

\newenvironment{invisible}{{\noindent\sc \colorbox{yellow}{Invisible:}\;}\color{gray}}{\medskip}
\excludeversion{invisible}

\allowdisplaybreaks

\excludeversion{proof?}
\newcommand{\id}{\mathsf{id}}
\newcommand{\Cc}{\mathcal{C}}
\newcommand{\Dd}{\mathcal{D}}
\newcommand{\Mm}{\mathcal{M}}
\newcommand{\mm}{\mathfrak{M}}

\newcommand{\Rr}{\mathcal{R}}
\newcommand{\Tt}{\mathcal T}
\newcommand{\Ss}{\mathcal{S}}

\newcommand{\ot}{\otimes}
\newcommand{\ep}{\varepsilon}

\newcommand{\Eq}{\operatorname{Eq}}
\newcommand{\Coeq}{\operatorname{Coeq}}
\newcommand{\tr}{\triangleright}
\newcommand{\tl}{\triangleleft}
\newcommand{\RR}{\mathfrak R}
\newcommand{\Yd}{\mathcal{YD}}

\def\Bialg{{\sf Bialg}}

\def\Vec{{\sf Vec}}

\def\Bimon{{\sf Bimon}}
\def\Comon{{\sf Comon}}
\def\Mon{{\sf Mon}}

\def\Hopfmon{{\sf Hopf}}
\newcommand{\ev}{{\rm ev}}

\newcommand{\ls}[1]{{\color{red}{#1}}}

\newcommand{\op}{\mathrm{op}}
\newcommand{\cop}{\mathrm{cop}}

\newenvironment{myproof}[1][\proofname]{%
  \proof[\bfseries\itshape #1]%
}{\endproof}

\begin{document}
\title[Braidings, duoidal categories and pre-Cartier structures for bi(co)modules]{Braidings, duoidal categories and pre-Cartier structures for bi(co)modules}

\author{Andrea Sciandra, Lukas Simons}

\address{%
\parbox[b]{0.9\linewidth}{Département de Mathématiques, Université Libre de Bruxelles, Boulevard du Triomphe, B-1050
 Bruxelles, Belgium.}}
\email{andrea.sciandra@ulb.be}
\urladdr{\url{www.andreasciandra.com}}
\address{%
\parbox[b]{0.9\linewidth}{Département de Mathématiques, Université Libre de Bruxelles, Boulevard du Triomphe, B-1050
 Bruxelles, Belgium,\\
Vakgroep Wiskunde en Data Science,
Vrije Universiteit Brussel, Pleinlaan 2, B-1050 Brussel, Belgium.}}

\email{lukas.simons@ulb.be, lukas.simons@vub.be}
\keywords{Bi(co)modules, tetramodules, Yetter--Drinfeld modules, duoidal categories, braided monoidal categories, pre-Cartier categories}

\subjclass[2020]{Primary 16T10; Secondary 18M15, 18M50}
\maketitle

\begin{abstract}
We study braidings and infinitesimal braidings on categories of
bi(co)modules and tetramodules, and their compatibility with duoidal
structures. For an arbitrary coalgebra, we classify braidings on its
category of bicomodules with the cotensor product in terms of canonical
$R$-forms, dualizing the classification for bimodules obtained by
Agore, Caenepeel, and Militaru. Every such braiding is a symmetry and
admits only the zero infinitesimal braiding. We construct duoidal
structures on categories of bi(co)modules and tetramodules over
bimonoids in braided monoidal categories under suitable assumptions
on equalizers and coequalizers. 
We then introduce pre-Cartier
duoidal categories and their one-sided variants. We establish
obstructions to compatible braidings for tetramodules, construct nonzero one-sided
pre-Cartier structures on bi(co)modules, and give a pre-Cartier duoidal
example with distinct monoidal products and a nonzero infinitesimal
braiding.
\end{abstract}

\tableofcontents

\section{Introduction}

Categories of bimodules and bicomodules carry monoidal structures
that reflect the algebraic data of their coefficients. Bimodules over
an algebra are composed using the balanced tensor product, while
bicomodules over a coalgebra are composed using the cotensor product.
When the coefficients form a bialgebra, its comultiplication and
multiplication also induce monoidal structures on the ordinary tensor
product. Tetramodules combine compatible module and comodule
structures and support both relative products. These constructions
lead to two related questions: which of the resulting monoidal
categories admit braidings, and how can braidings and their
infinitesimal counterparts be made compatible with the interaction
between two monoidal products?

The existence of a braiding depends on the chosen product. For the
relative tensor product of bimodules, Agore, Caenepeel, and Militaru
classified braidings by canonical $R$-matrices and proved that every
such braiding is a symmetry \cite{agore2014braidings}. This classification provides a
starting point for the corresponding problem for bicomodules and the
cotensor product. Canonical $R$-matrices and their coalgebraic
counterparts must be distinguished from universal $R$-matrices and
universal $R$-forms, which describe braidings on module and comodule
categories equipped with the ordinary tensor product. Keeping track
of this distinction is essential when both products occur in the
same category.

Duoidal categories provide a framework for studying their interaction.
They consist of two monoidal structures connected by an interchange
map and compatible unit maps \cite{Aguiar}. A braided monoidal category gives
a duoidal category by using its monoidal product twice, with interchange
induced by the braiding. Categories of bi(co)modules provide examples
involving different products. For tetramodules over a bialgebra, the
relative tensor and cotensor products form a duoidal category, as
shown by Shoikhet in the category of vector spaces \cite{Shoikhet}. In the Hopf
case, the relation between these products and Yetter--Drinfeld modules
is governed by the structure theory of Hopf bimodules \cite{Drabant,BespalovDrabant,Schauenburg}.

Infinitesimal braidings lead to a further compatibility problem.
Pre-Cartier categories equip a braided monoidal category with a
natural family of endomorphisms satisfying the infinitesimal braid
identities \cite{ABSW}. These identities arise from the first-order expansion
of the braiding axioms. In a duoidal category, the infinitesimal data
must also respect the interchange and the relevant unit maps. We
formulate these requirements for either monoidal product separately
and for both products simultaneously. This distinction is needed
for the examples considered here: a compatible braiding on one
product need not extend to a braided duoidal structure.\medskip

\noindent\textbf{Structure of the paper.} Section \ref{sec:preliminaries} recalls the required facts about bi(co)modules and
tetramodules in braided monoidal categories, together with
pre-Cartier categories and pre-Cartier bialgebras.

In Section \ref{sec:braidingbicomodules}, we classify braidings for the monoidal category $(^{C}\mm^{C},\square^{C},C)$, for an arbitrary coalgebra $C$ (Theorem \ref{theorem: braidings on bicomodules}), showing that these are all symmetries. The classification is expressed in terms of canonical $R$-forms $\Rr:C\ot C\ot C\to\Bbbk$ satisfying identities dual to those given in \cite{agore2014braidings}, where braidings for the monoidal category $(_{A}\mm_{A},\ot_{A},A)$ are classified for an arbitrary algebra $A$. Examples of braidings on $(^{C}\mm^{C},\square^{C},C)$ are provided (Example \ref{ex:braidingbicomod} and Lemma \ref{lem:rtensorproduct}), and the connection between braidings on the bi(co)modules of finite-dimensional (co)algebras and their duals is explained (Proposition \ref{cor:bijectionsimmetries}). If $H$ is a bialgebra, then it is shown that $(^{H}\mm^{H},\square^{H},H)$ nor $(_H\mm_H, \ot_H,H)$ admits a braiding, unless $H = \Bbbk$ (Proposition \ref{prop:trivialcanonicalform}). Dually to the result achieved in \cite{ASthesis} for the category $(_{A}\mm_{A},\ot_{A},A)$, the braided monoidal category $(^{C}\mm^{C},\square^{C},C,\sigma)$ has only trivial infinitesimal braiding, for any braiding $\sigma$ (Theorem \ref{thm:infbraidbicomod}). For readability, the proofs of results dual to those of \cite{agore2014braidings} are gathered in Appendix \ref{sec:appendix}.

In Section \ref{sec:duoidal}, we study duoidal structures associated to categories of bi(co)modules over a bimonoid in an arbitrary monoidal category. More precisely, we prove that, given a braided monoidal category $(\Mm,\ot,I,\sigma)$ with coequalizers which are preserved by $\ot$, then $(_H\Mm_H, \ot_H, H, \ot, I)$ is a duoidal category and, dually, if $(\Mm,\ot,I,\sigma)$ is a braided monoidal category with equalizers which are preserved by $\ot$, then $(^H\Mm^H, \ot, I, \square^H, H)$ is a duoidal category (Theorem \ref{theorem: bi(co)modules duoidal}). 
These constructions recover the known examples over vector spaces and can be applied again when the ambient category is itself a
braided category of bi(co)modules.

In Section \ref{sec:tetramodulesduoidal}, we prove that, given a braided monoidal category $(\Mm,\ot,I,\sigma)$ and $H$ in $\Bimon(\Mm)$, if $\Mm$ has equalizers and coequalizers which are both preserved by $\ot$, then $({}^{H}_{H}\Mm^H_H, \ot_H, H, \square^H, H)$ is duoidal (Theorem \ref{thm:duoidaltetramodules}), generalizing the result achieved in \cite{Shoikhet} in the case $\Mm=\mathsf{Vec}_{\Bbbk}$. When $H$ is an object in $\mathsf{Hopf}(\Mm)$, we show that the relative tensor
and cotensor products admit a common realization in which
their induced monoidal structures coincide. Assuming that the antipode is bijective,
we show that the equivalence with Yetter--Drinfeld modules
identifies the tetramodule duoidal structure with the one induced
by the canonical Yetter--Drinfeld braiding (Proposition \ref{prop:duoidalequivalenceYetterDrinfeld}). We also exhibit a
nonzero infinitesimal braiding on a category of Yetter--Drinfeld
modules (Example \ref{ex:infbraidYD}).

Finally, in Section \ref{sec:preCartier} we study braidings compatible with an interchange
map and introduce $\circ$-pre-Cartier and $\bullet$-pre-Cartier
duoidal categories, as well as pre-Cartier duoidal categories
satisfying both sets of conditions (Definition \ref{def:preCartier}). We show that the canonical
duoidal structure on tetramodules over a $\Bbbk$-bialgebra is
$\circ$-braided or $\bullet$-braided only when $H\cong\Bbbk$ (Theorem \ref{prop:tetramodule-braided-duoidal-obstruction}).
We also prove that the duoidal category obtained by using the
same symmetric monoidal product twice admits only zero
infinitesimal braidings satisfying these compatibility conditions (Proposition \ref{prop:trivialpreCartier}).
We construct one-sided pre-Cartier structures on bi(co)modules
from pre-Cartier bialgebras (Proposition \ref{prop:bimodulespreCartier} and Corollary \ref{cor:otherinfbraidingbimod}), give explicit nonzero examples (Examples \ref{ex:1}, \ref{ex:2}),
and conclude with a pre-Cartier duoidal category built from
truncated Cauchy and Hadamard products whose infinitesimal
braiding is nonzero (Example \ref{ex:fullpreCartier}). \medskip

\noindent\textit{Notations and conventions}. We fix a field $\Bbbk$ and all vector spaces are understood to be $\Bbbk$-vector spaces unless
otherwise specified. By a linear map we mean a $\Bbbk$-linear map and all linear maps whose domain is a tensor product will usually be defined on generators and understood to be extended by linearity. The category of vector spaces will be denoted by $\mathsf{Vec}_{\Bbbk}$, $\Vec$, or $\mm$. (Co-/Bi-/Hopf) Algebras will be understood to be over $\Vec_{\Bbbk}$ unless the context suggests otherwise. Algebras will be associative and unital and coalgebras will be coassociative and counital. For a $\Bbbk$-coalgebra $C$, we will employ the Sweedler notation for the coproduct $\Delta(x)=x_{1}\ot x_{2}$, omitting the summation symbol. Similarly, working with left (resp.\ right) $\Bbbk$-comodules $V$ we will employ a Sweedler type notation $\rho^{L}(v)=v_{(-1)}\ot v_{(0)}$ (resp.\ $\rho^{R}(v)=v_{(0)}\ot v_{(1)}$). We will sometimes omit writing parentheses for coactions.

Given an object $X$ in a category $\Mm$, the identity morphism on $X$ will be denoted either by $\id_{X}$ or $X$ for short. We will usually denote the equalizer of two parallel morphisms $f,g:X\to Y$ in a category $\Mm$ by $\mathrm{eq}(f,g):\Eq(f,g)\to X$ and the coequalizer by $\mathrm{coeq}(f,g):Y\to\mathrm{Coeq}(f,g)$. When considering the cotensor product over a coalgebra $C$, we will sometimes omit the superscript if the context makes it clear over what coalgebra the cotensor product is taken, and hence simply write $\square$ instead of $\square^C$.

Comforted by the Mac Lane Coherence theorem \cite[page 169]{MacLane}, we shall consistently be sloppy on associativity and unit constraints.

\section{Preliminaries}\label{sec:preliminaries}
In this section, we recall some notions and results that will be useful throughout this paper. We refer the reader to e.g.\ \cite[XIII.1]{Kassel-book} for the notion of braided monoidal category $(\Mm,\ot,I,\sigma)$. For limits and colimits and other basic notions of category theory we refer the reader to e.g.\ \cite{Handbook1}.

\subsection{Bi(co)modules and tetramodules in braided monoidal categories}

Given a monoidal category $(\Mm,\ot,I)$, one can define the categories $\mathsf{Mon}(\Mm)$ and $\mathsf{Comon}(\Mm)$ of monoids and comonoids in $\Mm$, respectively. Given an object $A$ in $\mathsf{Mon}(\Mm)$, one can define the category $_{A}\Mm$ of left $A$-modules in $\Mm$ and, similarly, right $A$-modules and $A$-bimodules in $\Mm$. The latter being left and right $A$-modules for which the order of acting on the left and right is unimportant. Dually, given an object $C$ in $\mathsf{Comon}(\Mm)$, one can define the category of left $C$-comodules in $\Mm$ and, similarly, the categories of right $C$-comodules and $C$-bicomodules. Let $(\Mm,\ot,I)$ be a monoidal category with coequalizers, $(A,m,u)$ be an object in $\mathsf{Mon}(\Mm)$, and $N\in {}_A\Mm$, $M\in \Mm_{A}$ with structure morphisms $\alpha^{L}_N\colon A\otimes N\to N$ and $\alpha^{R}_M:M\otimes A\to M$, respectively. 
Then we define the \emph{$A$-balanced tensor product} of $M$ and $N$ as
\begin{equation}\label{def:balancedtensor}
    (M\ot_A N, q_{M,N}) = \mathrm{coeq}\left(
\begin{tikzcd}
	{M\ot A\ot N} && {M\ot N}
	\arrow["{M\ot \alpha^L_N}"', shift right=1.5, from=1-1, to=1-3]
	\arrow["{\alpha^R_M\ot N}", shift left=1.5, from=1-1, to=1-3]
\end{tikzcd}\right).
\end{equation}
For any morphism $f\colon M\to X$ in $\Mm_A$ and $g\colon N\to Y$ in ${}_{A}{\Mm}$, there is a unique morphism $f\ot_A g\colon M\ot_A N\to X\ot_A Y$ such that $q_{X,Y}(f\ot g) = (f\ot_A g)q_{M,N}$. 
This construction provides functors $M\ot_{A}(-):{}_{A}\Mm\to\Mm$ and $(-)\ot_{A}N:\Mm_{A}\to\Mm$ for any $M$ in $\Mm_{A}$ and any $N$ in $_{A}\Mm$. 
For $M\in\Mm_A$ and $N\in {}_A\Mm$, there are canonical (natural) isomorphisms $\Upsilon_M$, $\Upsilon'_N$ in $\Mm$:
\begin{itemize}
    \item $\Upsilon_M:M\otimes_AA\to M$, uniquely determined by $\Upsilon_Mq_{M,A}=\alpha^{R}_M$; 
   \item $\Upsilon'_N:A\otimes_AN\to N$, uniquely determined by  $\Upsilon'_Nq_{A,N}=\alpha^{L}_N$. 
\end{itemize}
One can check that $\Upsilon^{-1}_M=q_{M,A}(\id_M\otimes u)$ and $\Upsilon'^{-1}_N=q_{A,N}(u\otimes \id_N)$. \begin{invisible}
Indeed, $\Upsilon_{M}\Upsilon_{M}^{-1}=\Upsilon_{M}q_{M,A}(\id_{M}\ot u)=\alpha^{R}_{M}(\id_{M}\ot u)=\id$ and $\Upsilon_{M}^{-1}\Upsilon_{M}q_{M,A}=\Upsilon_{M}^{-1}\alpha^{R}_{M}=q_{M,A}(\id_{M}\ot u)\alpha^{R}_{M}=q_{M,A}$.
\end{invisible}
Following \cite{MR2696373}, a monoidal category $(\Mm,\ot,I)$ is said to be \textit{coregular} if it has coequalizers and these are preserved by $\ot$, i.e.\ the functors $A\ot(-):\Mm\to\Mm$ and $(-)\ot B:\Mm\to\Mm$ preserve coequalizers, for any $A,B$ in $\Mm$. This means that, for any pair of parallel morphisms $f,g:X\to Y$ in $\Mm$, 
the induced morphism $\phi$ in the following diagram
\[\begin{tikzcd}
	A\ot X\ot B & A\ot Y\ot B & \mathrm{Coeq}(\id\ot f\ot \id,\id\ot g\ot\id) \\\\
	& A\ot\mathrm{Coeq}(f,g)\ot B
	\arrow[shift left=1.5, from=1-1, to=1-2,"\id\ot f\ot\id"]
	\arrow[shift right=1.5, from=1-1, to=1-2,"\id\ot g\ot\id"']
	\arrow[from=1-2, to=1-3,"\mathrm{coeq}"]
	\arrow[from=1-2, to=3-2,"{\id\ot\mathrm{coeq}(f,g)\ot\id}"']
	\arrow[dashed, from=1-3, to=3-2,"\phi"]
\end{tikzcd}\]
is an isomorphism in $\Mm$.

Let $(\Mm,\otimes,I)$ be a monoidal category with equalizers and $(C,\Delta,\varepsilon)$ be an object in $\mathsf{Comon}(\Mm)$. Recall from e.g.\ \cite[Definition 2.2.1]{MR2696373} that, given a right $C$-comodule $(V, \rho^{R}_V)$  and a left $C$-comodule $(W,\rho^{L}_W)$ in $\Mm$, their \textit{cotensor product over $C$} in $\Mm$ is defined as
\begin{equation}\label{def:cotensor}
    (V\square^C W, e_{V,W}) = \mathrm{eq}\left( 
\begin{tikzcd}
	{V\ot W} && {V\ot C\ot W}
	\arrow["{\rho^R_V\ot W}", shift left=1.5, from=1-1, to=1-3]
	\arrow["{V\ot\rho^L_W}"', shift right=1.5, from=1-1, to=1-3]
\end{tikzcd} \right).
\end{equation}
For any morphisms $f\colon V\to X$ in $\Mm^C$ and $g\colon W\to Y$ in ${}^{C}{\Mm}$ there is a unique morphism $f\square^C g\colon V\square^C W \to X\square^C Y$ such that $(f\ot g)e_{V,W} = e_{X,Y}(f\square^C g)$.
This construction provides functors $V\square^{C}(-):{}^{C}\Mm\to\Mm$ and $(-)\square^{C}W:\Mm^{C}\to\Mm$ for any $V$ in $\Mm^{C}$ and any $W$ in $^{C}\Mm$.
For $V\in\Mm^{C}$ and $W\in{}^{C}\Mm$, we have the canonical (natural) isomorphisms $\Lambda_V$, $\Lambda'_{W}$ in $\Mm$: 
\begin{itemize}
    \item $\Lambda _V:V\to V\square^CC$, uniquely determined by $\rho^{R}_V= e_{V,C} \Lambda_V$;
    \item $\Lambda'_W: W\to C\square^CW$, uniquely determined by the property $\rho^{L}_W=e_{C,W}\Lambda'_W$.
\end{itemize}
One can easily check that $\Lambda_M^{-1}=(\id_M\otimes\varepsilon)e_{M,C}$ and $(\Lambda'_Y)^{-1}=(\varepsilon\otimes\id_Y)e_{C,Y}$. 

A monoidal category $(\Mm,\otimes, I)$ is said to be \emph{regular} if it has equalizers and $\ot$ preserves them, i.e.\ the functors $A\ot(-)$ and $(-)\ot B$ preserve equalizers, for any $A,B$ in $\Mm$. This means that, for any pair of parallel morphisms $f,g:X\to Y$ in $\Mm$, 
the induced morphism $\psi$ in the following diagram
\[\begin{tikzcd}
	\Eq(\id\ot f\ot \id,\id\ot g\ot \id) & A\ot X\ot B & A\ot Y\ot B \\\\
	& A\ot\Eq(f,g)\ot B
	\arrow[from=1-1, to=1-2,"\mathrm{eq}"]
	\arrow[shift left=1.5, from=1-2, to=1-3,"\id\ot f\ot \id"]
	\arrow[shift right=1.5, from=1-2, to=1-3,"\id\ot g\ot \id"']
	\arrow[dashed, from=3-2, to=1-1,"\psi"]
	\arrow[from=3-2, to=1-2, "{\id\ot\mathrm{eq}(f,g)\ot \id}"']
\end{tikzcd}\]
    is an isomorphism in $\Mm$. 

The classical example of a regular and coregular category is the monoidal category $(\Vec_{\Bbbk},\ot_{\Bbbk},\Bbbk)$ of vector spaces over a field $\Bbbk$. 

The category $_{A}\Mm_{A}$ of $A$-bimodules in $\Mm$ over an algebra $A\in\mathsf{Mon}(\Mm)$ has objects given by triples $(V,\alpha^{L}_{V},\alpha^{R}_{V})$ such that $(V,\alpha^{L}_{V})$ is a left $A$-module, $(V,\alpha^{R}_{V})$ is a right $A$-module and $\alpha^{L}_{V}(\id_{A}\ot\alpha^{R}_{V})=\alpha^{R}_{V}(\alpha^{L}_{V}\ot\id_{A})$. Morphisms in $_{A}\Mm_{A}$ are left $A$-linear maps which are also right $A$-linear”. Dually, given $C$ in $\Comon(\Mm)$, a $C$-bicomodule is a triple $(V,\rho_V^L,\rho_V^R)$ with $V$ in $\Mm$ such that $(V,\rho_V^L)$ is a left $C$-comodule, $(V,\rho_V^R)$ is a right $C$-comodule, and $(\rho_V^L\otimes \id_C)\rho_V^R = (\id_C\otimes\rho_V^R)\rho_V^L$. A morphism of $C$-bicomodules is a left $C$-colinear morphism which is also right $C$-colinear. We denote the category of $C$-bicomodules in $\Mm$ by ${}^C\Mm^C$.


The following known result shows that the category of bimodules over an algebra in an arbitrary coregular monoidal category is monoidal, equipped with the tensor product over the algebra and the algebra itself as unit object. Dually, the category of bicomodules over a coalgebra in a regular monoidal category is monoidal, equipped with the cotensor product over the coalgebra and the coalgebra itself as unit object. We point out that these results are known in the literature, see e.g.\ \cite[page 329]{Schbimod} for bimodules. 
However, we report some details for the sake of completeness, showing where the assumption of (co)regularity is needed.

\begin{theorem}\label{theorem: bi(co)modules monoidal}
    Let $(\Mm, \ot, I)$ be a monoidal category.
    \begin{enumerate}[label=\arabic*)]
        \item If $A$ is in $\Mon(\Mm)$ and $\Mm$ is coregular, then $({}_{A}{\Mm}_A, \ot_A, A)$ is a monoidal category.
        \item If $C$ is in $\Comon(\Mm)$ and $\Mm$ is regular, then $({}^{C}{\Mm}^C, \square^C, C)$ is a monoidal category.
    \end{enumerate}
\end{theorem}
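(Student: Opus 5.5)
I will prove part 1) in detail and obtain part 2) by the formally dual argument: reverse all arrows, replace coequalizers by equalizers, $q$ by $e$, and $\Upsilon,\Upsilon'$ by $\Lambda,\Lambda'$.

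\emph{Step 1: $\ot_A$ lands in ${}_A\Mm_A$.} For bimodules $M,N$, coregularity makes $A\ot q_{M,N}$ a coequalizer of the pair $A\ot M\ot A\ot N\rightrightarrows A\ot M\ot N$. The composite $q_{M,N}(\alpha^L_M\ot N)$ coequalizes that pair, because the left and right actions of $M$ commute. Hence it factors uniquely through $A\ot q_{M,N}$, and this defines $\alpha^L_{M\ot_A N}$. The right action comes the same way from $q_{M,N}\ot A$, using $N\ot(-)$ and $(-)\ot A$ preservation.

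The action axioms and the bimodule compatibility are checked after precomposing with the epimorphisms $A\ot A\ot q$, $A\ot q\ot A$, etc. These are coequalizers, hence epi, again by coregularity. On these composites the axioms reduce to those of $M$ and $N$. The maps $f\ot_A g$ are bilinear by the same epi-cancellation, so functoriality follows from uniqueness in the universal property.

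\emph{Step 2: associator.} This is the main obstacle. I will show that $(M\ot_A N)\ot_A P$ and $M\ot_A(N\ot_A P)$ are both canonically isomorphic to the joint coequalizer $T$ of the two pairs $M\ot A\ot N\ot P\rightrightarrows M\ot N\ot P$ and $M\ot N\ot A\ot P\rightrightarrows M\ot N\ot P$, in a way compatible with the projections from $M\ot N\ot P$.

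For the left bracketing I use that $q_{M,N}\ot P$ and $q_{M,N}\ot A\ot P$ are coequalizers, by coregularity. The composite $q_{M\ot_AN,P}(q_{M,N}\ot P)$ is then epi, and a morphism out of $(M\ot_AN)\ot_AP$ corresponds exactly to a morphism out of $M\ot N\ot P$ coequalizing both pairs. Here one uses the defining property of the right action on $M\ot_A N$ from Step 1, and the fact that $q_{M,N}\ot A\ot P$ is epi, to transfer the balancing condition. The symmetric argument handles the right bracketing.

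Composing the two isomorphisms gives $a_{M,N,P}$, characterized by
\[
a\,q_{M\ot_AN,P}(q_{M,N}\ot P)=q_{M,N\ot_AP}(M\ot q_{N,P}).
\]
It is $A$-bilinear, and the pentagon holds, both checked after precomposing with the epi $M\ot N\ot P\ot Q\to\cdots$, where everything reduces to identities. Naturality follows in the same way.

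\emph{Step 3: unitors.} I take $\Upsilon_M$ and $\Upsilon'_N$, already known to be isomorphisms in $\Mm$. Their $A$-bilinearity follows by cancelling $A\ot q_{M,A}$, using associativity of the actions. The triangle axiom is checked on $M\ot A\ot N$ after precomposing with the epimorphisms. Both routes reduce to $q_{M,N}(\alpha^R_M\ot N)=q_{M,N}(M\ot\alpha^L_N)$, which is precisely the balancing relation.

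For part 2), regularity gives that $e\ot X$ and similar maps are equalizers, hence monos. Coactions on $V\square^C W$ are induced through them, and the associator is obtained by identifying both bracketings with the joint equalizer inside $V\ot W\ot Z$. The unit constraints are $\Lambda^{-1},\Lambda'^{-1}$.
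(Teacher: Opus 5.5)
Your proposal is correct and follows essentially the paper's proof. The actions on $M\ot_A N$ are induced via coregularity exactly as in the paper, and identifying both bracketings with the joint coequalizer of the two pairs on $M\ot N\ot P$ is a cleaner packaging of the paper's construction of the mutually inverse maps $g,\bar g$ (through the intermediate maps $f,\bar f$), with every verification done by cancelling the epimorphisms that coregularity supplies, and with part 2) obtained, as in the paper, by duality.
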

\begin{proof}
    1). 
    Let $(M, \alpha^L_M, \alpha^R_M)$ and $(X, \alpha^L_X, \alpha^R_X)$ be $A$-bimodules in $\Mm$, and define $M\ot_{A}X$
    with canonical morphism $q_{M,X}\colon M\ot X\to M\ot_A X$ as in \eqref{def:balancedtensor}. Consider the diagram
\[\begin{tikzcd}
	{A\ot M\ot A\ot X} && {A\ot M \ot X} && {M\ot_A X.}
	\arrow["{A\ot \alpha^R_M\ot X}", shift left=1.5, from=1-1, to=1-3]
	\arrow["{A\ot M \ot \alpha^L_X}"', shift right, from=1-1, to=1-3]
	\arrow["{q_{M,X}(\alpha^L_M\ot X)}", from=1-3, to=1-5]
\end{tikzcd}\]
One immediately verifies both compositions to agree, since $M$ is an $A$-bimodule and $q_{M,X}$ coequalizes the pair $(\alpha^{R}_{M}\ot\id,\id\ot\alpha^{L}_{X})$. Hence, by the universal property of the coequalizer, there exists a unique morphism in $\Mm$ from the coequalizer of the first two parallel morphisms above to $M\ot_A X$ satisfying the universal property. Since $\Mm$ is coregular, this coequalizer is isomorphic to $A\ot (M\ot_A X)$ with canonical epimorphism $\id\ot q_{M,X}$, whence there is a unique morphism $\alpha_{M\ot_A X}^L \colon A\ot (M\ot_A X)\to M\ot_A X$ in $\Mm$ satisfying $\alpha_{M\ot_A X}^L(\id\ot q_{M,X}) = q_{M,X}(\alpha_M^L \ot \id)$. This makes $M\ot_A X$ into a left $A$-module. 
\begin{invisible}
Indeed, notice that
\[
\alpha^L_{M\ot_A X}(A\ot\alpha^L_{M\ot_A X}), \alpha^L_{M\ot_A X}(\mu\ot M\ot_A X)\colon A\ot A\ot M\ot_A X \to M\ot_A X.
\]
Since both are morphisms from a coequalizer, it suffices that they are equal after composing with its canonical epimorphism to conclude their equality. By coregularity we compute that
\begin{align*}
    \alpha^L_{M\ot_A X}(A\ot\alpha^L_{M\ot_A X})(A\ot A \ot q_{M,X}) &= \alpha^L_{M\ot_A X}(A\ot q_{M,X})(A\ot \alpha^L_M\ot X)
    \\&= q_{M,X}(\alpha^L_M\ot X)(A\ot \alpha^L_M\ot X)
    \\&=q_{M,X}(\alpha^L_M\ot X)(\mu\ot M\ot X)
    \\&= \alpha^L_{M\ot_A X}(A\ot q_{M,X})(\mu\ot M\ot X)
    \\&= \alpha^L_{M\ot_A X}(\mu\ot M\ot_A X)(A\ot A\ot q_{M,X}).
\end{align*}
Similarly, we find the following equalities
\begin{align*}
    \alpha^L_{M\ot_A X}(\eta\ot M\ot_A X)(I\ot q_{M,X}) &= \alpha^L_{M\ot_A X}(A\ot q_{M,X})(\eta\ot M\ot X)
    \\&= q_{M,X}(\alpha^L_M \ot X)(\eta\ot M\ot X)
    \\&= q_{M,X}(l_M \ot X)
    \\&= q_{M,X}l_{M\ot X}
    \\&= l_{M\ot_A X}(I\ot q_{M,X}),
\end{align*}
where we utilized moreover properties of the left unitor. Hence $(M\ot_A X, \alpha^L_{M\ot_A X})$ is a left $A$-module. 
\end{invisible}
Similarly, there is a unique morphism $\alpha^R_{M\ot_A X}\colon (M\ot_A X) \ot A \to M\ot_A X$ in $\Mm$ satisfying $\alpha^R_{M\ot_A X}(q_{M,X}\ot A) = q_{M,X}(M\ot \alpha^R_X)$, and this makes $M\ot_A X$ into a right $A$-module. 
\begin{invisible}
To conclude, notice that
\begin{align*}
    \alpha^L_{M\ot_A X}(A\ot \alpha^R_{M\ot_A X})(A\ot q_{M,X}\ot A) &= \alpha^L_{M\ot_A X}(A\ot q_{M,X})(A\ot M\ot \alpha_X^R)
    \\&= q_{M,X}(\alpha_M^L\ot X)(A\ot M \ot \alpha^R_X)
    \\&= q_{M,X}(M\ot \alpha_X^R)(\alpha^L_M\ot X\ot A)
    \\&= \alpha^R_{M\ot_A X}(q_{M,X}\ot A)(\alpha^L_M \ot X\ot A)
    \\&= \alpha^R_{M\ot_A X}(A\ot \alpha^L_{M\ot_A X})(A\ot q_{M,X}\ot A).
\end{align*}
\end{invisible}
Finally, $(M\ot_A X, \alpha^L_{M\ot_A X}, \alpha^R_{M\ot_A X})$ becomes an $A$-bimodule.
\begin{invisible}
    If $f\colon M\to N$ and $g\colon X\to Y$ are $A$-bimodule morphisms in $\Mm$, it holds that the following compositions of morphisms are equal
\[\begin{tikzcd}
	{M\ot A\ot X} && {M\ot X} && {N\ot Y} && {N\ot_A Y.}
	\arrow["{M\ot \alpha_X^L}"', shift right, from=1-1, to=1-3]
	\arrow["{\alpha^R_M\ot X}", shift left=1.5, from=1-1, to=1-3]
	\arrow["{f\ot g}", from=1-3, to=1-5]
	\arrow["{q_{N,Y}}", from=1-5, to=1-7]
\end{tikzcd}\]
By the universal property of the coequalizer, there is a unique morphism $f\ot_A g\colon M\ot_A X\to N\ot_A Y$ in $\Mm$ such that $q_{N,Y}(f\ot g)=(f\ot_{A}g)q_{M,X}$.
One can verify that $f\ot_A g$ is an $A$-bimodule morphism. 
Notice that
\begin{align*}
    (f\ot_A g)\alpha^L_{M\ot_A X}(A\ot q_{M,X}) &= (f\ot_A g)q_{M,X}(\alpha^L_M\ot X)
    \\&= q_{N,Y}(f\ot g)(\alpha^L_M\ot X)
    \\&= q_{N,Y}(\alpha^L_N\ot Y)(A\ot f\ot g)
    \\&= \alpha_{N\ot_A Y}^L(A\ot q_{N,Y})(A\ot f\ot g)
    \\&= \alpha_{N\ot_A Y}^L (A\ot f\ot_A g)(A\ot q_{M,X}).
\end{align*}
Thus, $f\ot_A g$ is a left $A$-module morphism and, analogously, one proves that it is a right $A$-module morphism as well. 
The functoriality of $\ot_A$ is easily verified, again by the universal property of the coequalizer. 
\end{invisible}
Moreover, $(A,\mu,\mu)$ is immediately an $A$-bimodule by the associativity of the multiplication.\newline
For the monoidal structure we need left and right unitors, as well as an associator. Given $M$ in $_{A}\Mm_{A}$, left and right unitors are provided by $\Upsilon'_{M}$ and $\Upsilon_{M}$ defined before. Moreover, if $a$ is the associator of $\Mm$, the associator $\alpha_{M,N,P}\colon (M\ot_A N)\ot_A P\to M\ot_A (N\ot_A P)$ is defined as the unique morphism in $\Mm$ such that
\[
\alpha_{M,N,P}q_{M\ot_A N,P}(q_{M,N}\ot P)= q_{M, N\ot_A  P}(M \ot q_{N,P})a_{M,N,P}.
\]
\begin{invisible}
 A priori, $A\ot_A M = (M, \alpha^L_M)$ and $M\ot_A A = (M,\alpha^R_M)$ for any $A$-bimodule $M$.
    If $M$ is an $A$-bimodule, we claim that
    \begin{equation}\label{eq: A unit for ot_A}
    A\ot_A M = (M, \alpha^L_M), \hspace{3em} M\ot_A A = (M, \alpha^R_M).
    \end{equation}
    This implies the left and right unitors to be identities. Notice that $(M,\alpha^L_M)$ does have the property that it coequalizes the left action of $M$ and the right action of $A$;
\[\begin{tikzcd}
	{A\ot A\ot M} && {A\ot M} && {M.}
	\arrow["{A\ot \alpha^L_M}"', shift right, from=1-1, to=1-3]
	\arrow["{\mu\ot M}", shift left=1.5, from=1-1, to=1-3]
	\arrow["{\alpha^L_M}", from=1-3, to=1-5]
\end{tikzcd}\]
For the universal property, assume $N$ to be an $A$-bimodule and $f\colon A\ot M \to N$ an $\Mm$-morphism coequalizing said actions. Then define
\[
\bar{f} \colon M\xrightarrow{l_M^{-1}}I\ot M\xrightarrow{\eta\ot M}A\ot M\xrightarrow{f} N.
\]
This is an $\Mm$-morphism as composition thereof. It remains to be verified that $f = \bar{f}\alpha^L_M$, and that $\bar{f}$ is the unique $\Mm$-morphism with this property. For the former, utilizing properties of the left unitor and the fact that $f$ coequalizes said actions, we compute that
\begin{align*}
    \bar{f}\alpha^L_M&= f(\eta\ot M)l_M^{-1}\alpha^L_M
    \\&=f(\eta\ot M)(I\ot \alpha^L_M)l_{A\ot M}^{-1}
    \\&= f(A\ot \alpha^L_M)(\eta\ot A\ot M)l_{A\ot M}^{-1}
    \\&= f(\mu\ot M)(\eta\ot A\ot M)l_{A\ot M}^{-1}
    = fl_{A\ot M}l_{A\ot M}^{-1}
    = f.
\end{align*}
For the unicity, if $g\colon M \to N$ is a morphism such that $f = g\alpha^L_M = \bar{f}\alpha^L_M$, then a priori $gl_M = g\alpha^L_M(\eta\ot M) = \bar{f}\alpha^L_M(\eta\ot M) = \bar{f}l_M$, which implies that $g = \bar{f}$. Likewise one proves the second identity, rendering the right unitor trivial as well.
To construct the associator, we consider the following diagram 
\begin{equation}\label{eqn: first of associator}
\begin{tikzcd}
	{M\ot N\ot P} && {(M\ot_A N)\ot P} && {(M\ot_A N)\ot_A P}
	\arrow["{q_{M,N}\ot P}", from=1-1, to=1-3]
	\arrow["{q_{M\ot_A N, P}}", from=1-3, to=1-5]
\end{tikzcd}
\end{equation}
One verifies that the morphism in \eqref{eqn: first of associator} coequalizes the pair $(M\ot\alpha^{R}_{N}\ot P,M\ot N\ot\alpha^{L}_{P})$, by definition of the $A$-bimodule structure on $M\ot_A N$.
Thus, there exists a unique $\Mm$-morphism $f\colon M\ot (N\ot_A P)\to (M\ot_A N)\ot_A P$ such that
\begin{equation}\label{eqn: up of f}
    q_{M\ot_A N, P}(q_{M,N}\ot P) = f(M\ot q_{N,P}).
\end{equation}
One can show that $f$ coequalizes the pair $(\alpha^R_M\ot (N\ot_A P),M\ot \alpha^L_{N\ot_A P})$.
As the canonical morphisms of coequalizers are always epimorphisms, this is equivalent to stating that
\[
f(\alpha^R_M\ot N\ot_A P)(M\ot A\ot q_{N,P}) = f(M\ot \alpha^L_{N\ot_A P})(M\ot A\ot q_{N,P}),
\]
which is easily verified by condition (\ref{eqn: up of f}) and the definition of the $A$-bimodule structure on $N\ot_A P$.
Hence there exists a unique $\Mm$-morphism $g\colon M\ot_A (N\ot_A P)\to (M\ot_A N)\ot_A P$ such that
\begin{equation}\label{eqn: up of g}
    gq_{M, N\ot_A P} = f.
\end{equation}
Completely analogously, the morphism $q_{M, N\ot_A  P}(M \ot q_{N,P})\colon M\ot N \ot P \to M\ot_A (N\ot_A P)$ induces unique $\Cc$-morphisms
\begin{align*}
    \bar{f}&\colon (M\ot_A N)\ot P\to M\ot_A (N\ot_A P),
    \\\bar{g}&\colon (M\ot_A N)\ot_A P\to M\ot_A (N\ot_A P)
\end{align*}
satisfying
\begin{equation}\label{eqn: ups of bar f bar g}
    \bar{f} (q_{M, N}\ot P) = q_{M, N\ot_A  P}(M \ot q_{N,P}),\hspace{3em} \bar{g}q_{M\ot_A N,P} = \bar{f}.
\end{equation}
One can prove that $g$ and $\bar{g}$ are inverse morphisms, and the latter provide the associator. 
Utilizing the identities (\ref{eqn: up of f}), (\ref{eqn: up of g}), and (\ref{eqn: ups of bar f bar g}) one proves immediately that
\begin{align*}
&\bar{g}gq_{M,N\ot_A P}(M\ot q_{N,P}) = q_{M,N\ot_A P}(M\ot q_{N,P}),
\\&g\bar{g}q_{M\ot_A N, P}(q_{M, N}\ot P) = q_{M\ot_A N, P}(q_{M, N}\ot P),
\end{align*}
which proves that $g$ and $\bar{g}$ are inverse morphisms since the canonical morphisms of coequalizers are epimorphic. Hence the category $(_A\Mm_A, \ot_A, A)$ is a monoidal category.
\end{invisible}
\noindent 2). The case of $C$-bicomodules is exactly the dual of the previous one. 
\begin{invisible}
We briefly recall some details. Given two $C$-bicomodules $V$ and $W$, one defines $V\square^{C}W$ with the canonical morphism $e_{V,W}:V\square^{C}W\to V\ot W$ as in \eqref{def:cotensor}. There exist unique morphisms $\rho^{L}_{V\square W}$ and $\rho^{R}_{V\square W}$ in $\Mm$ such that the diagrams in (\ref{diagram: bicomodule on V square W}) commute.
    \begin{equation}\label{diagram: bicomodule on V square W}
\begin{tikzcd}
	{V\square^C W} & {V\otimes W} && {V\square^C W} & {V\otimes W} \\
	{C\otimes (V\square^C W)} & {C\otimes V\otimes W,} && {(V\square^C W)\otimes C} & {V\otimes W \otimes C}
	\arrow["{e_{V,W}}", from=1-1, to=1-2]
	\arrow["{\rho_{V\square W}^L}"', dashed, from=1-1, to=2-1]
	\arrow["{\rho_V^L\ot\id}", from=1-2, to=2-2]
	\arrow["{e_{V,W}}", from=1-4, to=1-5]
	\arrow["{\rho_{V\square W}^R}"', dashed, from=1-4, to=2-4]
	\arrow["{\id\ot\rho_W^R}", from=1-5, to=2-5]
	\arrow["{\id\ot e_{V,W}}"', from=2-1, to=2-2]
	\arrow["{e_{V,W}\ot\id}"', from=2-4, to=2-5]
\end{tikzcd}
\end{equation}
This makes $(V\square^C W, \rho_{V\square W}^L, \rho_{V\square W}^R)$ into a $C$-bicomodule. The existence is due to the universal property of equalizers, while the $C$-bicomodule structure can be pulled back through the monomorphism $e_{V,W}$, see \cite[Proposition 2.2.1]{MR2696373}. Given morphisms $f\colon V\to W$, $g\colon X\to Y$ in $^{C}\Mm^{C}$,

Then, by definition of $C$-bicomodule morphisms, the top (respectively bottom) two compositions of morphisms are equal in (\ref{eqn: towards cotensor product of morphisms}) and (\ref{eqn: towards cotensor product of morphisms 2}).
    \begin{align}
        \label{eqn: towards cotensor product of morphisms}&V\square^C X \xrightarrow{i_{V,X}} V\otimes X \xrightarrow{f\otimes g}
\begin{tikzcd}[ampersand replacement=\&]
	{W\ot Y} \&\& {W\ot C\ot Y,}
	\arrow["{\rho^R_W\ot Y}", shift left=2, from=1-1, to=1-3]
	\arrow["{W\ot \rho^L_Y}"', shift right=2, from=1-1, to=1-3]
\end{tikzcd}
        \\&\label{eqn: towards cotensor product of morphisms 2} V\square^C X \xrightarrow{i_{V,X}} 
\begin{tikzcd}[ampersand replacement=\&]
	{V\ot X} \&\& {V\ot C\ot X}
	\arrow["{\rho^R_V\ot X}", shift left=2, from=1-1, to=1-3]
	\arrow["{V\ot \rho^L_X}"', shift right=2, from=1-1, to=1-3]
\end{tikzcd} \xrightarrow{f\otimes C\ot g} W \ot C \ot Y.
    \end{align}
    Yet by definition of $V\square^C X$, the top and bottom compositions in (\ref{eqn: towards cotensor product of morphisms 2}) are equal, and hence so too are those in (\ref{eqn: towards cotensor product of morphisms}). Hence there is a unique $\Mm$-morphism $f\square^C g\colon V\square^C X \to W\square^C Y$ such that $(f\ot g)e_{V,X}=e_{W,Y}(f\square^{C}g)$.
By uniqueness of the construction of the cotensor product of $C$-bicomodule morphisms, one verifies the functoriality of the cotensor product immediately.
Moreover, since $\ot$ preserves equalizers, one has that $(^{C}\Mm^{C},\square^{C},C)$ is a monoidal category, considering $(C,\Delta, \Delta)$ as a $C$-bicomodule. In fact, given $M$ in $^{C}\Mm^{C}$, left and right unitors are given by $(\Lambda'_{M})^{-1}$ and $\Lambda_{M}^{-1}$ defined as before, respectively. Dually to the previous case, one can prove that $M\square^{C}(N\square^{C}P)\cong(M\square^{C}N)\square^{C}P$.
\end{invisible}
\end{proof}

\begin{remark}\label{rmk:forgetfulbimod}
Given $A$ in $\mathsf{Mon}(\Mm)$, it is known that the forgetful functor $U:{}_{A}\Mm_{A}\to\Mm$ creates equalizers (in fact, any limit)
, i.e. if $\Mm$ is a category with equalizers, also the category $_{A}\Mm_{A}$ has equalizers and these are constructed as in $\Mm$, and come equipped with the unique $A$-bimodule structure such that the canonical map becomes an $A$-bimodule morphism (see e.g.\ \cite[Corollary 2.4]{PareigisI} where this is done for $_{A}\Mm$). The functor $U$ does not create colimits in general, so one cannot say that $_{A}\Mm_{A}$ has coequalizers when $\Mm$ has them. This happens when $\Mm$ is coregular in which case the coregularity of the category $\Mm$ lifts to the monoidal category $(_{A}\Mm_{A},\ot_{A},A)$, see again e.g.\ \cite[page 329]{Schbimod} (or \cite[Theorem 1.12]{AMS} in the case of abelian monoidal categories). Dually, given $C$ in $\mathsf{Comon}(\Mm)$, the forgetful functor $U:{}^{C}\Mm^{C}\to\Mm$ creates colimits, which coincide with those in $\Mm$ equipped with the unique $H$-bicomodule structure rendering the canonical map a morphism thereof. This functor does not create limits in general, so one cannot say that $^{C}\Mm^{C}$ has equalizers when $\Mm$ has them. This happens when $\Mm$ is regular in which case the regularity of $\Mm$ lifts to the monoidal category $(^{C}\Mm^{C},\square^{C},C)$. These results are summarized in the following result, and we include some details of the proof for the sake of completeness.
\end{remark}

\begin{proposition}\label{prop: (co)regularity of bi(co)modules}
    Let $(\Mm, \ot, I)$ be a (strict) monoidal category. The following statements hold:
\begin{itemize}
    \item[1)] Let $A$ be an object in $\Mon(\Mm)$. If $\Mm$ has equalizers then $_{A}\Mm_{A}$ has equalizers. If $\Mm$ is coregular then $_{A}\Mm_{A}$ has coequalizers and $(_{A}\Mm_{A},\ot_{A},A)$ is coregular.
    \item[2)] Let $C$ be an object in $\Comon(\Mm)$. If $\Mm$ has coequalizers then $^{C}\Mm^{C}$ has coequalizers. If $\Mm$ is regular then $^{C}\Mm^{C}$ has equalizers and $(^{C}\Mm^{C},\square^{C},C)$ is regular.
\end{itemize}
\end{proposition}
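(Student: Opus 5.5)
I will prove 1) in detail; 2) is formally dual and follows by reversing arrows. I will use $\ot$ for tensor products, $\alpha^L,\alpha^R$ for actions, and $q$ for coequalizer maps throughout.

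\emph{Equalizers in ${}_A\Mm_A$.} Let $f,g\colon M\to N$ be bimodule morphisms, and let $e\colon E\to M$ be their equalizer in $\Mm$. Note that no preservation by $\ot$ is needed here.
\begin{itemize}
\item The composite $\alpha^L_M(A\ot e)\colon A\ot E\to M$ is equalized by $f$ and $g$, since both are left $A$-linear and $fe=ge$. By the universal property it factors uniquely through $e$, giving $\alpha^L_E$.
\item The associativity and unit axioms for $\alpha^L_E$ hold after composing with the monomorphism $e$, because they hold in $M$. Hence they hold for $E$.
\item The right action $\alpha^R_E$ and the bimodule compatibility are obtained in the same way.
\item If a bimodule morphism $h\colon X\to M$ satisfies $fh=gh$, it factors in $\Mm$ as $h=e\bar h$. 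Then $\bar h$ is $A$-linear, again by cancelling the monomorphism $e$.
\end{itemize}

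\emph{Coequalizers in ${}_A\Mm_A$, assuming $\Mm$ coregular.} Let $f,g\colon M\to N$ be bimodule morphisms, and let $q\colon N\to Q$ be their coequalizer in $\Mm$.
\begin{itemize}
\item By coregularity, $A\ot q\ot A$ is the coequalizer of $A\ot f\ot A$ and $A\ot g\ot A$. The same holds for $A\ot q$, $q\ot A$ and $A\ot A\ot q$.
\item The map $q\,\alpha^L_N$ coequalizes $A\ot f$ and $A\ot g$, so it induces a unique $\alpha^L_Q$ with $\alpha^L_Q(A\ot q)=q\,\alpha^L_N$. The right action $\alpha^R_Q$ is obtained similarly.
\item The module axioms follow by precomposing with the epimorphisms $A\ot A\ot q$ and $A\ot q\ot A$, exactly as in the proof of Theorem \ref{theorem: bi(co)modules monoidal}.
\item For the universal property: a bimodule map $h\colon N\to X$ coequalizing $f$ and $g$ factors as $h=\bar h q$. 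Linearity of $\bar h$ follows by cancelling the epimorphism $A\ot q$.
\end{itemize}

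\emph{Coregularity of $({}_A\Mm_A,\ot_A,A)$.} This is the main obstacle. I must show that for a bimodule $P$ the functor $P\ot_A(-)$ preserves coequalizers; the case $(-)\ot_A P$ is symmetric. The key observation is that $P\ot_A(-)$ is itself a coequalizer built from $P\ot A\ot(-)$ and $P\ot(-)$, both of which preserve coequalizers in $\Mm$ by coregularity. Since colimits commute with colimits, the result should follow. Concretely, I would argue as follows.
\begin{itemize}
\item Take $f,g\colon M\to N$ with coequalizer $q\colon N\to Q$, computed in $\Mm$ as above.
\item Form the $3\times 2$ diagram whose rows are the $\ot_A$-coequalizers defining $P\ot_A M$, $P\ot_A N$ and $P\ot_A Q$. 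Its columns are $P\ot A\ot(-)$ and $P\ot(-)$ applied to the fork $f,g,q$.
\item By coregularity the columns are coequalizer forks in $\Mm$.
\item Interchange of colimits (the $3\times 3$ lemma for coequalizers) then shows that $P\ot_A q$ coequalizes $P\ot_A f$ and $P\ot_A g$ in $\Mm$. The needed diagram chases all use that the maps $q_{P,-}$ and $P\ot q$ are epimorphisms.
\item Since coequalizers in ${}_A\Mm_A$ are created in $\Mm$ by the previous step, $P\ot_A q$ is also the coequalizer in ${}_A\Mm_A$.
\end{itemize}

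\emph{Part 2).} Dualizing everything gives part 2): coequalizers in ${}^C\Mm^C$ are created in $\Mm$ without any hypothesis, equalizers are created using regularity, and $V\square^C(-)$ preserves equalizers by the interchange of limits.
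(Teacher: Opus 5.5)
Your proof is correct and follows the paper's argument: equalizers are created by the forgetful functor, the coequalizer in $\mathcal{M}$ inherits a bimodule structure via coregularity, and preservation by $P\otimes_A(-)$ comes from combining the balanced-tensor coequalizers with the coequalizers supplied by coregularity. The only difference is presentation: you invoke interchange of colimits, which applies because all four squares commute since $f,g$ are left $A$-linear, whereas the paper carries out that same chase by hand, lifting $\gamma q_{X,N}$ along $X\otimes q$, checking balancedness against the epimorphism $X\otimes A\otimes q$, and then descending along $q_{X,T}$.
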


\begin{proof}
    1) Let $A$ be an object in $\Mon(\Mm)$. The first statement was observed in Remark \ref{rmk:forgetfulbimod}. We assume $\Mm$ is coregular. Given parallel morphisms $\alpha,\beta:M\to N$ in $_{A}\Mm_{A}$, we denote its coequalizer in $\Mm$ by $(T,q)$. There is a canonical way of endowing $T$ with an $A$-bimodule structure, namely uniquely so that its canonical epimorphism $q$ is moreover a morphism of $A$-bimodules. One verifies immediately that $q\alpha^L_N$ coequalizes $A\ot \alpha$ and $A\ot \beta$, whence, as $\ot$ preserves coequalizers, there exists a unique $\Mm$-morphism $\alpha^L_T\colon A\ot T \to T$ satisfying $q\alpha^L_N = \alpha^L_T(A\ot q)$. Similarly, we find a unique $\Mm$-morphism $\alpha^R_T\colon T\ot A\to T$ satisfying $q\alpha^R_N = \alpha^R_T(q\ot A)$. One immediately verifies that $(T,\alpha^L_T, \alpha^R_T)$ becomes an object in $_{A}\Mm_{A}$, by precomposing with the correct canonical morphism of a coequalizer (since these are epimorphisms). By construction, this bimodule structure on $C$ makes $q$ a morphism in $_{A}\Mm_{A}$. Moreover, the uniqueness in $_{A}\Mm_{A}$ follows from the uniqueness in $\Mm$. Hence $_A\Mm_A$ admits coequalizers. 

Moreover, we show that $\ot_A$ preserves these. This means that for all $A$-bimodules $X$ and $Y$, 
    \[
    X\ot_A T \ot_A Y = \Coeq\left( 
\begin{tikzcd}
	{X\ot_A M\ot_A Y} && {X\ot_A N\ot_A Y}
	\arrow["{X\ot_A \alpha\ot_A Y}", shift left=1.5, from=1-1, to=1-3]
	\arrow["{X\ot_A\beta\ot_A Y}"', shift right=1.5, from=1-1, to=1-3]
\end{tikzcd} \right).
    \]
One verifies immediately that $X\ot_A q$ coequalizes $X\ot_A \alpha$ and $X\ot_A \beta$. For the universal property, assume we have an $A$-bilinear morphism $\gamma:X\ot_{A}N\to P$ coequalizing $X\ot_A\alpha$ and $X\ot_A\beta$.
Since $\ot$ preserves coequalizers, $X\ot T$ is the coequalizer of the pair of morphisms $X\ot \alpha$ and $X\ot\beta$ in $\Mm$. 
Therefore, given $\gamma q_{X,N}:X\ot N\to P$, there is a unique $A$-bilinear morphism $\psi\colon X\ot T\to P$ such that $\psi(X\ot q) = \gamma q_{X,N}$. This moreover coequalizes $\alpha^R_X\ot T$ and $X\ot \alpha^L_T$.
\begin{invisible}
In fact
\begin{align*}
    \psi(X\ot\alpha^L_T)(X\ot A\ot q) &= \psi(X\ot q)(X\ot \alpha^L_N)
    \\&=\gamma q_{X,N}(X\ot \alpha^L_N)
    \\&= \gamma q_{X,N}(\alpha^R_X\ot N)
    \\&= \psi(X\ot q)(\alpha^R_X\ot N)
    = \psi(\alpha^R_X\ot N)(X\ot A\ot q)
\end{align*}
and $X\ot A \ot q$ is an epimorphism.
\end{invisible}
Hence there is a unique $A$-bilinear morphism $\rho \colon X\ot_A T\to P$ such that $\psi = \rho q_{X,T}$. 
We have that
\[
    \rho(X\ot_A q)q_{X,N} = \rho q_{X,T}(X\ot q) = \psi(X\ot q) = \gamma q_{X,N}.
\]
Moreover, $\rho$ is unique for this property since $X\ot_A q$ is an epimorphism. Proving that $\ot_A$ preserves coequalizers on the left is completely analogously, and thus $(_A\Mm_A, \ot_A, A)$ is coregular.

    Part 2) follows by dual arguments.
\end{proof}
\begin{remark}\label{rmk:regularityotA}
In general, the regularity of $(\Mm,\ot,I)$ does not imply the regularity of $(_{A}\Mm_{A},\ot_{A},A)$. We know that $_{A}\Mm_{A}$ has equalizers but they may not be preserved by $\ot_{A}$. Given an object $X$ in $_{A}\Mm_{A}$, if the functors $-\ot_{A}X,X\ot_{A}-:{}_{A}\Mm_{A}\to{}_{A}\Mm_{A}$ preserve equalizers then $X$ is said to be left respectively right $A$-\textit{flat}, see e.g.\ \cite[\S 2.3]{Schbimod}. 

Dually, the coregularity of $(\Mm,\ot,I)$ does not imply the coregularity of $(^{C}\Mm^{C},\square^{C},C)$. We know that $^{C}\Mm^{C}$ has coequalizers but they may not be preserved by $\square^{C}$.  Given an object $X$ in $^{C}\Mm^{C}$, if the functors $-\square^{C}X,X\square^{C}-:{}^{C}\Mm^{C}\to{}^{C}\Mm^{C}$ preserve coequalizers then $X$ is said to be left respectively right $C$-\textit{coflat}.
\end{remark}
Once the monoidal category $(\Mm,\ot,I)$ has a (pre-)braiding, one can define the categories $\mathsf{Bimon}(\Mm)$ and $\mathsf{Hopf}(\Mm)$ of bimonoids and Hopf monoids in $\Mm$, as well as the categories $\mathsf{Mon}_{c}(\Mm)$ and $\mathsf{Comon}_{cc}(\Mm)$ of commutative monoids and cocommutative comonoids.

We recall that, given a braided monoidal category $(\Mm,\ot,I,\sigma)$, the categories $\mathsf{Mon}(\Mm)$ and $\mathsf{Comon}(\Mm)$ are monoidal with $\ot$ and $I$ (and the same constraints of $\Mm$). Moreover, we have the following equivalences of categories
\begin{align*}
    &\mathsf{Mon}(\mathsf{Comon}(\Mm))\cong\mathsf{Bimon}(\Mm)\cong\mathsf{Comon}(\mathsf{Mon}(\Mm))\\
    \mathsf{Comon}&(\mathsf{Comon}(\Mm))\cong\mathsf{Comon}_{cc}(\Mm),\qquad \mathsf{Mon}(\mathsf{Mon}(\Mm))\cong\mathsf{Mon}_{c}(\Mm),
\end{align*}
see e.g.\ \cite[page 12]{Aguiar}. 
\begin{remark}
Note that the monoidal categories $(\mathsf{Mon}(\Mm),\ot,I)$ and $(\mathsf{Comon}(\Mm),\ot,I)$ may fail to be braided, and the category $\mathsf{Bimon}(\Mm)$ may fail to be monoidal. The latter is monoidal if $\sigma$ acts like a symmetry on the objects underlying (co)monoids, i.e. $\sigma_{M,N}^{-1} = \sigma_{N,M}$ if $M$ and $N$ are (co)monoids. \begin{invisible}However, if $\sigma$ is a \textit{symmetry}, i.e.\ $\sigma^{-1}_{A,B}=\sigma_{B,A}$ for all objects $A,B$ in $\Mm$, then $\sigma_{A,B}$ is a morphism of monoids and comonoids.\end{invisible} It follows that both $(\mathsf{Mon}(\Mm),\ot,I,\sigma)$ and $(\mathsf{Comon}(\Mm),\ot,I,\sigma)$ are then symmetric monoidal categories. Iterating these results and applying the previous equivalences, one can deduce that $(\mathsf{Bimon}(\Mm),\ot,I,\sigma)$, $(\mathsf{Mon}_{c}(\Mm),\ot,I,\sigma)$ and $(\mathsf{Comon}_{cc}(\Mm),\ot,I,\sigma)$ are symmetric monoidal categories as well. Moreover, if $(\Mm,\ot,I,\sigma)$ is a symmetric monoidal category, then also $(\mathsf{Hopf}(\Mm),\ot,I,\sigma)$ is a symmetric monoidal category, see e.g.\ \cite[page 12]{Aguiar}.
\end{remark} 

It is known that, given a braided monoidal category $(\Mm,\ot,I,\sigma)$ and an object $H$ in $\mathsf{Bimon}(\Mm)$, the categories $(_{H}\Mm,\ot,I)$ and $(^{H}\Mm,\ot,I)$ are monoidal with the same constraints of $\Mm$, see e.g.\ \cite[Proposition 3.2.7, Proposition 3.2.9]{HecSch}. If the category $\Mm$ is symmetric then $\mathsf{Bimon}(\Mm)$ is monoidal and, by identifying $_{H}\Mm_{H}$ with $_{H\ot H^{\mathrm{op}}}\Mm$ and $^{H}\Mm^{H}$ with $^{H\ot H^{\mathrm{cop}}}\Mm$ where $H^{\mathrm{op}}=(H,m_{H}\sigma_{H,H},u_{H},\Delta_{H},\varepsilon_{H})$ and $H^{\mathrm{cop}}=(H,m_{H},u_{H},\sigma_{H,H}\Delta_{H},\varepsilon_{H})$, one obtains the monoidal structure of bi(co)modules using \cite[Proposition 3.2.7, Proposition 3.2.9]{HecSch}. In fact, these monoidal structures hold even when $\Mm$ is simply braided, as recalled in the following result.

\begin{proposition}\label{prop:bi(co)modmonoidalot}
    Let $(\Mm, \ot, I, a, l, r, \sigma)$ be a braided monoidal category and $H$ be an object in $\mathsf{Bimon}(\Mm)$. Then $(_H\Mm_H, \ot, I, a, l, r)$ and  $(^H\Mm^H, \ot, I, a, l, r)$ are monoidal categories. If $\Mm$ is (co)regular, then so are $({}_{H}{\Mm}_H, \ot, I)$ and $({}^{H}{\Mm}^H, \ot, I)$. 
\end{proposition}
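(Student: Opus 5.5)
We treat the bimodule case in detail; the bicomodule case is formally dual, obtained by reversing all arrows (replacing $m,u$ by $\Delta,\varepsilon$, actions by coactions, coequalizers by equalizers).

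\textbf{Monoidal structure on bimodules.} Given $M,N$ in ${}_H\Mm_H$, we equip $M\ot N$ with the diagonal actions
\[
\alpha^L_{M\ot N}=(\alpha^L_M\ot\alpha^L_N)(H\ot\sigma_{H,M}\ot N)(\Delta\ot M\ot N),
\]
\[
\alpha^R_{M\ot N}=(\alpha^R_M\ot\alpha^R_N)(M\ot\sigma_{N,H}\ot H)(M\ot N\ot\Delta).
\]
These are exactly the monoidal structures of ${}_H\Mm$ and $\Mm_H$ from \cite[Proposition 3.2.7]{HecSch}; for $\Mm_H$ one may also apply that result in the reverse category. In particular each is a module structure, $I$ is a bimodule via $\varepsilon$ on both sides, and $a,l,r$ are $H$-linear on each side.

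The only new point is the bimodule compatibility
\[
\alpha^L_{M\ot N}(H\ot\alpha^R_{M\ot N})=\alpha^R_{M\ot N}(\alpha^L_{M\ot N}\ot H).
\]
Here I would expand both sides. The left $H$ enters through $\Delta$ and the braidings $\sigma_{H,M}$, while the right $H$ enters through $\Delta$ and $\sigma_{N,H}$. By naturality of $\sigma$ with respect to $\alpha^R_M$ and $\alpha^L_N$, the two braidings can be slid past the actions. This is the main obstacle. One must check that the crossing $\sigma_{H,M}$ on one side and $\sigma_{N,H}$ on the other can be moved past each other without ever needing $\sigma_{H,H}$ or $\sigma^{-1}$. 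This works because the left copy of $H$ only ever braids past $M$, and the right copy only past $N$, so the strands never cross one another; string diagrams make the check transparent. After this, the separate bimodule axioms of $M$ and $N$ give equality. Morphisms $f\ot g$ of bimodules are bimodule maps by naturality of $\sigma$. Hence we obtain the monoidal category $({}_H\Mm_H,\ot,I,a,l,r)$.

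\textbf{(Co)regularity.} Assume $\Mm$ is coregular. By Remark \ref{rmk:forgetfulbimod} and Proposition \ref{prop: (co)regularity of bi(co)modules}, coequalizers in ${}_H\Mm_H$ exist and are computed in $\Mm$, with the induced bimodule structure. For a bimodule $X$, the functor $X\ot-$ on bimodules has underlying functor $X\ot-$ on $\Mm$, which preserves coequalizers. The bimodule structure on $X\ot T$ is the unique one making $X\ot q$ a bimodule map, since $X\ot q$ is epi. Therefore $X\ot T$ with this structure is the coequalizer in ${}_H\Mm_H$, and similarly on the right.

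If $\Mm$ is regular, equalizers in ${}_H\Mm_H$ are created by the forgetful functor (Remark \ref{rmk:forgetfulbimod}), so the same argument gives preservation by $\ot$ with no further hypothesis. Dually, for ${}^H\Mm^H$, coequalizers are always created, and under regularity equalizers exist by Proposition \ref{prop: (co)regularity of bi(co)modules} 2) applied pointwise. The coaction on $X\ot E$ is induced by the monomorphism $X\ot e$, exactly as in the bimodule argument.
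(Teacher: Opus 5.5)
Your plan is the paper's. The paper also writes down the diagonal (co)actions and calls the remaining verifications easy. It obtains (co)regularity from the fact that (co)equalizers in ${}_H\Mm_H$ and ${}^H\Mm^H$ are computed in $\Mm$ and carry the unique structure making the canonical map a morphism; it writes out the regular bimodule case, you the coregular one.

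The problem is the step you single out as the main obstacle. Label the legs of $\Delta$ on the left and right copies of $H$ as $h_1,h_2$ and $k_1,k_2$. Your justification of $\alpha^L_{M\ot N}(H\ot\alpha^R_{M\ot N})=\alpha^R_{M\ot N}(\alpha^L_{M\ot N}\ot H)$ is that these strands never cross and $\sigma_{H,H}$ is never needed. That is false. Once you slide the braidings past the actions by naturality, as you propose, the hexagon axioms give
\begin{gather*}
\sigma_{H,M}(H\ot\alpha^R_M)=(\alpha^R_M\ot H)(M\ot\sigma_{H,H})(\sigma_{H,M}\ot H),\\
\sigma_{N,H}(\alpha^L_N\ot H)=(H\ot\alpha^L_N)(\sigma_{H,H}\ot N)(H\ot\sigma_{N,H}).
\end{gather*}
So a crossing $\sigma_{H,H}$ of the inner legs $h_2$ and $k_1$ appears on each side. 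This cannot be avoided: with all coproducts at the bottom and all actions at the top, $h_2$ starts left of $k_1$ but must end on the $N$-action, to the right of $k_1$, which ends on the $M$-action. After these substitutions, and after applying the bimodule axioms of $M$ and $N$ on the right-hand side, both sides become
\[
\begin{aligned}
&\bigl(\alpha^L_M(H\ot\alpha^R_M)\ot\alpha^L_N(H\ot\alpha^R_N)\bigr)(H\ot M\ot\sigma_{H,H}\ot N\ot H)\\
&\qquad(H\ot\sigma_{H,M}\ot H\ot N\ot H)(H\ot H\ot M\ot\sigma_{N,H}\ot H)(\Delta\ot M\ot N\ot\Delta).
\end{aligned}
\]

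The real content of the check is therefore that the two inner legs cross by the same morphism $\sigma_{H,H}$ on both sides. This is exactly where it matters that the left action uses $\sigma_{H,M}$ and the right action uses $\sigma_{N,H}$ rather than $\sigma_{H,N}^{-1}$. With $\sigma_{H,N}^{-1}$, the right-hand side would produce $\sigma_{H,H}^{-1}$ instead, and the two sides would in general differ. With this correction your argument goes through. The dual bicomodule check works the same way: after normalizing, the legs $m_{(1)}$ and $n_{(-1)}$ cross via the same $\sigma_{H,H}$ on both sides.

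A minor point in the coregular part: uniqueness of the induced actions on $X\ot T$ needs $H\ot X\ot q$ and $X\ot q\ot H$ to be epimorphisms, not just $X\ot q$. This holds by coregularity.
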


\begin{proof}
If $(M,\alpha^L_M, \alpha^R_M)$ and $(N,\alpha^L_N, \alpha^R_N)$ are object in $_{H}\Mm_{H}$, one defines on $M\ot N$ the \emph{diagonal actions}
\begin{align*}
&\alpha^L_{M\ot N} = (\alpha^L_M \ot \alpha^L_N)(\id\ot\sigma_{H,M}\ot \id)(\Delta\ot\id\ot\id)\colon H\ot M\ot N \to M\ot N, 
\\ &\alpha^R_{M\ot N} = (\alpha^R_M\ot \alpha^R_N)(\id\ot\sigma_{N,H}\ot\id)(\id\ot\id\ot\Delta) \colon M\ot N\ot H\to M\ot N,
\end{align*}
\begin{invisible}
It is immediate to verify that this makes $M\ot N$ into an $H$-bimodule, and that for $f\colon M\to X$ and $g\colon N\to Y$ $H$-bimodule morphisms, $f\ot g\colon M\ot N\to X\ot Y$ becomes an $H$-bimodule morphism again. Hence $\ot\colon _H\Mm_H\times{} _H\Mm_H\to{} _H\Mm_H$ is a well-defined functor. It follows easily that 
\end{invisible}
and $(I,\ep\ot I, I\ot \ep)$ is the $H$-bimodule structure on $I$. 

Dually, given $(M,\rho^L_M, \rho^R_M)$ and $(N,\rho^L_N, \rho^R_N)$ in $^{H}\Mm^{H}$, one defines on $M\ot N$ the \emph{diagonal coactions}
\begin{align*}
&\rho^L_{M\ot N} =(m\ot\id\ot\id)(\id\ot\sigma_{M,H}\ot \id) (\rho^L_M \ot \rho^L_N)\colon M\ot N \to H\ot M\ot N, 
\\ &\rho^R_{M\ot N} = (\id\ot\id\ot m)(\id\ot\sigma_{H,N}\ot\id) (\rho^R_M\ot \rho^R_N)\colon M\ot N\to M\ot N\ot H,
\end{align*}
and $(I,u\ot I,I\ot u)$ is in $^{H}\Mm^{H}$. It is an easy verification that the constraints are $H$-bi(co)linear.

If $\Mm$ is regular then ${}_{H}{\Mm}_H$ and ${}^{H}{\Mm}^H$ admit equalizers by Proposition \ref{prop: (co)regularity of bi(co)modules}. It suffices to prove that $\ot$ preserves these. Consider a pair of morphisms $\alpha,\beta:M\to N$ in $_{H}\Mm_{H}$, and denote its equalizer in ${}_{H}{\Mm}_H$ as $(E, i)$. Recall that this is the equalizer in $\Mm$ equipped with the unique $H$-bimodule structure such that its canonical morphism is an $H$-bimodule morphism. Given $H$-bimodules $V,W$, we already know that the $H$-bimodule $V\ot E\ot W$ is the equalizer of the pair $(\id\ot\alpha\ot\id,\id\ot\beta\ot\id)$ in $\Mm$. The universal property follows by the one in $\Mm$, using that $\id\ot i\ot\id$ is a morphism in $_{H}\Mm_{H}$ and a monomorphism in $\Mm$. Similarly for $H$-bicomodules and coregularity. \qedhere
\end{proof}
Given a braided monoidal category $(\Mm, \ot, I,\sigma)$ and an object $H$ in $\mathsf{Bimon}(\Mm)$, an \emph{$H$-tetramodule} is a quintuple $(V,\alpha^L, \alpha^R, \rho^L, \rho^R)$ such that $(V,\alpha^L, \alpha^R)$ is an $H$-bimodule, $(V,\rho^L, \rho^R)$ is an $H$-bicomodule, and these are compatible in the sense that $\rho^L$ and $\rho^R$ are $H$-bimodule morphisms (or, equivalently, $\alpha^L$ and $\alpha^R$ are $H$-bicomodule morphisms). A morphism of $H$-tetramodules in $\Mm$ is a morphism in $_{H}\Mm_{H}$ which is also a morphism in $^{H}\Mm^{H}$. We denote the category of $H$-tetramodules in $\Mm$ by $^H_H\Mm^H_H$.
    We have that $H$ is in $\Mon({}^H\Mm^H)$ and $\Comon({}_H\Mm_H)$ for the (co)multiplication, and the following equivalences of categories
    ${}^H_H\Mm^H_H \cong {}_H({}^H\Mm^H)_H \cong  {}^H({}_H\Mm_H)^H$,
see e.g.\ \cite[Lemma 3.6]{Schauenburg}.
We recall the following result, in which the monoidal structures of bimodules and bicomodules given in Theorem \ref{theorem: bi(co)modules monoidal} are lifted to tetramodules:
\begin{proposition}[cf.\ {\cite[Theorem 1.1]{BespalovDrabant}}]\label{prop:monoidaltetramodules}
    Let $(\Mm, \ot, I,\sigma)$ be a braided monoidal category and $H$ be an object in $\mathsf{Bimon}(\Mm)$. If $\Mm$ is coregular respectively regular, then $(^H_H\Mm^H_H, \ot_H, H)$ respectively $(^H_H\Mm^H_H, \square^H, H)$ are monoidal categories, for $H$ equipped with the (co)multiplication. 
\end{proposition}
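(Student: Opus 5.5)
By the symmetry of the situation, I would treat the coregular case $(^H_H\Mm^H_H,\ot_H,H)$ in detail and get the regular case $(^H_H\Mm^H_H,\square^H,H)$ by the dual argument. The plan is to use the equivalence ${}^H_H\Mm^H_H\cong{}^H({}_H\Mm_H)^H$ recalled above. By Theorem \ref{theorem: bi(co)modules monoidal}, $({}_H\Mm_H,\ot_H,H)$ is monoidal. It then suffices to show that the bicomodule structures of two tetramodules $M,N$ induce an $H$-bicomodule structure on $M\ot_H N$ in ${}_H\Mm_H$. We also need the unit and the constraints to be colinear.

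\textbf{Coactions on $M\ot_H N$.} I would define $\rho^L_{M\ot_H N}$ as the unique morphism satisfying
\[
\rho^L_{M\ot_H N}\,q_{M,N}=(m\ot q_{M,N})(H\ot\sigma_{M,H}\ot N)(\rho^L_M\ot\rho^L_N).
\]
This is the diagonal coaction of Proposition \ref{prop:bi(co)modmonoidalot}, followed by $H\ot q_{M,N}$. The right coaction is defined analogously. Existence requires the right-hand side to coequalize $\alpha^R_M\ot N$ and $M\ot\alpha^L_N$. I would check this using three facts: $\rho^L_M$ is right $H$-linear, $\rho^L_N$ is left $H$-linear, and naturality of $\sigma$. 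These reduce both composites to expressions differing only by whether the middle factor of $H$ is multiplied into the $H$-factor through $\rho_M$ or $\rho_N$. Associativity of $m$ and $H\ot q_{M,N}$ coequalizing the action pair then finish the check. Because $\Mm$ is coregular, $H\ot q_{M,N}$ and $q_{M,N}\ot H$ are again coequalizers, hence epimorphisms. So coassociativity, counitality, the bicomodule compatibility, and linearity of $\rho^L,\rho^R$ for the bimodule structure of $M\ot_H N$ can all be checked after precomposing with $q_{M,N}$ (or $H\ot q_{M,N}$, $H\ot H\ot q_{M,N}$). Each identity is then the corresponding one for the diagonal coactions on $M\ot N$, which hold by Proposition \ref{prop:bi(co)modmonoidalot}.

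\textbf{Functoriality, unit and constraints.} For tetramodule maps $f,g$, the map $f\ot_H g$ is colinear, by precomposing with $q$. The object $H$, with multiplication as actions and comultiplication as coactions, is a tetramodule by the bimonoid axiom. The unitor $\Upsilon_M$ satisfies $\Upsilon_Mq_{M,H}=\alpha^R_M$, and $\alpha^R_M$ is colinear because tetramodule compatibility says the actions are bicomodule maps; hence $\Upsilon_M$ is colinear, and similarly $\Upsilon'_N$. The associator is characterized through the epimorphism $q_{M\ot_H N,P}(q_{M,N}\ot P)$. Colinearity therefore reduces to colinearity of the associator $a$ of $\Mm$ for the diagonal coactions, which holds by Proposition \ref{prop:bi(co)modmonoidalot}. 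The pentagon and triangle axioms hold already in ${}_H\Mm_H$.

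\textbf{Main obstacle.} I expect the hardest step to be the coequalizing verification for the induced coaction. There one must carefully use braiding naturality, namely $\sigma_{M,H}(\alpha^R_M\ot H)=(H\ot\alpha^R_M)(\sigma_{M,H}\ot H)$ and the hexagon for $\sigma_{M\ot H,H}$, together with the bimonoid compatibility $\Delta m=(m\ot m)(H\ot\sigma_{H,H}\ot H)(\Delta\ot\Delta)$, which is hidden in the linearity of the coactions. Coregularity is used exactly to get epimorphisms after tensoring with $H$. The cotensor case dualizes every step: equalizers, monomorphisms $e_{M,N}$, the diagonal actions, and regularity.
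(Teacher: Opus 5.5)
Your argument is correct, but it is organized differently from the paper's.

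\textbf{The paper's route.} The paper does no lifting by hand. It observes three things:
\begin{enumerate}
\item $({}^H\Mm^H,\ot,I)$ is coregular, with coequalizers computed in $\Mm$ (Proposition \ref{prop:bi(co)modmonoidalot});
\item $(H,m,u)$ is a monoid in this category;
\item ${}^H_H\Mm^H_H\cong{}_H({}^H\Mm^H)_H$.
\end{enumerate}
Part 1) of Theorem \ref{theorem: bi(co)modules monoidal}, applied with ${}^H\Mm^H$ as the ambient category, then gives $({}^H_H\Mm^H_H,\ot_H,H)$ directly. Dually, part 2) applied in the regular category $({}_H\Mm_H,\ot,I)$, where $(H,\Delta,\ep)$ is a comonoid, gives the cotensor case.

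\textbf{Your route.} For $\ot_H$ you use the other identification ${}^H({}_H\Mm_H)^H$. The theorem then only gives you $({}_H\Mm_H,\ot_H,H)$, and you must lift this structure along the forgetful functor by hand. That means descending the diagonal coactions through $q_{M,N}$, checking the comodule axioms after precomposing with epimorphisms, and checking colinearity of $f\ot_H g$, of $\Upsilon$ and $\Upsilon'$, and of the associator. These checks are exactly what the paper absorbs into one fact: the forgetful functor ${}^H\Mm^H\to\Mm$ creates coequalizers and $\ot$ preserves them.

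\textbf{Comparison.} The paper's proof is a two-line reuse of an existing theorem. Yours is longer but explicit. It exhibits the induced coactions on $M\ot_H N$ and shows exactly where coregularity is used: epimorphicity of $H\ot q_{M,N}$, $q_{M,N}\ot H$ and $q_{M,N}\ot P$.

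\textbf{One small precision.} You reduce to linearity identities on $M\ot N$, namely that the diagonal coactions are linear for the outer actions. These are not contained in Proposition \ref{prop:bi(co)modmonoidalot} itself. They are literally equivalent to colinearity of $\alpha^L_M\ot N$ and $M\ot\alpha^R_N$ for the diagonal coactions. That colinearity follows from the tetramodule compatibility of $M$ and $N$ together with functoriality of $\ot$ on ${}^H\Mm^H$. This is the same input that, in the paper's formulation, makes $M$ and $N$ bimodules in $({}^H\Mm^H,\ot,I)$.
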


\begin{invisible}
\begin{proof}
    By Proposition \ref{prop:bi(co)modmonoidalot}, $({}_{H}{\Mm}_H, \ot, I)$ is regular with the same equalizers as $\Mm$. Since $H$ is in $\Comon({}_{H}{\Mm_H}, \ot, I)$, by Theorem \ref{theorem: bi(co)modules monoidal} the category $({}^{H}({}_{H}{\Mm}_H)^H, \square^H, (H,\Delta, \Delta))\cong ({}^{H}_{H}{\Mm}^H_H, \square^H, H)$ is a monoidal category. 
    Analogously, $({}^{H}{\Mm}^H, \ot, I)$ is coregular and $H$ is in $\Mon({}^{H}{\Mm}^H, \ot, I)$, whence $({}_{H}{({}^{H}\Mm^H})_H, \ot_H, (H,\mu,\mu)) \cong ({}^{H}_{H}{\Mm}_H^H, \ot_H, H)$ is monoidal.
\end{proof}
\end{invisible}
\begin{remark}\label{remark: constraints of tetramodules}
    For completeness, let us explicitly write down the constraints on ${}^H_H\Mm^H_H$ for both monoidal structures. Denote the constraints of $\Mm$ by $l,r,$ and $a$. If $(\Mm,\ot,I)$ is regular, then so is $({}_H\Mm_H, \ot, I)$ by Proposition \ref{prop:bi(co)modmonoidalot}, with the same equalizers and constraints as in $\Mm$. Since ${}_H\Mm_H$ has the same equalizers, by Theorem \ref{theorem: bi(co)modules monoidal}, the constraints of $({}^H({}_H\Mm_H)^H\cong {}^H_H\Mm^H_H, \square^H, H)$ are $\Lambda_V^{-1}, (\Lambda_V')^{-1}$, and $a^{\square}_{V,W,U}$ for any $V,W,U$ in ${}^H_H\Mm^H_H$, which are defined as the unique morphisms satisfying
    \begin{gather*}
        e_{V,H}\Lambda_V = \rho^R_V, \hspace{3em} e_{H,V}\Lambda'_V = \rho^L_V,
        \\ (V\ot e_{W,U})e_{V,W\square U}a^{\square}_{V,W,U} = a_{V,W,U}(e_{V,W}\ot U)e_{V\square W, U}.
    \end{gather*}
    Likewise, if $\Mm$ is coregular, then $({}_H({}^H\Mm^H)_H\cong {}^H_H\Mm^H_H, \ot_H, H)$ is monoidal with constraints $\Upsilon_M, \Upsilon'_M$, and $a^{\ot_H}_{M,N,P}$ for any $M,N,P$ in ${}_H^H\Mm^H_H$, which are defined as the unique morphisms satisfying
    \begin{gather*}
        \Upsilon_Mq_{M,H} = \alpha^R_M, \hspace{3em} \Upsilon'_{M}q_{H, M} = \alpha^L_M,
        \\a^{\ot_H}_{M,N,P}q_{M\ot_H N, P}(q_{M,N}\ot P) = q_{M,N\ot_H P}(M\ot q_{N,P})a_{M,N,P}.
    \end{gather*}
\end{remark}

\subsection{Pre-Cartier categories and bialgebras}

Here we recall pre-Cartier categories introduced in \cite{ABSW} as infinitesimal versions of a braided category, and their algebraic counterparts for the categories of modules and comodules, namely pre-Cartier bialgebras. First, we recall that a category is \textit{pre-additive} when the set of morphisms between any two objects is an abelian group and the composition of morphisms satisfies the distributive law. A pre-additive braided monoidal category is a braided monoidal category which is also pre-additive and the tensor functor is additive in each entry, i.e.\ the tensor product and the addition of morphisms satisfy the distributive laws.

\begin{definition}[{\cite[Definition 1.1]{ABSW}}]
A pre-additive braided monoidal category $(\Mm,\otimes,I,\sigma)$ is called \textit{pre-Cartier} if it is equipped with a natural transformation $t:\otimes\to\otimes$ such that the identities
\begin{align}
t_{X,Y\otimes Z}&=t_{X,Y}\otimes\id_{Z}+(\sigma^{-1}_{X,Y}\otimes\id_{Z})(\id_{Y}\otimes t_{X,Z})(\sigma_{X,Y}\otimes\id_{Z}),\label{eqn: inf braid 1} \\
t_{X\otimes Y,Z}&=\id_{X}\otimes t_{Y,Z}+(\id_{X}\otimes\sigma^{-1}_{Y,Z})(t_{X,Z}\otimes\id_{Y})(\id_{X}\otimes\sigma_{Y,Z}),\label{eqn: inf braid 2}
\end{align}
hold true for all objects $X,Y,Z$ in $\Mm$. In this case, $t$ is said to be an \textit{infinitesimal braiding} of $(\Mm,\otimes,I,\sigma)$. A pre-Cartier category $(\Mm,\otimes,I,\sigma,t)$ is called \textit{Cartier} if in addition
\[
\sigma_{X,Y}t_{X,Y}=t_{Y,X}\sigma_{X,Y}
\]
holds for all objects $X,Y$ in $\Mm$. 
\end{definition}

The notion of symmetric Cartier category recovers that of \textit{infinitesimal symmetric category} which goes back to \cite{Ca}. 

We now recall how infinitesimal braidings are described for the braided monoidal category of modules over a quasitriangular bialgebra. In the following, for an element $T=\sum_{i}{T^{i}\ot T_{i}}\in H\ot H$, we will adopt the standard notation $T_{12}=\sum_{i}T^{i}\ot T_{i}\ot 1=T\ot 1$, $T_{23}=\sum_{i}1\ot T^{i}\ot T_{i}=1\ot T$, $T_{13}=\sum_{i}T^{i}\ot1\ot T_{i}$, $T^{\mathrm{op}}=\tau_{H,H}(T)=\sum_iT_{i}\ot T^{i}$. We omit writing the summation, leaving it implicit.

\begin{definition}[{\cite{Drinfeld}}]\label{definition: quasitriangular}
A bialgebra $H$ is \textit{quasitriangular} if there is an invertible element $\Rr=\Rr^i \otimes \Rr_i \in H\otimes H$ \--- called the \textit{universal $\Rr$-matrix} or \textit{quasitriangular structure} \--- such that the following axioms hold:
\begin{align}
    \Delta^\mathrm{op}(\cdot)&=\Rr\Delta(\cdot)\Rr^{-1},\label{qtr1}\\
(\id_H\otimes\Delta)(\Rr)&=\Rr_{13}\Rr_{12},\label{qtr2}\\
(\Delta\otimes\id_H)(\Rr)&=\Rr_{13}\Rr_{23},\label{qtr3}
\end{align}
If in addition $\Rr^{-1}=\Rr^\op$, then $(H,\Rr)$ is called \textit{triangular}.
\end{definition}

We recall that $(\varepsilon\ot\id)(\Rr)=1_{H}=(\id\ot\varepsilon)(\Rr)$. Moreover, if $H$ is a Hopf algebra, the following equalities are satisfied $(S\ot\id)(\Rr)=\Rr^{-1}$, $(\id\ot S)(\Rr^{-1})=\Rr$, and hence $(S\ot S)(\Rr)=\Rr$, $(S\ot S)(\Rr^{-1})=\Rr^{-1}$, see e.g. \cite[Lemma 2.1.2]{Majid-book}. Quasitriangular structures produce solutions to the so-called \emph{quantum Yang--Baxter equation}:
\begin{equation}\label{eqn: qybe}
        \Rr_{12}\Rr_{13}\Rr_{23} = \Rr_{23}\Rr_{13}\Rr_{12},
\end{equation}
see e.g.\ \cite[Theorem VIII.2.4]{Kassel-book}.

It is known (see \cite[Theorem 9.2.4 and paragraph thereafter]{Majid-book}) that a bialgebra $H$ is quasitriangular if and only if the monoidal category ${}_H\mm$ of left $H$-modules is braided, with triangular structures corresponding to ${}_{H}\mm$ being symmetric, with braiding determined on objects $M,N\in{}_H\mm$ by
\begin{equation*}
    \sigma^\Rr_{M,N}\colon M\otimes N\rightarrow N\otimes M,\qquad
    m\otimes n\mapsto\Rr^\op\cdot(n\otimes m)=(\Rr_i\cdot n)\otimes(\Rr^i\cdot m).
\end{equation*}
Note that $(\sigma^\Rr_{M,N})^{-1}( n\otimes m)= \Rr^{-1}\cdot (m\otimes n)=(\overline{\Rr}^i\cdot m)\otimes (\overline{\Rr}_{i}\cdot n)$, where $\Rr^{-1}=\overline{\Rr}^{i}\otimes\overline{\Rr}_{i}$.

\begin{definition}[{\cite[Definition 2.1]{ABSW}}]
A triple $(H,\Rr, \chi)$ is called a \emph{pre-Cartier quasitriangular bialgebra} if $(H,\Rr)$ is a quasitriangular bialgebra and $\chi\in H\otimes H$ such that
\begin{align}
    \chi\Delta(\cdot)&=\Delta(\cdot)\chi,\label{cqtr1}\\
    (\id_H\otimes\Delta)(\chi)&=\chi_{12}+\Rr^{-1}_{12}\chi_{13}\Rr_{12},\label{cqtr2}\\
    (\Delta\otimes\id_H)
    (\chi)&=\chi_{23}+\Rr^{-1}_{23}\chi_{13}\Rr_{23}.\label{cqtr3}
\end{align}
The element $\chi$ is called an \emph{infinitesimal $\Rr$-matrix}. A quasitriangular bialgebra $(H,\Rr)$ is \emph{Cartier} if it is pre-Cartier and the corresponding infinitesimal $\Rr$-matrix $\chi$ satisfies in addition that
\begin{equation}\label{eq:ctr2}
    \mathcal{R}\chi=\chi^\mathrm{op}\mathcal{R}.
\end{equation}
\end{definition}
By \cite[Remark 2.2, (iii)]{ABSW} the following equalities
\[
(\id\otimes \varepsilon)(\chi)=0=(\varepsilon\otimes\id)(\chi)
\]
hold for every infinitesimal $\Rr$-matrix $\chi$. 
\begin{invisible}
We recall that, in \cite[Theorem 2.21]{ABSW}, it is proven that any infinitesimal $\Rr$-matrix is also a $2$-cocycle in the cohomology for coalgebras defined as in \cite[XVIII.5]{Kassel-book}. More precisely, $\chi\in H\otimes H$ is such that
\begin{equation}\label{eq:Hoch2cocy}
\chi_{12}+(\Delta\otimes\id)(\chi)=\chi_{23}+(\id\otimes\Delta)(\chi).
\end{equation}
\end{invisible}
In \cite[Theorem 2.6]{ABSW} it is shown that a quasitriangular bialgebra $(H,\Rr)$ is pre-Cartier (resp.\ Cartier) if, and only if, the category $(_{H}\mm,\ot,\Bbbk)$ is braided pre-Cartier (resp.\ Cartier). The same bijective correspondence holds if one considers pre-Cartier (resp.\ Cartier) triangular bialgebras $H$ and symmetric pre-Cartier (resp.\ Cartier) structures on $({}_H\mm,\ot,\Bbbk)$. The corresponding infinitesimal braiding on $({}_H\mm,\ot,\Bbbk)$ is given for all objects $M,N$ in ${}_H\mm$ by
\begin{equation}\label{t-chi}
    t_{M,N}\colon M\otimes N\rightarrow M\otimes N,\qquad
    m\otimes n\mapsto\chi\cdot(m\otimes n)=(\chi^i\cdot m)\otimes(\chi_i\cdot n),
\end{equation}
where $\chi=\chi^i\otimes\chi_i\in H\otimes H$ is the infinitesimal $\Rr$-matrix for $H$. 
\begin{invisible}
We point out that the description of $t$ given in \eqref{t-chi} is obtained by using the naturality of the infinitesimal braiding. Indeed, for any $M$ in ${}_{H}\mm$ and $m\in M$, we can consider a map $l_{m}\colon H\to M$ in $_{H}\mm$ by setting $l_{m}(h)=h\cdot m$, so that, defining $\chi=t_{H,H}(1_{H}\ot1_{H})$, we have
\[
t_{M,N}(m\ot n)=t_{M,N}(l_{m}\ot l_{n})(1_{H}\ot1_{H})=(l_{m}\ot l_{n})t_{H,H}(1_{H}\ot1_{H})=(l_{m}\ot l_{n})(\chi^{i}\ot\chi_{i})=(\chi^{i}\cdot m)\ot(\chi_{i}\cdot n). 
\]
\end{invisible}

A similar characterization can be given for the braided monoidal category of comodules over a coquasitriangular bialgebra. 

\begin{definition}[{\cite[Definition 2.1]{Larson1991}}]
    Let $H$ be a bialgebra, then a \emph{universal $R$-form} of $H$ is a linear map $\Rr\colon H\ot H \to \Bbbk$ which is invertible with respect to the convolution product, and satisfying, for all $a,b,c\in H$, the following identities:
    \begin{align}
        &\tag{CQT1} \Rr(c\ot ab) = \Rr(c_1\ot b)\Rr(c_2\ot a),\label{CQT1}
        \\&\tag{CQT2} \Rr(ab\ot c)= \Rr(a\ot c_1)\Rr(b\ot c_2),\label{CQT2}
        \\&\tag{CQT3} \Rr(a_1\ot b_1)a_2b_2 = \Rr(a_2\ot b_2)b_1a_1.\label{CQT3}
    \end{align}
    If so, we call $(H,\Rr)$ a coquasitriangular bialgebra. If in addition $\Rr^{-1}=\Rr\tau$, then $(H,\Rr)$ is called \textit{cotriangular}.
\end{definition}
If $(H,\Rr)$ is a coquasitriangular bialgebra, then $\Rr^{-1}$, the convolution inverse of $\Rr$, satisfies for all $a,b,c\in H$
\begin{align}
    &\tag{CQT1'} \Rr^{-1}(c\ot ab) = \Rr^{-1}(c_1\ot a)\Rr^{-1}(c_2\ot b),\label{CQT1'}
    \\&\tag{CQT2'} \Rr^{-1}(ab\ot c) = \Rr^{-1}(b\ot c_1)\Rr^{-1}(a\ot c_2),\label{CQT2'}
    \\&\tag{CQT3'} a_1b_1\Rr^{-1}(a_2\ot b_2) = \Rr^{-1}(a_1\ot b_1)b_2a_2.\label{CQT3'}
\end{align}

Given a bialgebra $H$, there is a bijection between the braidings $\sigma$ on $(\mm^H,\ot, \Bbbk)$ and universal $R$-forms on $H$. Explicitly, a universal $R$-form $\Rr$ uniquely determines a braiding on right $H$-comodules $M$ and $N$ via
   \[
    \sigma^{\Rr}_{M,N}( m\ot n)= n_{(0)}\ot m_{(0)}\Rr(m_{(1)}\ot n_{(1)}),
    \]
see 
\cite[Exercise 9.2.9]{Majid-book}.

We also recall the dual setting for infinitesimal structures.

\begin{definition}[{\cite[Definition 3.1]{ABSW}}]
A triple $(H,\Rr,\chi)$ is called a \textit{pre-Cartier coquasitriangular bialgebra} if $(H,\Rr)$ is a coquasitriangular bialgebra and $\chi\colon H\ot H\to\Bbbk$ is such that for all $a,b,c\in H$
\begin{align}
    &\label{pcct1}\chi(a_{1}\ot b_{1})a_{2}b_{2}=a_{1}b_{1}\chi(a_{2}\ot b_{2}),
    \\&\label{pcct2}\chi(a\ot bc)=\chi(a\ot b)\varepsilon(c)+\Rr^{-1}(a_{1}\ot b_{1})\chi(a_{2}\ot c)\Rr(a_{3}\ot b_{2}),
    \\&\label{pcct3}\chi(ab\ot c)=\varepsilon(a)\chi(b\ot c)+\Rr^{-1}(b_{1}\ot c_{1})\chi(a\ot c_{2})\Rr(b_{2}\ot c_{3}).
\end{align}
The element $\chi$ is called an \emph{infinitesimal $\Rr$-form}.
We say that $(H,\Rr,\chi)$ is Cartier if in addition it satisfies 
\[
\Rr(a_{1}\ot b_{1})\chi(a_{2}\ot b_{2})=\chi(b_{1}\ot a_{1})\Rr(a_{2}\ot b_{2}).
\]
\end{definition}

As proven in \cite[Theorem 3.2]{ABSW}, given a coquasitriangular bialgebra $(H,\Rr)$, there is a bijection between pre-Cartier structures of $(H,\Rr)$ and pre-Cartier structures of $(\mm^{H},\ot, \sigma^{\Rr})$. The corresponding infinitesimal braiding on $\mm^{H}$ is defined for all objects $M,N$ in $\mm^{H}$ by
\[
t_{M,N}\colon M\ot N\to M\ot N,\ m\ot n\mapsto m_{(0)}\ot n_{(0)}\chi(m_{(1)}\ot n_{(1)}),
\]
where $\chi\colon H\ot H\to\Bbbk$ is the infinitesimal $\Rr$-form for $H$. Moreover, there is a bijective correspondence between Cartier structures of $(H,\Rr)$ and Cartier structures of $(\mm^{H},\ot,\sigma^{\Rr})$.

\section{Braidings and infinitesimal braidings for the category of bi(co)modules}\label{sec:braidingbicomodules}
In this section, we work over the coregular and regular monoidal category $(\Vec_{\Bbbk},\ot_{\Bbbk},\Bbbk)$ of vector spaces over $\Bbbk$, and we let $A$ and $C$ be an algebra and a coalgebra therein, respectively.

We briefly recall the classification of braidings for the monoidal category $({}_{A}{\mm}_{A},\otimes_{A},A)$, obtained in \cite{agore2014braidings}. Here we adopt a simple juxtaposition for the actions.

\begin{theorem}[{\cite[Theorem 3.1]{agore2014braidings}}]\label{thm:braidingsbimodtensorA}
    Let $A$ be an algebra. Then there is a bijective correspondence between the class of all braidings $\sigma$ on $({}_{A}{\mm}_{A},\otimes_{A},A)$ and the set of all invertible elements $R=R^{1}\otimes R^{2}\otimes R^{3}\in A\otimes A\otimes A$ satisfying the following conditions, for all $a\in A$:
\begin{enumerate}
    \item[1)] $R^{1}\otimes R^{2}\otimes aR^{3}=R^{1}a\otimes R^{2}\otimes R^{3}$, 
    \item[2)] $aR^{1}\otimes R^{2}\otimes R^{3}=R^{1}\otimes R^{2}a\otimes R^{3}$,
    \item[3)] $R^{1}\otimes aR^{2}\otimes R^{3}=R^{1}\otimes R^{2}\otimes R^{3}a$, 
    \item[4)] $R^{1}\otimes R^{2}\otimes1\otimes R^{3}=r^{1}R^{1}\otimes r^{2}\otimes r^{3}R^{2}\otimes R^{3}$, 
    \item[5)] $R^{1}\otimes1\otimes R^{2}\otimes R^{3}=R^{1}\otimes R^{2}r^{1}\otimes r^{2}\otimes R^{3}r^{3}$,
\end{enumerate}
where $r=r^{1}\otimes r^{2}\otimes r^{3}=R$. Under the above correspondence, the braiding $\sigma$ corresponding to $R$ is given by the formula
\[
\sigma_{M,N}\colon M\otimes_{A}N\to N\otimes_{A}M,\ m\otimes_{A}n\mapsto R^{1}nR^{2}\otimes_{A}mR^{3},
\]
for all $M,N$ in ${}_{A}{\mm}_{A}$.
\end{theorem}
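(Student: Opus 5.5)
The plan is to reduce the braiding to its value on one universal pair of bimodules and then use naturality to recover it everywhere. The key objects are the free bimodules $A\otimes A$, which represent the "element" functor on ${}_{A}\mm_{A}$. For $M$ in ${}_{A}\mm_{A}$ and $m\in M$ there is a unique bimodule map
\[
f_m\colon A\otimes A\to M,\qquad a\otimes b\mapsto amb .
\]
Moreover $(A\otimes A)\otimes_A(A\otimes A)\cong A\otimes A\otimes A$ via $a\otimes b\otimes_A c\otimes d\mapsto a\otimes bc\otimes d$.

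\textbf{From a braiding to $R$.} Given $\sigma$, consider
\[
\sigma_{A\otimes A,A\otimes A}\colon A^{\otimes 3}\to A^{\otimes 3},
\]
which is $A$-bilinear for the outer actions. Set $R:=\sigma_{A\otimes A,A\otimes A}(1\otimes 1\otimes_A 1\otimes 1)$, regarded as an element of $A\otimes A\otimes A$. Label the two copies of $A\otimes A$ as $P$ (the first factor) and $Q$ (the second). In the target $Q\otimes_A P$, the first tensor leg of $R$ sits in the left part of $Q$, the middle leg in the glued $Q$-right/$P$-left part, and the last leg in the right part of $P$.

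Naturality with respect to $f_m\otimes_A f_n$ then gives
\[
\sigma_{M,N}(m\otimes_A n)=R^1nR^2\otimes_A mR^3 .
\]
Here one must check that the middle leg acts as stated. This reduces to $f_n(a\otimes b)\otimes_A f_m(c\otimes d)=anb\otimes_A cmd$ being rewritten as $an\otimes_A bcmd$, with the left leg of $P$ landing on the right of $n$.

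Conditions 1)--3) come from naturality with respect to the endomorphisms of $A\otimes A$ given by right multiplication in either tensor leg. Concretely, $a\otimes b\mapsto ax\otimes b$ and $a\otimes b\mapsto a\otimes xb$ are bimodule maps, and applying them separately to the $P$ and $Q$ factors moves $x$ between the legs of $R$. Using the balancing over $A$ in the middle, each such move yields one of the three commutation identities. Conditions 4) and 5) are the two hexagon axioms,
\[
\sigma_{M,N\otimes_A P}=(N\otimes_A\sigma_{M,P})(\sigma_{M,N}\otimes_A P),
\]
and its mirror. I will evaluate these on three free modules $A\otimes A$, where $(A\otimes A)^{\otimes_A 3}\cong A^{\otimes 4}$, and take the value at $1^{\otimes 4}$. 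Invertibility of $R$ in $A\otimes A\otimes A$ (with the algebra structure in which 1)--3) make sense) follows by running the same construction on $\sigma^{-1}$: this produces an element $R'$, and composing the two formulas on the free module shows that $R'$ is the inverse of $R$.

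\textbf{From $R$ to a braiding.} Given $R$ satisfying 1)--5), I will define $\sigma_{M,N}$ by the stated formula. First it is well defined on $M\otimes N$, and it is balanced because $ma\otimes n$ and $m\otimes an$ have the same image by 1): the $a$ moves from the third leg to the first, and the first leg multiplies $n$ from the left. Next it is bilinear by 2) and 3), natural by construction, and invertible using $R^{-1}$. Finally the hexagons follow from 4) and 5). The two constructions are mutually inverse: one direction is the naturality formula, and the other is evaluation at $1\otimes 1\otimes_A 1\otimes 1$.

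The main obstacle is bookkeeping. One must identify precisely which leg of $R$ multiplies which element, and how the multiplication in $A^{\otimes 3}$ relevant for invertibility is defined; the exact shape of 4) and 5) depends on these choices. I would fix the identification $(A\otimes A)^{\otimes_A k}\cong A^{\otimes(k+1)}$ once and for all and verify each condition by evaluating at units.
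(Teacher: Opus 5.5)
The paper only quotes this result from \cite{agore2014braidings}, but your strategy is the one its appendix uses for the dual Theorem~\ref{theorem: braidings on bicomodules}, so your proposal is correct: the maps $\hat f$ of Lemma~\ref{lemma: induced bicomodule morphism} replace your $f_m$, the analogues of well-definedness and bilinearity give 1)--3), the braid relations give 4)--5), and invertibility is extracted from $\sigma^{-1}$. The one point to make precise is invertibility, which refers to the ordinary componentwise algebra $A\otimes A\otimes A$: if $T$ is the element attached to the braiding $\sigma^{-1}_{M,N}\colon N\otimes_A M\to M\otimes_A N$, composing the two formulas on free bimodules gives $T^1\otimes R^3T^2R^1\otimes R^2T^3=1\otimes1\otimes1$, which by 2) for $T$ reads $(\id\otimes\tau)(R)\,T=1\otimes1\otimes1$, so the inverse of $R$ is the flipped element $(\id\otimes\tau)(T)$ rather than $T$ itself (mirroring $\Tt(\tau\otimes\id)$ in the dual proof), and in the converse direction $\sigma^{-1}$ must be built from $(\id\otimes\tau)(R^{-1})$, which does satisfy 1)--3).
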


An invertible element $R\in A\ot A\ot A$ satisfying the previous equations is called a \textit{canonical $R$-matrix} of $A$. We also recall that the classification of the canonical $R$-matrices can be simplified.

\begin{theorem}[{\cite[Theorem 3.2]{agore2014braidings}}]\label{thm:semplifiedconditions}
    Let $A$ be an algebra, then there is a bijection between the set of canonical $R$-matrices of $A$ and the set of all elements $R\in A\otimes A\otimes A$ satisfying, for any $a\in A$, 
\begin{equation}\label{cond1braiding}
R^{1}\otimes aR^{2}\otimes R^{3}=R^{1}\otimes R^{2}\otimes R^{3}a
\end{equation}  
and the normalizing condition
\begin{equation}\label{cond2braiding}
R^{1}R^{2}\otimes R^{3}=R^{2}\otimes R^{3}R^{1}=1\otimes1.
\end{equation}
Furthermore, $R$ is invariant under cyclic permutation of the tensor factors, $R=R^{2}\otimes R^{3}\otimes R^{1}=R^{3}\otimes R^{1}\otimes R^{2}$,
and we have the additional normalizing condition
\begin{equation}\label{cond3braiding}
R^{1}\otimes R^{2}R^{3}=1\otimes1.
\end{equation}
In particular, every braiding on ${}_{A}{\mm}_{A}$ is a symmetry.
\end{theorem}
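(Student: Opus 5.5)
The plan is to show that, under the correspondence of Theorem \ref{thm:braidingsbimodtensorA}, conditions 1)--5) together with invertibility are equivalent to \eqref{cond1braiding} plus \eqref{cond2braiding}. Cyclic invariance and \eqref{cond3braiding} will be derived along the way. The symmetry statement then follows from cyclic invariance and the formula for $\sigma$.

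\emph{From a canonical $R$-matrix to \eqref{cond1braiding}--\eqref{cond3braiding}.} Condition \eqref{cond1braiding} is exactly 3). For the normalizations I use the hexagon/unit information encoded in 4) and 5).
\begin{itemize}
\item Apply $\id\ot m\ot\id$ (multiply the second and third factors) to 4). The left side becomes $R^1\ot R^2\ot R^3=R$.
\item The right side becomes $r^1R^1\ot r^2r^3R^2\ot R^3$.
\item Apply instead $\id\ot\id\ot\varepsilon$-type contractions, i.e.\ multiplications of adjacent tensorands, to 4) and 5) in the other positions. Using 1)--3) to slide elements between factors, this produces identities of the form $R=(x\ot 1\ot 1)R$ and $R=R(\cdots)$, where $x=r^1r^2\ot r^3$ or similar.
\end{itemize}
Invertibility of $R$ in $A^{\ot3}$ then forces $R^1R^2\ot R^3=1\ot1$ and $R^2\ot R^3R^1=1\ot1$.

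Cyclic invariance comes from combining 1) and 2) with 3). By 1), $R^1a\ot R^2\ot R^3=R^1\ot R^2\ot aR^3$. The key step is to insert $1\ot1=R^2\ot R^3R^1$ (the second normalization, written with a second copy $r$) and use 1)--3) to move factors. This yields $R=R^2\ot R^3\ot R^1$, and iterating gives the other cyclic permutation. Applying the cyclic permutation to \eqref{cond2braiding} gives \eqref{cond3braiding}.

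\emph{Converse.} Assume \eqref{cond1braiding} and \eqref{cond2braiding}. First derive cyclic invariance as above, then obtain 1) and 2) from 3) by cyclically permuting. Invertibility: I expect the inverse to be $R^{\mathrm{op}}$-type, namely $R^3\ot R^2\ot R^1$ with suitable factor positions. The product in $A^{\ot3}$ collapses to $1\ot1\ot1$ by the normalizations after using 1)--3) to bring the relevant factors adjacent. For 4) and 5), expand the right-hand side $r^1R^1\ot r^2\ot r^3R^2\ot R^3$. Use 1) to move $R^1$ across into the last slot of $r$, then apply \eqref{cond2braiding} to contract $r^3R^2$-type products to $1$. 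This gives $R^1\ot R^2\ot1\ot R^3$; 5) is handled symmetrically.

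The two directions are inverse because the underlying element $R$ is unchanged, so the correspondence is a bijection. For the symmetry, compute $\sigma_{N,M}\sigma_{M,N}(m\ot_A n)=r^1R^3\!\,m\,\ldots$ using the formula of Theorem \ref{thm:braidingsbimodtensorA}. Cyclic invariance, together with 1)--3), reassembles the coefficients into $R^1R^2\ot\cdots$, and the normalizations reduce this to $m\ot_A n$.

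I expect the main obstacle to be the bookkeeping for extracting cyclic invariance and normalization from 4) and 5). This requires choosing exactly which adjacent multiplications to apply and inserting copies of $1\ot1$ at the right moment. I would first try to extract $R^2\ot R^3R^1=1\ot1$ from 5) by multiplying its second and third slots and cancelling the invertible $R$.
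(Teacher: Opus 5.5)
Your overall architecture is the right one; it is the algebra version of the argument the paper gives in its appendix for the coalgebra dual, Proposition \ref{prop: equivalent conditions canonical R-comatrix}. The gap is in the forward direction: as planned, it never actually produces $R^2\ot R^3R^1=1\ot1$.

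The contractions you describe yield the other two identities. Multiplying the middle slots of 4) gives $R=(r^1\ot r^2r^3\ot1)R$. That is \eqref{cond3braiding}, obtained directly rather than via cyclicity. Multiplying the second and third slots of 5) gives $R=R(1\ot r^1r^2\ot r^3)$, i.e.\ $R^1R^2\ot R^3=1\ot1$. It does not give $R^2\ot R^3R^1=1\ot1$, as your last paragraph expects. Your cyclicity argument begins by inserting exactly $1\ot1=R^2\ot R^3R^1$, so the chain does not close.

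The missing step is as follows. Multiply the first two slots of 5) to get $R=R^1R^2r^1\ot r^2\ot R^3r^3$. Then apply 3) for $R$ with $a=r^3$, and 3) for $r$ with $a=R^2$:
\[
R=R^1r^3R^2r^1\ot r^2\ot R^3=R^1r^3r^1\ot R^2r^2\ot R^3=R\,(r^3r^1\ot r^2\ot 1).
\]
Invertibility of $R$ then gives $R^3R^1\ot R^2=1\ot1$. This is the dual of the second computation in the appendix proof, which precomposes \eqref{cond: comatrix 5} with $\Delta\ot\id\ot\id$ and uses \eqref{cond: comatrix 3 prime} twice.

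The converse is circular as written. You derive cyclic invariance ``as above'', i.e.\ using 1) and 2), and then obtain 1) and 2) from cyclic invariance. In the converse only 3) and \eqref{cond2braiding} are available. Cyclicity has to come from 3) and both halves of \eqref{cond2braiding} alone:
\[
R^2\ot R^3\ot R^1=r^1r^2R^2\ot R^3\ot r^3R^1=r^1R^2\ot R^3r^2\ot r^3R^1=r^1R^2\ot R^3R^1r^2\ot r^3=R.
\]
The steps use, in order:
\begin{enumerate}
\item $R^1R^2\ot R^3=1\ot1$;
\item 3) for $R$ with $a=r^2$;
\item 3) for $r$ with $a=R^1$;
\item $R^2\ot R^3R^1=1\ot1$.
\end{enumerate}
The same chain works in the forward direction once both normalizations are known, so 1) and 2) are never needed for it.

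With cyclicity in hand, the rest of your converse goes through. 1) and 2) follow by permuting 3), and the inverse is $R^2\ot R^1\ot R^3$, which equals $R^3\ot R^2\ot R^1$. One correction is needed in your check of 4). After moving $R^1$ with 1) you have $r^1\ot r^2\ot R^1r^3R^2\ot R^3$. Apply 3) for $R$ with $a=r^3$ to turn $R^1r^3R^2\ot R^3$ into $R^1R^2\ot R^3r^3$. It is then $R^1R^2\ot R^3=1\ot1$ that collapses, not an ``$r^3R^2$-type'' product.
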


Moreover, there is no non-trivial infinitesimal braiding for the braided monoidal category of $A$-bimodules, with the tensor product $\ot_{A}$.

\begin{theorem}[{\cite[Theorem 3.2.11]{ASthesis}}]\label{noinfbraidbim}
    For any braiding $\sigma$ on the monoidal category $({}_{A}{\mm}_{A},\ot_{A},A)$ there is no non-trivial infinitesimal braiding.
\end{theorem}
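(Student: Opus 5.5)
The plan is first to translate an arbitrary infinitesimal braiding $t$ on $({}_{A}{\mm}_{A},\ot_{A},A,\sigma)$ into an element of $A\ot A\ot A$, in the same way that Theorem \ref{thm:braidingsbimodtensorA} encodes $\sigma$ by a canonical $R$-matrix. Afterwards, the infinitesimal braid identities \eqref{eqn: inf braid 1} and \eqref{eqn: inf braid 2} become algebraic identities for this element, and combining them with the normalizations of $R$ should force the element to vanish.

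\emph{Step 1 (encoding $t$).} The free bimodule $A\ot A$ represents the forgetful functor: for $m\in M$ the map $l_m\colon A\ot A\to M$, $a\ot b\mapsto amb$, is $A$-bilinear. Moreover,
\[
(A\ot A)\ot_A(A\ot A)\cong A\ot A\ot A .
\]
I set $T=T^1\ot T^2\ot T^3$ to be the image of $t_{A\ot A,A\ot A}\bigl((1\ot 1)\ot_A(1\ot 1)\bigr)$ under this isomorphism. Naturality of $t$ applied to $l_m\ot_A l_n$ then gives
\[
t_{M,N}(m\ot_A n)=T^1 m T^2\ot_A n T^3 ,
\]
in analogy with the formula for $\sigma$. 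Conversely, $A$-bilinearity of $t_{M,N}$ and well-definedness on $\ot_A$ are equivalent to the following centralizing conditions, obtained exactly as in \cite{agore2014braidings}:
\[
aT^1\ot T^2\ot T^3=T^1\ot T^2\ot T^3 a' \text{-type relations}, \qquad T^1\ot aT^2\ot T^3=T^1\ot T^2\ot aT^3 ,
\]
where the first display is schematic and the precise form will be read off from left and right $A$-linearity. I also record that $t_{X,A}=0=t_{A,X}$; this follows from \eqref{eqn: inf braid 1} with $Y=Z=A$, which gives $t_{X,A}=2t_{X,A}$. In terms of $T$ this reads $T^1T^2\ot T^3=0$ and $T^1\ot T^2T^3=0$, after using $t_{A,A}$ together with the identification $A\ot_A A\cong A$.

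\emph{Step 2 (the infinitesimal braid identity).} I would evaluate \eqref{eqn: inf braid 1} at $X=Y=Z=A\ot A$ on the generator $(1\ot1)\ot_A(1\ot1)\ot_A(1\ot1)$, identifying
\[
(A\ot A)^{\ot_A 3}\cong A^{\ot 4}.
\]
The left-hand side gives $T$ placed in the outer slots. The first summand on the right is $T$ placed on the first pair. The conjugated summand is computed using the explicit formulas for $\sigma$ and $\sigma^{-1}=\sigma$ from Theorem \ref{thm:braidingsbimodtensorA} and Theorem \ref{thm:semplifiedconditions}. This yields an identity in $A^{\ot 4}$ involving $T$ and two copies of $R$.

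\emph{Step 3 (collapsing, the main obstacle).} I then apply suitable partial multiplications $A^{\ot 4}\to A^{\ot 3}$ or $A^{\ot 2}$ to this identity. The aim is that the $R$-factors collapse via the normalizing conditions \eqref{cond2braiding} and \eqref{cond3braiding} together with the cyclic invariance of $R$, while the $T$-factors collapse using the relations $T^1T^2\ot T^3=0=T^1\ot T^2T^3$ from Step 1. The expected outcome is an equation of the form $T=T+T$, or equivalently $T=0$ once the centralizing relations are used. Finding the right contraction is where I expect the real work to lie.

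My first attempt would be to multiply the second and third tensor factors. This uses $R^1\ot R^2R^3=1\ot1$ to kill the conjugation, so that the conjugated term reduces to $T$ itself, while the left-hand side reduces via $T^1\ot T^2T^3=0$. If that contraction does not close the argument, I would symmetrically use \eqref{eqn: inf braid 2} with the contraction of the first two factors. Once $T=0$ is established, Step 1 gives $t_{M,N}=0$ for all $M,N$.
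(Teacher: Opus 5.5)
There is a genuine gap, and it is where you expected it. Your strategy (encode $t$ by $T\in A^{\ot 3}$ via the free bimodule $A\ot A$, record $t_{X,A}=0=t_{A,X}$, evaluate the infinitesimal braid identity on $(A\ot A)^{\ot_A 3}\cong A^{\ot 4}$) is the dual of the paper's argument for bicomodules in Theorem \ref{thm:infbraidbicomod}. However, Step 3, which carries all the content, is not done, and the contraction you commit to gives nothing.

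Take $r=R$ as a second copy and use $\sigma^{-1}_{X,Y}=\sigma_{Y,X}$. On the generator, \eqref{eqn: inf braid 1} reads
\[
T^1\ot T^2\ot 1\ot T^3=T^1\ot T^2\ot T^3\ot 1+r^1T^1\ot R^3T^2r^2R^1\ot R^2r^3\ot T^3 .
\]
Multiply the second and third factors. The left-hand side becomes $T$; it is the first right-hand term, not the left-hand side, that is killed by $T^1\ot T^2T^3=0$. The conjugated term also becomes $T$: cyclic invariance and \eqref{cond3braiding} give $R^3bR^1R^2=R^1bR^2R^3=b$, and then $r^1\ot r^2r^3=1\ot 1$. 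So you only get $T=T$.

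Your Step 1 relations are also misstated. $A$-bilinearity of $t_{M,N}$, together with the fact that it is defined on $M\ot_A N$, gives on $A\ot A$ the slotwise relations
\[
aT^1\ot T^2\ot T^3=T^1a\ot T^2\ot T^3,\quad T^1\ot aT^2\ot T^3=T^1\ot T^2a\ot T^3,\quad T^1\ot T^2\ot aT^3=T^1\ot T^2\ot T^3a .
\]
These are not cyclic shifts like the relations for $R$; in particular $T^1\ot aT^2\ot T^3=T^1\ot T^2\ot aT^3$ does not hold. The slotwise relations are exactly what you need to collapse the $T$-legs.

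The contraction that works multiplies the third and fourth factors. This is the analogue of substituting $r\mapsto r_1$, $u\mapsto r_2$ in \eqref{cond: inf comatrix 4 prime} in the proof of Theorem \ref{thm:infbraidbicomod}. Both the left-hand side and the first right-hand term become $T$, so $r^1T^1\ot R^3T^2r^2R^1\ot R^2r^3T^3=0$. By slotwise centrality this term equals $T\cdot(r^1\ot R^3r^2R^1\ot R^2r^3)$. Condition 3) of Theorem \ref{thm:braidingsbimodtensorA} for $r$, condition 2) for $R$, and \eqref{cond3braiding} twice give
\[
r^1\ot R^3r^2R^1\ot R^2r^3=r^1\ot r^2R^1\ot R^2r^3R^3=r^1\ot r^2r^3R^1\ot R^2R^3=1\ot 1\ot 1,
\]
so $T=2T$ and $T=0$.

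Your fallback (\eqref{eqn: inf braid 2} with the first two factors multiplied) is the mirror image and also works. It does so only if you first move $T^2$ into the middle slot using condition 2) for $r$. If you instead factor out $T^1T^2\ot 1\ot T^3$ and use $T^1T^2\ot T^3=0$, you get a tautology again.

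With the corrected Step 1 you can skip Step 3 entirely:
\begin{itemize}
\item Slotwise centrality means $T\in Z(A)^{\ot 3}$.
\item Taking $a=z\in Z(A)$ in \eqref{cond1braiding}, multiplying the first two factors and using \eqref{cond2braiding} gives $z\ot 1=1\ot z$, so $Z(A)=\Bbbk 1_A$.
\item Hence $t=\lambda\,\id$, and your relation $T^1\ot T^2T^3=0$ forces $\lambda=0$.
\end{itemize}
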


The purpose of this section is to achieve the corresponding result in the dual setting. These two will be put together in the duoidal setting of Section \ref{sec:duoidal}. The proofs of those results are moved to Appendix \ref{sec:appendix}, this to improve readability and because they are dual to the results of \cite{agore2014braidings}.

We consider the monoidal category $(^{C}\mm^{C},\square^{C},C)$. Given $V,W$ in $^{C}\mm^{C}$, we recall that $V\square^C W$ contains elements $v\ot w\in V\ot W$ such that $v_{(0)}\otimes v_{(1)}\ot w = v\ot w_{(-1)}\otimes w_{(0)}$. We will denote them by $v\square w$. Moreover, given morphisms $f\colon V\to W$ and $g\colon X\to Y$ in ${}^C\mm^C$, then $f\square^C g=(f\ot g)\vert_{V\square^C X}$.

\begin{remark}\label{lemma: left right unitors for vec}
As recalled in the preliminaries, there exists a $C$-bicomodule isomorphism $C\square^C C \cong C$. More precisely, this is given by $\Delta\colon C\to C\square^{C}C\colon x\mapsto x_{1}\square x_{2}$ whose inverse is given by $x\square y\mapsto \varepsilon(x)y=\varepsilon(y)x$. 
\begin{invisible}
Notice that $\Delta$ has image in $C\square^C C = \ker(\id\ot\Delta-\Delta\ot\id)$ by coassociativity. This is moreover easily verified to be a $C$-bicomodule morphism. Conversely, the restrictions of $\ep\ot\id$ and $\id\ot\ep$ to $C\square^C C$ coincide, and are inverse to $\Delta$ (as linear maps), whence they are the inverse $C$-bicomodule morphism.
\end{invisible}
In particular, for all $C$-bicomodules $V,W$, $\id_{V}\ot\Delta\ot\id_{W}$ is an isomorphism of $C$-bicomodules $V\ot C\ot W\cong(V\ot C)\square^C (C\ot W)$, since $\ot$ preserves equalizers.
\end{remark}
\begin{invisible}
Let $v\square x\in V\square^C X$, then $v_{(0)}\ot v_{(1)} \ot x = v \ot x_{(-1)}\ot x_{(0)}$ and
    \begin{align*}
        f(v)_{(0)}\ot f(v)_{(1)}\ot g(x) &= f(v_{(0)}) \ot v_{(1)}\ot g(x)
        \\&= f(v)\ot x_{(-1)}\ot g(x_{(0)})
        = f(v)\ot g(x)_{(-1)}\ot g(x)_{(0)}.
    \end{align*}
    Hence $(f\ot g)(V\square^C X)\subseteq W\square^C Y$. As $f\ot g$ is moreover a $^C\mm^C$-morphism its restriction to $V\square^C X$ equals $f\square^C g$.
\end{invisible}

\begin{invisible}
The next lemma provides a bicomodule morphism which will be useful in the classification of braidings on $({}^C\mm^C,\square^{C},C)$.

\begin{lemma}\label{lemma: induced bicomodule morphism}
    Let $V$ be a $C$-bicomodule, and $f\in V^*$. Define
    \begin{equation}\label{eqn: induced bimodule morphism}
        \hat{f}\colon V\to C\otimes C\colon v\mapsto v_{(-1)}\otimes f(v_{(0)})v_{(1)}.
    \end{equation}
    Then $(\ep\ot\ep)\hat{f} = f$ and $\hat{f}$ is a $C$-bicomodule morphism, considering $C\ot C$ in ${}^C\mm^{C}$ with coactions $\Delta\ot\id_{C}$ and $\id_{C}\ot\Delta$.
\end{lemma}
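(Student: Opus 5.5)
Both claims are checked directly on elements, using only the counit and coassociativity of $C$ and the bicomodule axioms for $V$.

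\emph{The identity $(\ep\ot\ep)\hat f=f$.} For $v\in V$ we compute
\[
(\ep\ot\ep)\hat f(v)=\ep(v_{(-1)})\,f(v_{(0)})\,\ep(v_{(1)}).
\]
The bicomodule compatibility lets us write $v_{(-1)}\ot v_{(0)}\ot v_{(1)}$ unambiguously. Applying counitality of the left coaction and then of the right coaction collapses the expression to $f(v)$.

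\emph{Left $C$-colinearity.} The left coaction on $C\ot C$ is $\Delta\ot\id_C$, so we must show
\[
(\Delta\ot\id)\hat f(v)=v_{(-1)}\ot\hat f(v_{(0)}).
\]
The left-hand side equals $v_{(-1)1}\ot v_{(-1)2}\ot f(v_{(0)})v_{(1)}$. By coassociativity of the left coaction, $v_{(-1)1}\ot v_{(-1)2}\ot v_{(0)}=v_{(-1)}\ot v_{(0)(-1)}\ot v_{(0)(0)}$. We then use the compatibility $(\rho^L\ot\id)\rho^R=(\id\ot\rho^R)\rho^L$ to move the right coaction past the left one. This rewrites the expression as $v_{(-1)}\ot v_{(0)(-1)}\ot f(v_{(0)(0)})v_{(0)(1)}$, which is $v_{(-1)}\ot\hat f(v_{(0)})$.

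\emph{Right $C$-colinearity.} The right coaction on $C\ot C$ is $\id_C\ot\Delta$, so we must show
\[
(\id\ot\Delta)\hat f(v)=\hat f(v_{(0)})\ot v_{(1)}.
\]
The left-hand side is $v_{(-1)}\ot f(v_{(0)})\,v_{(1)1}\ot v_{(1)2}$. Coassociativity of the right coaction rewrites $v_{(0)}\ot v_{(1)1}\ot v_{(1)2}$ as $v_{(0)(0)}\ot v_{(0)(1)}\ot v_{(1)}$. The result is then exactly $\hat f(v_{(0)})\ot v_{(1)}$.

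The only point needing care is the Sweedler bookkeeping in the mixed left/right coactions. Here the bicomodule compatibility condition is what justifies treating the iterated coactions as a single, unambiguous expression.
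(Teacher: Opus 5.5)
Your proof is correct and follows the paper's own argument. The paper likewise dismisses $(\ep\ot\ep)\hat f=f$ as immediate from counitality. It then checks colinearity by a direct Sweedler computation using coassociativity of the coactions and the compatibility $(\rho^L\ot\id)\rho^R=(\id\ot\rho^R)\rho^L$: it writes out the left-colinear case and treats the right one as similar, just as you do.
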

\begin{proof}
    The first claim is immediate by definition of left and right coactions. Let us verify that $\hat{f}$ is a morphism of $C$-bicomodules. 
By definition of the tensor $C$-bicomodule structure it follows that
    \begin{align*}
        \hat{f}(v)_{(-1)}\otimes \hat{f}(v)_{(0)} &= (v_{(-1)}\otimes v_{(1)})_{(-1)} \otimes (v_{(-1)}\otimes v_{(1)})_{(0)} f(v_{(0)})
        \\&= v_{(-1)1}\otimes v_{(-1)2}\otimes v_{(1)}f(v_{(0)})
        \\&= v_{(-1)}\otimes v_{(0)(-1)}\otimes v_{(0)(1)}f(v_{(0)(0)})
        \\&= v_{(-1)}\otimes \hat{f}(v_{(0)})
\end{align*}  
and, similarly, $\hat{f}(v)_{(0)}\otimes \hat{f}(v)_{(1)}=\hat{f}(v_{(0)})\ot v_{(1)}$.
\end{proof}
\end{invisible}

\begin{invisible}
The following remark is immediate, yet useful.

\begin{remark}\label{lemma: left right unitors for vec}
As recalled in the preliminaries, there exists a $C$-bicomodule isomorphism $C\square^C C \cong C$. More precisely, this is given by $\Delta\colon C\to C\square^{C}C\colon x\mapsto x_{1}\square x_{2}$ whose inverse is given by $x\square y\mapsto \varepsilon(x)y=\varepsilon(y)x$. 

Notice that $\Delta$ has image in $C\square^C C = \ker(\id\ot\Delta-\Delta\ot\id)$ by coassociativity. This is moreover easily verified to be a $C$-bicomodule morphism. Conversely, the restrictions of $\ep\ot\id$ and $\id\ot\ep$ to $C\square^C C$ coincide, and are inverse to $\Delta$ (as linear maps), whence they are the inverse $C$-bicomodule morphism.

In particular, for all $C$-bicomodules $V,W$, $\id_{V}\ot\Delta\ot\id_{W}$ is an isomorphism of $C$-bicomodules $V\ot C\ot W\cong(V\ot C)\square^C (C\ot W)$, since $\ot$ preserves equalizers.
\end{remark}
\end{invisible}


The next technical result will be useful for the classification of braidings for the category of bicomodules.

\begin{proposition}\label{prop: equivalent conditions canonical R-comatrix}
    Let $\RR\colon C\ot C \ot C\to \Bbbk$ be a linear map. Then the following are equivalent:
    \begin{enumerate}[label=\alph*)]
        \item $\RR$ is convolution-invertible and satisfies, for all $f\in C^*$, the following conditions:
        \begin{align}
            &(\ep\ot\ep\ot f)\ast \RR= \RR\ast(f\ot\ep\ot\ep),\label{cond: comatrix 1}
            \\&(f\ot\ep\ot\ep)\ast \RR = \RR\ast(\ep\ot f\ot\ep),\label{cond: comatrix 2}
            \\&(\ep\ot f\ot \ep) \ast \RR = \RR\ast(\ep\ot\ep\ot f),\label{cond: comatrix 3}
            \\&(\RR\ot\ep)(\id\ot \id\ot\tau) = (\RR\ot \ep)\ast (\ep\ot\RR)(\tau\ot\id\ot\id),\label{cond: comatrix 4}
            \\&(\ep\ot\RR)(\tau\ot\id\ot\id) = (\RR \ot\ep)(\id\ot\id\ot\tau)\ast (\ep\ot\RR),\label{cond: comatrix 5}
        \end{align}
        where $\tau$ denotes the usual flip isomorphism of vector spaces;
        \item $\RR$ satisfies condition (\ref{cond: comatrix 3}) and the \emph{normalizing conditions}
        \begin{equation}\label{cond: normalizing condition}
        \RR(x_1\ot x_2\ot y) = \ep(x)\ep(y) = \RR(y_2\ot x\ot y_1) \text{ for all }x,y\in C.
        \end{equation}
    \end{enumerate}
    In the latter equivalent cases, $\RR$ satisfies moreover the following respectively cyclic, normalizing, and invertibility conditions
    \begin{align}
        &\RR(p\ot q \ot r) = \RR(q\ot r\ot p) = \RR(r\ot p\ot q)\label{cond: cyclic conditions} \text{ for all }p,q,r\in C,
        \\&\RR(x\ot y_1\ot y_2) = \ep(x)\ep(y)\label{cond: third normalizing condition} \text{ for all }x,y\in C,
        \\&\RR^{-1} = \RR(\tau\ot\id)\label{cond: symmetry}.
    \end{align}
\end{proposition}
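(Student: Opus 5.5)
The plan is to dualize the argument of \cite[Theorem 3.2]{agore2014braidings} (Theorem \ref{thm:semplifiedconditions}). Throughout, $\ast$ is the convolution product on $(C\ot C\ot C)^*$, with unit $\ep\ot\ep\ot\ep$, and on $(C\ot C)^*$. Multiplying $\RR$ by an element of $C^*$ inserted in one slot, as in (\ref{cond: comatrix 1})--(\ref{cond: comatrix 3}), amounts to a ``coaction'' statement. For example, (\ref{cond: comatrix 3}) reads
\[
f(y_1)\RR(x\ot y_2\ot z)=\RR(x\ot y\ot z_1)f(z_2)\quad\text{for all } f\in C^*,
\]
which, since $f$ is arbitrary, is equivalent to the identity
\[
y_1\ot\RR(x\ot y_2\ot z)=z_2\,\RR(x\ot y\ot z_1)\in C .
\]
I will work with these element-wise forms.

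\emph{a) $\Rightarrow$ b).} Condition (\ref{cond: comatrix 3}) is among the hypotheses, so only (\ref{cond: normalizing condition}) needs proof. I will specialize (\ref{cond: comatrix 4}) and (\ref{cond: comatrix 5}), which are identities of maps $C^{\ot 4}\to\Bbbk$, by precomposing with a comultiplication in suitable slots; this is dual to multiplying tensor factors in conditions 4) and 5) of Theorem \ref{thm:braidingsbimodtensorA}. Explicitly, (\ref{cond: comatrix 4}) reads
\[
\RR(x\ot y\ot w)\ep(z)=\RR(x_1\ot y\ot z_1)\RR(x_2\ot z_2\ot w).
\]
Evaluating at $x\ot y_1\ot y_2\ot w$ and setting $\varphi(x\ot y)=\RR(x\ot y_1\ot y_2)$ gives $\RR=(\varphi\ot\ep)\ast\RR$. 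Convolution-invertibility of $\RR$ then forces $\varphi=\ep\ot\ep$, which is (\ref{cond: third normalizing condition}). Substituting into (\ref{cond: comatrix 5}) in the analogous way yields the second equality in (\ref{cond: normalizing condition}). I will obtain the first one, $\RR(x_1\ot x_2\ot y)=\ep(x)\ep(y)$, by moving coefficients with (\ref{cond: comatrix 1})--(\ref{cond: comatrix 2}), or by a second specialization of (\ref{cond: comatrix 4}) in the outer slots.

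\emph{b) $\Rightarrow$ a).} This is the substantial direction. The first step is the cyclic invariance (\ref{cond: cyclic conditions}), which I expect to be the main obstacle, since it is where the element-wise form of (\ref{cond: comatrix 3}) must be combined with both normalizing conditions in the right order. The trick I will try first mirrors the algebra case. Write
\[
\RR(q\ot r\ot p)=\RR(q\ot r\ot p_1)\ep(p_2),
\]
and replace $\ep(p_2)$ by $\ep(p_2)\ep(\cdot)$ rewritten through the second normalizing identity, $\RR(y_2\ot x\ot y_1)=\ep(x)\ep(y)$, with $y$ the appropriate coproduct piece. This produces a product of two values of $\RR$. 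Then I use (\ref{cond: comatrix 3}) to transfer the Sweedler component from the third slot of one factor to the second slot of the other. Applying the first normalizing identity collapses the product to $\RR(p\ot q\ot r)$. Iterating gives full cyclicity. Once cyclicity holds, (\ref{cond: comatrix 3}) conjugated by the cyclic permutation gives (\ref{cond: comatrix 1}) and (\ref{cond: comatrix 2}), and the normalizing conditions permute into (\ref{cond: third normalizing condition}).

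Next I will show that $\RR(\tau\ot\id)$ is a two-sided convolution inverse of $\RR$. The product $\RR(x_1\ot y_1\ot z_1)\RR(y_2\ot x_2\ot z_2)$ is simplified by moving the $z$-components across with (\ref{cond: comatrix 3}) and (\ref{cond: comatrix 1}) until a normalizing pattern appears, which gives $\ep\ot\ep\ot\ep$; the other order is analogous. This proves (\ref{cond: symmetry}). Finally, (\ref{cond: comatrix 4}) and (\ref{cond: comatrix 5}) are checked directly. In the right-hand side $\RR(x_1\ot y\ot z_1)\RR(x_2\ot z_2\ot w)$ of (\ref{cond: comatrix 4}), I move $z_1$ and $z_2$ into a single factor using (\ref{cond: comatrix 1})--(\ref{cond: comatrix 3}) and cyclicity. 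The resulting factor has the form $\RR(\,\cdot\ot z_1\ot z_2)$, which collapses by (\ref{cond: third normalizing condition}) to leave $\RR(x\ot y\ot w)\ep(z)$. The same procedure gives (\ref{cond: comatrix 5}). The final assertions (\ref{cond: cyclic conditions})--(\ref{cond: symmetry}) are exactly what was established along the way in b) $\Rightarrow$ a).
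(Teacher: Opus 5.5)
There is one step that fails, in a)~$\Rightarrow$~b). Your derivation of \eqref{cond: third normalizing condition} is correct. Evaluating \eqref{cond: comatrix 4} at $x\ot y_1\ot y_2\ot w$ gives $\RR=(\varphi\ot\ep)\ast\RR$, and invertibility cancels $\RR$. (The paper never uses \eqref{cond: comatrix 4} in this direction.)

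The next step is the problem. The analogous specialization of \eqref{cond: comatrix 5} evaluates $\RR(p\ot r\ot u)\ep(q)=\RR(p\ot q_1\ot u_1)\RR(q_2\ot r\ot u_2)$ at $p\ot q_1\ot q_2\ot u$. This gives $\RR=\RR\ast(\ep\ot\Phi)$ with $\Phi(q\ot u)=\RR(q_1\ot q_2\ot u)$, hence the \emph{first} equality of \eqref{cond: normalizing condition}, not the second. So you have swapped the easy half and the hard half. The equality $\RR(y_2\ot x\ot y_1)=\ep(x)\ep(y)$ has the two legs of $y$ in the outer slots in reversed order. It does not drop out of a specialization followed by cancellation, and your plan as written gives no valid argument for it. The paper gets it by precomposing \eqref{cond: comatrix 5} with $\Delta\ot\id\ot\id$, moving Sweedler legs with \eqref{cond: comatrix 3 prime} twice, and only then cancelling $\RR$.

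Your fallback of moving coefficients does close the gap once it is aimed at the right equality. You already have the first equality of \eqref{cond: normalizing condition} and \eqref{cond: third normalizing condition}. Insert the first equality, apply \eqref{cond: comatrix 1 prime} twice, and collapse with the first equality of \eqref{cond: normalizing condition} and \eqref{cond: third normalizing condition}:
\begin{align*}
\RR(y_2\ot x\ot y_1)&=\RR(y_2\ot x_1\ot y_1)\RR(y_3\ot y_4\ot x_2)=\RR(y_3\ot x_1\ot y_2)\RR(y_1\ot y_4\ot x_2)\\
&=\RR(y_1\ot y_2\ot x_4)\RR(x_3\ot x_1\ot x_2)=\ep(y)\ep(x).
\end{align*}

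The rest of your plan is sound and is essentially a mirror image of the paper's proof.
\begin{itemize}
\item For cyclicity you insert through the second normalizing identity and collapse with the first, whereas the paper does the reverse. This is legitimate: replacing $(C,\RR)$ by $(C^{\cop},\RR(\id\ot\tau))$ preserves \eqref{cond: comatrix 3} and cyclicity, and interchanges the two normalizing identities.
\item You prove $\RR^{-1}=\RR(\tau\ot\id)$ directly from cyclicity, \eqref{cond: comatrix 1 prime}, \eqref{cond: comatrix 3 prime} and the normalizing identities, before checking \eqref{cond: comatrix 4}--\eqref{cond: comatrix 5}. This also works; the paper checks \eqref{cond: comatrix 4}--\eqref{cond: comatrix 5} first and reads off invertibility from them.
\end{itemize}
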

\begin{invisible}
\begin{proof}
Notice that conditions (\ref{cond: comatrix 1})--(\ref{cond: comatrix 5}) are one-by-one equivalent to stating that, for all $p,q,r,u\in C$, the following equalities hold: \ls{[Change numbering at the end to match!]}
\begin{align}
        &\RR(p\ot q\ot r_2)r_1 = \RR(p_1\ot q \ot r)p_2,\tag{30'}\label{cond: comatrix 1 prime}
        \\&\RR(p_2\ot q \ot r)p_1 =\RR(p\ot q_1\ot r)q_2,\tag{31'}\label{cond: comatrix 2 prime}
        \\&\RR(p\ot q_2\ot r)q_1 = \RR(p\ot q \ot r_1)r_2.\tag{32'}\label{cond: comatrix 3 prime}
        \\&\RR(p\ot q \ot u)\ep(r) = \RR(p_1\ot q\ot r_1)\RR(p_2\ot r_2\ot u),\tag{33'}\label{cond: comatrix 4 prime}
        \\&\RR(p\ot r\ot u)\ep(q) = \RR(p\ot q_1\ot u_1)\RR(q_2\ot r\ot u_2)\tag{34'}\label{cond: comatrix 5 prime}.
    \end{align}
    Assume $\RR$ to be convolution-invertible and satisfying conditions (\ref{cond: comatrix 1})--(\ref{cond: comatrix 5}). It suffices to verify the normalizing conditions (\ref{cond: normalizing condition}). By condition (\ref{cond: comatrix 5}) it holds that
    \begin{align*}
        \RR &= (\ep\ot \RR)(\tau\ot\id\ot\id)(\id\ot\Delta\ot\id) 
        \\&= ((\RR\ot \ep)(\id\ot\id\ot\tau)\ast (\ep\ot\RR))(\id\ot\Delta\ot\id)
        \\\iff\ep^{\ot 3} &= \RR^{-1}\ast ((\RR\ot \ep)(\id\ot\id\ot\tau)\ast (\ep\ot\RR))(\id\ot\Delta\ot\id).
    \end{align*}
    Evaluating in an element $p\ot q\ot r\in C\ot C\ot C$ we obtain that
    \begin{align*}
        \ep(p)\ep(q)\ep(r) &= \RR^{-1}(p_1\ot q_1\ot r_1)\RR(p_2\ot q_2\ot r_2)\RR(q_3\ot q_4\ot r_3)
        \\&= \ep(p)\ep(q_1)\ep(r_1)\RR(q_2\ot q_3\ot r_2)
        \\&= \RR(q_1\ot q_2\ot r)\ep(p).
    \end{align*}
    For $p\ot q \ot r= x_2\ot y \ot x_1$ this implies that $\RR(y_1\ot y_2\ot x) = \ep(x)\ep(y)$.\newline
    Similarly, by condition (\ref{cond: comatrix 5})
    \begin{align*}
        \RR &= (\ep\ot \RR)(\tau\ot\id\ot\id)(\Delta\ot\id\ot\id) 
        \\&= ((\RR\ot \ep)(\id\ot\id\ot\tau)\ast (\ep\ot\RR))(\Delta\ot\id\ot\id)
        \\\iff\ep^{\ot 3} &= \RR^{-1}\ast ((\RR\ot \ep)(\id\ot\id\ot\tau)\ast (\ep\ot\RR))(\Delta\ot\id\ot\id).
    \end{align*}
    Evaluating on an element $p\ot q\ot r\in C\ot C\ot C$ we obtain that
    \begin{align*}
        \ep(p)\ep(q)\ep(r)&= \RR^{-1}(p_1\ot q_1\ot r_1)\RR(p_2\ot p_3\ot r_2)\RR(p_4\ot q_2\ot r_3)
        \\&= \RR^{-1}(p_1\ot q_1\ot r_1)\RR(p_4\ot q_2\ot \RR(p_2\ot p_3\ot r_{21})r_{22})
        \\&\stackrel{(\ref{cond: comatrix 3 prime})}{=} \RR^{-1}(p_1\ot q_1\ot r_1)\RR(p_4\ot q_2\ot \RR(p_2\ot p_{32}\ot r_2)p_{31})
        \\&=\RR^{-1}(p_1\ot q_1\ot r_1)\RR(p_2\ot \RR(p_5\ot q_2\ot p_{3})p_{4}\ot r_2)
        \\&\stackrel{(\ref{cond: comatrix 3 prime})}{=} \RR^{-1}(p_1\ot q_1\ot r_1)\RR(p_2\ot q_{21}\ot r_2)\RR(p_4\ot q_{22}\ot p_3)
        \\&= \RR^{-1}(p_1\ot q_1\ot r_1)\RR(p_2\ot q_2\ot r_2)\RR(p_4\ot q_3\ot p_3)
        \\&= \ep(p_1)\ep(q_1)\ep(r)\RR(p_3\ot q_2\ot p_2)
        \\&=\varepsilon(r) \RR(p_2\ot q \ot p_1).
    \end{align*}
    Hence for $p\ot q \ot r = y\ot x_1\ot x_2$ it follows that $\RR(y_2\ot x\ot y_1) = \ep(x)\ep(y)$.
    Assume conversely that $\RR$ satisfies condition (\ref{cond: comatrix 3}) and the normalizing conditions (\ref{cond: normalizing condition}). We first prove that $\RR$ satisfies the cyclic conditions (\ref{cond: cyclic conditions}). For any $p,q,r\in C$ it holds that
    \begin{align*}
        \RR(r\ot p\ot q) &\stackrel{(\ref{cond: normalizing condition})}{=} \RR(p_1\ot p_2\ot r_1)\RR(r_2\ot p_3\ot q)
        \\&= \RR(p_1\ot p_{21}\ot r_1)\RR(r_2\ot p_{22}\ot q)
        \\&\stackrel{(\ref{cond: comatrix 3 prime})}{=} \RR(p_1\ot q_2\ot r_1)\RR(r_2\ot p_2\ot q_1)
        \\&\stackrel{(\ref{cond: comatrix 3 prime})}{=} \RR(p_1\ot q_{22}\ot r)\RR(q_{21}\ot p_2\ot q_1)
        \\&= \RR(p_1\ot q_3\ot r)\RR(q_2\ot p_2\ot q_1)
        \\&\stackrel{(\ref{cond: normalizing condition})}{=} \RR(p_1\ot q_2\ot r)\ep(q_1)\ep(p_2)
        = \RR(p\ot q\ot r).
    \end{align*}
    Hence $\RR$ satisfies the cyclic condition (\ref{cond: cyclic conditions}). From this the additional normalizing condition (\ref{cond: third normalizing condition}) immediately follows by combining \eqref{cond: normalizing condition} and \eqref{cond: cyclic conditions}. We now show that $\RR$ is a convolution-invertible satisfying conditions (\ref{cond: comatrix 1})--(\ref{cond: comatrix 5}), proving that a) and b) are equivalent. Conditions (\ref{cond: comatrix 1}) and (\ref{cond: comatrix 2}) follow immediately by applying \eqref{cond: cyclic conditions} to (\ref{cond: comatrix 3}). For condition (\ref{cond: comatrix 4}) we compute that
    \begin{align*}
        \RR(p_1\ot q\ot r_1)\RR(p_2\ot r_2\ot u)&\stackrel{(\ref{cond: comatrix 1 prime})}{=} \RR(p\ot q\ot r_{2})\RR(r_{1}\ot r_3\ot u)
        \\&\stackrel{(\ref{cond: comatrix 3 prime})}{=} \RR(p\ot q\ot u_2)\RR(r_1\ot r_2\ot u_1)
        \\&\stackrel{(\ref{cond: normalizing condition})}{=} \RR(p\ot q\ot u_2)\ep(r)\ep(u_{1})
        = \RR(p\ot q\ot u)\ep(r).
    \end{align*}
    Similarly, condition (\ref{cond: comatrix 5}) follows since
    \begin{align*}
        \RR(p\ot q_1\ot u_1)\RR(q_2\ot r\ot u_2) &\stackrel{(\ref{cond: comatrix 3 prime})}{=} \RR(p\ot q_{12}\ot u)\RR(q_2\ot r\ot q_{11})
        \\&= \RR(p\ot q_{21}\ot u)\RR(q_{22}\ot r\ot q_1)
        \\&\stackrel{(\ref{cond: comatrix 2 prime})}{=} \RR(p\ot r_2\ot u)\RR(q_2\ot r_1\ot q_1)
        \\&\stackrel{(\ref{cond: normalizing condition})}{=} \RR(p\ot r_2\ot u)\ep(q)\ep(r_1)
        = \RR(p\ot r\ot u)\ep(q).
    \end{align*}
    It remains to prove the invertibility of $\RR$ with respect to the convolution. We claim $\RR(\tau\ot\id)$ to be the inverse of $\RR$. Let $p,q,r\in C$, then
    \begin{align*}
        (\RR\ast \RR(\tau\ot\id))(p\ot q \ot r) &= \RR(p_1\ot q_1\ot r_1)\RR(q_2\ot p_2\ot r_2)
        \\&\stackrel{(\ref{cond: cyclic conditions})}{=} \RR(p_1\ot q_1\ot r_1)\RR(p_2\ot r_2\ot q_2)
        \\&\stackrel{(\ref{cond: comatrix 4 prime})}{=}
        \RR(p\ot q_1\ot q_2)\ep(r)
        \stackrel{(\ref{cond: third normalizing condition})}{=} \ep(p)\ep(q)\ep(r),
        \\(\RR(\tau\ot\id)\ast \RR)(p\ot q\ot r)&= \RR(q_1\ot p_1\ot r_1)\RR(p_2\ot q_2\ot r_2)
        \\&\stackrel{(\ref{cond: comatrix 5 prime})}{=} \RR(q_1\ot q_2\ot r)\ep(p)
        \stackrel{(\ref{cond: normalizing condition})}{=} \ep(p)\ep(q)\ep(r).\qedhere
    \end{align*}
\end{proof}
\end{invisible}
\begin{remark}
Notice that conditions (\ref{cond: comatrix 1})--(\ref{cond: comatrix 5}) are one-by-one equivalent to stating that, for all $p,q,r,u\in C$, the following equalities hold: 
\begin{align}
        &\RR(p\ot q\ot r_2)r_1 = \RR(p_1\ot q \ot r)p_2,\tag{20'}\label{cond: comatrix 1 prime}
        \\&\RR(p_2\ot q \ot r)p_1 =\RR(p\ot q_1\ot r)q_2,\tag{21'}\label{cond: comatrix 2 prime}
        \\&\RR(p\ot q_2\ot r)q_1 = \RR(p\ot q \ot r_1)r_2.\tag{22'}\label{cond: comatrix 3 prime}
        \\&\RR(p\ot q \ot u)\ep(r) = \RR(p_1\ot q\ot r_1)\RR(p_2\ot r_2\ot u),\tag{23'}\label{cond: comatrix 4 prime}
        \\&\RR(p\ot r\ot u)\ep(q) = \RR(p\ot q_1\ot u_1)\RR(q_2\ot r\ot u_2)\tag{24'}\label{cond: comatrix 5 prime}.
    \end{align}
\end{remark}

\begin{definition}
    Let $C$ be a coalgebra. A linear map $\RR\colon C\ot C\ot C \to \Bbbk$ satisfying the equivalent conditions of Proposition \ref{prop: equivalent conditions canonical R-comatrix} is a \emph{canonical $R$-form} of $C$.
\end{definition}

We will study the uses of canonical $R$-forms, by studying the following two closely related morphisms for any $C$-bicomodules $M,N$. 
\begin{align*}
    &\Upsilon_{M,N}:M\ot N\longrightarrow N\ot M, \quad m\ot n\mapsto \RR(m_{(-1)}\ot m_{(1)}\ot n_{(-1)})n_{(0)}\ot m_{(0)},
    \\&\tilde{\sigma}_{M,N}:M\ot N\longrightarrow N\ot M, \quad m\ot n \mapsto \RR(m_{(-1)}\ot m_{(1)}\ot n_{(1)})n_{(0)}\ot m_{(0)}.
\end{align*}
\begin{invisible}
$\Upsilon_{M,N}\vert_{M\square N}$ has the property that
    \begin{align*}
        \Upsilon_{M,N}(m\square n) &= \RR(m_{(-1)}\ot m_{(1)}\ot n_{(-1)})n_{(0)}\ot m_{(0)}
        \\&= \RR(m_{(0)(-1)}\ot m_{(0)(1)}\ot m_{(1)})n\ot m_{(0)(0)}
        \\&= \RR(m_{(-1)}\ot m_{(1)1}\ot m_{(1)2})n\ot m_{(0)}
        \\&\overset{(\ref{cond: third normalizing condition})}{=} \ep(m_{(-1)})\ep(m_{(1)})n\ot m_{(0)}
        \\&= n\ot m.
    \end{align*}
    Hence $\Upsilon_{M,N}$ induces a morphism $M\square N\to N\square M$ if and only if $\tau$ is an isomorphism from $M\square N$ to $N\square M$.
\end{invisible}
A first use of canonical $R$-forms is that they induce solutions to the quantum Yang--Baxter equation as well as the braid equation. The following result is dual to \cite[Theorem 3.13]{agore2014braidings}, 
where $\Upsilon=\Upsilon_{M,M}$ for a given $C$-bicomodule $M$. In the next result, we set $\Upsilon^{12}=\Upsilon\ot\id$, $\Upsilon^{23}=\id\ot\Upsilon$ and $\Upsilon^{13}=(\id\ot\tau)(\Upsilon\ot\id)(\id\ot\tau)$.

\begin{proposition}\label{prop:solutionYangBaxter}
    Let $C$ be a coalgebra and $\RR$ be a canonical $R$-form of $C$. For any $C$-bicomodule $M$ the map
    \begin{equation}
        \Upsilon \colon M\ot M\to M\ot M,\ m\ot n\mapsto \RR(m_{(-1)}\ot m_{(1)}\ot n_{(-1)})n_{(0)}\ot m_{(0)}
    \end{equation}
    is a solution to the quantum Yang--Baxter equation, this is $\Upsilon^{12}\Upsilon^{13}\Upsilon^{23} = \Upsilon^{23}\Upsilon^{13}\Upsilon^{12}$, as well as the braid equation $\Upsilon^{12}\Upsilon^{23}\Upsilon^{12} = \Upsilon^{23}\Upsilon^{12}\Upsilon^{23}$. Moreover, we have 
    $\Upsilon\Upsilon \Upsilon = \Upsilon$.
\end{proposition}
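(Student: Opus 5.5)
The plan is to dualize the computations behind \cite[Theorem 3.13]{agore2014braidings}. Every identity will be evaluated on elementary tensors and reduced to an identity between scalars built from $\RR$ and the coaction legs. To keep the bookkeeping manageable, I will use the Sweedler-type notation $m_{(-2)}\ot m_{(-1)}\ot m_{(0)}\ot m_{(1)}\ot m_{(2)}$ for iterated left and right coactions. Since $M$ is a bicomodule, coassociativity lets us regard all such legs as coming from a single iterated coaction.

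\textbf{Braid and Yang--Baxter equations.} For each side, applied to $m\ot n\ot p\in M\ot M\ot M$, I will first write out the result.
\begin{itemize}
\item Each application of $\Upsilon$ contributes one factor of $\RR$.
\item The arguments of each factor are two outer legs of the first input (left and right coaction) together with one left leg of the second input.
\item The surviving $(0)$-components get permuted.
\end{itemize}
For instance, both sides of the braid equation should end in $p_{(0)}\ot n_{(0)}\ot m_{(0)}$ multiplied by a product of three $\RR$'s. Because applying $\Upsilon$ to an element whose components have already been coacted on produces coaction legs of coaction legs, the raw expressions contain $\RR$ evaluated at iterated coproducts. The goal is then to show that the two scalar coefficients agree.

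I plan to do this by repeatedly applying the transfer identities (\ref{cond: comatrix 1 prime})--(\ref{cond: comatrix 3 prime}), which move a coproduct leg from one slot of $\RR$ to another. Together with the cyclic invariance (\ref{cond: cyclic conditions}), these should bring a pair of $\RR$-factors into the shape of (\ref{cond: comatrix 4 prime}) or (\ref{cond: comatrix 5 prime}). Those identities collapse a product of two $\RR$'s into a single $\RR$ times a counit. Once each side is reduced to a single normal form, for example $\RR(\cdots)\RR(\cdots)$ with the legs of $n$ appearing only in prescribed positions, equality should follow by comparison. I expect the normalizing conditions (\ref{cond: normalizing condition}) and (\ref{cond: third normalizing condition}) to kill the factors whose arguments are two consecutive legs of the same element. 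The QYBE, with the given $\Upsilon^{13}$, will be handled in exactly the same way; only the positions of the flips differ.

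\textbf{The relation $\Upsilon\Upsilon\Upsilon=\Upsilon$.} First I will compute $\Upsilon^2(m\ot n)$, which equals $\RR(m_{(-1)}\ot m_{(1)}\ot n_{(-2)})\RR(n_{(-1)}\ot n_{(1)}\ot m_{(-2)})\,m_{(0)}\ot n_{(0)}$ up to the ordering of legs. Using (\ref{cond: comatrix 3 prime}) and cyclicity, I will rewrite this as a single idempotent-like operator $E(m\ot n)$. Then I will check that $\Upsilon E=\Upsilon$, collapsing the resulting product of $\RR$'s via (\ref{cond: comatrix 4 prime}) and (\ref{cond: third normalizing condition}). This is the dual of the fact that $R^1R^2\ot R^3=1\ot 1$.

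I expect the main obstacle to be the braid/QYBE computation. With three $\RR$-factors and up to six coaction legs per element, the step of choosing which transfer identity to apply in order to reach the shape of (\ref{cond: comatrix 4 prime}) or (\ref{cond: comatrix 5 prime}) is delicate. To make it systematic, I would first move all legs of the middle element $n$ into a single $\RR$-factor, which seems the most promising normal form.
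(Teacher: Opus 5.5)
Your approach is essentially the paper's. All three identities are proved by expanding both sides in iterated Sweedler notation and matching the scalar coefficients with \eqref{cond: comatrix 1 prime}--\eqref{cond: comatrix 4 prime}, cyclicity \eqref{cond: cyclic conditions} and \eqref{cond: normalizing condition}; for the braid equation the paper does exactly what you anticipate, reducing each side via \eqref{cond: comatrix 2 prime} and \eqref{cond: normalizing condition} to two $\RR$-factors with all legs of the middle element in one factor, and then matching them by \eqref{cond: comatrix 1 prime}. The only difference is organizational: the paper expands $\Upsilon\Upsilon\Upsilon$ directly, while you factor through $\Upsilon\Upsilon(m\ot n)=\RR(m_{(-1)}\ot m_{(1)}\ot n_{(1)})\,m_{(0)}\ot n_{(0)}$ and then check $\Upsilon E=\Upsilon$. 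That works equally well, with two small corrections: obtaining $\Upsilon\Upsilon$ needs cyclicity together with \eqref{cond: comatrix 4 prime}, as in the remark after the proposition, rather than only \eqref{cond: comatrix 3 prime}; and $\Upsilon E=\Upsilon$ follows from \eqref{cond: comatrix 1 prime}, cyclicity and \eqref{cond: comatrix 4 prime}, which are the same steps that close the paper's computation.
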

\begin{invisible}
\begin{proof}
    Let $M$ be a fixed $C$-bicomodule, and $p,q,r\in M$. We compute that
    \begin{align*}
        \Upsilon^{12}\Upsilon^{13}\Upsilon^{23}(p\ot q\ot r) 
        &= \RR(q_{(-1)1}\ot q_{(1)2}\ot r_{(-1)1})\RR(p_{(-1)}\ot p_{(1)}\ot q_{(-1)2})
        \\&\hspace{12em}\RR(q_{(-1)3}\ot q_{(1)1}\ot r_{(-1)2})r_{(0)}\ot q_{(0)}\ot p_{(0)}
        \\&= \RR(q_{(-1)11}\ot q_{(1)2}\ot r_{(-1)1})\RR(p_{(-1)}\ot p_{(1)}\ot q_{(-1)12})
        \\&\hspace{12em}\RR(q_{(-1)2}\ot q_{(1)1}\ot r_{(-1)2})r_{(0)}\ot q_{(0)}\ot p_{(0)}
        \\&\overset{(\ref{cond: comatrix 1 prime})}{=} \RR(p_{(-1)2}\ot q_{(1)2}\ot r_{(-1)1})\RR(p_{(-1)1}\ot p_{(1)}\ot q_{(-1)1})
        \\&\hspace{12em}\RR(q_{(-1)2}\ot q_{(1)1}\ot r_{(-1)2})r_{(0)}\ot q_{(0)}\ot p_{(0)}
        \\&\overset{(\ref{cond: comatrix 1 prime})}{=} \RR(p_{(-1)2}\ot q_{(1)2}\ot q_{(-1)22})\RR(p_{(-1)1}\ot p_{(1)}\ot q_{(-1)1})
        \\&\hspace{12em}\RR(q_{(-1)21}\ot q_{(1)1}\ot r_{(-1)})r_{(0)}\ot q_{(0)}\ot p_{(0)}
        \\&\overset{(\ref{cond: comatrix 2 prime})}{=}\RR(p_{(-1)2}\ot q_{(-1)21}\ot q_{(-1)3})\RR(p_{(-1)1}\ot p_{(1)}\ot q_{(-1)1})
        \\&\hspace{12em}\RR(q_{(-1)22}\ot q_{(1)}\ot r_{(-1)})r_{(0)}\ot q_{(0)}\ot p_{(0)}
        \\&=\RR(p_{(-1)2}\ot q_{(-1)12}\ot q_{(-1)3})\RR(p_{(-1)1}\ot p_{(1)}\ot q_{(-1)11})
        \\&\hspace{12em}\RR(q_{(-1)2}\ot q_{(1)}\ot r_{(-1)})r_{(0)}\ot q_{(0)}\ot p_{(0)}
        \\&\overset{(\ref{cond: comatrix 3 prime})}{=} \RR(p_{(-1)2}\ot p_{(1)1}\ot q_{(-1)3})\RR(p_{(-1)1}\ot p_{(1)2}\ot q_{(-1)1})
        \\&\hspace{12em}\RR(q_{(-1)2}\ot q_{(1)}\ot r_{(-1)})r_{(0)}\ot q_{(0)}\ot p_{(0)}
        \\&= \Upsilon^{23}\Upsilon^{13}\Upsilon^{12}(p\ot q\ot r).
    \end{align*}
    Hence the quantum Yang--Baxter equation holds. Analogously, we prove that $\Upsilon$ satisfies the braid equation. Indeed, on the one hand, we get
    \begin{align*}
    \Upsilon^{12}&\Upsilon^{23}\Upsilon^{12}(p\ot q\ot r)
    \\&= \RR(p_{(-1)1}\ot p_{(1)2}\ot q_{(-1)1})\RR(p_{(-1)2}\ot p_{(1)1}\ot r_{(-1)1})
    \\&\hspace{12em}\RR(q_{(-1)2}\ot q_{(1)}\ot r_{(-1)2})r_{(0)}\ot q_{(0)}\ot p_{(0)}
    \\&\overset{(\ref{cond: comatrix 2 prime})}{=} \RR(p_{(-1)1}\ot p_{(-1)21}\ot q_{(-1)1})\RR(p_{(-1)22}\ot p_{(1)}\ot r_{(-1)1})
    \\&\hspace{12em}\RR(q_{(-1)2}\ot q_{(1)}\ot r_{(-1)2})r_{(0)}\ot q_{(0)}\ot p_{(0)}
    \\&= \RR(p_{(-1)11}\ot p_{(-1)12}\ot q_{(-1)1})\RR(p_{(-1)2}\ot p_{(1)}\ot r_{(-1)1})
    \\&\hspace{12em}\RR(q_{(-1)2}\ot q_{(1)}\ot r_{(-1)2})r_{(0)}\ot q_{(0)}\ot p_{(0)}
    \\&\overset{(\ref{cond: normalizing condition})}{=} \ep(p_{(-1)1})\ep(q_{(-1)1})\RR(p_{(-1)2}\ot p_{(1)}\ot r_{(-1)1})\RR(q_{(-1)2}\ot q_{(1)}\ot r_{(-1)2})r_{(0)}\ot q_{(0)}\ot p_{(0)}
    \\&= \RR(p_{(-1)}\ot p_{(1)}\ot r_{(-1)1})\RR(q_{(-1)}\ot q_{(1)}\ot r_{(-1)2})r_{(0)}\ot q_{(0)}\ot p_{(0)}.
    \end{align*}
    On the other hand,
    \begin{align*}
        \Upsilon^{23}\Upsilon^{12}\Upsilon^{23}(p\ot q\ot r)
        &=\RR(q_{(-1)1}\ot q_{(1)}\ot r_{(-1)1})\RR(p_{(-1)1}\ot p_{(1)2}\ot r_{(-1)2})
        \\&\hspace{12em} \RR(p_{(-1)2}\ot p_{(1)1}\ot q_{(-1)2})r_{(0)}\ot q_{(0)}\ot p_{(0)}
        \\&\overset{(\ref{cond: comatrix 2 prime})}{=} \RR(q_{(-1)1}\ot q_{(1)}\ot r_{(-1)1})\RR(p_{(-1)1}\ot p_{(-1)21}\ot r_{(-1)2})
        \\&\hspace{12em} \RR(p_{(-1)22}\ot p_{(1)}\ot q_{(-1)2})r_{(0)}\ot q_{(0)}\ot p_{(0)}
        \\&= \RR(q_{(-1)1}\ot q_{(1)}\ot r_{(-1)1})\RR(p_{(-1)11}\ot p_{(-1)12}\ot r_{(-1)2})
        \\&\hspace{12em} \RR(p_{(-1)2}\ot p_{(1)}\ot q_{(-1)2})r_{(0)}\ot q_{(0)}\ot p_{(0)}
        \\&\overset{(\ref{cond: normalizing condition})}{=} \ep(p_{(-1)1})\ep(r_{(-1)2})\RR(q_{(-1)1}\ot q_{(1)}\ot r_{(-1)1})
        \\&\hspace{12em} \RR(p_{(-1)2}\ot p_{(1)}\ot q_{(-1)2})r_{(0)}\ot q_{(0)}\ot p_{(0)}
        \\&= \RR(q_{(-1)1}\ot q_{(1)}\ot r_{(-1)})\RR(p_{(-1)}\ot p_{(1)}\ot q_{(-1)2})r_{(0)}\ot q_{(0)}\ot p_{(0)}
        \\&\overset{(\ref{cond: comatrix 1 prime})}{=} \RR(q_{(-1)}\ot q_{(1)}\ot r_{(-1)2})\RR(p_{(-1)}\ot p_{(1)}\ot r_{(-1)1})r_{(0)}\ot q_{(0)}\ot p_{(0)}.
    \end{align*}
    
    Finally, we compute
    \begin{align*}
        (\Upsilon&\circ\Upsilon\circ\Upsilon)(p\ot q)
        \\&= \RR(p_{(-1)1}\ot p_{(1)2}\ot q_{(-1)1})\RR(q_{(-1)2}\ot q_{(1)}\ot p_{(-1)2})\RR(p_{(-1)3}\ot p_{(1)1}\ot q_{(-1)3})q_{(0)}\ot p_{(0)}
        \\&= \RR(p_{(-1)11}\ot p_{(1)2}\ot q_{(-1)1})\RR(q_{(-1)2}\ot q_{(1)}\ot p_{(-1)12})\RR(p_{(-1)2}\ot p_{(1)1}\ot q_{(-1)3})q_{(0)}\ot p_{(0)}
        \\&\overset{(\ref{cond: comatrix 1 prime})}{=} \RR(p_{(-1)1}\ot p_{(1)2}\ot q_{(-1)12})\RR(q_{(-1)2}\ot q_{(1)}\ot q_{(-1)11})\RR(p_{(-1)2}\ot p_{(1)1}\ot q_{(-1)3})q_{(0)}\ot p_{(0)}
        \\&= \RR(p_{(-1)1}\ot p_{(1)2}\ot q_{(-1)21})\RR(q_{(-1)22}\ot q_{(1)}\ot q_{(-1)1})\RR(p_{(-1)2}\ot p_{(1)1}\ot q_{(-1)3})q_{(0)}\ot p_{(0)}
        \\&\overset{(\ref{cond: comatrix 2 prime})}{=} \RR(p_{(-1)1}\ot p_{(1)2}\ot q_{(1)2})\RR(q_{(-1)2}\ot q_{(1)1}\ot q_{(-1)1})\RR(p_{(-1)2}\ot p_{(1)1}\ot q_{(-1)3})q_{(0)}\ot p_{(0)}
        \\&\overset{(\ref{cond: normalizing condition})}{=} \ep(q_{(-1)1})\ep(q_{(1)1})\RR(p_{(-1)1}\ot p_{(1)2}\ot q_{(1)2})\RR(p_{(-1)2}\ot p_{(1)1}\ot q_{(-1)2})q_{(0)}\ot p_{(0)}
        \\&= \RR(p_{(-1)1}\ot p_{(1)2}\ot q_{(1)})\RR(p_{(-1)2}\ot p_{(1)1}\ot q_{(-1)})q_{(0)}\ot p_{(0)}
        \\&\overset{(\ref{cond: comatrix 1 prime})}{=} \RR(p_{(-1)}\ot p_{(1)2}\ot q_{(1)2})\RR(q_{(1)1}\ot p_{(1)1}\ot q_{(-1)})q_{(0)}\ot p_{(0)}
        \\&\overset{(\ref{cond: cyclic conditions})}{=} \RR(p_{(1)2}\ot q_{(1)2}\ot p_{(-1)})\RR(p_{(1)1}\ot q_{(-1)}\ot q_{(1)1})q_{(0)}\ot p_{(0)}
        \\&\overset{(\ref{cond: comatrix 4 prime})}{=} \RR(p_{(1)}\ot q_{(-1)}\ot p_{(-1)})\ep(q_{(1)})q_{(0)}\ot p_{(0)}
        \\&\overset{(\ref{cond: cyclic conditions})}{=} \RR(p_{(-1)}\ot p_{(1)}\ot q_{(-1)})q_{(0)}\ot p_{(0)} = \Upsilon(p\ot q),
    \end{align*}
hence $\Upsilon\circ\Upsilon\circ\Upsilon=\Upsilon$.
\end{proof}
\end{invisible}

\begin{remark}
We observe that, given $p,q\in M$, then
    \begin{align*}
        \Upsilon\Upsilon(p\ot q)
        &= \RR(p_{(-1)1}\ot p_{(1)}\ot q_{(-1)1})\RR(q_{(-1)2}\ot q_{(1)}\ot p_{(-1)2})p_{(0)}\ot q_{(0)}
        \\&\overset{(\ref{cond: cyclic conditions})}{=} \RR(p_{(-1)1}\ot p_{(1)}\ot q_{(-1)1})\RR(p_{(-1)2}\ot q_{(-1)2}\ot q_{(1)})p_{(0)}\ot q_{(0)}
        \\&\overset{(\ref{cond: comatrix 4 prime})}{=} \RR(p_{(-1)}\ot p_{(1)}\ot q_{(1)})\ep(q_{(-1)})p_{(0)}\ot q_{(0)}\\&=\tau_{M,M}(\RR(p_{(-1)}\ot p_{(1)}\ot q_{(1)})q_{(0)}\ot p_{(0)})\\&=\tau_{M,M}\tilde{\sigma}_{M,M}(p\ot q),
        \end{align*}
hence $\Upsilon\Upsilon=\tau_{M,M}\tilde{\sigma}_{M,M}$. We will see that $\tilde{\sigma}_{M,N}$ induces a braiding on the category of $C$-bicomodules.
\end{remark}

The second use of canonical $R$-forms is that they classify the braidings of bicomodules. The following theorem is dual to the classification of braidings on the category of bimodules in \cite[Theorem 3.1]{agore2014braidings}.
\begin{theorem}\label{theorem: braidings on bicomodules}
    Let $C$ be a coalgebra, then there is a bijection between braidings $\sigma$ on the monoidal category $(^C\mm^C, \square^C, C)$ and canonical $R$-forms of $C$.
    Explicitly, for any braiding $\sigma$ on $(^C\mm^C, \square^C, C)$ the corresponding canonical $R$-form $\RR$ is defined by the following composition:
\begin{equation}
\begin{tikzcd}\label{eqn: canonical r comatrix}
	{C\ot C\ot C} &&&& \Bbbk \\
	{C\ot C \square^C C\ot C} && {C\ot C\square^C C\ot C} && {C\ot C\ot C\ot C}
	\arrow["\RR", from=1-1, to=1-5]
	\arrow["{\id\ot \Delta\ot \id}"', from=1-1, to=2-1]
	\arrow["{\sigma_{C\ot C, C\ot C}}"', from=2-1, to=2-3]
	\arrow["{e_{C\ot C,C\ot C}}"', from=2-3, to=2-5]
	\arrow["{\ep^{\ot 4}}"', from=2-5, to=1-5]
\end{tikzcd}
\end{equation}
    Conversely, any canonical $R$-form $\RR$ induces a unique braiding by
    \begin{equation}\label{eqn: definition of braiding}
    \sigma_{V,W}(v\square w) = \RR(v_{(-1)}\ot v_{(1)}\ot w_{(1)})w_{(0)}\square v _{(0)} = \RR(v_{(-1)}\ot w_{(-1)}\ot w_{(1)})w_{(0)}\square v _{(0)}.
    \end{equation}
    Moreover, all braidings on $(^C\mm^C, \square^C, C)$ are symmetries.
\end{theorem}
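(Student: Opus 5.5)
Two passages are needed. From a braiding to an $R$-form we use naturality and the free bicomodules $C\ot V\ot C$. From an $R$-form to a braiding we verify the axioms using Proposition \ref{prop: equivalent conditions canonical R-comatrix}. This dualizes the argument of \cite[Theorem 3.1]{agore2014braidings}.

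\emph{From braidings to forms.} Let $\sigma$ be a braiding and define $\RR$ by \eqref{eqn: canonical r comatrix}. The key tool is that for any bicomodule $V$ and $f\in V^*$, the map $\hat f\colon V\to C\ot C$, $v\mapsto v_{(-1)}\ot f(v_{(0)})v_{(1)}$, is a bicomodule morphism with $(\ep\ot\ep)\hat f=f$. Given $V,W$, naturality of $\sigma$ with respect to $\hat f\square\hat g$, followed by $\ep^{\ot 4}$ and the identification $(C\ot C)\square(C\ot C)\cong C\ot C\ot C$ of Remark \ref{lemma: left right unitors for vec}, gives
\[
(g\ot f)\sigma_{V,W}(v\square w)=\RR(v_{(-1)}\ot v_{(1)}\ot w_{(1)})\,g(w_{(0)})f(v_{(0)}).
\]
Since $f,g$ are arbitrary, $\sigma$ is forced to have the form \eqref{eqn: definition of braiding}. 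In particular $\sigma$ is uniquely determined by $\RR$, so the assignment is injective. The equality of the two expressions in \eqref{eqn: definition of braiding} follows from $w\in W$ lying in the cotensor together with \eqref{cond: comatrix 3 prime}, once that identity is established.

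It remains to show $\RR$ is a canonical $R$-form, i.e.\ condition b) of Proposition \ref{prop: equivalent conditions canonical R-comatrix}.
\begin{itemize}
\item Condition \eqref{cond: comatrix 3} should come from bicolinearity of $\sigma_{C\ot C,C\ot C}$: apply the left coaction in the middle tensor factor and compare the two sides.
\item The normalizing conditions \eqref{cond: normalizing condition} should come from the unit coherence $\sigma_{V,C}$ versus the unitors $\Lambda,\Lambda'$. The composite $V\cong V\square C\to C\square V\cong V$ is the identity, and one evaluates this at $V=C\ot C$.
\end{itemize}
Alternatively, I would derive \eqref{cond: comatrix 4} and \eqref{cond: comatrix 5} directly from the two hexagon identities evaluated on free bicomodules, and get invertibility from invertibility of $\sigma$. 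I expect the unit route to be shorter.

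\emph{From forms to braidings.} Conversely, let $\RR$ be a canonical $R$-form and define $\sigma$ by \eqref{eqn: definition of braiding}. First check that the image lies in $W\square V$. This uses \eqref{cond: comatrix 1 prime}–\eqref{cond: comatrix 3 prime} to move the coaction of $w_{(0)}$ against that of $v_{(0)}$. Then check bicolinearity, using the same identities, and naturality, which is obvious. The hexagon axioms should reduce, after using the cotensor relations to collapse Sweedler indices, to \eqref{cond: comatrix 4 prime} and \eqref{cond: comatrix 5 prime}.

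For the symmetry property, compute $\sigma_{W,V}\sigma_{V,W}(v\square w)$. This gives a convolution of $\RR$ with $\RR(\tau\ot\id)$ evaluated on coaction components, up to a cyclic reordering by \eqref{cond: cyclic conditions}. By \eqref{cond: symmetry} this is $\ep^{\ot3}$, so the composite is the identity. This also gives invertibility of $\sigma$ and proves that all braidings are symmetries. Finally, the maps $\sigma\mapsto\RR$ and $\RR\mapsto\sigma$ are mutually inverse. Starting from $\RR$, forming $\sigma$ and evaluating \eqref{eqn: canonical r comatrix} on $p\ot q\ot r$ returns $\RR(p\ot q\ot r)$ after counits. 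The other direction was shown above.

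The main obstacle is the Sweedler bookkeeping in the hexagon verification and in showing that $\sigma_{V,W}$ lands in $W\square V$. Indices on $v\square w$ can be shifted between factors only via the cotensor condition. I would handle this by always rewriting $w_{(-1)}$ as a coaction index of $v$ (i.e.\ $v_{(1)}$) before applying the $R$-form identities.
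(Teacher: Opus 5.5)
\textbf{The gap: the unit coherence does not give \eqref{cond: normalizing condition}.} Your plan is to derive the explicit formula by naturality against $\hat f\square\hat g$ and then verify condition b) of Proposition~\ref{prop: equivalent conditions canonical R-comatrix}. That plan is sound, but the step you use for the normalizing conditions does not produce them.

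Plug your formula into the composite you name. With $W=C$, the identity $(\Lambda'_V)^{-1}\sigma_{V,C}\Lambda_V=\id_V$ reads $\RR(v_{(-1)}\ot v_{(1)}\ot v_{(2)})v_{(0)}=v$. At $V=C\ot C$ and $v=x\ot y$ this becomes $\RR(x_1\ot y_2\ot y_3)\,x_2\ot y_1=x\ot y$. Applying counits gives $\RR(x\ot y_1\ot y_2)=\ep(x)\ep(y)$. That is \eqref{cond: third normalizing condition}, not \eqref{cond: normalizing condition}.

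The other unit coherence, $\Lambda_V^{-1}\sigma_{C,V}\Lambda'_V=\id_V$, gives $\RR(x_1\ot x_2\ot y)=\ep(x)\ep(y)$, which is the first half of \eqref{cond: normalizing condition}. The second half, $\RR(y_2\ot x\ot y_1)=\ep(x)\ep(y)$, comes from neither coherence. It is exactly what the last step of the cyclicity computation in the proof of b)$\Rightarrow$a) uses. So, as written, condition b) is not established.

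\textbf{How to repair it without the hexagons.} Use both unit coherences. Also extract \eqref{cond: comatrix 1 prime} from the fact that $\sigma_{V,W}$ lands in $W\square V$, evaluated at $V=W=C\ot C$. Then set $\RR'(p\ot q\ot r):=\RR(r\ot p\ot q)$.
\begin{itemize}
\item Condition \eqref{cond: comatrix 3 prime} for $\RR'$ is precisely \eqref{cond: comatrix 1 prime} for $\RR$.
\item The two halves of \eqref{cond: normalizing condition} for $\RR'$ are precisely $\RR(y\ot x_1\ot x_2)=\ep(x)\ep(y)$ and $\RR(x_1\ot x_2\ot y)=\ep(x)\ep(y)$, i.e.\ your two unit normalizations.
\end{itemize}
Running the cyclicity computation of Proposition~\ref{prop: equivalent conditions canonical R-comatrix} on $\RR'$ shows that $\RR'$ is cyclic, hence so is $\RR$. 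Then $\RR(y_2\ot x\ot y_1)=\RR(y_1\ot y_2\ot x)=\ep(x)\ep(y)$, so b) holds.

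The Proposition then supplies convolution-invertibility and \eqref{cond: comatrix 4}--\eqref{cond: comatrix 5} for free. The paper instead extracts all five primed identities from the image, colinearity and hexagon properties. It then gets invertibility by applying the construction to the inverse braiding and exhibiting an explicit inverse. So once patched, your route is shorter.

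\textbf{Two smaller corrections.}
\begin{itemize}
\item The two expressions in \eqref{eqn: definition of braiding} agree by the cotensor relation alone; no $R$-form identity is needed.
\item In $\sigma_{W,V}\sigma_{V,W}(v\square w)$, the components $v_{(-1)}$ and $w_{(-1)}$ are not coproduct legs of one element. So \eqref{cond: symmetry} on $C^{\ot 3}$ does not apply verbatim. You need the four-variable identity \eqref{cond: comatrix 4 prime} together with \eqref{cond: cyclic conditions}, the cotensor relation and \eqref{cond: normalizing condition}, as the paper does.
\end{itemize}
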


\begin{invisible}
\begin{proof}
    Let $V$ and $W$ be $C$-bicomodules, and assume the existence of a braiding $\sigma$ on $(^C\mm^C, \square^C, C)$. Define $\RR$ as in (\ref{eqn: canonical r comatrix}). For any $f\in V^*$ and $g\in W^*$ we consider the morphisms $\hat{f}\colon V\to C\ot C$ and $\hat{g}\colon W\to C\ot C$ in $^C\mm^C$ defined as in Lemma \ref{lemma: induced bicomodule morphism}. We observe that
\begin{align*}
        (g\ot f)e_{W,V}\sigma_{V,W}&= \ep^{\ot 4}(\hat{g}\ot \hat{f})e_{W,V}\sigma_{V,W}
        = \ep^{\ot 4}e_{C^{\ot2},C^{\ot 2}}(\hat{g}\square \hat{f})\sigma_{V,W}
        = \ep^{\ot 4}e_{C^{\ot2},C^{\ot 2}}\sigma_{C^{\ot2},C^{\ot 2}}(\hat{f}\square\hat{g})
\end{align*}
For notational ease, we will refer by slight abuse of notation to $\sigma_{C^{\ot2}, C^{\ot2}}$ simply as $\sigma$. We will also avoid writing inclusions $e_{C^{\ot2},C^{\ot2}}$. Using
    the explicit isomorphism $C\cong C\square^C C$ from Remark \ref{lemma: left right unitors for vec}, it follows that
\begin{align*}
        (g\ot f)e_{W,V}\sigma_{V,W}(v\square w)&=\ep^{\ot 4}\sigma(v_{(-1)}\ot v_{(1)}\square w_{(-1)}\ot w_{(1)})f(v_{(0)})g(w_{(0)})
        \\&= \ep^{\ot 4}\sigma(v_{(-1)}\ot v_{(1)1}\square v_{(1)2}\ot w_{(1)})\ep(w_{(-1)})(g\ot f)e_{W,V}(w_{(0)}\square v_{(0)})\\&= \ep^{\ot 4}\sigma(\id\ot\Delta\ot\id)(v_{(-1)}\ot v_{(1)}\ot w_{(1)})(g\ot f)e_{W,V}(w_{(0)}\square v_{(0)})
        \\&= (g\ot f)e_{W,V}(\RR(v_{(-1)}\ot v_{(1)}\ot w_{(1)})w_{(0)}\square v_{(0)}).
    \end{align*}
    By the non-degeneracy of the evaluation $W^*\ot V^*\ot W\ot V\to \Bbbk$, the fact that $f$ and $g$ were chosen arbitrarily implies that $e_{W,V}\sigma_{V,W}(v\square w)=e_{W,V}(\RR(v_{(-1)}\ot v_{(1)}\ot w_{(1)})w_{(0)}\square v_{(0)})$. Since $e_{W,V}$ is a monomorphism, we obtain the first equality of (\ref{eqn: definition of braiding}). The second equality holds likewise, since $v_{(1)}\square w_{(-1)} = \ep(v_{(1)})w_{(-1)1}\square w_{(-1)2}$. 
    \newline
    The fact that $\sigma_{V,W}(V\square W)\subseteq W\square V$ states that for all $v\square w\in V\square^C W$
    \[
    \RR(v_{(-1)}\ot v_{(1)}\ot w_{(1)})w_{(0)(0)}\ot w_{(0)(1)}\ot v_{(0)} = \RR(v_{(-1)}\ot v_{(1)}\ot w_{(1)})w_{(0)}\ot v_{(0)(-1)}\ot v_{(0)(0)}.
    \]
    In particular, for $p,q,r\in C$, it holds that $p\ot q_1\square q_2\ot r\in C^{\ot 2}\square^C C^{\ot 2}$, and this becomes the condition that
\[
\RR(p_1\ot q_2\ot r_3)q_3\ot r_1\ot r_2\ot p_2\ot q_1 = \RR(p_1\ot q_2\ot r_2)q_3\ot r_1\ot p_2\ot p_3\ot q_1.
\]
    Applying $\ep\ot\ep\ot\id\ot \ep\ot\ep$ results in identity (\ref{cond: comatrix 1 prime}). Let $v\in V$, $w\in W$, and $u\in U$, with $V,W,U$ three $C$-bicomodules, and $p,q,r,u\in C$ from now on.
    The fact that $\sigma_{V,W}$ is a morphism in $^C\mm^C$ translates to
    \[
    \sigma_{V,W}(v\square w)_{(-1)}\ot \sigma_{V,W}(v\square w)_{(0)}\ot \sigma_{V,W}(v\square w)_{(1)}\ = v_{(-1)}\ot \sigma_{V,W}(v_{(0)}\square w_{(0)})\ot w_{(1)}.
    \]
    In the case of $V=W=C\ot C$ and $p\ot q_1\square q_2\ot r \in C^{\ot 2}\square^C C^{\ot 2}$ this becomes
    \begin{align*}
        \RR(p_1\ot q_3\ot r_2)q_4\ot q_5\ot r_1\square p_2\ot q_1\ot q_2 = \RR(p_2\ot q_2\ot r_2)p_1\ot q_3\ot r_1\square p_3\ot q_1\ot r_3.
    \end{align*}
    Applying $\id\ot\ep^{\ot 5}$ respectively $\ep^{\ot 5}\ot \id$ results in condition (\ref{cond: comatrix 2 prime}) respectively (\ref{cond: comatrix 3 prime}).
    The braid relation $\sigma_{V, W\square^C U} = (\id\square \sigma_{V,U})(\sigma_{V,W}\square\id)$ translates, considered for $V=W=U=C\ot C$ and $p\ot q_1\square q_2\ot r_1\square r_2\ot u\in C^{\ot 2}\square^C C^{\ot 2}\square^C C^{\ot 2}$, to
    \begin{align*}
        \RR(p_1\ot q_2\ot u_2)&q_3\ot r_1\square r_2\ot u_1\square p_2\ot q_1
        \\&= \RR(p_1\ot q_2\ot r_2)\RR(p_2\ot r_3\ot u_2)q_3\ot r_1\square r_4\ot u_1\square p_3\ot q_1.
    \end{align*}
    Applying $\ep^{\ot 6}$ results in condition (\ref{cond: comatrix 4 prime}).
    Similarly, the braid relation $\sigma_{V\square^C W, U} = (\sigma_{V,U}\square \id)(\id\square \sigma_{W,U})$
    translates to
    \begin{align*}
        \RR(p_1\ot r_2\ot u_2)&r_3\ot u_1\square p_2\ot q_1\square q_2\ot r_1
        \\&= \RR(q_2\ot r_2\ot u_3)\RR(p_1\ot r_3\ot u_2)r_4\ot u_1\square p_2\ot q_1\square q_3\ot r_1.
    \end{align*}
    Applying $\ep^{\ot 6}$ we obtain condition (\ref{cond: comatrix 5 prime}), since
    \[
        \RR(p\ot r\ot u)\ep(q) = \RR(p\ot r_2\ot u_1)\RR(q\ot r_1\ot u_2) \overset{(\ref{cond: comatrix 2 prime})}{=} \RR(p\ot q_1\ot u_1)\RR(q_2\ot r\ot u_2).
    \]
    Notice that $\sigma_{V,W}^{-1}$ is a braiding on $(^C\mm^C, \square^C, C)$ as well and hence by the above reasoning there exists a linear functional $\Tt\colon C\ot C\ot C\to \Bbbk$
    satisfying conditions (\ref{cond: comatrix 1})--(\ref{cond: comatrix 5}). Then
    \begin{align*}
        v\square w = \sigma_{V,W}^{-1}\sigma_{V,W}(v\square w) = \RR(v_{(-1)}\ot v_{(1)}\ot w_{(1)})\Tt(w_{(0)(-1)}\ot v_{(0)(-1)}\ot v_{(0)(1)})v_{(0)(0)}\square w_{(0)(0)}.
    \end{align*}
    For $V = W = C\ot C$ and an element $p\ot q_1\square q_2\ot r \in C^{\ot 2}\square^C C^{\ot 2}$, this becomes
    \[
    p\ot q_1\square q_2\ot r= \RR(p_1\ot q_3\ot r_2)\Tt(q_4\ot p_2\ot q_2)p_3\ot q_1\square q_5\ot r_1.
    \]
    Applying $\ep^{\ot 4}$ we obtain that
    \begin{align*}
    \ep^{\ot 3}(p\ot q\ot r) &= \RR(p_1\ot q_2\ot r)\Tt(q_3\ot p_2\ot q_1) 
    \\&= \RR(p_1\ot q_{12}\ot r)\Tt(q_2\ot p_2\ot q_{11})\overset{(\ref{cond: comatrix 3 prime})}{=} \RR(p_1\ot q_1\ot r_1)\Tt(q_2\ot p_2\ot r_2).
    \end{align*}
    Hence $\Tt(\tau\ot \id)$ is the right convolution inverse to $\RR$. By symmetry, it holds that $\RR(\tau\ot\id)$ is right inverse to $\Tt$, which is equivalent to stating that $\Tt(\tau\ot \id)$ is left inverse to $\RR$, whence $\RR$ is convolution invertible. Moreover, by condition (\ref{cond: symmetry}) of Proposition \ref{prop: equivalent conditions canonical R-comatrix}, $\Tt(\tau\ot\id) = \RR^{-1} = \RR(\tau\ot\id)$, whence $\Tt = \RR$ and $\sigma_{V,W}^{-1} = \sigma_{W,V}$.

Conversely, assume $\RR\colon C\ot C\ot C \to \Bbbk$ to be a canonical $R$-form of $C$. For $C$-bicomodules $V,W$ define the linear map
    \[
    \sigma_{V,W}\colon V\square^C W \to W\ot V \colon v\square w \mapsto \RR(v_{(-1)}\ot v_{(1)}\ot w_{(1)})w_{(0)}\ot v_{(0)},
    \]
    i.e. $\sigma_{V,W}=\tilde{\sigma}_{V,W}|_{V\square^{C}W}$. Since $v\square w \in V\square^C W$, it holds that $v_{(0)}\ot v_{(1)}\ot w = v\ot w_{(-1)}\ot w_{(0)}$, and hence
    \[
    \sigma_{V,W}(v\square w) = \RR(v_{(-1)}\ot v_{(1)}\ot w_{(1)})w_{(0)}\ot v_{(0)} = \RR(v_{(-1)}\ot w_{(-1)}\ot w_{(1)})w_{(0)}\ot v_{(0)}.
    \]
    Utilizing condition \eqref{cond: comatrix 1 prime} one verifies that $\sigma_{V,W}$ has image in $W\square^C V$. Conditions \eqref{cond: comatrix 2 prime} (resp.\ \eqref{cond: comatrix 3 prime}) translate to the left (resp.\ right) $C$-colinearity of $\sigma_{V,W}$.
    For the braid relations, let $U$ be a $C$-bicomodule, and $v\square w\square u\in V\square^C W\square^C U$, then
    \begin{align*}
        (\id\square \sigma_{V,U})&(\sigma_{V,W}\square\id)(v\square w \square u)
        \\= &\RR(v_{(-1)}\ot v_{(1)}\ot w_{(1)})\RR(v_{(0)(-1)}\ot v_{(0)(1)}\ot u_{(1)})w_{(0)}\square u_{(0)}\square v_{(0)(0)}
        \\= &\RR(v_{(-1)1}\ot v_{(1)2}\ot w_{(1)})\RR(v_{(-1)2}\ot v_{(1)1}\ot u_{(1)})w_{(0)}\square u_{(0)}\square v_{(0)}
        \\\stackrel{(\ref{cond: comatrix 3 prime})}{=} &\RR(v_{(-1)1}\ot v_{(1)}\ot w_{(1)1})\RR(v_{(-1)2}\ot w_{(1)2}\ot u_{(1)})w_{(0)}\square u_{(0)}\square v_{(0)}
        \\\stackrel{(\ref{cond: comatrix 4 prime})}{=} &\RR(v_{(-1)}\ot v_{(1)}\ot u_{(1)})\ep(w_{(1)})w_{(0)}\square u_{(0)}\square v_{(0)}
        \\= &\RR(v_{(-1)}\ot v_{(1)}\ot u_{(1)})w\square u_{(0)}\square v_{(0)}
        \\= &\RR(v_{(-1)}\ot v_{(1)}\ot (w\square u)_{(1)}) (w\square u)_{(0)}\square v_{(0)}
        \\= &\sigma_{V, W\square^C U}(v\square w\square u).
    \end{align*}
    Similarly, one verifies that $\sigma_{V\square W, U} = (\sigma_{V,U}\square\id)(\id\square \sigma_{W,U})$.\newline
    Finally, we verify that $\sigma$ is a natural isomorphism. One verifies immediately that $\sigma$ is a natural transformation from $\square^C \to (\square^C)^{\op}$. It suffices to prove that $\sigma_{V,W}$ is a linear isomorphism for all $C$-bicomodules $V$ and $W$. Inspired by the converse case, we expect the inverse to $\sigma_{V,W}$ to be $\sigma_{W,V}$.
    \begin{align*}
        \sigma_{W,V}\sigma_{V,W}(v\square w)&= \RR(v_{(-1)}\ot v_{(1)}\ot w_{(1)})\sigma_{W,V}(w_{(0)}\square v_{(0)})
        \\&= \RR(v_{(-1)}\ot v_{(1)}\ot w_{(1)})\RR(w_{(0)(-1)}\ot w_{(0)(1)}\ot v_{(0)(1)})v_{(0)(0)}\square w_{(0)(0)}
        \\&= \RR(v_{(-1)}\ot v_{(1)2}\ot w_{(1)2})\RR(w_{(-1)}\ot w_{(1)1}\ot v_{(1)1})v_{(0)}\square w_{(0)}
        \\&\stackrel{(\ref{cond: cyclic conditions})}{=} \RR(v_{(1)2}\ot w_{(1)2}\ot v_{(-1)})\RR(v_{(1)1}\ot w_{(-1)}\ot w_{(1)1})v_{(0)}\square w_{(0)}
        \\&\stackrel{(\ref{cond: comatrix 4 prime})}{=} \RR(v_{(1)}\ot w_{(-1)}\ot v_{(-1)})\ep(w_{(1)})v_{(0)}\square w_{(0)}
        \\&=\RR(v_{(1)}\ot w_{(-1)}\ot v_{(0)(-1)})v_{(0)(0)}\square w_{(0)}
        \\&\stackrel{(\star)}{=} \RR(w_{(-1)}\ot w_{(0)(-1)}\ot v_{(-1)})v_{(0)}\square w_{(0)(0)}
        \\&= \RR(w_{(-1)1}\ot w_{(-1)2}\ot v_{(-1)})v_{(0)}\square w_{(0)}
        \\&\stackrel{(\ref{cond: normalizing condition})}{=} \ep(w_{(-1)})\ep(v_{(-1)})v_{(0)}\square w_{(0)}
        \\&= v\square w,
    \end{align*}
    and hence $\sigma_{W,V}$ is a right inverse as well by symmetry. 
\end{proof}
\end{invisible}

We have obtained a characterization of the braidings on $({}^C\mm^C, \square^C, C)$ in terms of canonical $R$-forms on $C$, and have concluded that all the braidings are symmetries. 
We point out that this classification could be generalized to more general categories in which Sweedler type notations for coproducts and coactions can be adopted, and presumably even general braided monoidal categories, by finding relations with respect to the convolution. This goes beyond the scope of our work.

We prove the following result.

\begin{proposition}\label{prop:trivialcanonicalform}
The following statements hold.
\begin{itemize}
    \item[1)] Let $C$ be a coalgebra containing a grouplike element $g$. If $C$ admits a canonical $R$-form, then $C\cong\Bbbk g$.
    \item[2)] Let $A$ be an augmented $\Bbbk$-algebra, i.e.\ $A$ is endowed with an algebra map $\varepsilon:A\to\Bbbk$. If $A$ admits a canonical $R$-matrix, then $A\cong\Bbbk$.
\end{itemize}
Consequently, for a $\Bbbk$-bialgebra $H$, either of the monoidal categories $(_{H}\mm_{H},\ot_{H},H)$ and $(^{H}\mm^{H},\square^{H},H)$ admits a braiding if and only if $H\cong\Bbbk$.
\end{proposition}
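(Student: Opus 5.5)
For part 1) I would evaluate the identities satisfied by a canonical $R$-form at the grouplike $g$ and read off that $C$ is spanned by $g$. Since $\Delta(g)=g\ot g$ and $\ep(g)=1$, the normalizing condition \eqref{cond: normalizing condition} with $x=y=g$ gives $\RR(g\ot g\ot g)=1$. Next take an arbitrary $r\in C$ and use \eqref{cond: comatrix 3 prime} with $p=q=g$:
\[
\RR(g\ot g\ot r)g=\RR(g\ot g\ot r_1)r_2 .
\]
Here I would first identify the functional $\phi:=\RR(g\ot g\ot -)$. By the cyclic condition \eqref{cond: cyclic conditions} we have $\phi(r)=\RR(g\ot r\ot g)$. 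The second normalizing identity in \eqref{cond: normalizing condition}, $\RR(y_2\ot x\ot y_1)=\ep(x)\ep(y)$ with $y=g$, then gives $\phi(r)=\ep(r)$ for every $r$. Substituting into the displayed identity yields $\ep(r)g=\ep(r_1)r_2=r$ for all $r\in C$, so $C=\Bbbk g$. The only step needing care is confirming that the cyclic and normalizing conditions combine in exactly this way. If they do not, I would instead use \eqref{cond: comatrix 1 prime} or \eqref{cond: comatrix 2 prime} with two slots equal to $g$, which are symmetric variants of the same argument.

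For part 2) the argument is dual, using Theorem \ref{thm:semplifiedconditions}. Apply $\ep\ot\ep\ot\id$ to \eqref{cond1braiding}:
\[
\ep(a)\,\ep(R^1)\ep(R^2)R^3=\ep(R^1)\ep(R^2)R^3a .
\]
Now apply $\ep\ot\id$ to the normalization $R^1R^2\ot R^3=1\ot1$, using that $\ep$ is multiplicative; this gives $\ep(R^1)\ep(R^2)R^3=1$. Hence $\ep(a)1=a$ for all $a$, so $A=\Bbbk1\cong\Bbbk$. Since $A$ is augmented it is nonzero, so $1\neq 0$ and the conclusion is not vacuous.

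For the consequence, a bialgebra $H$ has the grouplike element $1_H$ and the augmentation $\ep$. By Theorem \ref{theorem: braidings on bicomodules} and Theorem \ref{thm:braidingsbimodtensorA}, braidings on $(^{H}\mm^{H},\square^{H},H)$ and on $(_{H}\mm_{H},\ot_{H},H)$ correspond to canonical $R$-forms and canonical $R$-matrices respectively. Parts 1) and 2) then force $H\cong\Bbbk$. Conversely, for $H=\Bbbk$ both categories are $\Vec$ with the usual flip, which is a braiding; equivalently, $R=1\ot1\ot1$ and $\RR=\ep^{\ot3}$ satisfy the defining conditions.
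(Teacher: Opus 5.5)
Your proposal is correct and is essentially the paper's proof. In 1) the paper specializes \eqref{cond: comatrix 3 prime} with $p=r=g$ and varies the middle slot, whereas you take $p=q=g$ and vary the last slot; there the first normalizing identity with $x=g$ already gives $\RR(g\ot g\ot r)=\ep(r)$, so you do not need cyclicity. In 2) the paper applies $\ep\ot\id\ot\ep$ together with $R^{2}\ot R^{3}R^{1}=1\ot1$, whereas you apply $\ep\ot\ep\ot\id$ together with $R^{1}R^{2}\ot R^{3}=1\ot1$, which is a symmetric variant of the same computation. Your explicit derivation of the final equivalence for a bialgebra (via the grouplike $1_H$, the augmentation $\ep$, and the converse for $H=\Bbbk$) spells out what the paper leaves implicit.
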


\begin{proof}
1). Denote by $\mathfrak{R}$ the canonical $R$-form. The normalization equality \eqref{cond: normalizing condition} with $y=g$ gives $\mathfrak{R}(g,x,g)=\varepsilon(x)$ for every $x\in C$. Then, given $q\in C$, we obtain
\[
q=\varepsilon(q_{2})q_{1}=\mathfrak{R}(g\ot q_{2}\ot g)q_{1}\overset{\eqref{cond: comatrix 3 prime}}{=}\mathfrak{R}(g\ot q\ot g)g=\varepsilon(q)g,
\]
hence $C\cong\Bbbk g$.\\
2). Write $\varepsilon: A\to\Bbbk$ for the augmentation and $R=R^{1}\ot R^{2}\ot R^{3}$ for a canonical $R$-matrix. By 3) of Theorem \ref{thm:braidingsbimodtensorA} and \eqref{cond2braiding}, we know
\[
R^{1}\ot aR^{2}\ot R^{3}=R^{1}\ot R^{2}\ot R^{3}a,\quad \text{for all $a\in A$},\qquad R^{2}\ot R^{3}R^{1}=1\ot 1.
\]
Define $u:=\varepsilon(R^{1})R^{2}\varepsilon(R^{3})$. Applying $\varepsilon\ot\mathrm{id}\ot\varepsilon$ to the first gives $au=\varepsilon(a)u$, for all $a\in A$. Applying $\mathrm{id}\ot\varepsilon$ to the second identity gives $u=1$. Hence $a=\varepsilon(a)1$ for every $a\in A$.
\end{proof}

Examples of braidings on $(^{C}\mm^{C},\square^{C},C)$ may be generated by the following proposition. 

\begin{proposition}\label{cor:bijectionsimmetries}
    If $A$ is a finite-dimensional algebra, then there is a bijection between the symmetries of $(_A\mm_A, \ot_A, A)$ and $(^{A^*}\mm^{A^*}, \square^{A^*}, A^*)$. 
    Likewise, if $C$ is a finite-dimensional coalgebra then there is a bijection between the symmetries of $(^C\mm^C, \square^C, C)$ and $(_{C^*}\mm_{C^*}, \ot_{C^*}, C^*)$.
\end{proposition}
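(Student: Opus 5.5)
The plan is to reduce everything to the two classification results already available: Theorem \ref{thm:braidingsbimodtensorA} together with Theorem \ref{thm:semplifiedconditions} for bimodules, and Theorem \ref{theorem: braidings on bicomodules} together with Proposition \ref{prop: equivalent conditions canonical R-comatrix} for bicomodules. By these results, and since every braiding in either setting is a symmetry, the symmetries of $({}_A\mm_A,\ot_A,A)$ are in bijection with canonical $R$-matrices $R\in A\ot A\ot A$. Likewise, the symmetries of $({}^{C}\mm^{C},\square^C,C)$ are in bijection with canonical $R$-forms $\RR\colon C\ot C\ot C\to\Bbbk$. It therefore suffices to show that, for finite-dimensional $A$ with $C=A^*$, the canonical isomorphism $A\ot A\ot A\cong (A^*\ot A^*\ot A^*)^*$ sends canonical $R$-matrices of $A$ exactly onto canonical $R$-forms of $A^*$. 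The second statement then follows by applying the first to $A=C^*$, using $C\cong C^{**}$ as coalgebras.

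Concretely, I would define $\RR_R(f\ot g\ot h)=f(R^1)g(R^2)h(R^3)$ for $f,g,h\in A^*$. Here the comultiplication of $A^*$ is dual to the multiplication of $A$, i.e.\ $f_1(a)f_2(b)=f(ab)$, and $\varepsilon(f)=f(1)$. I would then use the simplified characterizations on both sides: condition \eqref{cond1braiding} plus the normalizing condition \eqref{cond2braiding} for $R$, and condition \eqref{cond: comatrix 3} (equivalently \eqref{cond: comatrix 3 prime}) plus \eqref{cond: normalizing condition} for $\RR$.

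The translation goes as follows. First, \eqref{cond: comatrix 3 prime} reads $\RR(p\ot q_2\ot r)q_1=\RR(p\ot q\ot r_1)r_2$ in $A^*$. Evaluating at $a\in A$ and using $q_1(a)q_2(R^2)=q(aR^2)$ gives $p(R^1)q(aR^2)r(R^3)=p(R^1)q(R^2)r(R^3a)$ for all $p,q,r$. By nondegeneracy of the pairing (finite-dimensionality), this is exactly \eqref{cond1braiding}. Next, the first half of \eqref{cond: normalizing condition}, $\RR(x_1\ot x_2\ot y)=\varepsilon(x)\varepsilon(y)$, becomes $x(R^1R^2)y(R^3)=x(1)y(1)$, i.e.\ $R^1R^2\ot R^3=1\ot1$. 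The second half, $\RR(y_2\ot x\ot y_1)=\varepsilon(x)\varepsilon(y)$, becomes $y(R^3R^1)x(R^2)=y(1)x(1)$, i.e.\ $R^2\ot R^3R^1=1\ot 1$, which is \eqref{cond2braiding}. Each step is reversible, so $R\mapsto\RR_R$ is a bijection between canonical $R$-matrices of $A$ and canonical $R$-forms of $A^*$.

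The only point needing care is the bookkeeping in these last translations: making sure the Sweedler indices of the dual coproduct land on the correct side of the product. If the order comes out reversed, one instead pairs with the opposite algebra, which amounts to replacing $R$ by a cyclic permutation; the cyclic invariance \eqref{cond: cyclic conditions} and the corresponding cyclic invariance in Theorem \ref{thm:semplifiedconditions} absorb that ambiguity. Convolution invertibility need not be checked separately, since it is built into the equivalences of Theorem \ref{thm:semplifiedconditions} and Proposition \ref{prop: equivalent conditions canonical R-comatrix}. Composing the two classification bijections with $R\mapsto\RR_R$ yields the claimed bijection of symmetries. An alternative route would use the category equivalences ${}_A\mm_A\simeq{}^{A^*}\mm^{A^*}$ for finite-dimensional $A$, but the explicit element-level argument above is more direct.
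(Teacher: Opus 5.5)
Your proposal is correct and follows the same route as the paper: you reduce to the simplified characterizations \eqref{cond1braiding}, \eqref{cond2braiding} and \eqref{cond: comatrix 3}, \eqref{cond: normalizing condition}, and then show that these correspond under $A^{\ot 3}\cong (A^*\ot A^*\ot A^*)^*$. The paper does this with structure constants in a basis, while you do it coordinate-free via the evaluation pairing.

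Your evaluations already land exactly on \eqref{cond1braiding} and \eqref{cond2braiding}, so the fallback about a reversed order and cyclic permutations is unnecessary. Your explicit treatment of the coalgebra statement via $C\cong C^{**}$ spells out what the paper leaves as ``likewise''.
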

\begin{proof}
    Let $\{e_i\}_{i}$ be a basis of $A$, then there is a linear isomorphism
    \begin{equation}\label{eqn: linear isomorphisms}
    \begin{array}{c c c}
        (A^*\ot A^*\ot A^*)^*&\longrightarrow &A\ot A\ot A
        \\ \RR&\longmapsto&\sum_{i,j,r} \langle \RR \mid e_i^*\ot e_j^*\ot e_r^*\rangle e_i\ot e_j\ot e_r,
        \\(e_i^*\ot e_j^*\ot e_r^*)^*&\longmapsfrom & e_i\ot e_j\ot e_r.
    \end{array}
    \end{equation}
    We claim that for any $\RR\in (A^*\ot A^*\ot A^*)^*$ this is a canonical $R$-form of $A^*$ if and only if the corresponding $R\in A\ot A\ot A$ is a canonical $R$-matrix of $A$. Notice firstly that, denoting
    \[
        e_ie_j = \sum_a \mu_{ij}^a e_a, \hspace{3em} \Delta(e_a^*) = \sum_{i,j} \zeta_{ij}^a e_i^*\ot e_j^*,
    \]
    it follows that
    \[
    \mu_{ij}^a = \langle e_a^*\mid e_ie_j\rangle = \langle e_{a_1}^*\mid e_i\rangle \langle e_{a_2}^*\mid e_j\rangle = \zeta_{ij}^a.
    \]
    Likewise, if we denote $1_A = \sum_i \alpha_i e_i$ then $\ep(e_i^*) = \langle e_i^*\mid 1_A\rangle = \alpha_i$. Denote, by slight abuse of notation as we implicitly use the linear isomorphism $(A^*\ot A^*\ot A^*)^*\cong A^{**}\ot A^{**}\ot A^{**}$,
    \[
    \RR = \sum_{i,j,r}\lambda_{ijr}\ev_{e_i}\ot \ev_{e_j}\ot \ev_{e_r}.
    \]
    Then it follows, utilizing the explicit linear isomorphism (\ref{eqn: linear isomorphisms}), that
    \[
    R = \sum_{i,j,r}\langle \RR\mid e_i^*\ot e_j^*\ot e_r^*\rangle e_i\ot e_j\ot e_r = \sum_{i,j,r}\lambda_{ijr}e_i\ot e_j\ot e_r.
    \]
    Conditions (\ref{cond1braiding}) for $R$ and (\ref{cond: comatrix 3}) for $\RR$ are both equivalent to the identity
    \[
    \sum_r \lambda_{irs}\mu_{ar}^j = \sum_r \lambda_{ijr}\mu_{ra}^s \hspace{1em} \text{for all }i,j,s,a,
    \]
    proving their equivalence. Analogously, one verifies immediately that conditions (\ref{cond2braiding}) for $R$ and (\ref{cond: normalizing condition}) for $\RR$ are both equivalent to the identities
    \[
    \sum_{i,j}\lambda_{ijb} \mu_{ij}^a = \alpha_a\alpha_b = \sum_{i,j}\lambda_{iaj}\mu_{ji}^b \hspace{1em} \text{for all }a,b.\qedhere
    \]
\end{proof}

\begin{remark}
While the finite-dimensionality of the algebra $A$ is needed in order for $A^*$ to be a coalgebra in general, this is not the case for coalgebras. One could be interested in possible generalizations of Proposition \ref{cor:bijectionsimmetries} omitting the finite-dimensionality assumptions. For instance, one may be able to completely drop the finite-dimensionality constraint on the coalgebra $C$, while for an algebra $A$ one could consider the \emph{Sweedler dual} $A^{\circ}$ of the algebra $A$ (see \cite[Chapter VI]{Sweedler}) and consider the relation between the symmetries of $({}_A\mm_A, \ot_A, A)$ and those of $({}^{A^{\circ}}\mm^{A^{\circ}},\square^{A^{\circ}}, A^{\circ})$.
\begin{invisible}
    , if $A^{\circ}$ is dense in $A^*$. The latter means that $f(a) = 0$ for all $f\in A^{\circ}$ if and only if $a=0$. The density assumption is reasonable, since there exist non-zero algebras $A$ such that $A^{\circ} = \{0\}$ \cite[page 114]{Sweedler}, \ls{whose space of bicomodules has no non-trivial braidings [not necessarily a problem? See below]}.

Explicitly, let $E/F$ be an infinite degree field extension, then $E$ is a well-defined algebra over $F$, yet its only ideals are $\{0\}$ and $E$. In particular, its only cofinite ideal is $E$, whence $E^{\circ} = \{0\}$ and thus $({}^{E^{\circ}}\mm^{E^{\circ}}, \square^{E^{\circ}}, E^{\circ})$ admits only the trivial zero braiding, since ${}^{E^{\circ}}\mm^{E^{\circ}}$ is the subcategory containing only the zero space. In particular, let $\Bbbk$ be a field of characteristic $p > 0$, and consider the infinite field extension $E/F = \Bbbk(X_i\mid i\geq 1) / \Bbbk(X_i^p\mid i\geq 1)$. \ls{[This should have the property that the inclusion is an epi of rings (since there is always only a unique extension of a field map $F\to E$ if $F$ has a unique $p$-th root of unity, which should follow from our assumptions by the Frobenius map?), and hence by Agore the category of $E$-bimodules is braided by the flip map. However, even here we find one braiding for each category, hence not a contradiction to there being a possible bijection? If works out, thank Carsten]} \ls{[By Agore (Prop 3.3), the only possible braiding is the flip, which does not work! So maybe dense is not needed.]}
\end{invisible}
\end{remark}
We now use Proposition \ref{cor:bijectionsimmetries} to obtain the following examples:

\begin{example}\label{ex:braidingbicomod}
The following coalgebras $C$ are such that the monoidal category $(^{C}\mm^{C},\square^{C},C)$ has a braiding.
\begin{itemize}
\item[1)] Consider the zero coalgebra $C$, then $({}^C\mm^C, \square^C, C)$ is the monoidal category with unique object $\{0\}$ and a unique morphism. This admits a unique braiding, corresponding to the canonical $R$-form $\ep\ot\ep\ot\ep = 0\colon C\ot C\ot C\to \Bbbk$.
\item[2)] The \textit{comatrix coalgebra} $C=M^{c}_{n}(\Bbbk)$. As a vector space this has a basis $\{f_{ij}\}_{i,j=1,\ldots, n}$, and the coproduct and counit are defined by
\[
\Delta(f_{ij})=\sum_{p=1}^n{f_{ip}\ot f_{pj}},\qquad \varepsilon(f_{ij})=\delta_{ij}.
\]
We have that $M^{c}_{n}(\Bbbk)\cong M_{n}(\Bbbk)^{*}$. In \cite[Example 3.9]{agore2014braidings} an $R$-matrix for the matrix algebra $A=M_{n}(\Bbbk)$ is constructed. This is given by $R=\sum_{i,j,k=1}^{n}{e_{ij}\ot e_{ki}\ot e_{jk}}$, where $e_{ij}$ is the elementary matrix with 1 in the $(i,j)$-position and 0 elsewhere. By Theorem \ref{thm:braidingsbimodtensorA}, $R$ corresponds to a symmetry on $(_{A}\mm_{A},\ot_{A},A)$ and then, by Proposition \ref{cor:bijectionsimmetries}, we have a symmetry on $(^{C}\mm^{C}, \square^{C},C)$. 
\item[3)] Let $\Bbbk$ be a field of characteristic different from 2, and $a,b\in\Bbbk^{\times}$. Consider the vector space $C$ with basis $\{1,i,j,l\}$. Define the coproduct and the counit in the following way
\begin{align*}
&\Delta(1)=1\ot1+ai\ot i+bj\ot j-abl\ot l,\quad \Delta(i)=1\ot i+i\ot1-bj\ot l+bl\ot j\\&
\Delta(j)=1\ot j+j\ot1+ai\ot l-al\ot i,\quad \Delta(l)=1\ot l+l\ot1+i\ot j-j\ot i
\end{align*}
and $\varepsilon(1)=1$, $\varepsilon(i)=\varepsilon(j)=\varepsilon(l)=0$. One has $C\cong A^{*}$, where $A$ is the generalized quaternion algebra, i.e.\ a vector space with basis $\{1,i,j,l\}$ and multiplication defined by $i^{2}=a$, $j^{2}=b$, $ij=-ji=l$. In \cite[Example 3.10]{agore2014braidings} an $R$-matrix for the generalized quaternion algebra $A$ is constructed. By Theorem \ref{thm:braidingsbimodtensorA}, $R$ corresponds to a symmetry on $(_{A}\mm_{A},\ot_{A},A)$ and then, by Proposition \ref{cor:bijectionsimmetries}, we have a symmetry on the category $(^{C}\mm^{C}, \square^{C},C)$.
\end{itemize}
\end{example}

\begin{invisible}
\ls{Notice that we have found very little examples of bicomodule categories which do admit braidings. This can be explained by the following insight that the only finite-dimensional coalgebras whose bicomodule categories admit a (necessarily unique) braiding are the comatrix coalgebras from Example \ref{ex:braidingbicomod}.}

\begin{proposition}[{\cite[Corollary 3.7]{agore2014braidings}}]
    Let $A$ be a finite-dimensional algebra, then $({}_A\mm_A, \ot_A, A)$ admits a braiding if and only if $A$ is a central simple algebra. Moreover, in this case the braiding is unique.
\end{proposition}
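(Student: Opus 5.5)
We split the statement into the two implications and treat uniqueness together with existence. For the direction ``braiding $\Rightarrow$ central simple'' we work directly with the canonical $R$-matrix provided by Theorem \ref{thm:braidingsbimodtensorA} and Theorem \ref{thm:semplifiedconditions}. For the converse, and for uniqueness, we transport the question to $\Vec_{\Bbbk}$ through a monoidal equivalence.

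\emph{Only if.} Let $R=R^1\ot R^2\ot R^3$ be the canonical $R$-matrix of a braiding, so that conditions 1)--5) of Theorem \ref{thm:braidingsbimodtensorA} and the normalizations \eqref{cond2braiding}, \eqref{cond3braiding} hold.

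We first show that the center is trivial. Take $z\in Z(A)$ and apply condition 1), namely $R^1\ot R^2\ot zR^3=R^1z\ot R^2\ot R^3$. Multiplying the first two tensor factors gives $R^1R^2\ot zR^3=R^1zR^2\ot R^3$. Since $z$ is central, the right-hand side equals $zR^1R^2\ot R^3$. Using $R^1R^2\ot R^3=1\ot 1$ on both sides, this becomes $1\ot z=z\ot 1$, hence $z\in\Bbbk 1$.

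Next we show that $A$ is separable. Choose a functional $\varphi\in A^*$ with $\varphi(1)=1$, which is possible as soon as $A\neq 0$. Put
\[
e:=\varphi(R^3)\,R^1\ot R^2\in A\ot A .
\]
Condition 2) says $aR^1\ot R^2\ot R^3=R^1\ot R^2a\ot R^3$; applying $\varphi$ to the third factor shows $ae=ea$ in the bimodule $A\ot A$. Moreover \eqref{cond2braiding} gives $m(e)=\varphi(1)1=1$. Thus $e$ is a separability idempotent, so $A$ is separable and in particular semisimple. A finite-dimensional semisimple algebra is a product of simple algebras, and its center is the product of their centers. Since $Z(A)=\Bbbk$, there is exactly one factor. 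Hence $A$ is simple with center $\Bbbk$, i.e.\ central simple.

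\emph{If, and uniqueness.} Suppose $A$ is central simple. Then $A\ot A^{\op}\cong\operatorname{End}_{\Bbbk}(A)$ (the Azumaya property), so ${}_A\mm_A\simeq{}_{A\ot A^{\op}}\mm$ is Morita equivalent to $\Vec_{\Bbbk}$. The equivalence is given by $V\mapsto V\ot A$, with quasi-inverse $M\mapsto M^A=\{m: am=ma \text{ for all } a\}$. This functor is strong monoidal, via the natural isomorphism
\[
(V\ot A)\ot_A(W\ot A)\cong V\ot W\ot A,
\]
and it sends the unit $\Bbbk$ to $A$.

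Braidings transport along monoidal equivalences. The flip on $\Vec_{\Bbbk}$ therefore yields a braiding on $({}_A\mm_A,\ot_A,A)$, which proves existence. For uniqueness it suffices to show that $(\Vec_{\Bbbk},\ot,\Bbbk)$ has exactly one braiding. Let $\sigma$ be any braiding on $\Vec_{\Bbbk}$. Every object is a direct sum of copies of $\Bbbk$, and $\sigma$ is natural and additive. Hence $\sigma$ is determined by the scalar $c=\sigma_{\Bbbk,\Bbbk}$, namely $\sigma_{V,W}=c\,\tau_{V,W}$. The hexagon identity for $\sigma_{\Bbbk,\Bbbk\ot\Bbbk}$ then forces $c=c^2$. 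Since $\sigma$ is invertible, $c\neq 0$, so $c=1$ and $\sigma=\tau$.

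The step I expect to be the main obstacle is verifying that $V\mapsto V\ot A$ is a strong monoidal equivalence, with coherent structure maps, and checking precisely where central simplicity is used. I would rely on the standard fact that $A$ is Azumaya over $\Bbbk$, so that $A\ot_{A^e}-$ and $(-)^A$ are mutually inverse. The monoidal structure map is then the evident one, and its compatibility with the associativity constraints can be checked on generators.
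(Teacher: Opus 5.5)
The paper gives no proof of this statement. It only quotes \cite[Corollary 3.7]{agore2014braidings}, in a hidden block, together with the authors' own doubt about whether a central simple algebra must be a matrix algebra over $\Bbbk$. So there is nothing to compare with line by line. Your argument is an independent proof, and it is correct for $A\neq 0$.

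\emph{Necessity.} You extract everything from the canonical $R$-matrix.
\begin{itemize}
\item Condition 1) of Theorem \ref{thm:braidingsbimodtensorA}, combined with \eqref{cond2braiding}, gives $1\ot z=z\ot 1$ for central $z$.
\item Condition 2), combined with \eqref{cond2braiding}, makes $\varphi(R^3)R^1\ot R^2$ a separability idempotent.
\item Central and separable then gives central simple via Wedderburn.
\end{itemize}

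\emph{Sufficiency and uniqueness.} You use the monoidal Morita equivalence $V\mapsto V\ot A$ together with the fact that the flip is the only braiding on $\Vec_{\Bbbk}$.

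The step you flag as the main obstacle is harmless. For every algebra $A$, the map $(v\ot a)\ot_A(w\ot b)\mapsto v\ot w\ot ab$ is a well-defined natural bimodule isomorphism, with inverse $v\ot w\ot a\mapsto (v\ot 1)\ot_A(w\ot a)$. Its coherence is immediate on generators. Central simplicity is needed only to make the functor an equivalence: from $A\ot A^{\mathrm{op}}\cong\operatorname{End}_{\Bbbk}(A)$ one gets $M^A\ot A\cong M$.

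A real advantage of your route is that it never uses $A\cong M_n(\Bbbk)$. This settles the authors' worry: it covers $M_n(D)$ for a central division algebra $D$. An example is the generalized quaternion algebras of Example \ref{ex:braidingbicomod}, which are division algebras for suitable $a,b$.

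The one thing to fix is the degenerate case $A=0$. Your choice of $\varphi$ with $\varphi(1)=1$, and the Wedderburn step ``exactly one factor'', both require $A\neq 0$. This is a gap in the statement itself, not only in your proof. For $A=0$, the category ${}_0\mm_0$ has one object and one morphism, so it is uniquely braided, with canonical $R$-matrix $0=1\ot 1\ot 1$. Yet $0$ is not a simple algebra. This is the algebra counterpart of the zero coalgebra in Example \ref{ex:braidingbicomod} 1). You should state and prove the result for nonzero $A$.
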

\ls{[This is not the complete statement, but it is the part relevant to us. However, by Wedderburn--Artin, a central simple algebra is isomorphic to a matrix algebra \textit{over a division ring}, but is this a matrix algebra over $\Bbbk$?]}
\end{invisible}

Moreover, one can obtain other examples on the tensor product of coalgebras in the following way, dualizing \cite[Example 3.12]{agore2014braidings}. We point out that the finite-dimensionality of $C$ or $D$ is assumed since we need $(C\ot D)^{*}\cong C^{*}\ot D^{*}$.

\begin{lemma}\label{lem:rtensorproduct}
    Let $C$ and $D$ be coalgebras with canonical $R$-forms $\RR_{C}:C\ot C\ot C\to\Bbbk$ and  $\RR_{D}:D\ot D\ot D\to \Bbbk$, respectively. If either $C$ or $D$ is finite-dimensional, then the following is a canonical $R$-form on the tensor product coalgebra $C\ot D$
\[
\RR_{\ot}:(C\ot D)\ot(C\ot D)\ot (C\ot D)\to\Bbbk,\ (c\ot d)\ot(c'\ot d')\ot(c''\ot d'')\mapsto\RR_{C}(c\ot c'\ot c'')\RR_{D}(d\ot d'\ot d'').
\]
\end{lemma}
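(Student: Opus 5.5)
We verify criterion b) of Proposition \ref{prop: equivalent conditions canonical R-comatrix} directly for $\RR_{\ot}$ on the tensor product coalgebra $C\ot D$. Its comultiplication is $\Delta(c\ot d)=(c_1\ot d_1)\ot(c_2\ot d_2)$ and its counit is $\ep(c\ot d)=\ep(c)\ep(d)$. By linearity it suffices to check every identity on simple tensors $x=c\ot d$, $y=c'\ot d'$, and so on. The point of using criterion b) is that it avoids proving convolution-invertibility by hand: the inverse then comes for free from \eqref{cond: symmetry}.

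\textbf{Normalizing conditions.} For $x=c\ot d$ and $y=c'\ot d'$,
\[
\RR_{\ot}(x_1\ot x_2\ot y)=\RR_C(c_1\ot c_2\ot c')\,\RR_D(d_1\ot d_2\ot d')=\ep(c)\ep(c')\ep(d)\ep(d')=\ep(x)\ep(y),
\]
using \eqref{cond: normalizing condition} for $C$ and for $D$ separately. The identity $\RR_{\ot}(y_2\ot x\ot y_1)=\ep(x)\ep(y)$ is proved in exactly the same way, since the Sweedler components of $y$ split factorwise.

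\textbf{Condition \eqref{cond: comatrix 3}.} We use its elementwise form \eqref{cond: comatrix 3 prime},
\[
\RR(p\ot q_2\ot r)\,q_1=\RR(p\ot q\ot r_1)\,r_2 .
\]
This is equivalent to \eqref{cond: comatrix 3} for all $f\in (C\ot D)^*$, because the family of all $f$ separates points. Take $p=c\ot d$, $q=c'\ot d'$, $r=c''\ot d''$. The left-hand side becomes
\[
\RR_C(c\ot c'_2\ot c'')\,c'_1\ \ot\ \RR_D(d\ot d'_2\ot d'')\,d'_1 .
\]
Applying \eqref{cond: comatrix 3 prime} to $C$ in the first factor and to $D$ in the second turns this into
\[
\RR_C(c\ot c'\ot c''_1)\,c''_2\ \ot\ \RR_D(d\ot d'\ot d''_1)\,d''_2 ,
\]
which is the right-hand side.

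The only point needing care is the role of finite-dimensionality. Condition \eqref{cond: comatrix 3} is quantified over all $f\in(C\ot D)^*$, while the factorwise argument naturally handles functionals of the form $f_C\ot f_D$. With \eqref{cond: comatrix 3 prime} no dual is needed at all. Equivalently, if $C$ or $D$ is finite-dimensional then $(C\ot D)^*\cong C^*\ot D^*$, so every $f$ is a finite sum of $f_C\ot f_D$. For such a decomposable functional, both convolutions factor as (convolution in $C$)$\ot$(convolution in $D$), and \eqref{cond: comatrix 3} splits into the corresponding conditions for $\RR_C$ and $\RR_D$. I expect this bookkeeping, namely confirming that the convolution identity reduces to the decomposable or elementwise case, to be the only real obstacle; it is handled by either of the two routes above.

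With \eqref{cond: comatrix 3} and \eqref{cond: normalizing condition} established, criterion b) holds, so $\RR_{\ot}$ is a canonical $R$-form of $C\ot D$.
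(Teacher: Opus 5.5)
Your argument is correct, and it proves more than the statement asks. The paper gives no written proof of this lemma. It only says that it dualizes the corresponding bimodule example of Agore--Caenepeel--Militaru, and that finite-dimensionality is assumed in order to have $(C\ot D)^*\cong C^*\ot D^*$. So the intended verification writes the test functional $f$ in \eqref{cond: comatrix 3} as a finite sum of functionals $f_C\ot f_D$. For such functionals each convolution identity splits into the corresponding identities for $\RR_C$ and $\RR_D$. That is exactly your second, ``equivalently'' route.

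Your first route uses the elementwise form \eqref{cond: comatrix 3 prime} instead, and it needs no duals:
\begin{itemize}
\item The implication \eqref{cond: comatrix 3 prime}$\Rightarrow$\eqref{cond: comatrix 3} only requires applying an arbitrary $f\in(C\ot D)^*$ to both sides. You do not even need that functionals separate points; that is only used for the converse direction.
\item Both \eqref{cond: comatrix 3 prime} and the normalizing conditions \eqref{cond: normalizing condition} are multilinear, so checking them on simple tensors suffices.
\item The implication b)$\Rightarrow$a) of Proposition \ref{prop: equivalent conditions canonical R-comatrix} is proved elementwise for an arbitrary coalgebra.
\end{itemize}
Hence your first route shows that $\RR_{\ot}$ is a canonical $R$-form of $C\ot D$ for arbitrary coalgebras $C$ and $D$. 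The finite-dimensionality hypothesis is superfluous.

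You should say this explicitly rather than presenting the two routes as interchangeable. Only the second route uses the hypothesis. The first is both shorter and strictly more general.
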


Dual to \cite[Proposition 3.3]{agore2014braidings} we are able to characterize completely the braidings on the bicomodules over a cocommutative coalgebra. We will see that this setting is more restrictive than that of algebras.

\begin{proposition}\label{prop: unit canonical R-form iff condition}
    Let $C$ be a coalgebra, then the following statements hold:
    \begin{enumerate}
        \item if $C$ admits a decomposable canonical $R$-form $\RR = f\ot g\ot p\in C^*\ot C^*\ot C^*$, then this is $\ep\ot \ep\ot \ep$,
        \item $\ep\ot\ep\ot\ep$ is a canonical $R$-form of $C$ if and only if $\ep\ot f = f\ot \ep$ for all $f\in C^*$ if and only if $\ep^*\colon \Bbbk\to C^*$ is an epimorphism of rings.
    \end{enumerate}
\end{proposition}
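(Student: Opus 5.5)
The plan is to work throughout with characterization b) of Proposition \ref{prop: equivalent conditions canonical R-comatrix}: a canonical $R$-form is exactly a linear map $\RR$ satisfying \eqref{cond: comatrix 3 prime} together with the normalizing conditions \eqref{cond: normalizing condition}. Both parts then reduce to direct evaluations of these identities.

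\textbf{Part (2).} For $\RR=\ep\ot\ep\ot\ep$ the normalizing conditions hold trivially, since $\ep(x_1)\ep(x_2)=\ep(x)$. Hence $\RR$ is a canonical $R$-form if and only if \eqref{cond: comatrix 3 prime} holds. With this $\RR$, that identity reads $\ep(p)q\ep(r)=\ep(p)\ep(q)r$ for all $q,r$, that is, $q\ep(r)=\ep(q)r$ in $C\ot\Bbbk\cong C$. This is the condition $\ep\ot\id=\id\ot\ep$ as maps $C\ot C\to C$.

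Dualizing, it becomes $(\ep\ot f)=(f\ot\ep)$ on $C\ot C$ for all $f\in C^*$: one direction is by applying $f$, the other because $C^*$ separates points. Equivalently, the two maps $C^*\rightrightarrows C^*\ot_\Bbbk C^*\subseteq(C\ot C)^*$ induced by $\ep^*\colon\Bbbk\to C^*$ agree. Recall that a ring map $\Bbbk\to B$ is an epimorphism if and only if $b\ot1=1\ot b$ in $B\ot_\Bbbk B$ for all $b$. Here the unit of $C^*$ is $\ep$, so $f\ot\ep=\ep\ot f$ in $C^*\ot C^*$ is exactly this criterion, and the embedding $C^*\ot C^*\hookrightarrow(C\ot C)^*$ lets us pass between the two formulations.

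\textbf{Part (1).} Assume $\RR=f\ot g\ot p$, so $\RR(x\ot y\ot z)=f(x)g(y)p(z)$. Applying the normalizing condition $\RR(x_1\ot x_2\ot y)=\ep(x)\ep(y)$ gives $(f\ast g)(x)\,p(y)=\ep(x)\ep(y)$. Hence $p=\lambda\ep$ and $f\ast g=\lambda^{-1}\ep$ for some nonzero scalar $\lambda$, and absorbing scalars we may take $p=\ep$ and $f\ast g=\ep$. The second normalizing condition gives $f(y_2)g(x)p(y_1)=\ep(x)\ep(y)$, so $f=\mu\ep$ and $g=\mu^{-1}\ep$. Absorbing scalars again, this yields $\RR=\ep\ot\ep\ot\ep$.

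The main obstacle is the scalar bookkeeping in Part (1). One must check that the product of the scalars is $1$; this follows by evaluating $\RR(x_1\ot x_2\ot y)$ once more. One must also handle the degenerate case $C=0$, where every functional equals $\ep\ot\ep\ot\ep=0$ anyway. A secondary point is making the ring-epimorphism criterion precise for possibly infinite-dimensional $C$, using the injectivity of $C^*\ot C^*\to(C\ot C)^*$.
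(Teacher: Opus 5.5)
Your proposal is correct. Part (2) follows the paper's argument; Part (1) takes a different and shorter route.

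\textbf{Part (2).} You use characterization b) of Proposition \ref{prop: equivalent conditions canonical R-comatrix}. The normalizing conditions are automatic for $\ep\ot\ep\ot\ep$, so everything reduces to \eqref{cond: comatrix 3}, i.e.\ $f\ot\ep=\ep\ot f$. The paper simply cites \cite[Proposition 3.3]{agore2014braidings} for the ring-epimorphism reformulation. You instead spell out the standard criterion $b\ot 1=1\ot b$ in $B\ot_\Bbbk B$, together with injectivity of $C^*\ot C^*\to(C\ot C)^*$, which is fine.

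\textbf{Part (1).} The paper works with characterization a). It substitutes $f\ot g\ot p$ into \eqref{cond: comatrix 4} and \eqref{cond: comatrix 5} and uses convolution invertibility of $\RR$. This gives identities of the shape $\ep\ot\ep=f\ot(p\ast g)$ and $\ep\ot\ep=(g\ast f)\ot p$, from which it infers that $g$ is convolution invertible. It then uses \eqref{cond: comatrix 2} to move $g$ through $\RR$ and obtain $\RR=(g\ast f)\ot\ep\ot p=\ep\ot\ep\ot\ep$.

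You use only the normalizing conditions \eqref{cond: normalizing condition}. For a pure tensor these become the rank-one identities $(f\ast g)(x)\,p(y)=\ep(x)\ep(y)$ and $(p\ast f)(y)\,g(x)=\ep(x)\ep(y)$. Since $\ep\neq 0$ when $C\neq 0$, they force $p$, then $g$, then $f$ to be scalar multiples of $\ep$; for $f$ you use that $p\ast f=\lambda f$ when $p=\lambda\ep$. The scalars then multiply to $1$ automatically, so the extra check you mention is harmless but unnecessary.

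\textbf{Comparison.} Your version avoids all invertibility bookkeeping. It leans on the already-established equivalence a)$\Leftrightarrow$b), whose proof is where normalization is actually derived from invertibility and the braid-type axioms. The paper's version is the direct dual of the Agore--Caenepeel--Militaru argument. It shows that the braid-type axioms plus invertibility already force a decomposable form to be trivial, without passing through the normalizing conditions.
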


Let us study when a non-zero coalgebra $C$ admits a decomposable canonical $R$-form in $C^*\ot C^*\ot C^*$.

\begin{corollary}\label{cor: non-zero coalgebra unit r-form iff field}
    If $C$ is a non-zero coalgebra, then $C$ admits a decomposable canonical $R$-form $\RR = f\ot g\ot p\in C^*\ot C^*\ot C^*$ if and only if $C \cong \Bbbk$, in which case the induced braiding is the flip map. 
\end{corollary}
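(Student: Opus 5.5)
By Proposition \ref{prop: unit canonical R-form iff condition}(1), any decomposable canonical $R$-form $\RR=f\ot g\ot p$ must equal $\ep\ot\ep\ot\ep$. By part (2) of the same proposition, this holds if and only if $\ep\ot h=h\ot\ep$ in $(C\ot C)^*$ for all $h\in C^*$. So the plan is to show that, for $C\neq 0$, the condition $\ep\ot h=h\ot\ep$ for all $h\in C^*$ forces $C\cong\Bbbk$, and conversely that $\Bbbk$ satisfies it.

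For the forward direction, evaluate $\ep\ot h=h\ot\ep$ on an elementary tensor $x\ot y$:
\[
\ep(x)h(y)=h(x)\ep(y)\quad\text{for all }x,y\in C,\ h\in C^*.
\]
Since $C\neq 0$ and $\ep(x_1)x_2=x$, the counit is nonzero, so we may choose $x$ with $\ep(x)=1$. Then $h(y)=h(x)\ep(y)=h(\ep(y)x)$ for every $h\in C^*$. Since $C^*$ separates points, $y=\ep(y)x$ for all $y$, so $C=\Bbbk x$ is one-dimensional. Applying $\ep\ot\id$ to $\Delta(x)=\lambda x\ot x$ gives $\lambda=1$, so $x$ is grouplike and $C\cong\Bbbk$ as coalgebras.

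Alternatively, one can invoke Proposition \ref{prop:trivialcanonicalform}(1) once a grouplike element is found, but the direct argument above already supplies it.

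For the converse, $C=\Bbbk$ satisfies $\ep\ot h=h\ot\ep$ trivially, since $C^*=\Bbbk\ep$. Hence $\ep\ot\ep\ot\ep$ is a canonical $R$-form, and it is clearly decomposable.

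It remains to identify the induced braiding. By \eqref{eqn: definition of braiding},
\[
\sigma_{V,W}(v\square w)=\ep(v_{(-1)})\ep(v_{(1)})\ep(w_{(1)})\,w_{(0)}\square v_{(0)}=w\square v,
\]
which is the flip. Over $C=\Bbbk$, the cotensor product is just $\ot$, so this is the usual flip of vector spaces.

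The only mildly delicate point is that part (2) of Proposition \ref{prop: unit canonical R-form iff condition} is phrased with $\ep\ot f=f\ot\ep$ in $C^*\ot C^*$. I read this as an identity of functionals on $C\ot C$, which is exactly the evaluation used above, so no subtlety arises.
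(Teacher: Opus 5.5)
Your proof is correct and takes essentially the same route as the paper: you reduce via Proposition~\ref{prop: unit canonical R-form iff condition} to the condition $\ep\ot h=h\ot\ep$ for all $h\in C^*$, pick an element of counit $1$, and conclude that $C$ is spanned by it. You also spell out the grouplike check, the converse, and the verification that the induced braiding is the flip, which the paper leaves implicit.
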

\begin{proof}
     By Proposition \ref{prop: unit canonical R-form iff condition}, $C$ admits a decomposable canonical $R$-form in $C^*\ot C^*\ot C^*$ if and only if this is $\ep\ot\ep\ot\ep$ if and only if $\ep\ot f= f\ot \ep$ for all $f\in C^*$. The latter is equivalent to $x\ep(y)=\ep(x)y$ for all $x,y\in C$. As $C\neq \{0\}$ there exists an element $z\in C$ such that $\ep(z) = 1$. Hence for all $y\in C$ it holds that $y = z\ep(y)$, which defines a coalgebra isomorphism $C = \Bbbk z\cong \Bbbk$.
 \end{proof}
By Corollary \ref{cor: non-zero coalgebra unit r-form iff field}, canonical $R$-forms of non-trivial coalgebras are rather involved.

\begin{corollary}\label{cor: cocommutative braided}
    If $C$ is a non-zero cocommutative coalgebra, then $({}^{C}{\mm}^C, \square^C, C)$ admits a braiding if and only if $C\cong \Bbbk$, in which case the braiding is the flip map.
\end{corollary}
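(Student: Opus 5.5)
The plan is to show that for a cocommutative coalgebra $C$ any canonical $R$-form must be $\ep\ot\ep\ot\ep$. Then Corollary \ref{cor: non-zero coalgebra unit r-form iff field} and the classification in Theorem \ref{theorem: braidings on bicomodules} finish the argument. The converse direction is immediate. If $C\cong\Bbbk$, then $\ep\ot\ep\ot\ep$ is a canonical $R$-form by Proposition \ref{prop: unit canonical R-form iff condition}, since $\ep\ot f=f\ot\ep$ trivially when $C^*\cong\Bbbk$. The induced braiding \eqref{eqn: definition of braiding} is then $v\square w\mapsto w_{(0)}\square v_{(0)}\ep(v_{(-1)})\ep(w_{(1)})=w\square v$, which is the flip.

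For the forward direction, suppose $\sigma$ is a braiding on $({}^C\mm^C,\square^C,C)$, and let $\RR$ be the associated canonical $R$-form from Theorem \ref{theorem: braidings on bicomodules}. The key step uses condition \eqref{cond: comatrix 3 prime}, $\RR(p\ot q_2\ot r)q_1=\RR(p\ot q\ot r_1)r_2$, together with the normalizing conditions \eqref{cond: normalizing condition} and \eqref{cond: third normalizing condition} and the cyclicity \eqref{cond: cyclic conditions}. Take $r=q$ in \eqref{cond: comatrix 3 prime}. Cocommutativity gives $q_1\ot q_2=q_2\ot q_1$, so the right-hand side becomes $\RR(p\ot q_2\ot q_1)\cdots$. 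I would combine this with the normalization and cyclicity to obtain $\RR(p\ot q\ot r)=\ep(p)\ep(q)\ep(r)$.

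A cleaner route is to mimic the proof of Proposition \ref{prop:trivialcanonicalform}. Apply $\ep$ suitably in \eqref{cond: comatrix 3 prime} to get, for each fixed $p,r$, an identity of the form $\RR(p\ot q_2\ot r)q_1=\RR(p\ot q\ot r_1)r_2$. Then pair against an arbitrary $f\in C^*$ to obtain the functional identity \eqref{cond: comatrix 3}, $(\ep\ot f\ot\ep)\ast\RR=\RR\ast(\ep\ot\ep\ot f)$. By cyclicity this also gives \eqref{cond: comatrix 1} and \eqref{cond: comatrix 2}. In the convolution algebra $(C^{*})^{\ot3}\subseteq (C^{\ot3})^*$, which is commutative because $C$ is cocommutative, these identities read $(\ep\ot f\ot\ep)\ast\RR=(\ep\ot\ep\ot f)\ast\RR$. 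Since $\RR$ is convolution-invertible, with $\RR^{-1}=\RR(\tau\ot\id)$ by \eqref{cond: symmetry}, I can cancel $\RR$ and obtain $\ep\ot f\ot\ep=\ep\ot\ep\ot f$ for all $f\in C^*$. Evaluating gives $\ep(x)\ep(y)\ep(z)$-type relations, which amount to $\ep\ot f=f\ot\ep$ on $C\ot C$. By Proposition \ref{prop: unit canonical R-form iff condition} and Corollary \ref{cor: non-zero coalgebra unit r-form iff field}, nonzero $C$ is then isomorphic to $\Bbbk$. The braiding is the flip, because $\RR=\ep^{\ot3}$ is the unique canonical $R$-form on $\Bbbk$.

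The main obstacle is making the cancellation rigorous. The convolution algebra $(C\ot C\ot C)^*$ is commutative when $C$ is cocommutative, since its coalgebra structure is the tensor product coalgebra. Thus $(\ep\ot f\ot\ep)\ast\RR=\RR\ast(\ep\ot f\ot\ep)$, and \eqref{cond: comatrix 3} becomes $\RR\ast(\ep\ot f\ot\ep)=\RR\ast(\ep\ot\ep\ot f)$. Left-multiplying by $\RR^{-1}$ then yields the desired identity. I expect no further difficulty beyond checking that commutativity holds in the full dual $(C^{\ot3})^*$ rather than only on decomposable tensors, which follows directly from cocommutativity of $\Delta_{C^{\ot3}}$.
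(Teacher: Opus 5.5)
Your ``cleaner route'' is correct, but it is not the argument the paper uses.

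\textbf{The paper's route.} The paper shows directly that $\RR$ is convolution-idempotent. It uses \eqref{cond: comatrix 4 prime} and \eqref{cond: comatrix 3 prime} together with cocommutativity to get $\RR=\RR\ast\RR$. Invertibility then gives $\RR=\ep\ot\ep\ot\ep$, and Corollary \ref{cor: non-zero coalgebra unit r-form iff field} yields $C\cong\Bbbk$.

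\textbf{Your route.} You use that $(C\ot C\ot C)^*$ is a commutative algebra when $C$ is cocommutative. The axiom \eqref{cond: comatrix 3} then becomes $\RR\ast(\ep\ot f\ot\ep)=\RR\ast(\ep\ot\ep\ot f)$. Cancelling $\RR$ gives $\ep(x)f(y)\ep(z)=\ep(x)\ep(y)f(z)$. Choosing $x$ with $\ep(x)=1$, which is possible since $C\neq 0$, you obtain $f\ot\ep=\ep\ot f$ for all $f\in C^*$. This is exactly the criterion of Proposition \ref{prop: unit canonical R-form iff condition}(2), and it forces $C\cong\Bbbk$. Only afterwards is $\RR=\ep^{\ot3}$ forced by the normalizing conditions.

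\textbf{Comparison.} Your argument needs a single axiom plus invertibility. It is the dual of the natural argument for commutative algebras, where condition 3) of Theorem \ref{thm:braidingsbimodtensorA} and invertibility of $R$ give $1\ot a\ot 1=1\ot 1\ot a$. This makes transparent exactly where cocommutativity enters. The paper's computation instead identifies $\RR$ itself for every cocommutative $C$, including the zero coalgebra, via a short Sweedler calculation with two axioms.

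\textbf{Minor remarks.}
\begin{itemize}
\item Your first sketch, putting $r=q$ in \eqref{cond: comatrix 3 prime} and ``combining with normalization and cyclicity'', is never completed; drop it.
\item \eqref{cond: comatrix 3} is already a defining axiom of a canonical $R$-form, so there is nothing to derive there, and the appeal to \eqref{cond: comatrix 1} and \eqref{cond: comatrix 2} is superfluous.
\item Your flip computation omits the factor $\ep(v_{(1)})$, which is harmless.
\end{itemize}
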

\begin{proof}
     Let $C$ be a cocommutative coalgebra and $\RR$ a canonical $R$-form of $C$. Then for any $p\ot q \ot r \in C\ot C\ot C$ it holds that
        \begin{align*}
             \RR(p\ot q\ot r) &= \RR(p\ot q_1\ot r)\ep(q_2)
             \\&\overset{(\ref{cond: comatrix 4 prime})}{=} \RR(p_1\ot q_1\ot q_{21})\RR(p_2\ot q_{22}\ot r)
             \\&\overset{(\ref{cond: comatrix 3 prime})}{=} \RR(p_1\ot q_1\ot r_2)\RR(p_2\ot q_2\ot r_1)
             \\&= \RR(p_1\ot q_1\ot r_1)\RR(p_2\ot q_2\ot r_2)
             = (\RR\ast \RR)(p\ot q \ot r).
         \end{align*}
         As $\RR$ is convolution invertible, this implies that $\RR= \ep\ot \ep\ot \ep$, and its induced braiding is the flip map. By Corollary \ref{cor: non-zero coalgebra unit r-form iff field} this implies that $C\cong \Bbbk$.
\end{proof}

Finally, dual to \cite[Theorem 3.2.11]{ASthesis}, we now study the infinitesimal braidings on the category of $C$-bicomodules, for an arbitrary coalgebra $C$. 

\begin{proposition}\label{prop: infinitesimal braidings characterization}
    Let $C$ be a coalgebra and $\sigma$ a braiding on the category $(^C\mm^C, \square^C, C)$ with corresponding canonical $R$-form $\RR$. Then the infinitesimal braidings on $(^C\mm^C, \square^C, C, \sigma)$ are in bijection to linear functionals $\chi\colon C\ot C\ot C\to \Bbbk$ satisfying, for all $f\in C^*$, the following identities:
    \begin{align}
        &\chi\ast(\ep\ot f\ot \ep) = (\ep\ot f\ot \ep)\ast \chi,\label{cond: inf comatrix 1}
        \\&\chi\ast (f\ot \ep\ot \ep) = (f\ot \ep\ot \ep)\ast \chi,\label{cond: inf comatrix 2}
        \\&\chi\ast (\ep\ot\ep\ot f) = (\ep\ot\ep\ot f)\ast \chi,\label{cond: inf comatrix 3}
        \\ &(\chi\ot \ep)(\id\ot\id\ot\tau) = \chi\ot\ep + (\RR\ot\ep)(\id\ot\tau\ot\id)\ast (\RR\ot\ep)\ast (\ep\ot\chi)(\tau\ot\id\ot\id),\label{cond: inf comatrix 4}
        \\ &(\ep\ot\chi)(\tau\ot\id\ot\id) = \ep\ot\chi + (\chi\ot\ep)(\id\ot\id\ot\tau)\ast (\ep\ot\RR)\ast (\ep\ot\RR)(\id\ot\tau\ot\id).\label{cond: inf comatrix 5}
    \end{align}
\end{proposition}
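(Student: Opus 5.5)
I will mirror the proof of Theorem \ref{theorem: braidings on bicomodules}, replacing the braiding by the infinitesimal braiding $t$. The key tool is again the family of bicomodule morphisms $\hat f\colon V\to C\ot C$, $v\mapsto v_{(-1)}\ot f(v_{(0)})v_{(1)}$, for $f\in V^*$. These satisfy $(\ep\ot\ep)\hat f=f$, where $C\ot C$ carries the coactions $\Delta\ot\id$ and $\id\ot\Delta$. I also use the isomorphism $V\ot C\ot W\cong (V\ot C)\square^C(C\ot W)$ of Remark \ref{lemma: left right unitors for vec}.

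\textbf{From $t$ to $\chi$.} Given $t$, set
\[
\chi:=\ep^{\ot 4}\,e_{C\ot C,C\ot C}\,t_{C\ot C,C\ot C}\,(\id\ot\Delta\ot\id)\colon C\ot C\ot C\to\Bbbk .
\]
Naturality of $t$ with respect to $\hat f\square\hat g$, together with non-degeneracy of evaluation (exactly as for $\sigma$), should give
\[
t_{V,W}(v\square w)=\chi(v_{(-1)}\ot v_{(1)}\ot w_{(1)})\,v_{(0)}\square w_{(0)},
\]
and equivalently the same formula with $w_{(-1)}$ replacing $v_{(1)}$, since the two agree on $V\square W$. This shows $t\mapsto\chi$ is injective.

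\textbf{Extracting the conditions.} I then translate each requirement on $t$, evaluated on $V=W=C\ot C$ and elements $p\ot q_1\square q_2\ot r$, and apply suitable counits.
\begin{itemize}
\item[(i)] $t_{V,W}$ lands in $V\square W$. Applying $\ep\ot\ep\ot\id\ot\ep\ot\ep$ to the corresponding identity should yield \eqref{cond: inf comatrix 1}; with $t$ there is no flip, so the middle slot is the one that commutes.
\item[(ii)] $t$ is left and right $C$-colinear. Applying $\id\ot\ep^{\ot5}$ and $\ep^{\ot5}\ot\id$ should yield \eqref{cond: inf comatrix 2} and \eqref{cond: inf comatrix 3}.
\item[(iii)] The infinitesimal braid identities \eqref{eqn: inf braid 1} and \eqref{eqn: inf braid 2}. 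Evaluate them on $p\ot q_1\square q_2\ot r_1\square r_2\ot u$, insert the explicit formula \eqref{eqn: definition of braiding} for $\sigma$ and its inverse (which is $\sigma$ itself, by Theorem \ref{theorem: braidings on bicomodules}), and apply $\ep^{\ot6}$. This should produce \eqref{cond: inf comatrix 4} and \eqref{cond: inf comatrix 5}. Each side contains one $\chi$ together with an $\RR$ and an $\RR^{-1}=\RR(\tau\ot\id)$, which accounts for the two $\RR$-factors, one carrying a $\tau$.
\end{itemize}

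\textbf{From $\chi$ to $t$.} Conversely, given $\chi$ satisfying the five identities, define $t_{V,W}$ by the displayed formula restricted to $V\square W$. Running the previous computations backwards gives the following.
\begin{itemize}
\item[(a)] \eqref{cond: inf comatrix 1} gives that the image lies in $V\square W$.
\item[(b)] \eqref{cond: inf comatrix 2} and \eqref{cond: inf comatrix 3} give $C$-bicolinearity.
\item[(c)] Naturality is immediate from colinearity of morphisms.
\item[(d)] \eqref{cond: inf comatrix 4} and \eqref{cond: inf comatrix 5} give the two infinitesimal braid identities.
\end{itemize}
The two assignments are mutually inverse. The formula recovers $t$, and evaluating the formula on $(\id\ot\Delta\ot\id)(p\ot q\ot r)$ and applying $\ep^{\ot4}$ returns $\chi$ by counitality.

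\textbf{Main obstacle.} I expect step (iii) to be the hardest: the Sweedler bookkeeping for the infinitesimal braid identities. The conjugating terms $(\sigma^{-1}\ot\id)(\id\ot t)(\sigma\ot\id)$ must be rewritten in the convolution form of \eqref{cond: inf comatrix 4} and \eqref{cond: inf comatrix 5}. This requires repeatedly moving comultiplication indices between slots using \eqref{cond: comatrix 1 prime}--\eqref{cond: comatrix 3 prime} and their $\chi$-analogues coming from \eqref{cond: inf comatrix 1}--\eqref{cond: inf comatrix 3}. It is also the step needed to show the reconstructed $t$ satisfies the identities on arbitrary bicomodules and not just on $C\ot C$. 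My approach would be to reduce first to cofree bicomodules $C\ot X\ot C$ by naturality, and then carry out the computation there.
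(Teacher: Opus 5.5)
Your proposal follows the paper's route: the paper also transports the proof of Theorem~\ref{theorem: braidings on bicomodules} by defining $\chi$ from $t_{C\ot C,C\ot C}$. It rewrites the five conditions pointwise (using \eqref{cond: comatrix 2 prime} and \eqref{cond: comatrix 3 prime}) and matches them one-by-one with $t_{V,W}$ landing in $V\square^C W$, left and right colinearity, and the two infinitesimal braid identities, just as you propose. Your reduction to cofree bicomodules is harmless but unnecessary, since the pointwise identities can be applied directly to the Sweedler components of arbitrary bicomodules, as in that theorem's braid-relation computation.
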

\begin{proof}
     Notice that, by utilizing conditions (\ref{cond: comatrix 2 prime}) and (\ref{cond: comatrix 3 prime}) for (\ref{cond: inf comatrix 4}) respectively (\ref{cond: inf comatrix 5}), conditions (\ref{cond: inf comatrix 1})--(\ref{cond: inf comatrix 5}) are one-by-one equivalent to stating that for all $p,q,r,t\in C$ 
   \begin{align}
         &\chi(p\ot q_1\ot r)q_2 = \chi(p\ot q_2\ot r)q_1,\tag{33'}\label{cond: inf comatrix 1 prime}
         \\&\chi(p_1\ot q\ot r)p_2 = \chi(p_2\ot q\ot r)p_1,\tag{34'}\label{cond: inf comatrix 2 prime}
         \\&\chi(p\ot q\ot r_1)r_2 = \chi(p\ot q\ot r_2)r_1,\tag{35'}\label{cond: inf comatrix 3 prime}
         \\&\chi(p\ot q \ot u)\ep(r) = \chi(p\ot q\ot r)\ep(u) + \RR(p_1\ot q_2\ot r_2)\chi(p_2\ot r_3\ot u)\RR(q_3\ot r_1\ot q_1),\tag{36'}\label{cond: inf comatrix 4 prime}
         \\&\chi(p\ot r\ot u)\ep(q) = \chi(q\ot r\ot u)\ep(p)+ \RR(q_2\ot r_2\ot u_2)\chi(p\ot q_1\ot u_1)\RR(r_3\ot q_3\ot r_1).\tag{37'}\label{cond: inf comatrix 5 prime}
     \end{align}
 Completely analogously to the proof of Theorem \ref{theorem: braidings on bicomodules} one finds that an infinitesimal braiding $t$ is of the form
 \begin{equation}\label{eqn: definition of inf braiding}
 t_{V,W}(v\square w) = \chi(v_{(-1)}\ot v_{(1)}\ot w_{(1)})v_{(0)}\square w_{(0)} = \chi(v_{(-1)}\ot w_{(-1)}\ot w_{(1)})v_{(0)}\square w_{(0)},
 \end{equation}
 and conditions (\ref{cond: inf comatrix 1 prime})--(\ref{cond: inf comatrix 5 prime}) can be shown to be equivalent to the respective properties that $t_{V,W}$ has image in $V\square^C W$, $t_{V,W}$ is left respectively right $C$-colinear, and the infinitesimal braid equations (\ref{eqn: inf braid 1}) and (\ref{eqn: inf braid 2}).
\end{proof}


As a consequence we can deduce that $(^C\mm^C, \square^C, C)$ admits no non-zero infinitesimal braidings, for any braiding $\sigma$ thereon.

\begin{theorem}\label{thm:infbraidbicomod}
    Let $C$ be a coalgebra. For any braiding $\sigma$ on $({}^C\mm^C, \square^C, C)$, there is no non-zero infinitesimal braiding on $({}^C\mm^C, \square^C, C, \sigma)$.
\end{theorem}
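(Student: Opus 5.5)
The plan is to work entirely with the element-wise description from Proposition \ref{prop: infinitesimal braidings characterization}. By that result, an infinitesimal braiding $t$ has the form \eqref{eqn: definition of inf braiding} for a functional $\chi\colon C\ot C\ot C\to\Bbbk$ satisfying \eqref{cond: inf comatrix 1 prime}--\eqref{cond: inf comatrix 5 prime}. So it suffices to prove $\chi=0$ on $C\ot C\ot C$, and then $t_{V,W}=0$ for all $V,W$.

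First, I extract ``normalizing'' identities for $\chi$, analogous to \eqref{cond: normalizing condition} and \eqref{cond: third normalizing condition} for $\RR$.
\begin{itemize}
\item Specialize \eqref{cond: inf comatrix 4 prime} at $u=r$. The two $\varepsilon$-terms become $\chi(p\ot q\ot r_1)\varepsilon(r_2)$ on both sides and cancel. What remains is
\[
\RR(p_1\ot q_2\ot r_2)\,\chi(p_2\ot r_3\ot r_4)\,\RR(q_3\ot r_1\ot q_1)=0 \qquad\text{for all } p,q,r.
\]
\item I then simplify the two $\RR$-factors with \eqref{cond: comatrix 1 prime}--\eqref{cond: comatrix 3 prime}, the cyclic condition \eqref{cond: cyclic conditions} and the normalizing conditions. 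The aim is a form $(\RR\ast(\text{something built from }\chi))=0$, or a sandwich $\RR'\ast\Phi\ast\RR''$ with $\RR',\RR''$ convolution invertible by \eqref{cond: symmetry}.
\item Cancelling $\RR$ by convolution should then give a normalization such as $\chi(p\ot r_1\ot r_2)=0$ for all $p,r$.
\item Symmetrically, specialize \eqref{cond: inf comatrix 5 prime} at $p=q$ (again the $\varepsilon$-terms cancel) to obtain $\chi(q_1\ot q_2\ot u)=0$.
\end{itemize}
Combining these with the ``commutation'' relations \eqref{cond: inf comatrix 1 prime}--\eqref{cond: inf comatrix 3 prime}, which let coproduct legs move between slots, I expect to also get $\chi(r_2\ot q\ot r_1)=0$. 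Thus $\chi$ would vanish whenever two of its arguments are adjacent coproduct legs, in any cyclic position.

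Second, I return to the general identity \eqref{cond: inf comatrix 4 prime} with $r$ arbitrary and apply $\varepsilon$ cleverly. Concretely, I take $r=q$-legs or insert $\varepsilon(r)=1$ for a fixed $r$ with $\varepsilon(r)=1$ (the case $C=0$ is trivial). The goal is to rewrite $\chi(p\ot q\ot u)$ as a combination of terms in which $\chi$ is evaluated with two adjacent coproduct legs; by the first step these vanish. A natural attempt is to plug $r=u$-related or $q$-related elements into the sandwich term. Then \eqref{cond: comatrix 4 prime} and \eqref{cond: comatrix 5 prime} should convert the product of two $\RR$'s into a single $\varepsilon$-factor, exactly as in the proof of Corollary \ref{cor: cocommutative braided}. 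This would force the remaining $\chi$ to have repeated legs.

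An alternative I would try in parallel mirrors the argument of Corollary \ref{cor: cocommutative braided}. There, idempotence under convolution forced $\RR=\varepsilon^{\ot3}$. Here the linear (derivation-like) identities \eqref{cond: inf comatrix 4 prime} and \eqref{cond: inf comatrix 5 prime} should give $\chi=\chi+\chi'$ with $\chi'$ a convolution-conjugate of $\chi$ by $\RR$, and then show $\chi'=\chi$. That would yield $\chi=2\chi-\chi=0$ without needing characteristic assumptions.

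The main obstacle is the first step: untangling the $\RR$-sandwich $\RR(p_1\ot q_2\ot r_2)\,\cdot\,\RR(q_3\ot r_1\ot q_1)$ into a genuine convolution with an invertible factor. I would first try to use \eqref{cond: comatrix 3 prime} to move the leg $q_1$ into an $r$-slot. Then the cyclic condition \eqref{cond: cyclic conditions} should put both $\RR$'s in the shape of \eqref{cond: comatrix 4 prime} or of $\RR\ast\RR(\tau\ot\id)=\varepsilon^{\ot3}$ from \eqref{cond: symmetry}, collapsing them to $\varepsilon$'s.
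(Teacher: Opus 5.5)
\textbf{There is a genuine gap: the one computation the proof consists of is never carried out.}

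Your opening move is exactly the paper's. Putting $r\ot u=r_1\ot r_2$ in \eqref{cond: inf comatrix 4 prime} cancels the $\ep$-terms and leaves
\[
\Sigma(p,q,r):=\RR(p_1\ot q_2\ot r_2)\,\chi(p_2\ot r_3\ot r_4)\,\RR(q_3\ot r_1\ot q_1)=0 .
\]
After that, the proposal does not supply the argument that does the work. Everything hinges on untangling such $\RR$-sandwiches, and you leave this as an expectation; you call it the main obstacle yourself. Step 2 is described only by what it should achieve.

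The paper finishes with one computation showing $\Sigma(p,q,r)=\chi(p\ot q\ot r)$:
\begin{enumerate}
\item \eqref{cond: comatrix 3 prime} trades the $r$-leg in the middle slot of $\chi$ for a $q$-leg, giving $\RR(p_1\ot q_3\ot r_2)\chi(p_2\ot q_2\ot r_3)\RR(q_4\ot r_1\ot q_1)$.
\item \eqref{cond: inf comatrix 1 prime} swaps $q_2$ and $q_3$.
\item The cyclic condition \eqref{cond: cyclic conditions} rewrites $\RR(p_1\ot q_2\ot r_2)=\RR(r_2\ot p_1\ot q_2)$. The two $\RR$-factors then have the shape of \eqref{cond: comatrix 5 prime} and collapse to $\RR(q_3\ot p_1\ot q_1)\ep(r_1)$.
\item One more use of \eqref{cond: inf comatrix 1 prime}, followed by \eqref{cond: normalizing condition}, gives $\chi(p\ot q\ot r)$.
\end{enumerate}
Hence $\chi=\Sigma=0$ directly. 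This is your ``parallel alternative'' $\chi'=\chi$, but you give no route to it.

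Your two-step architecture can also be completed, but only with identities absent from the proposal. In that route the $\RR$-factors are removed by \eqref{cond: comatrix 3 prime} (resp.\ \eqref{cond: comatrix 1 prime}) followed by the normalizing conditions, not by cancelling convolution inverses.
\begin{itemize}
\item Apply \eqref{cond: comatrix 3 prime} to the last factor $\RR(q_3\ot r_1\ot q_1)$ against the leg $q_2$, then use \eqref{cond: normalizing condition} and \eqref{cond: third normalizing condition}. The sandwich in \eqref{cond: inf comatrix 4 prime} then equals $\ep(q)\chi(p\ot r\ot u)$ for arbitrary $u$.
\item So \eqref{cond: inf comatrix 4 prime} becomes $\chi(p\ot q\ot u)\ep(r)=\chi(p\ot q\ot r)\ep(u)+\ep(q)\chi(p\ot r\ot u)$.
\item Substituting $p\ot q\mapsto p_1\ot p_2$ gives $\chi(p\ot r\ot u)=\ep(r)\chi(p_1\ot p_2\ot u)-\ep(u)\chi(p_1\ot p_2\ot r)$.
\end{itemize}
To kill the right-hand side you need $\chi(q_1\ot q_2\ot u)=0$. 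This comes from \eqref{cond: inf comatrix 5 prime}: its sandwich reduces, by cyclicity, \eqref{cond: comatrix 1 prime} and \eqref{cond: normalizing condition}, to $\ep(r)\chi(p\ot q\ot u)$, and then you specialize $p\ot q=q_1\ot q_2$.

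The normalization $\chi(p\ot r_1\ot r_2)=0$ that you single out from Step 1 is not enough. Every $\chi(p\ot q\ot u)=\phi(p\ot q)\ep(u)-\ep(q)\phi(p\ot u)$ satisfies both it and the reduced form of \eqref{cond: inf comatrix 4 prime}.
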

\begin{proof}
    Let $\RR$ be the canonical $R$-form corresponding to the braiding $\sigma$, and $t$ an infinitesimal braiding of $(^C\mm^C, \square^C, C,\sigma)$ corresponding to the linear functional $\chi$. For any $p,q,r\in C$ it holds that
    \begin{align*}
        \chi(p\ot q \ot r) &= \chi(p\ot q \ot r_2)\ep(r_1)
        \\&\stackrel{(\ref{cond: inf comatrix 4 prime})}{=} \chi(p\ot q\ot r)+ \RR(p_1\ot q_2\ot r_2)\chi(p_2\ot r_3\ot r_4)\RR(q_3\ot r_1\ot q_1).
    \end{align*}
    Hence we find that
    \begin{align*}
    0 &= \RR(p_1\ot q_2\ot r_2)\chi(p_2\ot r_3\ot r_4)\RR(q_3\ot r_1\ot q_1)
    \\&= \RR(p_1\ot q_2\ot r_{21})\chi(p_2\ot r_{22}\ot r_3)\RR(q_3\ot r_1\ot q_1)
    \\&\stackrel{(\ref{cond: comatrix 3 prime})}{=} \RR(p_1\ot q_{22}\ot r_2)\chi(p_2\ot q_{21}\ot r_3)\RR(q_3\ot r_1\ot q_1)
    \\&= \RR(p_1\ot q_{3}\ot r_2)\chi(p_2\ot q_{2}\ot r_3)\RR(q_4\ot r_1\ot q_1)
    \\&\stackrel{(\ref{cond: inf comatrix 1 prime})}{=} \RR(p_1\ot q_2\ot r_2)\chi(p_2\ot q_3\ot r_3)\RR(q_4\ot r_1\ot q_1)
    \\&\stackrel{(\ref{cond: cyclic conditions})}{=} \RR(q_4\ot r_1\ot q_1)\RR(r_2\ot p_1\ot q_2)\chi(p_2\ot q_3\ot r_3)
    \\&\stackrel{(\ref{cond: comatrix 5 prime})}{=} \RR(q_3\ot p_1\ot q_1)\ep(r_1)\chi(p_2\ot q_2\ot r_2)
    \\&= \RR(q_3\ot p_1\ot q_1)\chi(p_2\ot q_2\ot r)
    \\&\stackrel{(\ref{cond: inf comatrix 1 prime})}{=} \RR(q_2\ot p_1\ot q_1)\chi(p_2\ot q_3\ot r)
    \\&\stackrel{(\ref{cond: normalizing condition})}{=} \ep(q_1)\ep(p_1)\chi(p_2\ot q_2\ot r)
    \\&= \chi(p\ot q\ot r).
    \end{align*}
    Thus $t$, as this is completely determined by $\chi$ as in \eqref{eqn: definition of inf braiding}, is the zero infinitesimal braiding.
\end{proof}

In the next section, we investigate duoidal structures associated to the categories of bimodules and bicomodules studied before. This will be done more generally for the categories of bi(co)modules in arbitrary braided monoidal categories.

\section{Duoidal structures for bi(co)modules in general monoidal categories}\label{sec:duoidal} 

First, we recall the definition of a duoidal category.

\begin{definition}[{cf.\ \cite[Definition 6.1]{Aguiar}}]\label{defn: duoidal}
    A \emph{duoidal category} is a category $\Cc$ equipped with two monoidal structures $\Cc^\circ= (\Cc, \circ, I, a^\circ, l^\circ, r^\circ )$ and $\Cc^\bullet = (\Cc, \bullet, J, a^\bullet, l^\bullet, r^\bullet )$,  related through a natural transformation
\begin{equation}\label{def:zetaduoidal}
\zeta=(\zeta_{A,B,C,D}: (A\bullet B)\circ (C\bullet D)\to (A\circ C)\bullet (B\circ D))_{A,B,C,D\in\Cc},
\end{equation}
called the \emph{interchange law}, and morphisms
\begin{equation}\label{def:deltamepsilonduoidal}
\Delta^{\bullet}_I:I\to I\bullet I, \quad m^{\circ}_J: J\circ J\to J, \quad \varepsilon^{\bullet}_I=u^{\circ}_{J}:I\to J
\end{equation}
such that
\begin{itemize}
    \item[$i)$] $(J, m^\circ_J, u^\circ_J)$ is a monoid in $\Cc^\circ$ and $(I, \Delta^\bullet_I, \varepsilon^\bullet_I)$ is a comonoid in $\Cc^\bullet$;
    \item[$ii)$] the following associativity conditions hold, for all $A,B,C,D,E,F\in\Cc$,
    \begin{equation}\label{eq:assoc1}
    \begin{split}
\zeta_{A,B, C\circ E, D\circ F}(\id_{A\bullet B}&\circ\zeta_{C,D,E,F})a^\circ_{A\bullet B, C\bullet D, E\bullet F}\\&=(a^\circ_{A,C,E}\bullet a^\circ_{B,D,F})\zeta_{A\circ C, B\circ D,E,F}(\zeta_{A,B,C,D}\circ\id_{E\bullet F})
 \end{split}   \end{equation}
 \begin{equation}\label{eq:assoc2}
    \begin{split}
(\id_{A\circ D}\bullet \zeta_{B,C,E,F})&\zeta_{A,B\bullet C, D, E\bullet F}(a^\bullet_{A,B,C}\circ a^\bullet_{D,E,F})\\&=a^\bullet_{A\circ D, B\circ E, C\circ F}(\zeta_{A,B,D, E}\bullet \id_{C\circ F})\zeta_{A\bullet B, C, D\bullet E, F};
    \end{split}
 \end{equation}
 \item[$iii)$] the following unitality conditions hold, for all $A,B\in\Cc$
 \begin{equation}\label{eq:unit1}
         (l^\circ_{A}\bullet l^\circ_B)\zeta_{I,I, A,B}(\Delta^{\bullet}_I\circ\id_{A\bullet B})=l^\circ_{A\bullet B};\quad (r^\circ_A\bullet r^\circ_B)\zeta_{A,B,I,I}(\id_{A\bullet B}\circ\Delta^{\bullet}_I)=r^\circ_{A\bullet B};
 \end{equation}
 \begin{equation}\label{eq:unit2}
    l^\bullet_{A\circ B}(m^{\circ}_J\bullet \id_{A\circ B})\zeta_{J,A,J,B}=l^\bullet_A\circ l^\bullet_B;\quad r^\bullet_{A\circ B}(\id_{A\circ B}\bullet m^{\circ}_J)\zeta_{A,J,B,J}=r^\bullet_A\circ r^\bullet_B.
 \end{equation}
\end{itemize}
A duoidal category will be denoted by $(\Cc,\circ, I,\bullet, J)$ or $(\Cc,\circ,\bullet)$. In case the structure morphisms \eqref{def:zetaduoidal} and \eqref{def:deltamepsilonduoidal} are isomorphisms in $\Cc$, the duoidal category is called \textit{strong}, see \cite[Definition 6.3]{Aguiar}.
\end{definition}

It has been proven in \cite{Batanin-Markl} that every duoidal category is \emph{duoidally equivalent} to a strict duoidal category, i.e.\ one where both monoidal categories are strict. For this reason, we will omit the associativity constraints in computations involving general duoidal categories. 

\begin{remark}
    As observed in \cite[Remark 6.2]{Aguiar}, the notion of a strict duoidal category $(\Cc,\circ,I,\bullet,J)$ where $J=I$ (and $\Delta^{\bullet}_{I}=m^{\circ}_{J}=\varepsilon^{\bullet}_{I}=\id_{I}$) coincides with the notion of \textit{2-fold monoidal category} which has appeared in \cite{BFSV}. The strictness assumption was then removed in \cite{FSS} and the unit objects were allowed to be distinct, but the structure maps $\Delta^{\bullet}_{I}$ and $m_{J}^{\circ}$ were assumed to be isomorphisms.
\end{remark}

\begin{remark}\label{remark: strong duoidal is braided}
    Generally, the interchange law and morphisms (\ref{def:deltamepsilonduoidal}) are not isomorphisms, this is, the duoidal category is not strong. Aguiar and Mahajan proved that strong duoidal categories correspond to braided monoidal categories \cite[Propositions 6.10, 6.11]{Aguiar}. 
    For a strong duoidal category, both monoidal structures are braided and braided monoidally isomorphic.
    Moreover, the interchange law is then of the form
    \[
    \zeta_{A,B,C,D}\colon (A\bullet B)\bullet (C\bullet D)\xrightarrow{\cong} A \bullet (B\bullet C)\bullet D\xrightarrow{\id\bullet\sigma_{B,C}\bullet\id} A\bullet (C\bullet B)\bullet D\xrightarrow{\cong} (A\bullet C)\bullet(B\bullet D).
    \]
\end{remark}

\begin{invisible}
We also recall that, given a duoidal category $(\mathcal{C}, \circ, I, \bullet, J)$, there are canonical equivalences of categories
\begin{equation}\label{equivalnecesBimonC}
\Bimon (\mathcal{C},\circ,\bullet)\cong\Comon (\mathsf{Mon}(\mathcal{C}^\circ), \bullet)\cong \mathsf{Mon}(\Comon (\mathcal{C}^\bullet), \circ),
\end{equation}
see \cite[Proposition 6.36]{Aguiar}.
\end{invisible}

\begin{example}\label{ex:duoidalbimodules}
\label{es:bimod} As shown in \cite[Proposition 2.7]{BT25}, see also \cite[Remark 1.6]{Sar21}, the category ${}_H\mm_H$, where $H$ is a $\Bbbk$-bialgebra, is duoidal. In fact,  the monoidal structures are given by $({}_H\mm_H,  \circ =\otimes_H, I=H )$  and $ ({}_H\mm_H, \bullet=\otimes_\Bbbk, J=\Bbbk )$, where the base field $\Bbbk$ is regarded as an $H$-bimodule via the counit $\varepsilon_{H}$ of $H$. The interchange law $\zeta$ is the natural transformation whose component $\zeta_{A,B,C,D}:(A\bullet B)\circ (C\bullet D)\to (A\circ C)\bullet (B\circ D),$ for all $A,B,C,D\in{}_H\mm_H$, is defined by
\[
\zeta_{A,B,C,D}\big((a\otimes b)\otimes_H (c\otimes d)\big)=(a\otimes_H c)\otimes (b\otimes_H d).
\]
Moreover, the morphisms $\Delta^{\bullet}_{I}:I\to I\bullet I$, $m^{\circ}_{J}:J\circ J\to J$ and $\varepsilon^{\bullet}_{I}:I\to J$ are, respectively, given by $\Delta_{H}$, $m^{\circ}_{\Bbbk}(k\ot_{H}k')=kk'$ and $\varepsilon_{H}$. \begin{invisible}
We observe that $\Delta_{H}$ and $\varepsilon_{H}$ are $H$-bilinear since $H$ is a bialgebra. In fact, we have $\Delta_{H}(h\cdot a\cdot h')=\Delta_{H}(hah')=h_{1}a_{1}h'_{1}\ot h_{2}a_{2}h'_{2}=h\cdot\Delta(a)\cdot h'$.
\end{invisible}
This example will be generalized in Theorem \ref{theorem: bi(co)modules duoidal}.
\end{example}

\begin{remark}\label{rmk:obstructionliftingduoidal}
We observe that $\Delta_{H}$ is not a morphism of bicomodules in general, considering $H$ as a bicomodule with its coproduct. 
\begin{invisible}
Given $h\in H$, we have that
\begin{align*}
&\rho^L_{H\ot H}\Delta_{H}(h)=\rho^L_{H\ot H}(h_{1}\ot h_{2})=h_{1_{(-1)}}h_{2_{(-1)}}\ot h_{1_{(0)}}\ot h_{2_{(0)}}=h_{1_{1}}h_{2_{1}}\ot h_{1_{2}}\ot h_{2_{2}}=h_{1}h_{3}\ot h_{2}\ot h_{4}\\
&\rho^{R}_{H\ot H}\Delta_{H}(h)=\rho^{R}_{H\ot H}(h_{1}\ot h_{2})=h_{1_{(0)}}\ot h_{2_{(0)}}\ot h_{1_{(1)}}h_{2_{(1)}}=h_{1_{1}}\ot h_{2_{1}}\ot h_{1_{2}}h_{2_{2}}=h_{1}\ot h_{3}\ot h_{2}h_{4}
\end{align*}
while $(\id\ot\Delta_{H})\Delta_{H}(h)=h_{1}\ot h_{2}\ot h_{3}=(\Delta_{H}\ot\id)\Delta_{H}(h)$. 
\end{invisible}
More precisely, we have that $\Delta_{H}$ is $H$-bicolinear if and only if the following equalities hold, for all $h\in H$:
\begin{equation}\label{eq:Deltabicolinear}
    h_{1}h_{3}\ot h_{2}\ot h_{4}=h_{1}\ot h_{2}\ot h_{3}=h_{1}\ot h_{3}\ot h_{2}h_{4}
\end{equation}
By applying $\id\ot\varepsilon\ot\id$ and $\id\ot\id\ot\varepsilon$ to the first equality we get $h_{1}h_{2}\ot h_{3}=\Delta(h)=h_{1}h_{3}\ot h_{2}$, while applying $\id\ot\varepsilon\ot\id$ and $\varepsilon\ot\id\ot\id$ to the second one we get $h_{1}\ot h_{2}h_{3}=\Delta(h)=h_{2}\ot h_{1}h_{3}$. These imply that $H$ is cocommutative, so that \eqref{eq:Deltabicolinear} can be rewritten as 
\begin{equation}\label{eq:Deltabicolinear2}
h_{1}h_{2}\ot h_{3}\ot h_{4}=h_{1}\ot h_{2}\ot h_{3}=h_{1}\ot h_{2}\ot h_{3}h_{4}, 
\end{equation}
which is clearly equivalent to $h_{1}h_{2}\ot h_{3}=\Delta(h)=h_{1}\ot h_{2}h_{3}$. Moreover, the latter is equivalent to $h=h_{1}h_{2}$, for all $h\in H$. This is a quite strong condition which does not hold in general. In fact, one can observe that $P(H)=\{0\}$.
\begin{invisible}
Suppose $x\in H$ such that $\Delta(x)=x\ot 1+1\ot x$, then $x=2x$, hence $x=0$. 
\end{invisible}
Moreover, the monoid $G(H)$ has only idempotent elements. We point out that, in case $H$ is a Hopf algebra, the condition $h=h_{1}h_{2}$ for all $h\in H$ is equivalent to $h=\varepsilon(h)1$ for all $h\in H$, i.e. $H=\Bbbk1_{H}$. 
\begin{invisible}
$h=\varepsilon(h_{1})h_{2}=S(h_{1_{1}})h_{1_{2}}h_{2}=S(h_{1})h_{2_{1}}h_{2_{2}}=S(h_{1})h_{2}=\varepsilon(h)1$
\end{invisible}
However, the morphism $\varepsilon_{H}:H\to\Bbbk$ is $H$-bicolinear if and only if $h=\varepsilon(h)1_{H}$, for all $h\in H$, i.e.\ $H=\Bbbk1_{H}$.
As a consequence, although ${}^H({}_H\mm_H)^H \cong {}^H_H\mm^H_H$ are equivalent categories this duoidal structure cannot be lifted to the category $^{H}_{H}\mm^{H}_{H}$.
\end{remark}

\begin{invisible}
\begin{remark}
\label{rmk:bimonbimod}
In view of by \cite[Corollary 2.10]{BT25}, in the setting of Example \ref{es:bimod},   giving a bimonoid $(H,m_H^\circ,u_H^\circ,\Delta_H^\bullet,\varepsilon_H^\bullet)$ in $\Cc$
is equivalent to giving a bialgebra $H$ together with a bialgebra map $u_{H}^{\circ}:B\rightarrow H$ which defines the $B$-bimodule
structure of $H$ so that $u_{H}^{\circ}\left(
b\right) =b\cdot 1_{H}.$
Through this identification, a morphism of bimonoids turns out to be a bialgebra map $f:H\to H'$ such that $fu_H^\circ=u_{H'}^\circ$.
In other words, the category  $\Bimon(\Cc,\circ,\bullet)$ of bimonoids is isomorphic to the coslice category of bialgebras under $B$, i.e.\ $B/\Bialg$.
\end{remark}
\end{invisible}

We can generalize the duoidal structure of Example \ref{ex:duoidalbimodules} to an arbitrary braided monoidal category and a bimonoid in it. Dually, we prove the same result for the category of bicomodules.

\begin{theorem}\label{theorem: bi(co)modules duoidal}
    Let $(\Mm, \ot, I, \sigma)$ be a braided monoidal category and $H$ be an object in $\mathsf{Bimon}(\Mm)$.
    \begin{enumerate}[label = \alph*)]
        \item If $\Mm$ is coregular, then $(_H\Mm_H, \ot_H, H, \ot, I)$ is a duoidal category.
        \item If $\Mm$ is regular, then $(^H\Mm^H, \ot, I, \square^H, H)$ is a duoidal category.
    \end{enumerate}
\end{theorem}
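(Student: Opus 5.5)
I will prove part a) in detail; part b) then follows by the formally dual argument, which reverses arrows and replaces coequalizers $q$ by equalizers $e$, bimodules by bicomodules, and $\ot_H$ by $\square^H$. In a) we take $\circ=\ot_H$ with unit $I_\circ=H$ and $\bullet=\ot$ with unit $J=I$, the latter carrying the bimodule structure $(\ep\ot I, I\ot\ep)$. Both are monoidal structures on ${}_H\Mm_H$, by Theorem \ref{theorem: bi(co)modules monoidal} and Proposition \ref{prop:bi(co)modmonoidalot} respectively. Since $\Mm$ is coregular, $\ot$ on ${}_H\Mm_H$ preserves the relevant coequalizers (Proposition \ref{prop:bi(co)modmonoidalot}). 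In particular $q_{A,C}\ot q_{B,D}$ is an epimorphism, because it is a composite of $q_{A,C}\ot \id$ and $\id\ot q_{B,D}$, each of which is a coequalizer map.

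\emph{Structure maps.} Following Example \ref{ex:duoidalbimodules}, I define $\zeta_{A,B,C,D}\colon (A\ot B)\ot_H(C\ot D)\to (A\ot_H C)\ot(B\ot_H D)$ as the unique morphism satisfying
\[
\zeta\, q_{A\ot B,C\ot D}=(q_{A,C}\ot q_{B,D})(\id_A\ot\sigma_{B,C}\ot\id_D).
\]
To see that this morphism exists, I must check that the right-hand side coequalizes $\alpha^R_{A\ot B}\ot\id$ and $\id\ot\alpha^L_{C\ot D}$. Expanding the diagonal actions, both composites become an expression in which $\Delta\colon H\to H\ot H$ is followed by moving one copy of $H$ into position with braidings, and then $q_{A,C}$ and $q_{B,D}$ are applied. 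Naturality of $\sigma$ together with the hexagon axioms reduces the comparison to the balancing property of $q_{A,C}$ and of $q_{B,D}$ separately. I will then check that $\zeta$ is $H$-bilinear. Since $\id\ot q$-type morphisms are epimorphisms by coregularity, this reduces to a computation on $A\ot B\ot C\ot D$, which is again an application of the compatibility of $\Delta$ with the diagonal actions and of naturality of $\sigma$. Naturality of $\zeta$ in all four variables follows from uniqueness of induced maps out of coequalizers.

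The remaining structure maps are as follows. $\Delta^\bullet_I:=\Delta\colon H\to H\ot H$ is $H$-bilinear for the diagonal actions precisely because $\Delta$ is multiplicative, using the braided bimonoid axiom. $u^\circ_J:=\ep\colon H\to I$ is bilinear since $\ep$ is multiplicative. $m^\circ_J\colon I\ot_H I\to I$ is the isomorphism $\Upsilon_I$, equivalently $\Upsilon'_I$, induced by $\ep\ot\ep$. With these choices, the claims that $(I,m^\circ_J,u^\circ_J)$ is a monoid in $({}_H\Mm_H,\ot_H,H)$ and that $(H,\Delta,\ep)$ is a comonoid in $({}_H\Mm_H,\ot,I)$ are immediate from the counit and coassociativity axioms of $H$.

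\emph{Axioms.} Each axiom \eqref{eq:assoc1}, \eqref{eq:assoc2}, \eqref{eq:unit1}, \eqref{eq:unit2} is an equality of morphisms out of an iterated object built with $\ot_H$. I will precompose each side with the appropriate iterated epimorphism, for instance $q_{(A\ot B)\ot_H(C\ot D),E\ot F}(q_{A\ot B,C\ot D}\ot\id)$; this composite is epic by coregularity. After precomposition, both sides become $(q\ot q)$ applied to a braiding morphism on $A\ot B\ot C\ot D\ot E\ot F$. For \eqref{eq:assoc1} the two braiding morphisms agree by the braid group relations, that is, the hexagon axioms together with naturality. For \eqref{eq:assoc2} they agree similarly, using the description of the associator $a^{\ot_H}$ from Remark \ref{remark: constraints of tetramodules}. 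For the unit axioms \eqref{eq:unit1}, after precomposing with $q_{H,A\ot B}$, I will use $\Upsilon'_{A\ot B}q=\alpha^L_{A\ot B}$, which is exactly the diagonal action $(\alpha^L\ot\alpha^L)(\id\ot\sigma_{H,A}\ot\id)(\Delta\ot\id)$; this matches the composite $(l^\circ\ot l^\circ)\zeta(\Delta\ot\id)$. Axiom \eqref{eq:unit2} is analogous, with $\ep$ and $\sigma_{I,-}=\id$.

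I expect the main obstacle to be the well-definedness and bilinearity of $\zeta$, and especially the coherence bookkeeping in the braided rather than symmetric setting. There one has to make sure that the choice $\sigma_{B,C}$, as opposed to $\sigma^{-1}_{C,B}$, is consistent with the braidings $\sigma_{H,M}$ and $\sigma_{N,H}$ that appear in the diagonal actions of Proposition \ref{prop:bi(co)modmonoidalot}. I would first draw this as a string diagram to confirm that the $H$-strand passes on the correct side. For b), the interchange is $\zeta\colon (A\square B)\ot(C\square D)\to (A\ot C)\square(B\ot D)$, induced through the equalizer $e$ by $(\id\ot\sigma_{B,C}\ot\id)(e\ot e)$. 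Here $\Delta^\bullet_I$ is $\Delta_I$ for $I$ with the unit coactions, $m^\circ_J=m\colon H\ot H\to H$ with $u^\circ_J=u$, and regularity supplies the needed monomorphisms.
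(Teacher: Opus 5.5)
Your proposal is correct and follows essentially the same route as the paper. It uses the same interchange, induced through $q_{A\ot B,C\ot D}$ by $(q_{A,C}\ot q_{B,D})(\id_A\ot\sigma_{B,C}\ot\id_D)$, and the same unit data. It verifies coequalization, bilinearity, naturality and all duoidal axioms by precomposing with coequalizer maps, which are epic by coregularity, and handles b) dually. The choice $\sigma_{B,C}$ you worried about is indeed the one compatible with the diagonal actions built from $\sigma_{H,M}$ and $\sigma_{N,H}$.

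One small correction concerns $m^\circ_J\colon I\ot_H I\to I$. It is the isomorphism determined by $m^\circ_J q_{I,I}=l_I=r_I$, which exists because the two parallel arrows $I\ot H\ot I\rightrightarrows I\ot I$ both reduce to $\ep$. It is not $\Upsilon_I$, whose domain is $I\ot_H H$, and it is not induced by $\ep\ot\ep$.
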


\begin{proof}
    Without loss of generality, we can assume $\Mm$ to be strict monoidal.

Since $(_H\Mm_H, \ot_{H},H)$ is a monoidal category by 1) of Theorem \ref{theorem: bi(co)modules monoidal}, and $(_{H}\Mm_{H},\ot,I)$ is a monoidal category by Proposition \ref{prop:bi(co)modmonoidalot}, to have that $(_H\Mm_H, \ot_H, H, \ot, I)$ is a duoidal category we need:
\begin{itemize}
    \item a natural transformation $\zeta_{M,N,P,Q}\colon (M\ot N)\ot_H (P\ot Q)\to (M\ot_H P)\ot (N\ot_H Q)$ of $H$-bimodules,
    \item a monoid structure $(I, m_I, u_I)$ in $(_H\Mm_H, \ot_H, H)$,
    \item a comonoid structure $(H, d_H, e_H=u_I)$ in $(_H\Mm_H, \ot, I)$,
\end{itemize}
satisfying the three compatibility conditions outlined in Definition \ref{defn: duoidal}. Given $M,N,P,Q$ in $_{H}\Mm_{H}$, consider the morphism in $\Mm$ defined as follows:
\[
\Xi_{M,N,P,Q} \colon M\ot N \ot P \ot Q \xrightarrow{M\ot \sigma_{N,P}\ot Q} M\ot P\ot N \ot Q \xrightarrow{q_{M,P}\ot q_{N,Q}} (M\ot_H P)\ot (N\ot_H Q).
\]
It is straightforward to prove that $\Xi_{M,N,P,Q}$ coequalizes the diagram
\[\begin{tikzcd}
	{M\ot N\ot H \ot P\ot Q} && {M\ot N \ot P\ot Q.}
	\arrow["{\alpha_{M\ot N}^R\ot P\ot Q}", shift left=1.5, from=1-1, to=1-3]
	\arrow["{M\ot N \ot \alpha^L_{P\ot Q}}"', shift right=1, from=1-1, to=1-3]
\end{tikzcd}\]
Hence there is a unique $\Mm$-morphism $\zeta_{M,N,P,Q}\colon (M\ot N)\ot_H (P\ot Q)\to (M\ot_H P)\ot (N\ot_H Q)$ satisfying that
\begin{equation}\label{eqn: definition of zeta}
    \zeta_{M,N,P,Q}q_{M\ot N, P\ot Q} = \Xi_{M,N,P,Q}.
\end{equation}
We claim that the induced $\zeta$ is an interchanging law for the monoidal structures $\ot_H$ and $\ot$ on $_H\Mm_H$. Explicitly, we need verify that $\zeta$ is a natural transformation of $H$-bimodules satisfying identities (\ref{eq:assoc1}) and (\ref{eq:assoc2}). Firstly, notice that\begin{invisible}, by the canonical morphisms of coequalizers being epimorphisms,\end{invisible} $\zeta$ is a morphism in $_{H}\Mm_{H}$ if and only if
\begin{align*}
    &\begin{split}\zeta_{M,N,P,Q}\alpha^L_{(M\ot N)\ot_H (P\ot Q)}&(H\ot q_{M\ot N, P\ot Q}) \\&= \alpha^L_{(M\ot_H P)\ot (N\ot_H Q)}(H\ot \zeta_{M,N,P,Q})(H\ot q_{M\ot N, P\ot Q}),
    \end{split}
    \\&\begin{split}\zeta_{M,N,P,Q}\alpha^R_{(M\ot N)\ot_H (P\ot Q)}&(q_{M\ot N, P\ot Q}\ot H) 
    \\&= \alpha^R_{(M\ot_H P)\ot (N\ot_H Q)}(\zeta_{M,N,P,Q}\ot H)(q_{M\ot N, P\ot Q}\ot H).\end{split}
\end{align*}
Both identities are easily verified to hold by construction of the $H$-bimodule structure on the $H$-linear tensor product of $H$-bimodules, rendering $\zeta_{M,N,P,Q}$ a morphism of $H$-bimodules. Notice that the naturality in every component is equivalent to stating that for all $H$-bimodule morphisms $f\colon M\to X, g\colon N\to Y, h\colon P\to Z,$ and $t\colon Q\to U$ the following diagram commutes;
\[\begin{tikzcd}
	{(M\ot N)\ot_H(P\ot Q)} && {(M\ot_H P)\ot (N\ot_H Q)} \\
	\\
	{(X\ot Y)\ot_H(Z\ot U)} && {(X\ot_H Z)\ot (Y\ot_H U).}
	\arrow["{\zeta_{M,N,P,Q}}", from=1-1, to=1-3]
	\arrow["{(f\ot g)\ot_H (h\ot t)}"', from=1-1, to=3-1]
	\arrow["{(f\ot_H h)\ot (g\ot_H t)}", from=1-3, to=3-3]
	\arrow["{\zeta_{X,Y,Z,U}}"', from=3-1, to=3-3]
\end{tikzcd}\]
Since $q_{M\ot N, P\ot Q}$ is an epimorphism, this holds immediately as
\begin{align*}
    \zeta_{X,Y,Z,U}((f\ot g)\ot_H (h\ot t))q_{M\ot N, P\ot Q} &= \zeta_{X,Y,Z,U}q_{X\ot Y, Z\ot U}(f\ot g\ot h\ot t)
    \\&= \Xi_{X,Y,Z,U}(f\ot g\ot h\ot t)
    \\&= (q_{X,Z}\ot q_{Y,U})(X\ot \sigma_{Y,Z}\ot U)(f\ot g\ot h\ot t)
    \\&= (q_{X,Z}\ot q_{Y,U})(f\ot h\ot g\ot t)(M\ot \sigma_{N,P}\ot Q)
    \\&= ((f\ot_H h)\ot (g\ot_H t))(q_{M,P}\ot q_{N,Q})(M\ot \sigma_{N,P}\ot Q)
    \\&= ((f\ot_H h)\ot (g\ot_H t))\Xi_{M,N,P,Q}
    \\&= ((f\ot_H h)\ot (g\ot_H t))\zeta_{M,N,P,Q}q_{M\ot N, P\ot Q}.
\end{align*}
Because the monoidal categories are strict, the compatibility conditions (\ref{eq:assoc1}) and (\ref{eq:assoc2}) translate to the commutativity of the below two diagrams for all $H$-bimodules $M,N,P,Q,R,$ and $T$
\[\begin{tikzcd}
	{(M\ot N)\ot_H (P\ot Q)\ot_H (R\ot T)} &&& {(M\ot N)\ot_H (P\ot_H R)\ot (Q\ot_H T)} \\
	\\
	{(M\ot_H P)\ot (N\ot_H Q)\ot_H (R\ot T)} &&& {(M\ot_H P\ot_H R) \ot (N\ot_H Q\ot_H T),} \\
	{(M\ot N \ot P) \ot_H (Q\ot R \ot T)} &&& {(M\ot_H Q)\ot (N\ot P)\ot_H (R\ot T)} \\
	\\
	{(M\ot N)\ot_H (Q\ot R)\ot (P\ot_H T)} &&& {(M\ot_H Q)\ot (N\ot_H R) \ot (P\ot_H T).}
	\arrow["{M\ot N \ot_H \zeta_{P, Q, R, T}}", from=1-1, to=1-4]
	\arrow["{\zeta_{M, N, P, Q}\ot_H R\ot T}"', from=1-1, to=3-1]
	\arrow["{\zeta_{M, N, P\ot_H R, Q\ot_H T}}", from=1-4, to=3-4]
	\arrow["{\zeta_{M\ot_H P, N\ot_H Q, R, T}}"', from=3-1, to=3-4]
	\arrow["{\zeta_{M, N\ot P, Q, R\ot T}}", from=4-1, to=4-4]
	\arrow["{\zeta_{M\ot N, P, Q\ot R, T}}"', from=4-1, to=6-1]
	\arrow["{M\ot_H Q \ot \zeta_{N,P,R,T}}", from=4-4, to=6-4]
	\arrow["{\zeta_{M, N, Q, R}\ot P\ot_H T}"', from=6-1, to=6-4]
\end{tikzcd}\]
Precomposing with the appropriate epimorphisms $q$ proves this immediately.\newline
Let us endow $I$ with a monoid structure in $(_H\Mm_H, \ot_H, H)$. Notice that $I$ is an $H$-bimodule for the counit of $H$. By definition of this $H$-bimodule structure on $I$, the unitors $l_I=r_I\colon I\ot I \to I$ coequalize $\alpha^R_I\ot I$ and $I\ot\alpha^L_I$. Hence there is a unique morphism $m_I\colon I\ot_H I \to I$ such that $m_Iq_{I,I} = l_I$. An elementary verification confirms that $(I, m_I, \ep_{H})$ is a monoid in $(_H\Mm_H, \ot_H, H)$. Moreover, we know that $(H,\Delta, \ep)$ is a comonoid in $(\Mm, \ot, I, \sigma)$ by assumption, whose counit coincides with the unit of $(I,m_I, \ep)$. 
Its morphisms are moreover $H$-bilinear, since
\begin{alignat*}{2}
    &\Delta\alpha^L_H &&= \Delta\mu
    \\& &&=(\mu\ot\mu)(H\ot\sigma_{H,H}\ot H)(\Delta\ot\Delta)
    \\& &&= (\alpha^L_H\ot \alpha^L_H)(H\ot\sigma_{H,H}\ot H)(\Delta\ot H\ot H)(H\ot \Delta)
    =\alpha^L_{H\ot H}(H\ot \Delta),
    \\&\ep\alpha^L_H &&= \ep\mu
    = \ep\ot\ep
   = (\ep\ot I)(H\ot \ep)
    =\alpha^L_I(H\ot \ep).
\end{alignat*}
Similarly, one proves both to be right $H$-linear. Hence it remains to verify conditions (\ref{eq:unit1}) and (\ref{eq:unit2}). Suppressing the canonical associativity and unit constraints, these identities become
\begin{align}
    &\label{eqn: unit1}\zeta_{H,H,M,N}(\Delta\ot_H M\ot N) 
    =\id_{M\ot N},
    \\&\label{eqn: unit2}\zeta_{M,N,H,H}(M\ot N \ot_H \Delta) 
    =\id_{M\ot N},
    \\&\label{eqn: unit3}(m_I\ot M\ot_H N)\zeta_{I,M,I,N} 
    =\id_{M\ot_H N},
    \\&\label{eqn: unit4}(M\ot_H N\ot m_I)\zeta_{M,I,N,I} 
    =\id_{M\ot_H N}.
\end{align}
These are easily verified, again by utilizing the canonical epimorphisms of the concerning coequalizers. For example, identity (\ref{eqn: unit1}) holds as $q_{H,M\ot N} = \alpha^L_{M\ot N}$ and hence
\begin{align*}
    \zeta_{H,H,M,N}(\Delta\ot_H M\ot N)q_{H,M\ot N} &= \zeta_{H,H,M,N}q_{H\ot H, M\ot N}(\Delta\ot M \ot N)
    \\&= (q_{H,M}\ot q_{H,N})(H\ot \sigma_{H,M}\ot N)(\Delta\ot M \ot N)
    \\&= (\alpha^L_M\ot \alpha^L_N)(H\ot\sigma_{H,M}\ot N)(\Delta\ot M\ot N)
    \\&= \alpha^L_{M\ot N}.
\end{align*}
Similarly, one verifies condition (\ref{eqn: unit2}), while conditions (\ref{eqn: unit3}) and (\ref{eqn: unit4}) follow more easily,
proving that $(_H\Mm_H, \ot_H, H, \ot, I)$ is a duoidal category.
\newline
The case of bicomodules is completely dual to a). We recall that $(^{H}\Mm^{H},\square^{H},H)$ is a monoidal category by 2) of Theorem \ref{theorem: bi(co)modules monoidal}, and $(^{H}\Mm^{H},\ot,I)$ is a monoidal category by Proposition \ref{prop:bi(co)modmonoidalot}. The interchange law for the duoidal structure on $(^H\Mm^H, \ot, I, \square^H, H)$ is the unique $H$-bicolinear morphism $\zeta_{U,V,W,T}$ induced by the $\Mm$-morphism
\[
\Xi_{U,V,W,T}\colon (U\square^H V)\ot (W\square^H T)\xrightarrow{e_{U,V}\ot e_{W,T}}U\ot V\ot W\ot T\xrightarrow{U\ot \sigma_{V,W}\ot T}U\ot W\ot V\ot T. \qedhere
\]
\begin{invisible}
which equalizes the diagram
\[\begin{tikzcd}
	{U\ot W\ot V\ot T} && {U\ot W\ot H\ot V\ot T.}
	\arrow["{\rho^R_{U\ot W}\ot V\ot T}", shift left=1.5, from=1-1, to=1-3]
	\arrow["{U\ot W\ot \rho^L_{V\ot T}}"', shift right=2, from=1-1, to=1-3]
\end{tikzcd}
\]
The monoid structure of $H$ in $(^{H}\Mm^{H},\ot,I)$ is given by its monoid structure $(H,m_{H},u_{H})$ in $\Mm$. The structure morphisms are of $H$-bicomodules since $H$ is in $\mathsf{Bimon}(\Mm)$. By definition of the $H$-bicomodule structure on $I$, the unitors $l_I^{-1}=r_I^{-1}\colon I \to I\ot I$ equalize the pair $(\rho^{R}_{I}\ot\id_{I},\id_{I}\ot\rho^{L}_{I})$ in $\Mm$. Hence there is a unique morphism $\Delta_I\colon I\to I\square^H I $ in $\Mm$ such that $e_{I,I}\Delta_I = l_I^{-1}$. An elementary computation shows that $(I, \Delta_I, \ep_{I})$ is a comonoid in $(^H\Mm^H, \square^H, H)$.
\end{invisible}
\end{proof}

\begin{remark}\label{remark: obstruction duoidal to tetramodules general}
    From the previous result one may be tempted to think that for any $H$ in $\Bimon(\Mm)$, $({}^{H}_{H}{\Mm}^H_H, \ot_H, H, \square^H, H)$ is immediately duoidal, since $^H_H\Mm^H_H\cong{} ^H(_H\Mm_H)^H\cong{} _H(^H\Mm^H)_H$. 
    However, in order to apply for instance b), we need $H$ to be a bimonoid in $_H\Mm_H$, which alters the constructed cotensor product since the bimonoid structure is altered. Hence, alike in Remark \ref{rmk:obstructionliftingduoidal}, the duoidal structure of bi(co)modules in a general setting does not immediately generalize to the category of tetramodules.
    
    The obtained duoidal structure is discussed in-depth in Examples \ref{remark: bimodules with k-tensor} and \ref{remark: bicomodules with k-tensor} below, and tetramodules in general categories are proven to be indeed duoidal in Theorem \ref{thm:duoidaltetramodules}.
\end{remark}

In the next section, we obtain some interesting examples of duoidal categories by applying Theorem \ref{theorem: bi(co)modules duoidal}.

\subsection{Examples of duoidal categories}\label{sec:Examplesduoidal} 

First, we recall the following facts that will be useful in the following and whose proof is straightforward. 

\begin{lemma}\label{lem:bimonIandbi(co)modonI}
    Let $(\Mm, \ot, I, a, l, r, \sigma)$ be a braided monoidal category. The following statements hold:
\begin{itemize}
    \item[1)] There are canonical monoid, comonoid, and bimonoid structures on $I$, namely (those underlying) $(I, l_I, \id_I, l_I^{-1}, \id_I)$.
    \item[2)] If $\Mm$ is coregular then $(_I\Mm_I, \ot_I, (I,l_I,r_I)) \cong (\Mm, \ot, I)$ are monoidally isomorphic. Dually, if $\Mm$ is regular then $(^I\Mm^I, \square^I,(I,l_I^{-1},r_I^{-1})) \cong (\Mm, \ot, I)$.
\end{itemize}
\end{lemma}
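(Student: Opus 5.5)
For 1) I will check the monoid, comonoid and bimonoid axioms for $(I,l_I,\id_I,l_I^{-1},\id_I)$ directly. The only inputs are Kelly's coherence identities $l_I=r_I$ and $l_{I\ot I}=l_I\ot I$ (suppressing $a$), together with $\sigma_{I,I}=\id_{I\ot I}$, which follows from the unit axioms of a braiding. Associativity of $l_I$ reduces to $l_I(l_I\ot I)=l_I(I\ot l_I)$ up to $a$, which is the triangle/coherence statement. Unitality reads $l_I(\id_I\ot\id_I)=l_I$ and $r_I=l_I$, where the unit $I\to I$ is $\id_I$ modulo the unitor identifications. The comonoid axioms are the inverses of these. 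For the bimonoid compatibility I check $\Delta m=(m\ot m)(I\ot\sigma_{I,I}\ot I)(\Delta\ot\Delta)$. Both sides are built from unitors and their inverses, and $\sigma_{I,I}$ is the identity, so they agree by Mac Lane coherence. The conditions involving $\ep$ and $u$ hold trivially.

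For 2), the key observation is that every object $M$ of $\Mm$ carries a unique $I$-bimodule structure, namely $(M,l_M,r_M)$. Indeed, the unit axiom forces $\alpha^L(\id_I\ot M)\cong l_M$, so $\alpha^L=l_M$, and similarly $\alpha^R=r_M$. The bimodule compatibility then holds by coherence. Every morphism is automatically $I$-bilinear by naturality of $l$ and $r$. Hence the forgetful functor ${}_I\Mm_I\to\Mm$ is an isomorphism of categories. It then remains to identify $M\ot_I N$ with $M\ot N$. The two parallel maps $r_M\ot N$ and $M\ot l_N$ from $M\ot I\ot N$ to $M\ot N$ coincide by the triangle axiom, and the coequalizer of two equal maps is the identity. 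So I can take $M\ot_I N=M\ot N$ with $q_{M,N}=\id$, using the canonical choice of coequalizer (any other choice is canonically isomorphic).

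With this identification, the induced bimodule structure on $M\ot_IN$ from Theorem~\ref{theorem: bi(co)modules monoidal} is $(l_{M}\ot N,\,M\ot r_N)=(l_{M\ot N},r_{M\ot N})$. The unitors $\Upsilon_M,\Upsilon'_M$ become $r_M,l_M$, since $\Upsilon_M q_{M,I}=\alpha^R_M=r_M$. The associator of Remark~\ref{remark: constraints of tetramodules} becomes $a$, since all the $q$'s are identities. Therefore the identity functor, with identity monoidal structure maps, is a strict monoidal isomorphism.

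The regular case is dual. The unique $I$-bicomodule structure on $M$ is $(l_M^{-1},r_M^{-1})$. The equalizer of the equal pair $r_M^{-1}\ot N$ and $M\ot l_N^{-1}$ is the identity, so $e_{M,N}=\id$. The constraints $\Lambda$, $\Lambda'$ and $a^\square$ again reduce to those of $\Mm$. The only real obstacle is bookkeeping: making sure that the equalities of parallel pairs and of constraints come from the triangle axiom together with $l_I=r_I$, and not from an incorrect strictification. I will handle this by working in a strict replacement, as the paper does elsewhere.
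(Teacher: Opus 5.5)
Your proposal is correct and follows essentially the same route as the paper's argument. The (co)unit axioms with $u=\id_I$ (resp.\ $\ep=\id_I$) force the (co)actions to be $l_M,r_M$ (resp.\ $l_M^{-1},r_M^{-1}$), so the forgetful functor is an isomorphism of categories; since the defining parallel pair coincides by the triangle axiom, one may take $M\ot_I N=M\ot N$ (resp.\ $M\square^I N=M\ot N$) with identity (co)equalizer, after which all constraints reduce to those of $\Mm$.
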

\begin{invisible}
\begin{proof}
    1). Notice that the proposed bimonoid structure is a well-defined monoid, comonoid, and bimonoid structure on $I$. Let us verify that these are unique. We will prove this unicity for the monoid structure on $I$, the proof for comonoid is completely analogous and these imply the case for the bimonoid structure.

    Let $(I,m,u)$ be a monoid structure on $I$. As the constraints of $\Mon(\Mm)$ are those of $\Mm$, it holds that $l_Iu_{I\ot I} = u$. By definition of the monoidal structure on $\Mon(\Mm)$, the left hand side equals $l_I(u\ot u)l_I^{-1}$, whence by naturality,
    \[
        l_I(u\ot u) = ul_I = l_I(I\ot u).
    \]
    Thus $u\ot u = I\ot u$, and since $I\ot u$ is a monomorphism this implies that $u\ot I = \id_{I\ot I}$, and hence $ur_I = r_I$. In conclusion, $u = \id_I$. Hence $m(u\ot I) = m = r_I = l_I$, concluding the proof.

2). Assume $\Mm$ to be regular, and recall that $I$ is uniquely a comonoid in $\Mm$, namely as $(I, l_I^{-1}, \id_I)$. Hence an object in ${}^I\Mm^I$ is a triple $(V, \rho^L, \rho^R)$ with $V$ in $\Mm$, and $\rho^L,\rho^R$ satisfying that
    \begin{gather*}
    (\id_I\ot I)\rho^L = l_V^{-1}, \hspace{3em}  (I\ot \id_I)\rho^R= r_V^{-1}, \hspace{3em} (I\ot \rho^L)\rho^L = (l_I^{-1}\ot V)\rho^L, \\(\rho^R\ot I)\rho^R = (V\ot l_I^{-1})\rho^R, \hspace{3em} (\rho^L\ot I)\rho^R = (I\ot \rho^R)\rho^L.
    \end{gather*}
    This implies immediately that $(V,\rho^L, \rho^R) = (V,l_V^{-1}, r_V^{-1})$, and the relations hold by coherence. Hence we define for any $V$ in $\Mm$ $F(V) = (V,l_V^{-1}, r_V^{-1})$ and conversely, for any object in ${}^I\Mm^I$, $G(W,\rho^L, \rho^R) = W$. By the above, $FG = \id$ and $GF = \id$. Notice that morphisms in $\Mm$ are immediately morphisms in ${}^I\Mm^I$, and letting $F$ and $G$ be trivial on morphisms results in an isomorphism of categories. For the monoidal structure, notice that for any $V, W$ in ${}^I\Mm^I$, $(V\ot W, \id_{V\ot W}) = (V\square^I W, e_{V,W})$. Hence the associators and constraints of both categories coincide, by their constructions in the Proposition \ref{prop: (co)regularity of bi(co)modules} and the preliminaries. By construction,
    \begin{align*}
    F(V) \square^I F(W) &= (V\ot W, \rho^L_{V\square W}, \rho^R_{V\square W}) = (V\ot W, \rho^L_V\ot W, V\ot \rho^R_W) 
    \\&= (V\ot W, l_{V}^{-1}\ot W, V\ot r_W^{-1}) = (V\ot W, l_{V\ot W}^{-1}, r_{V\ot W}^{-1}) = F(V\ot W)
    \end{align*}
    Hence $(F,\id, \id_I)$ and $(G,\id, \id_I)$ constitute a monoidal isomorphism.
    Likewise for the $I$-bimodules in case $\Mm$ is coregular.
\end{proof}
\end{invisible}
We now compute some examples. First, let $(\Mm,\ot,I,\sigma)=(\mathrm{Vec}_{\Bbbk},\ot,\Bbbk,\tau)$, which is known to be a regular and coregular symmetric monoidal category, and $H$ be a $\Bbbk$-bialgebra. By a) of Theorem \ref{theorem: bi(co)modules duoidal} one recovers Example \ref{ex:duoidalbimodules}, while by b) of Theorem \ref{theorem: bi(co)modules duoidal} one obtains the following dual example.

\begin{example}
    $(^H\mm^H,\ot, \Bbbk, \square^H, H)$ is a duoidal category, with interchange law defined by
    \begin{alignat*}{2}
        \zeta_{V,W,U,T}((v\square^H w)\ot (u\square^H t)) = (v\ot u)\square^H (w\ot t),
    \end{alignat*}
for all $V,W,U,T\in {}^{H}{\mm}^H$, and morphisms $\Delta_{I}:\Bbbk\to \Bbbk\square^{H}\Bbbk$, $m_{J}:H\ot H\to H$ and $\varepsilon_{I}:\Bbbk\to H$ given, respectively, by $\Delta_{I}(k)=1_{\Bbbk}\square^{H}k=k\square^{H}1_{\Bbbk}$, $m_{H}$, and $u_{H}$.
\end{example}

\begin{remark}\label{rmk:obstruction}
We observe that $u_{H}$ is a morphism of $H$-bimodules if and only if $H=\Bbbk1_{H}$
\begin{invisible}
    $u_{H}(h\cdot1_{\Bbbk}\cdot h')=u_{H}(\varepsilon(hh'))=\varepsilon(hh')1_{H}$, while $h\cdot u_{H}(1_{\Bbbk})\cdot h'=hh'$.
\end{invisible}
This shows that we cannot immediately lift the duoidal structure of ${}^H\mm^H$ to the category $^{H}_{H}\mm^{H}_{H}$, like we were unable to do so for bimodules (Remark \ref{rmk:obstructionliftingduoidal}). 
\end{remark}

Now we apply Theorem \ref{theorem: bi(co)modules duoidal} considering other braided monoidal categories. In order to do this, we recall how one can obtain braidings for the categories $(_{H}\mm_{H},\ot,\Bbbk)$ and $(^{H}\mm^{H},\ot,\Bbbk)$. These results are considered part of the folklore in the literature, and we have not found a specific reference. We include this remark to explain how they can be derived.

\begin{remark}\label{rmk:braidingbimodquasi}
We recall that the monoidal structure on $_{H}\mm_{H}$ is obtained by identifying $_{H}\mm_{H}$ with $_{H\ot H^{\mathrm{op}}}\mm$. Hence, braidings on $(_{H}\mm_{H},\ot,\Bbbk)$ correspond to braidings on $(_{H\ot H^{\mathrm{op}}}\mm,\ot,\Bbbk)$, and thus to $\Rr$-matrices for $H\ot H^{\mathrm{op}}$. Moreover, by e.g.\ \cite[Theorem 2.9]{MR1624475}, the latter can be classified in terms of $\Rr$-matrices of $H$, $\Rr$-matrices of $H^{\mathrm{op}}$, and central ``weak $\Rr$-matrices'' of $(H,H^{\mathrm{op}})$ (see \cite[Definition 1.1]{MR1624475}). We point out that we are looking at \cite[Theorem 2.9]{MR1624475}, with $A=H^{\mathrm{op}}$ and $R=1\ot1$, so that we recover the standard tensor product of bialgebras $H\ot H^{\mathrm{op}}$. Choosing the central weak $\Rr$-matrices $u$ and $v$ equal to $1_{H}\ot1_{H}$, the general quasitriangular structure on $H\ot H^{\mathrm{op}}$ given in \cite[Theorem 2.9]{MR1624475}  is $\Rr_{\ot}=(\id\ot\tau\ot\id)(\Rr\ot\Rr')$ with $\Rr$ and $\Rr'$ quasitriangular structures of $H$ and $H^{\mathrm{op}}$, respectively. 

Let $(H,\Rr)$ be a quasitriangular bialgebra. Then $\Rr^\op$ is a quasitriangular structure for $H^\op$, see e.g.\ \cite[Exercise 2.1.3]{Majid-book}. Therefore, 
$(\id\ot\tau\ot\id)(\Rr\ot\Rr^\op)$ is a quasitriangular structure for $H\ot H^\op$. 
By identifying left $H\ot H^\op$-modules with $H$-bimodules, we get that the monoidal category $({}_H\mathfrak{M}_H,\ot,\Bbbk)$ is braided, where, for all $X,Y\in{}_H\mathfrak{M}_H$, the braiding $\sigma_{X,Y}:X\ot Y\to Y\ot X$ is given by $\sigma_{X,Y}(x\ot y)=\Rr^{\mathrm{op}}(y \ot x)\Rr$. 
\begin{invisible}
 Set $\Ss:=(\id\ot\tau\ot\id)(\Rr\ot\Rr^\op)
=\Rr^i\ot\Rr_j\ot\Rr_i\ot\Rr^j.$ 
Then $\Ss^\op(y\ot x)
=(\Rr_i\ot\Rr^j\ot\Rr^i\ot\Rr_j)(y\ot x)
=(\Rr_i\ot\Rr^j)y\ot ( \Rr^i\ot\Rr_j)x
=\Rr_iy\Rr^j\ot\Rr^ix\Rr_j
=\Rr^{\mathrm{op}}(y\ot x)\Rr.$
\end{invisible}
Moreover, we observe that one can also consider $(\Rr^{-1})^{\op}$ as a quasitriangular structure for $H$, obtaining the quasitriangular structure $(\id\ot\tau\ot\id)((\Rr^{-1})^{\mathrm{op}}\ot\Rr^\op)$ for $H\ot H^{\mathrm{op}}$. The induced braiding $\sigma_{X,Y}:X\ot Y\to Y\ot X$ on $(_{H}\mm_{H},\ot,\Bbbk)$ is given by $\sigma_{X,Y}(x\ot y)=\Rr^{-1}(y \ot x)\Rr$. 
\begin{invisible}
 Set $\Ss:=(\id\ot\tau\ot\id)((\Rr^{-1})^{\op}\ot\Rr^\op)
=\overline{\Rr}_i\ot\Rr_i\ot\overline{\Rr}^i\ot\Rr^i.$ 
Then $\Ss^\op(y\ot x)
=(\overline{\Rr}^i\ot\Rr^i\ot \overline{\Rr}_i\ot\Rr_i)(y\ot x)
=(\overline{\Rr}^i\ot\Rr^i)y\ot ( \overline{\Rr}_i\ot\Rr_i)x
=\overline{\Rr}^iy\Rr^i\ot  \overline{\Rr}_ix\Rr_i
=\Rr^{-1}(y\ot x)\Rr.$
\end{invisible}
This braiding will have an advantage with respect to the previous one once we will consider the duoidal setting (see Proposition \ref{prop:braidedduoidalbimod}). 

Dually, given a coquasitriangular bialgebra $(H,\Rr:H\ot H\to\Bbbk)$, $(H^{\cop},\Rr^{\op})$ and $(H^{\cop}, \Rr^{-1})$ are coquasitriangular bialgebras, with $\Rr^{\op} = \Rr\tau_{H,H}$ and $\Rr^{-1}$ the convolution inverse of $\Rr$. In particular, $(H, (\Rr^{-1})^{\op})$ is a co(quasi)triangular bialgebra.
\begin{invisible}
        The fact that $(H^{\cop}, \Rr^{\op})$ is a coquasitriangular bialgebra is an immediate verification, while the claim on $(H^{\cop}, \Rr^{-1})$ is noticed right after \cite[Definition 5.1]{Ferri2025}. The last claim now follows immediately by combining the former two, noticing that $(\Rr^{-1})^{\op} = (\Rr^{\op})^{-1}$.
\end{invisible}
The category $(^{H}\mm^{H},\ot,\Bbbk)$ is braided with braiding defined, for all $X,Y\in{}^H\mathfrak{M}^H$, by $\sigma_{X,Y}:X\ot Y\to Y\ot X$, $x\ot y\mapsto \Rr^{-1}(x_{(-1)}\ot y_{(-1)})y_{(0)}\ot x_{(0)}\Rr(x_{(1)}\ot y_{(1)})$. This may be derived from the isomorphism with the category of right $H\ot H^{\cop}$-comodules \cite[Lemma 4.5.11 and Exercise 9.2.9]{Majid-book}, whose braidings are classified \cite[Theorem 2.10]{chen2} (with $\tau = \nu = \nu' = \ep\ot\ep$). Historically, braidings on the category of left comodules over a bialgebra were first studied in \cite[Theorem 2.7]{Larson1991}.

\end{remark}

\begin{example}\label{example: bimodules over smash products duoidal}
Consider a $\Bbbk$-bialgebra $H$ and the monoidal category $(_{H}\mm_{H},\ot,\Bbbk)$. Suppose that the latter category is braided, which happens when $H$ is a quasitriangular bialgebra as explained in Remark \ref{rmk:braidingbimodquasi}. By Proposition \ref{prop:bi(co)modmonoidalot}, $(_{H}\mm_{H},\ot,\Bbbk)$ is regular and coregular since $(\mathsf{Vec}_{\Bbbk},\ot,\Bbbk)$ is.

Let $K$ be an object in $\mathsf{Bimon}(_{H}\mm_{H},\ot,\Bbbk)$. In particular, it is an object in $\mathsf{Mon}(_{H}\mm,\ot,\Bbbk)$, $\mathsf{Mon}(\mm_{H},\ot,\Bbbk)$, and in $\mathsf{Comon}(_{H}\mm,\ot,\Bbbk)$, $\mathsf{Comon}(\mm_{H},\ot,\Bbbk)$. Then, by Theorem \ref{theorem: bi(co)modules duoidal}, we obtain the following duoidal categories: 
\[
        \left({}_{K}{({}_{H}{\mm}_H)_K}, \ot_K, K, \ot, \Bbbk\right), \hspace{5em} \left({}^{K}{({}_{H}{\mm}_H)}^K
        , \ot, \Bbbk, \square^K, K\right).
    \]
We point out that we have functors $X\ot_{K}(-):{}_{K}(_{H}\mm_{H})\to{}_{H}\mm_{H}$, $(-)\ot_{K}X:({}_{H}\mm_{H})_{K}\to{}_{H}\mm_{H}$ and $X\square^{K}(-):{}^{K}(_{H}\mm_{H})\to{}_{H}\mm_{H}$, $(-)\square^{K}X:{}(_{H}\mm_{H})^{K}\to{}_{H}\mm_{H}$, for suitable objects $X\in{}_{H}\mm_{H}$. 
\end{example}

\begin{remark}\label{remark: bimodules with k-tensor}
We observe $H$ is in $\Bimon({}_{H}{\mm}_H, \ot, \Bbbk, \sigma)$ for its $\Bbbk$-linear bialgebra structure if and only if $H\cong \Bbbk$. Indeed, it follows easily that $H$ is an object in $\Comon({}_{H}{\mm}_H, \ot, \Bbbk, \sigma)$ but it is in $\Mon({}_{H}{\mm}_H, \ot, \Bbbk, \sigma)$ if and only if $h = \ep(h)1_H$ for all $h\in H$ (see Remark \ref{rmk:obstruction}), which defines a bialgebra isomorphism from $H$ to $\Bbbk$. In this case, by utilizing Lemma \ref{lem:bimonIandbi(co)modonI}, Example \ref{example: bimodules over smash products duoidal} instantializes to the following duoidal categories;
    \begin{align*}
        &\left({}_{H}{({}_{H}{\mm}_H)}_H, \ot_H, H, \ot, \Bbbk\right) \cong ({}_{\Bbbk}{\mm}_{\Bbbk}, \ot_{\Bbbk}, \Bbbk, \ot, \Bbbk) \cong (\Vec_{\Bbbk}, \ot, \Bbbk, \ot, \Bbbk),
        \\&\left({}^{H}{({}_{H}{\mm}_H)^H}, \ot, \Bbbk, \square^H, H\right) \cong ({}^{\Bbbk}{\mm}^{\Bbbk}, \ot, \Bbbk, \square^{\Bbbk}, \Bbbk) \cong (\Vec_{\Bbbk}, \ot, \Bbbk, \ot, \Bbbk).
    \end{align*}
    Hence both correspond to the datum of $(\Vec_{\Bbbk},\ot,\Bbbk)$ endowed with the symmetry $\tau$. 
\end{remark}

\begin{example}\label{example: bicomodules over smash coproduct duoidal}
Consider a $\Bbbk$-bialgebra $H$ and the monoidal category $(^{H}\mm^{H},\ot,\Bbbk)$. Suppose that the latter category is braided. This happens when $H$ is a coquasitriangular bialgebra as explained in Remark \ref{rmk:braidingbimodquasi}. By Proposition \ref{prop:bi(co)modmonoidalot}, $(^{H}\mm^{H},\ot,\Bbbk)$ is regular and coregular since $(\mathsf{Vec}_{\Bbbk},\ot,\Bbbk)$ is. 

Let $K$ be an object in $\mathsf{Bimon}(^{H}\mm^{H})$. In particular, it is an object in $\mathsf{Mon}(^{H}\mm)$, $\mathsf{Mon}(\mm^{H})$, and in $\mathsf{Comon}(^{H}\mm)$, $\mathsf{Comon}(\mm^{H})$. Then, by Theorem \ref{theorem: bi(co)modules duoidal}, we obtain the following duoidal categories: 
\begin{align*}
        &\left({}_{K}{(^H{\mm}^H)_K}
        , \ot_K, K, \ot, \Bbbk\right), \hspace{5em} \left({}^{K}{(^{H}{\mm}^H)^K}, \ot, \Bbbk, \square^K, K\right).
    \end{align*}
\begin{invisible}
    If $H$ is a $\Bbbk$-bialgebra, then $(\Mm^H, \ot, \Bbbk)$ is monoidal for the diagonal coaction. Let $(C, \rho^H_C)$ be in $\Comon(\Mm^H, \ot, \Bbbk)$, this is a right $H$-comodule $C$ with a coalgebra structure for which its structure morphisms are $H$-comodule morphisms. Analogous to \cite{HecSch}, $H\# C$ is a coalgebra for the structure morphisms
    \begin{align*}
        &\Delta_{H\# C} = (\rho^H_{H\ot C}\ot C)(H\ot \Delta_C) = (H\ot C\ot \mu_H\ot C)(H\ot \sigma_{H,C}\ot H\ot C)(\Delta_H\ot \rho^H_C\ot C)(H\ot \Delta_C),
        \\&\ep_{H\# C} = \ep_H\ot \ep_C.
    \end{align*}
    The below two functors, which are trivial on morphisms, form an isomorphism of categories $(\Mm^H)^C \cong \Mm^{H\# C}$, utilizing that the coalgebra structure of $C$ is $H$-colinear.
    \begin{align*}
        &F\colon (\Mm^H)^C\to \Mm^{H\# C} \colon ((V,\rho_V^H), \rho^C_{V})\mapsto (V,\rho^{H\# C}_V),
        \hspace{2em} \rho^{H\# C}_V = (\rho^H_V\ot C)\rho_V^C
        \\&G\colon \Mm^{H\# C}\to (\Mm^H)^C\colon (W, \rho^{H\#C}_W)\mapsto ((W,\rho^H_W),\rho^C_W),
        \\&\hspace{3em} \rho_W^H = (W\ot H\ot \ep_C)\rho_W^{H\# C}, \hspace{3em} \rho^C_W = (W\ot \ep_H\ot C)\rho^{H\# C}_W.
    \end{align*}
    Notice that the right $C$-coaction depends on the right $H$-coaction $\rho^H_V$, and so more rigorously we should denote $\rho^C_{(V,\rho^H_V)}$, yet this is quite cumbersome notation.
\end{invisible}
\end{example}

\begin{remark}\label{remark: bicomodules with k-tensor}
     We observe that $H$ is in $\Bimon({}^{H}{}{\mm}^H, \ot, \Bbbk, \sigma)$ if and only if $H\cong \Bbbk$. Indeed, it follows easily that $H$ is in $\Mon({}^{H}{\mm}^H, \ot, \Bbbk)$ but it is in $\Comon({}^{H}{\mm}^H, \ot, \Bbbk)$ if and only if $h = \ep(h)1_H$ for all $h\in H$ (see Remark \ref{rmk:obstructionliftingduoidal}), which defines a bialgebra isomorphism from $H$ to $\Bbbk$. In this case, by utilizing Lemma \ref{lem:bimonIandbi(co)modonI}, Example \ref{example: bicomodules over smash coproduct duoidal} instantializes to the following duoidal categories;
    \begin{align*}
        &\left( {}_{H}{({}^{H}{\mm^H}})_H, \ot_H, H, \ot, \Bbbk \right) \cong ({}^{\Bbbk}{\mm}^\Bbbk, \ot_\Bbbk, \Bbbk, \ot, \Bbbk) \cong (\Vec_{\Bbbk}, \ot, \Bbbk, \ot, \Bbbk),
        \\&\left( {}^{H}{({}^{H}{\mm^H}})^H, \ot, \Bbbk, \square^H, H \right) \cong ({}^{\Bbbk}{\mm}^\Bbbk, \ot, \Bbbk, \square^\Bbbk, \Bbbk) \cong (\Vec_{\Bbbk}, \ot, \Bbbk, \ot, \Bbbk).
    \end{align*}
    Hence both correspond to the datum of $(\Vec_{\Bbbk},\ot,\Bbbk)$ endowed with the symmetry $\tau$.
\end{remark}

Finally, we consider the category $(_{A}\mm_{A},\ot_{A},A)$ with $A$ an algebra and $(^{C}\mm^{C},\square^{C},C)$, with $C$ a coalgebra.

\begin{example}
   Given an algebra $A$, then $({}_{A}{\mm}_A, \ot_A,A)$ is a coregular monoidal category by Theorem \ref{theorem: bi(co)modules monoidal} and Proposition \ref{prop: (co)regularity of bi(co)modules}, since $(\mathsf{Vec}_{\Bbbk},\ot,\Bbbk)$ is. In general, as observed in Remark \ref{rmk:regularityotA}, $({}_{A}{\mm}_A, \ot_A, A)$ is not regular. Suppose that $({}_{A}{\mm}_A, \ot_A, A)$ is braided, this is there exists a canonical $R$-matrix $R$ for $A$ (see Theorem \ref{thm:braidingsbimodtensorA}). Denote its corresponding braiding by $\sigma^R$, and consider $K\in\mathsf{Bimon}(_{A}\mm_{A},\ot_{A},A,\sigma^R)$, then by Theorem \ref{theorem: bi(co)modules duoidal} we obtain the following duoidal category
\[
(_{K}(_{A}\mm_{A})_{K},(\ot_{A})_{K},K,\ot_{A},A).
\]
Given $M,N$ in ${}_{K}(_{A}\mm_{A})_{K}$, denoting by $\triangleright$ and $\triangleleft$ the left and right $K$-actions on $M,N$ \begin{invisible}(which are $A$-bilinear maps),\end{invisible} one has $M(\ot_{A})_{K}N=M\ot_A N / \langle (m\tl k) \ot_A n - m\ot_A (k\tr n)\rangle.
$
\end{example}

\begin{remark}
    One can consider $A$ as a bimonoid in $(_{A}\mm_{A},\ot_{A},A, \sigma^R)$ for the identity morphisms by Lemma \ref{lem:bimonIandbi(co)modonI}. The previous example instantializes to
    \begin{align*}
        &\left({}_{A}{({}_{A}{\mm_A})_A}, (\ot_A)_A, A, \ot_A, A\right) \cong ({}_{A}{\mm}_A, \ot_A, A, \ot_A, A),
    \end{align*}
    The interchange law is induced by the braiding as discussed in Remark \ref{remark: strong duoidal is braided}. Hence this corresponds to the datum of the braided monoidal category $({}_{A}{\mm}_A, \ot_A, A, \sigma^R)$.  
\end{remark}

\begin{example}
    Consider a coalgebra $C$, then $({}^{C}{\mm}^C, \square^C,C)$ is a regular monoidal category by Theorem \ref{theorem: bi(co)modules monoidal} and Proposition \ref{prop: (co)regularity of bi(co)modules},  since $(\mathsf{Vec}_{\Bbbk},\ot,\Bbbk)$ is. In general, as observed in Remark \ref{rmk:regularityotA}, $({}^{C}{\mm}^C, \square^C, C)$ is not coregular. Suppose that $({}^{C}{\mm}^C, \square^C, C)$ is braided, this is there exists a canonical $R$-form $\RR$ for $C$ (see Theorem \ref{theorem: braidings on bicomodules}). Denote its corresponding braiding by $\sigma^{\RR}$, and consider $K\in\mathsf{Bimon}(^{C}\mm^{C},\square^{C},C,\sigma^{\RR})$. By Theorem \ref{theorem: bi(co)modules duoidal} we obtain the following duoidal category
\[
(^{K}(^{C}\mm^{C})^{K},\square^{C},C,(\square^{C})^{K},K)
\]
Given $M,N$ in $^{K}(^{C}\mm^{C})^{K}$, differentiating between $C$-coactions and $K$-coactions by a superscript, one has that $M(\square^{C})^{K}N=\{m\square^{C}n\ |\ m_{(0)}^{K}\square^Cm^{K}_{(1)}\square^{C}n=m\square^{C} n_{(-1)}^{K}\square^C n_{(0)}^{K}\}$. 
\end{example}

\begin{example}
    One can consider $C$ as a bimonoid in $(^{C}\mm^{C},\square^C,C,\sigma^{\RR})$ for the identity morphisms by Lemma \ref{lem:bimonIandbi(co)modonI}. 
    The previous example instantializes to
    \begin{align*}
        \left( {}^{C}{({}^{C}{\mm^C}})^C, \square^C, C, (\square^C)^C, C \right)\cong ({}^{C}{\mm}^C, \square^C, C, \square^C, C).
    \end{align*}
    The interchange law is induced by the braiding $\sigma^{\RR}$ as discussed in Remark \ref{remark: strong duoidal is braided}. Hence this corresponds to the datum of the braided monoidal category $({}^{C}{\mm}^C, \square^C, C, \sigma^{\RR})$. 
\end{example}

\section{On the duoidal category of tetramodules and Yetter--Drinfeld modules}\label{sec:tetramodulesduoidal}

We recall that tetramodules over a $\Bbbk$-bialgebra $H$ can be structured as a duoidal category $(^{H}_{H}\mm^{H}_{H},\ot_{H},H,\square^{H},H)$ (in fact, a 2-fold monoidal category); this was proven in \cite[Theorem 3.6]{Shoikhet}. We prove a generalization of this construction, and view said result as an example thereof in Example \ref{example: tetramodules duoidal}.

\begin{theorem}\label{thm:duoidaltetramodules}
    Let $(\Mm,\ot,I,\sigma)$ be a braided (strict) monoidal category, and $H$ in $\Bimon(\Mm)$. If $\Mm$ is regular and coregular, then $({}^{H}_{H}{\Mm}^H_H, \ot_H, H, \square^H, H)$ is duoidal. 
\end{theorem}
\begin{proof}
As recalled in Proposition \ref{prop:monoidaltetramodules}, $(_{H}^{H}\Mm^{H}_{H},\ot_{H},H)$ and $(_{H}^{H}\Mm^{H}_{H},\square^{H},H)$ are monoidal categories. Let us construct a suitable interchange law. For any tetramodules $M,N,P,Q$ we define $\Xi_{M,N,P,Q}$ as the composition making the below diagram commute.
\[\begin{tikzcd}
	{(M\square^H N) \ot (P\square^H Q)} && {(M\ot_H P)\ot(N\ot_H Q).} \\
	{M\ot N\ot P\ot Q} && {M\ot P\ot N\ot Q}
	\arrow["{\Xi_{M,N,P,Q}}", dashed, from=1-1, to=1-3]
	\arrow["{e_{M,N}\ot e_{P,Q}}"', from=1-1, to=2-1]
	\arrow["{M\ot\sigma_{N,P}\ot Q}"', from=2-1, to=2-3]
	\arrow["{q_{M,P}\ot q_{N,Q}}"', from=2-3, to=1-3]
\end{tikzcd}\]
One verifies that $\Xi_{M,N,P,Q}$ equalizes $\rho^R_{M\ot_H P}\ot (N\ot_H Q)$ and $(M\ot_H P)\ot \rho^L_{N\ot_H Q}$.
\begin{invisible} For notational ease, we will often write $\Xi$ instead of $\Xi_{M,N,P,Q}$. Using that coequalizers in ${}^{H}_{H}{\Mm}^H_H$ are equipped with the unique $H$-bicomodule structures rendering their canonical morphism $H$-bicolinear by Remark \ref{rmk:forgetfulbimod}, we compute that 
\begin{align*}
    (&\rho^R_{M\ot_H P}\ot (N\ot_H Q))\Xi
    \\&= (\rho^R_{M\ot_H P}\ot (N\ot_H Q))(q_{M, P} \ot q_{N,Q})(M\ot \sigma_{N,P}\ot Q)(e_{M,N}\ot e_{P,Q})
    \\&=(q_{M,P}\ot H\ot q_{N,Q})(\rho^R_{M\ot P}\ot N\ot Q)(M\ot \sigma_{N,P}\ot Q)(e_{M,N}\ot e_{P,Q})
    \\&= (q_{M,P}\ot H\ot q_{N,Q})(M\ot P\ot \mu\ot N\ot Q)(M\ot \sigma_{H,P}\ot H\ot N\ot Q)
    \\&\hspace{17em}(\rho^R_M \ot \rho^R_P\ot N\ot Q)(M\ot \sigma_{N,P}\ot Q)(e_{M,N}\ot e_{P,Q})
    \\&= (q_{M,P}\ot H\ot q_{N,Q})(M\ot P\ot \mu\ot N\ot Q)(M\ot \sigma_{H,P}\ot H\ot N\ot Q)
    \\&\hspace{17em}(M\ot H\ot \sigma_{N,P\ot H}\ot Q)(\rho^R_M \ot N\ot \rho^R_P\ot Q)(e_{M,N}\ot e_{P,Q})
    \\&= (q_{M,P}\ot H\ot q_{N,Q})(M\ot P\ot \mu\ot N\ot Q)(M\ot \sigma_{H,P}\ot H\ot N\ot Q)
    \\&\hspace{17em}(M\ot H\ot \sigma_{N,P\ot H}\ot Q)(M \ot \rho^L_N\ot P\ot \rho^L_Q)(e_{M,N}\ot e_{P,Q})
    \\&= (q_{M,P}\ot H\ot q_{N,Q})(M\ot P\ot \mu\ot N\ot Q)(M\ot P \ot H\ot  \sigma_{N,H}\ot Q)
    \\&\hspace{17em}(M\ot \sigma_{H\ot N,P}\ot H\ot Q)(M \ot \rho^L_N\ot P\ot \rho^L_Q)(e_{M,N}\ot e_{P,Q})
    \\&= (q_{M,P}\ot H\ot q_{N,Q})(M\ot P\ot \mu\ot N\ot Q)(M\ot P \ot H\ot  \sigma_{N,H}\ot Q)
    \\&\hspace{17em}(M \ot P\ot \rho^L_N\ot \rho^L_Q)(M\ot \sigma_{N,P}\ot H\ot Q)(e_{M,N}\ot e_{P,Q})
    \\&= (q_{M,P}\ot H\ot q_{N,Q})(M\ot P \ot \rho^L_{N\ot Q})(M\ot \sigma_{N,P}\ot H\ot Q)(e_{M,N}\ot e_{P,Q})
    \\&= ((M\ot_H P) \ot \rho^L_{N\ot_H Q})(q_{M,P}\ot q_{N,Q})(M\ot \sigma_{N,P}\ot H\ot Q)(e_{M,N}\ot e_{P,Q})
    \\&= ((M\ot_H P) \ot \rho^L_{N\ot_H Q})\Xi.
\end{align*}
\end{invisible}
Hence, there exists a unique morphism $\gamma\colon (M\square^H N)\ot(P\square^H Q)\to (M\ot_H P)\square^H (N\ot_H Q)$ in $\Mm$ such that $\Xi_{M,N,P,Q} = e_{M\ot_H P, N\ot_H Q}\gamma$.
We claim $\gamma$ to coequalize $\alpha^R_{M\square^H N}\ot (P\square^H Q)$ and $(M\square^H N) \ot \alpha^L_{P\square^H Q}$. Since $e_{M\ot_H P, N\ot_H Q}$ is a monomorphism, this is equivalent to proving that $\Xi_{M,N,P,Q}$ does so, which follows analogously to the coequalizing \begin{invisible}proven\end{invisible} above. Hence there exists a unique morphism $\zeta_{M,N,P,Q}\colon (M\square^H N)\ot_H (P\square^H Q)\to (M\ot_H P)\square^H (N\ot_H Q)$ in $\Mm$ satisfying $\zeta_{M,N,P,Q} q_{M\square^H N, P\square^H Q} = \gamma$.
\begin{invisible}Again, as the canonical morphism of an equalizer is always a monomorphism,\end{invisible} $\zeta_{M,N,P,Q}$ can be characterized as the unique $\Mm$-morphism satisfying that
\begin{equation}\label{eqn: UP of zeta}
    e_{M\ot_H P, N\ot_H Q}\zeta_{M,N,P,Q} q_{M\square^H N, P\square^H Q} = \Xi_{M,N,P,Q}.
\end{equation}
In order to prove that $\zeta_{M,N,P,Q}$ is a morphism of tetramodules, one writes down the desired identities, pre- and post-composes these with the correct epi- respectively monomorphisms stemming from the corresponding (co)equalizer to obtain an identity on $\Xi_{M,N,P,Q}$. The latter are easily verified by the tetramodule structure of the considered (co)equalizer. Similarly, one proves that $\zeta_{M,N,P,Q}$ defines a natural transformation by reverting back to the corresponding identity on $\Xi_{M,N,P,Q}$ and verifying this. Let us prove at last that $\zeta$ is an interchange law for the considered monoidal structures. Conditions \eqref{eq:assoc1} and \eqref{eq:assoc2} become
\begin{align*}
    &(a_{A,C,E}^{\ot_H}\square^H a_{B,D,F}^{\ot_H})\zeta_{A\ot_H C, B\ot_H D, E, F}(\zeta_{A,B,C,D}\ot_H (E\square^H F))
    \\&\hspace{5em}= \zeta_{A,B,C\ot_H E, D\ot_H F}((A\square^H B)\ot_H \zeta_{C,D,E,F})a^{\ot_H}_{A\square B, C\square D, E\square F},
    \\&a^{\square}_{A\ot_H D, B\ot_H E, C\ot_H F}(\zeta_{A,B,D,E}\square^H (C\ot_H F))\zeta_{A\square^H B, C, D\square^H E, F}
    \\&\hspace{5em} =((A\ot_H D)\square^H \zeta_{B,C,E,F})\zeta_{A,B\square C, D, E\square F}(a^{\square}_{A,B,C}\ot_H a^{\square}_{D,E,F}).
\end{align*}
These can be verified by pre- and postcomposing with the appropriate canonical morphisms of the coequalizer respectively equalizer, utilizing the defining properties of the associators $a^{\ot_H}$ and $a^{\square}$ as discussed in Remark \ref{remark: constraints of tetramodules}. Explicit computations are omitted due to their tedious nature.

Lastly, we need morphisms
\[
\Delta_H \colon H\to H\square^H H, \hspace{2em} m_H \colon H\ot_H H\to H, \hspace{2em} \ep_H = u_H\colon H\to H
\]
in ${}^{H}_{H}{\Mm}^H_H$ such that $(H, m_H, u_H)$ is in $\Mon({}^{H}_{H}{\Mm}_H^H, \ot_H, H)$, $(H, \Delta_H, \ep_H)$ is in $\Comon({}^{H}_{H}{\Mm}^H_H, \square^H, H)$. 
By using the notation of Remark \ref{remark: constraints of tetramodules}, $(H, m_H, u_H) = (H, \Upsilon_H = \Upsilon'_H, \id_H)$ and $(H, \Delta_H, \ep_H) = (H, \Lambda_H = \Lambda'_H, \id_H)$. The identities of \eqref{eq:unit1}, \eqref{eq:unit2} translate to, for any $H$-tetramodules $M$ and $N$,
\begin{align*}
    &(\Upsilon'_M\square \Upsilon'_N)\zeta_{H,H,M,N}(\Delta_H\ot_H (M\square N)) = \Upsilon'_{M\square N},
    \\&(\Upsilon_M\square \Upsilon_N)\zeta_{M,N,H,H}((M\square N)\ot_H \Delta_H) = \Upsilon_{M\square N},
    \\&(m_H\square (M\ot_H N))\zeta_{H,M,H,N}(\Lambda'_M\ot_H \Lambda'_N) = \Lambda'_{M\ot_H N},
    \\&((M\ot_H N)\square m_H)\zeta_{M,H,N,H}(\Lambda_M\ot_H \Lambda_N) = \Lambda_{M\ot_H N}.
\end{align*}
These are easily verified by pre- and postcomposing with the appropriate canonical morphisms of the concerning coequalizer respectively equalizer.
\end{proof}

\begin{example}[{\cite[Theorem 3.6]{Shoikhet}}]\label{example: tetramodules duoidal}
    Let us consider the case of vector spaces. Notice that both $({}^H_H\mm^H_H, \ot_H, H)$ and $({}^H_H\mm^H_H, \square^H, H)$ are monoidal categories by Proposition \ref{prop:monoidaltetramodules}.
Given $M,N,P,Q$ in $^{H}_{H}\mm^{H}_{H}$, one defines the interchange law as in Theorem \ref{thm:duoidaltetramodules} to obtain
    \begin{equation}\label{eq:interchangetetramodules}
    \begin{split}
    \zeta_{M,N,P,Q}:\ &(M\square^{H}N)\ot_{H}(P\square^{H}Q)\longrightarrow(M\ot_{H}P)\square^{H}(N\ot_{H}Q),
    \\&(m\square n)\ot_H (p\square q) \longmapsto (m\ot_H p)\square (n\ot_H q).
    \end{split}
    \end{equation}
    Explicitly, consider the linear map
\[\begin{tikzcd}[cramped]
	{(M\square^H N)\ot (P\square^H Q)} && {M\ot P\ot N\ot Q} & \\
	& {M\ot N \ot P\ot Q} && {\hspace{-2em}(M\ot_H P)\ot (N\ot_H Q).}
	\arrow["{e_{M,N}\ot e_{P,Q}}"', from=1-1, to=2-2]
	\arrow["{q_{M,P}\ot q_{N,Q}}", from=1-3, to=2-4]
	\arrow["{M\ot \tau_{N,P}\ot P}"', from=2-2, to=1-3]
\end{tikzcd}\]
As in Theorem \ref{thm:duoidaltetramodules}, this map takes values in
$(M\otimes_H P)\square^H(N\otimes_H Q)$ and is $H$-balanced.
The universal properties therefore yield the interchange map \eqref{eq:interchangetetramodules}. Moreover, one defines the maps $\Delta^{\bullet}_{H}:H\to H\square^{H}H$, $h\mapsto h_1\square^{H}h_2$, $m^{\circ}_{H}:H\ot_{H}H\to H$, $h\ot_{H}h'\mapsto hh'$, $\varepsilon^{\bullet}_{H}=\id_{H}$.
\end{example}

One is able to explicitly construct the duoidal structure of tetramodules in a general setting.
\begin{proposition}[{\cite[Proposition 3.2.1]{Drabant}}]\label{prop:splittingidempotent}
Let $(\Mm, \ot, I,\sigma)$ be a braided monoidal category and $H$ be an object in $\Hopfmon(\Mm, \ot, I, \sigma)$. For any $(M,\alpha^L, \rho^L)$ in ${}^H_H\Mm$, the morphism
\begin{equation}\label{eqn: idempotent}
    {}_M\Pi \colon M\xrightarrow{\rho^L}H\ot M\xrightarrow{S\ot M} H\ot M\xrightarrow{\alpha^L}M
\end{equation}
is an idempotent, this is ${}_M\Pi\circ {}_M\Pi = {}_M\Pi$.\newline
If ${}_M\Pi$ is a split idempotent, i.e.\ there exists a triple $(_HM, i_M, p_M)$ with $_HM$ an object in $\Mm$ and $i_M$, $p_M$ morphisms in $\Mm$ such that $i_Mp_M = {}_M\Pi$ and $p_Mi_M = \id_{_HM}$, then
\begin{equation}\label{eqn: (co)invariants}
    (_HM, i_M) = \Eq\left( 
\begin{tikzcd}
	M & {H\ot M}
	\arrow["{\rho^L}", shift left=1.5, from=1-1, to=1-2]
	\arrow["{\eta\ot M}"', shift right=1.5, from=1-1, to=1-2]
\end{tikzcd}\right), \hspace{3em} (_HM, p_M) = \Coeq\left( 
\begin{tikzcd}
	{H\ot M} & M
	\arrow["{\alpha^L}", shift left=1.5, from=1-1, to=1-2]
	\arrow["{\ep\ot M}"', shift right=1.5, from=1-1, to=1-2]
\end{tikzcd}\right).
\end{equation}
In particular, $_HM$ realizes both the coinvariants of the left coaction and the invariants of the left action of $M$.
\end{proposition}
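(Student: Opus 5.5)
Write $\Pi={}_M\Pi=\alpha^L(S\ot M)\rho^L$. The plan has two parts: show $\Pi^2=\Pi$, then identify the splitting object with both the coinvariants and the invariants through the identities $\rho^L\Pi=\eta\ot\Pi$ and $\Pi\alpha^L=\ep\ot\Pi$. These say that $\Pi$ lands in coinvariants and factors through invariants.

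\emph{Key identities.} I would first derive
\[
\rho^L\Pi=\eta\ot\Pi,\qquad \Pi\alpha^L=\ep\ot\Pi,
\]
together with the trivial facts $\Pi(\eta\ot M)=\Pi$ and $(\ep\ot M)\rho^L=\id$.

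For $\rho^L\Pi$ I would proceed as follows.
\begin{itemize}
\item Use that $\rho^L$ is $H$-linear for the diagonal action on $H\ot M$; this is the Hopf-module compatibility in ${}^H_H\Mm$, which involves $\sigma_{H,H}$.
\item Use that $S$ is an anti-coalgebra map in the braided sense, $\Delta S=(S\ot S)\sigma_{H,H}\Delta$.
\item Use coassociativity of $\rho^L$.
\end{itemize}
Combining these produces a factor $m(S\ot\id)\Delta=\eta\ep$ acting on the $H$-leg. Naturality of $\sigma$ is then used to move $\eta$ through the braidings, which gives $\eta\ot\Pi$. This graphical or string-diagram computation is where I expect the \textbf{main obstacle}: tracking the braidings correctly. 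I would perform it with string diagrams, in the standard way for braided Hopf algebras.

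Dually, for $\Pi\alpha^L$:
\begin{itemize}
\item Use $\rho^L\alpha^L=(m\ot\alpha^L)(H\ot\sigma\ot M)(\Delta\ot\rho^L)$.
\item Use $S$ anti-multiplicative, $Sm=m\sigma(S\ot S)$.
\item Use associativity of $\alpha^L$.
\end{itemize}
This yields $m(S\ot\id)\Delta=\eta\ep$ on the acting leg, and hence $\ep\ot\Pi$.

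\emph{Idempotency.} Applying the first key identity,
\[
\Pi\Pi=\alpha^L(S\ot M)\rho^L\Pi=\alpha^L(S\eta\ot\Pi)=\Pi,
\]
since $S\eta=\eta$.

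\emph{Equalizer.} Let $\Pi=i_Mp_M$ with $p_Mi_M=\id$.
\begin{itemize}
\item $i_M$ equalizes: $\rho^Li_M=\rho^L\Pi i_M=(\eta\ot\Pi) i_M=(\eta\ot M)i_M$.
\item Universality: suppose $f\colon X\to M$ satisfies $\rho^Lf=(\eta\ot M)f$. Then
\[
\Pi f=\alpha^L(S\ot M)(\eta\ot M)f=\alpha^L(\eta\ot M)f=f.
\]
Hence $f=i_M(p_Mf)$. Uniqueness follows because $i_M$ is split mono.
\end{itemize}

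\emph{Coequalizer.} This is dual.
\begin{itemize}
\item $p_M$ coequalizes: $p_M\alpha^L=p_M\Pi\alpha^L=p_M(\ep\ot\Pi)=p_M(\ep\ot M)$, using $p_M\Pi=p_M$.
\item Universality: suppose $g$ satisfies $g\alpha^L=g(\ep\ot M)$. Then
\[
g\Pi=g(\ep\ot M)(S\ot M)\rho^L=g(\ep\ot M)\rho^L=g,
\]
using $\ep S=\ep$. Hence $g=(gi_M)p_M$. Uniqueness follows because $p_M$ is split epi.
\end{itemize}

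The final sentence of the statement is then immediate.
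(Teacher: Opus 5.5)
Your proposal is correct. The paper gives no proof of this proposition; it imports it from \cite[Proposition 3.2.1]{Drabant}. Your argument is therefore a self-contained substitute rather than a variant of something in the text. The route is the natural direct one: reduce everything to $\rho^L\Pi=\eta\ot\Pi$ and $\Pi\alpha^L=\ep\ot\Pi$, then read off the equalizer and coequalizer properties from the splitting, with uniqueness coming from $i_M$ being split mono and $p_M$ split epi. Note that your first identity, precomposed with $i_M$, is exactly the fact $\rho^L_N i_N=\eta\ot i_N$ that the paper later invokes in the proof of Theorem~\ref{theorem: monoidal structures tetramodules}.

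The braided computation you flag as the main obstacle closes without string diagrams. Start from
\[
\rho^L\Pi=(m\ot\alpha^L)(H\ot\sigma_{H,H}\ot M)\bigl((S\ot S)\sigma_{H,H}\ot H\ot M\bigr)\bigl((H\ot\Delta)\Delta\ot M\bigr)\rho^L .
\]
Naturality of $\sigma$ moves the two antipodes to the outputs, and the hexagon axiom merges the two braidings into $\sigma_{H,H\ot H}$. Naturality again gives $\sigma_{H,H\ot H}(H\ot\Delta)=(\Delta\ot H)\sigma_{H,H}$, so that
\[
\rho^L\Pi=\bigl(m(S\ot H)\Delta\ot\alpha^L(S\ot M)\bigr)(\sigma_{H,H}\Delta\ot M)\rho^L=\eta\ot\Pi .
\]
The last equality uses the antipode axiom, $(\ep\ot H)\sigma_{H,H}=H\ot\ep$, and counitality.

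For $\Pi\alpha^L$ the argument is symmetric. The two braidings merge into $\sigma_{H\ot H,H}$, and naturality gives $\sigma_{H\ot H,H}(\Delta\ot H)=(H\ot\Delta)\sigma_{H,H}$. Associativity of $\alpha^L$ then lets $m(S\ot H)\Delta=\eta\ep$ act, and $(H\ot\ep)\sigma_{H,H}=\ep\ot H$ together with unitality of $\alpha^L$ finishes.
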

We say that a category $\Mm$ has \emph{split idempotents} if any idempotent in $\Mm$ is split. It is an elementary exercise to prove that if $\Mm$ has equalizers or coequalizers, then it has split idempotents, hence the above results apply to our setting wherein we assume the existence of equalizers and coequalizers.
\begin{corollary}
Let $H$ be a $\Bbbk$-Hopf algebra and let $M$ be in ${}^H_H\mm$. The canonical quotient map induces an isomorphism
\[
{}^{\mathrm{co}H}M \longrightarrow
\Bbbk\otimes_H M \cong M/H^+M,
\ m\longmapsto [m],
\]
where $H^+=\ker\varepsilon$.
Its inverse is induced by $m\mapsto S(m_{(-1)})m_{(0)}$.
\end{corollary}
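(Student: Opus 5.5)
This is a direct specialisation of Proposition \ref{prop:splittingidempotent} to $\Mm=\Vec_{\Bbbk}$ with the flip. The plan is to identify the two descriptions of ${}_HM$ in \eqref{eqn: (co)invariants} with ${}^{\mathrm{co}H}M$ and $\Bbbk\ot_H M$, respectively.

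\emph{Step 1: the idempotent and its splitting.} Here ${}_M\Pi(m)=S(m_{(-1)})m_{(0)}$. In $\Vec_{\Bbbk}$ every idempotent splits through its image, so set ${}_HM=\operatorname{im}{}_M\Pi$. Take $i_M$ to be the inclusion and $p_M$ the corestriction of ${}_M\Pi$.

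\emph{Step 2: the equalizer side.} The equalizer of $\rho^L$ and $\eta\ot M$ is by definition ${}^{\mathrm{co}H}M=\{m: m_{(-1)}\ot m_{(0)}=1\ot m\}$. So Proposition \ref{prop:splittingidempotent} gives ${}^{\mathrm{co}H}M=\operatorname{im}{}_M\Pi$, with ${}_M\Pi$ acting as the identity on it. As a sanity check, one can verify directly that:
\begin{itemize}
\item ${}_M\Pi(m)$ is coinvariant, using that $\rho^L$ is $H$-linear and $S(h_2)h_3\ot h_1$-type cancellation;
\item ${}_M\Pi(m)=\varepsilon(1)m=m$ for coinvariant $m$.
\end{itemize}

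\emph{Step 3: the coequalizer side.} The coequalizer of $\alpha^L$ and $\varepsilon\ot M$ is $M/\langle hm-\varepsilon(h)m\rangle=M/H^+M$, since $hm-\varepsilon(h)m=(h-\varepsilon(h)1)m$ and $H^+$ is spanned by the elements $h-\varepsilon(h)1$. This quotient is the standard presentation of $\Bbbk\ot_H M$ via the balanced tensor product \eqref{def:balancedtensor}, with $\Bbbk$ a right $H$-module through $\varepsilon$, using $\Bbbk\ot M\cong M$.

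\emph{Step 4: the comparison map.} Both $({}_HM,i_M)$ and $({}_HM,p_M)$ present the same object. Hence the composite ${}^{\mathrm{co}H}M\xrightarrow{i_M}M\xrightarrow{\pi}M/H^+M$ equals the universal comparison map, which is the isomorphism $\pi i_M$. This is exactly the canonical quotient map $m\mapsto[m]$.

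Its inverse is $p_M$ composed with the identification ${}_HM\cong{}^{\mathrm{co}H}M$, i.e.\ the map induced by ${}_M\Pi$, $m\mapsto S(m_{(-1)})m_{(0)}$. This is well defined on the quotient because ${}_M\Pi(hm)=\varepsilon(h){}_M\Pi(m)$, which is precisely the coequalizing property in Proposition \ref{prop:splittingidempotent}. One can also check by hand that $S(h_1m_{(-1)})h_2m_{(0)}=S(m_{(-1)})S(h_1)h_2m_{(0)}$.

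\emph{Step 5: the two compositions.}
\begin{itemize}
\item For coinvariant $m$, $[m]\mapsto{}_M\Pi(m)=m$.
\item For any $m$, $[{}_M\Pi(m)]=[m]$, because $m-S(m_{(-1)})m_{(0)}=(\varepsilon(m_{(-1)})-S(m_{(-1)}))m_{(0)}\in H^+M$.
\end{itemize}

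The main (mild) obstacle is the bookkeeping in Step 3, namely matching the coequalizer description with $\Bbbk\ot_H M$. Once that is in place, the claim is just the content of Proposition \ref{prop:splittingidempotent}.
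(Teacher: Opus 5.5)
Your proposal is correct and takes the same route as the paper, which states this result as an immediate specialisation of Proposition \ref{prop:splittingidempotent} to $\Vec_{\Bbbk}$: the equalizer is identified with ${}^{\mathrm{co}H}M$, the coequalizer with $M/H^+M\cong\Bbbk\ot_H M$, and the comparison isomorphism is $\pi i_M$ because $p_Mi_M=\id$. Your Step~5 checks also make the argument self-contained. One small correction: for coinvariant $m$ one has ${}_M\Pi(m)=S(1)m=m$, not $\varepsilon(1)m$, although the value is the same.
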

\begin{lemma}[{\cite[Lemma 3.3.1 and Lemma 3.3.3]{Drabant}}]\label{lemma: left Hopf module isomorphisms}
    Let $(\Mm,\ot, I,\sigma)$ be a braided monoidal category, $H$ be an object in $\Hopfmon(\Mm,\ot, I, \sigma)$, and $M$ be an object in ${}^H_H\Mm$. If $\Mm$ has split idempotents, then $H\ot (_HM)$ is in ${}^H_H\Mm$ for the structure morphisms $\alpha^L = \mu\ot (_HM)$ and $\rho^L = \Delta\ot (_HM)$. Moreover,
    \begin{align}\label{eqn: inverse tetramodule morphisms}
        \phi_M& \colon H\ot (_HM)\xrightarrow{H\ot i_M} H\ot M\xrightarrow{\alpha^L_M} M,
        \\\psi_M&\colon M\xrightarrow{\rho^L_M} H\ot M\xrightarrow{H\ot p_M}H\ot (_HM)
    \end{align}
    are inverse morphisms in ${}^H_H\Mm$.
\end{lemma}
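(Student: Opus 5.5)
To prove the lemma, I would first check that $H\ot({}_HM)$ lies in ${}^H_H\Mm$, then show that $\phi_M$ and $\psi_M$ are morphisms in ${}^H_H\Mm$, and finally that they are mutually inverse. Throughout I use the characterization of ${}_HM$ from Proposition \ref{prop:splittingidempotent}: $i_M$ equalizes $\rho^L$ and $\eta\ot M$, $p_M$ coequalizes $\alpha^L$ and $\ep\ot M$, and $i_Mp_M={}_M\Pi=\alpha^L(S\ot M)\rho^L$, $p_Mi_M=\id$.

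\textbf{Step 1: $H\ot({}_HM)$ is a Hopf module.} The structure $(\mu\ot{}_HM,\Delta\ot{}_HM)$ is the free Hopf module structure on $H$ tensored with the object ${}_HM$. The module and comodule axioms are immediate. The compatibility condition (the coaction is $H$-linear for the diagonal action) is exactly the bimonoid axiom $\Delta\mu=(\mu\ot\mu)(H\ot\sigma_{H,H}\ot H)(\Delta\ot\Delta)$ tensored with ${}_HM$.

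\textbf{Step 2: the maps are morphisms.} $\phi_M$ is left $H$-linear by associativity of $\alpha^L$. For colinearity of $\phi_M$, I would compute
\[
\rho^L\alpha^L(H\ot i_M)=(\mu\ot\alpha^L)(H\ot\sigma_{H,H}\ot M)(\Delta\ot\rho^L)(H\ot i_M),
\]
using the Hopf module compatibility. Then I replace $\rho^L i_M$ by $\eta\ot i_M$, and naturality of $\sigma$ with respect to $\eta$ collapses this to $(H\ot\phi_M)(\Delta\ot{}_HM)$. $\psi_M$ is colinear by coassociativity. For linearity of $\psi_M$, I would expand $\rho^L\alpha^L$ by the compatibility condition and then use $p_M\alpha^L=p_M(\ep\ot M)$. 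The counit combined with the braiding naturality should leave $(\mu\ot p_M)(H\ot\rho^L)$, as required.

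\textbf{Step 3: inverses.} One direction is
\[
\phi_M\psi_M=\alpha^L(H\ot i_Mp_M)\rho^L=\alpha^L(H\ot\alpha^L)(H\ot S\ot M)(H\ot\rho^L)\rho^L .
\]
By coassociativity and associativity this becomes $\alpha^L\big((\mu(H\ot S)\Delta)\ot M\big)\rho^L$, which equals $\alpha^L(\eta\ep\ot M)\rho^L=\id_M$ by the antipode axiom.

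The other direction is
\[
\psi_M\phi_M=(H\ot p_M)\rho^L\alpha^L(H\ot i_M).
\]
Step 2 already rewrites $\rho^L\alpha^L(H\ot i_M)=(H\ot\phi_M)(\Delta\ot{}_HM)$, so this equals $(H\ot p_M\alpha^L)(\Delta\ot i_M)$. Since $p_M\alpha^L=p_M(\ep\ot M)$, this becomes $(H\ot p_Mi_M)=\id$.

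\textbf{Main obstacle.} I expect the only delicate point to be the braided bookkeeping in Step 2: placing $\sigma$ correctly in the Hopf module compatibility and using its naturality to slide $\eta$ and $\ep$ through. Everything else is formal manipulation with the (co)equalizer properties of $i_M$ and $p_M$. The antipode need not be invertible anywhere in this argument.
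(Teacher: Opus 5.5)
Your proof is correct: the free Hopf module structure on $H\ot({}_HM)$, the (co)linearity checks using $\rho^L i_M=\eta\ot i_M$, $p_M\alpha^L=p_M(\ep\ot M)$ and naturality of $\sigma$ against $\eta$ and $\ep$, and both inverse computations all go through, and no invertibility of $S$ is needed. The paper gives no proof of its own here and only cites Drabant. Your argument is the standard braided form of the fundamental theorem of Hopf modules that the citation relies on. One small simplification: checking that $\psi_M$ is a morphism is redundant, because the inverse of an invertible morphism of Hopf modules is automatically linear and colinear.
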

The following result will be used in the explicit construction of the $H$-linear tensor and $H$-colinear cotensor products of $H$-tetramodules (Theorem \ref{theorem: monoidal structures tetramodules}).
\begin{proposition}[{\cite[Proposition 3.4.1 and Theorem 4.3.1]{Drabant}}]\label{prop: tensor and cotensor by invariants}
    Let $(\Mm, \ot, I,\sigma)$ be a braided monoidal category with split idempotents and $H$ be an object in $\Hopfmon(\Mm,\ot, I, \sigma)$. If $M\in \prescript{H}{H}{\Mm}$, $N\in \Mm_H$, and $P\in \Mm^H$, then $N\ot_H M$ and $P\square^H M$ are well-defined objects of $\Mm$. Explicitly, they are given (up to isomorphism) by
    \begin{align}
        &N\ot_H M := N\ot (_HM) \text{ with }q_{N,M} \colon N\ot M \xrightarrow{N\ot \psi_M} N\ot H\ot (_HM) \xrightarrow{\alpha_N^R\ot (_HM)} N\ot (_HM),
        \\&P\square^H M := P\ot ({}_{H}{M}) \text{ with } e_{P,M}\colon P\ot (_HM)\xrightarrow{\rho^R_P\ot (_HM)} P\ot H \ot (_HM) \xrightarrow{P\ot \phi_M} P\ot M.
    \end{align}
    Moreover, if $N\in \Mm^H_H$ then $N\ot_H M \cong N\square^H M$.
\end{proposition}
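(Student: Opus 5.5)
We take the statement to be Proposition \ref{prop: tensor and cotensor by invariants}. Our approach is to reduce everything to the free left Hopf module $H\ot({}_HM)$ via Lemma \ref{lemma: left Hopf module isomorphisms}, and then verify universal properties directly.

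\textbf{Step 1 (tensor product).} We first show that $q_{N,M}=(\alpha^R_N\ot {}_HM)(N\ot\psi_M)$ is the coequalizer of $\alpha^R_N\ot M$ and $N\ot\alpha^L_M$.
\begin{itemize}
\item \emph{Coequalizing.} Since $\psi_M$ is left $H$-linear, $\psi_M\alpha^L_M=(\mu\ot {}_HM)(H\ot\psi_M)$. Associativity of $\alpha^R_N$ then gives $q_{N,M}(N\ot\alpha^L_M)=q_{N,M}(\alpha^R_N\ot M)$.
\item \emph{Splitting.} Take the candidate section $s=(N\ot\phi_M)(N\ot u\ot {}_HM)=N\ot i_M$. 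Then $q_{N,M}s=(\alpha^R_N\ot{}_HM)(N\ot\psi_M i_M)$. Here $\psi_M i_M=(H\ot p_M)\rho^L i_M=u\ot p_Mi_M=u\ot\id$, using that $i_M$ equalizes $\rho^L$ and $u\ot M$ (Proposition \ref{prop:splittingidempotent}). Hence $q_{N,M}s=\id$.
\item \emph{Universality.} Let $f\colon N\ot M\to X$ coequalize the pair. Set $\bar f=f(N\ot i_M)$; then $\bar f q_{N,M}=f$. To see this, write $\id_M=\phi_M\psi_M$. Then $f(N\ot\phi_M\psi_M)=f(N\ot\alpha^L_M)(N\ot H\ot i_M)(N\ot\psi_M)$, and by the coequalizing property this equals $f(\alpha^R_N\ot i_M)(N\ot\psi_M)=\bar f q_{N,M}$.
\item \emph{Uniqueness.} This follows because $q_{N,M}$ is split epi.
\end{itemize}

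\textbf{Step 2 (cotensor product).} This is the exact dual of Step 1. We show that $e_{P,M}=(P\ot\phi_M)(\rho^R_P\ot{}_HM)$ equalizes $\rho^R_P\ot M$ and $P\ot\rho^L_M$, using left $H$-colinearity of $\phi_M$ and coassociativity. It is split mono, with retraction $P\ot p_M$, because $p_M\phi_M=p_M\alpha^L(H\ot i_M)=\ep\ot p_Mi_M$ by the coequalizer description of $p_M$. For universality, any $g$ equalizing the pair factors as $(P\ot p_M)g$, using $\id_M=\phi_M\psi_M$ together with the equalizing property.

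\textbf{Step 3 (isomorphism).} When $N\in\Mm^H_H$, both objects are $N\ot{}_HM$ as constructed in Steps 1 and 2, so $N\ot_HM\cong N\square^HM$ is immediate from the underlying objects. We would also remark that the canonical comparison is $e_{N,M}$ followed by $q_{N,M}$. One should check that this composite is $(\alpha^R_N\ot{}_HM)(N\ot S\ot{}_HM)(\rho^R_N\ot{}_HM)$, or some similar expression, so that it is invertible via the antipode, in the style of the idempotent ${}_M\Pi$. We only need the abstract isomorphism, though.

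\textbf{Main obstacle.} The step we expect to be hardest is the braided bookkeeping in the universality argument. In a general braided category we must confirm that $\phi_M\psi_M=\id$ and the (co)linearity of $\psi_M,\phi_M$ interact correctly with the structure maps $\alpha^R_N$ and $\rho^R_P$ without introducing braidings. This works because these structure maps only act on the outer tensor factor. These facts are supplied by Lemma \ref{lemma: left Hopf module isomorphisms}, which we take as given.
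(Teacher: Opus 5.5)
Your proof is correct and is essentially the argument the paper relies on. The paper only cites Drabant for this proposition, but in the proof of Theorem~\ref{theorem: monoidal structures tetramodules} it transports the defining pairs along $\psi$ to the standard split forks, with splittings $N\ot\eta$ and $N\ot H\ot\eta$ tensored with ${}_HM$ (dually with $\ep$). Your section $N\ot i_M$ and the map $(N\ot H\ot i_M)(N\ot\psi_M)$ implicit in your universality computation are exactly these splittings pulled back, and Step~3 is immediate as stated.

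Do drop the aside about the comparison $q_{N,M}e_{N,M}$. Since $\psi_M\phi_M=\id$, it equals $(\alpha^R_N\rho^R_N)\ot({}_HM)$, which is not invertible in general: for $N=M=H$ it is $\mu\Delta$, which kills $g-1$ in $\Bbbk\mathbb{Z}_2$. The only identification you should claim is the identity of $N\ot({}_HM)$.
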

\begin{remark}
    Since on $\Mm^H_H\times \prescript{H}{H}{\Mm}$ the $H$-linear tensor product and the $H$-colinear cotensor product coincide, it suffices to assume existence of equalizers and/or coequalizers in $\Mm$ to discuss both constructions.
\end{remark}


\begin{invisible}
\begin{proof}
    By (\ref{eqn: (co)invariants}) it holds that $_HM$ is the equalizer of $H$-bimodule morphisms, and since $({}_{H}{\Mm}{_H}, \ot, I)$ has equalizers by Proposition \ref{prop:bi(co)modmonoidalot}. Thus $(_HM, \alpha^L_{_HM}, \alpha^R_{_HM})$ is an $H$-bimodule for the unique actions such that $i_M$ becomes an $H$-bimodule morphism. Likewise, as coequalizer of $H$-bicomodule morphisms $(_HM, \rho^L_{_HM}, \rho^R_{_HM})$ is an $H$-bicomodule for the unique coactions rendering $p_M$ an $H$-bicomodule morphism. It remains to be verified that this constitutes a tetramodule structure on $_HM$. Notice that $\rho^L_{_HM}$ is an $H$-bimodule morphism, since
    \begin{align*}
        \rho^L_{_HM}\alpha^L_{_HM}(H\ot p_M) &= (H\ot p_M)\rho^L_M\alpha^L_M
        \\&= (H\ot p_M)(\mu\ot \alpha^L_M)(H\ot \sigma_{H,H}\ot M)(\Delta\ot \rho^L_M)
        \\&= (\mu\ot \alpha^L_{_HM})(H\ot\sigma_{H,H}\ot _HM)(\Delta\ot\rho^L_{_HM})(H\ot p_M)
        \\&= \alpha^L_{H\ot (_HM)}(H\ot \rho^L_{_HM})(H\ot p_M).
    \end{align*}
    The conclusion follows as $H\ot p_M$ is epimorphic. Likewise one proves that $\rho^L_{_HM}$ is a right $H$-module morphism, and $\rho^R_{_HM}$ is an $H$-bimodule morphism.
\end{proof}
\end{invisible}

This allows us to explicitly construct the $H$-linear tensor and $H$-colinear cotensor product of $H$-tetramodules in a general setting. Indeed, the constructions in Proposition \ref{prop: tensor and cotensor by invariants} induce monoidal structures on the category of $H$-tetramodules.

\begin{theorem}\label{theorem: monoidal structures tetramodules}
Let $(\mathcal M,\otimes,I,\sigma)$ be a braided monoidal category
with split idempotents, and let $H$ be an object in $\mathsf{Hopf}(\mathcal M)$.
For each $H$-tetramodule $N$, choose a splitting
$(K_N,i_N,p_N)$ of the idempotent of Proposition \ref{prop:splittingidempotent}, where
$K_N={}_HN$, and let $\phi_N$ and $\psi_N$ be the inverse
isomorphisms in $^{H}_{H}\Mm$ provided by Lemma \ref{lemma: left Hopf module isomorphisms}.

For any $H$-tetramodules $M$ and $N$, the relative tensor and
cotensor products can both be realized on $M\otimes K_N$, with
canonical maps
\begin{align}
q_{M,N}
&=
(\alpha_M^R\otimes K_N)(M\otimes\psi_N)
\colon M\otimes N\longrightarrow M\otimes K_N,
\\
e_{M,N}
&=
(M\otimes\varphi_N)(\rho_M^R\otimes K_N)
\colon M\otimes K_N\longrightarrow M\otimes N.
\end{align}
The two induced tetramodule structures on $M\otimes K_N$
coincide. With these realizations, the identity functor
\[
(\mathrm{id},\mathrm{id},\mathrm{id}_H)\colon
({}_H^H\mathcal M_H^H,\otimes_H,H)
\longrightarrow
({}_H^H\mathcal M_H^H,\square^H,H)
\]
is a strict monoidal isomorphism. Moreover, the following equality holds:
\begin{equation}\label{eqn: monoidal structure}
e_{M,N}q_{M,N}
=
(\alpha_M^R\otimes\alpha_N^L)
(M\otimes\sigma_{H,H}\otimes N)
(\rho_M^R\otimes\rho_N^L).
\end{equation}
\end{theorem}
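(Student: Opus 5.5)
The plan is to reduce everything to Proposition \ref{prop: tensor and cotensor by invariants} and Lemma \ref{lemma: left Hopf module isomorphisms}, applied to $N$ regarded in ${}^H_H\Mm$ and $M$ regarded in $\Mm^H_H$, and then to transport the remaining (right) structures and constraints along $\phi_N,\psi_N$. By Proposition \ref{prop: tensor and cotensor by invariants}, $(M\ot K_N,q_{M,N})$ is a coequalizer of $(\alpha^R_M\ot N, M\ot\alpha^L_N)$ and $(M\ot K_N,e_{M,N})$ is an equalizer of $(\rho^R_M\ot N, M\ot\rho^L_N)$. Hence both relative products may be realized on the same object.

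The first concrete step is to verify \eqref{eqn: monoidal structure}. Expanding the definitions,
\[
e_{M,N}q_{M,N}=(M\ot\phi_N)(\rho^R_M\ot K_N)(\alpha^R_M\ot K_N)(M\ot\psi_N).
\]
Here $\rho^R_M\alpha^R_M$ is rewritten by the tetramodule compatibility $\rho^R_M\alpha^R_M=(\alpha^R_M\ot\mu)(M\ot\sigma_{H,H}\ot H)(\rho^R_M\ot\Delta)$. Next, $\phi_N(\mu\ot K_N)=\alpha^L_N(H\ot\phi_N)$ and $(\Delta\ot K_N)\psi_N=(H\ot\psi_N)\rho^L_N$, which hold because $\phi_N,\psi_N$ are morphisms in ${}^H_H\Mm$ for the structures $\mu\ot K_N$, $\Delta\ot K_N$. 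Naturality of $\sigma$ then moves $\sigma_{H,H}$ into place, and $\phi_N\psi_N=\id_N$ collapses the expression to the right-hand side of \eqref{eqn: monoidal structure}. This step is bookkeeping in a braided string calculus.

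Next, I compare the two tetramodule structures on $M\ot K_N$. On the $\ot_H$ side, the left action and left coaction are determined by $q_{M,N}$ being a morphism of tetramodules, with coactions induced as in Remark \ref{rmk:forgetfulbimod}. On the $\square^H$ side, they are determined by $e_{M,N}$ being a morphism of tetramodules. I would write down explicit candidates for all four structure maps on $M\ot K_N$: the left action is diagonal-like, $(\alpha^L_M\ot K_N)$ composed with a transported action using $\sigma$; the right action uses $\alpha^R_{K_N}$ transported through $\phi_N$; and the coactions are described analogously. I then show that each candidate is compatible with $q_{M,N}$ and, separately, with $e_{M,N}$. Uniqueness follows because $q_{M,N}$ is an epimorphism and $e_{M,N}$ is a monomorphism, and uniqueness forces both induced structures to equal the candidates. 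A slicker route avoids writing candidates: show directly that $q_{M,N}$ intertwines the $\square$-structure, i.e. that $e_{M,N}$ composed with the $\ot_H$-structure map equals the $\square$-structure map composed with $e_{M,N}$. I would try to do this by precomposing with the epimorphism $q_{M,N}$ (together with $H\ot q_{M,N}$, resp. $q_{M,N}\ot H$), which reduces each identity to one about $e_{M,N}q_{M,N}$ given by \eqref{eqn: monoidal structure}. On that explicit morphism, linearity and colinearity are checked using the bimonoid axioms and the tetramodule compatibilities. I expect this compatibility check, especially for the coactions versus the actions through the braiding, to be the main obstacle, and I would organise it via the slick route.

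Finally, for strict monoidality, the units coincide since the identity functor sends $H$ to $H$. The unitors $\Upsilon_M,\Upsilon'_M$ and $\Lambda_M^{-1},(\Lambda'_M)^{-1}$ must agree once both products are realized on $M\ot K_H$, resp. $H\ot K_M$. Each unitor is characterized by a universal property involving $q$ or $e$, and composing with $e_{M,H}q_{M,H}$ and using \eqref{eqn: monoidal structure} reduces this to the (co)unit axioms. For the associators, I compare $a^{\ot_H}$ and $a^{\square}$ from Remark \ref{remark: constraints of tetramodules}. I precompose with $q_{M\ot_H N,P}(q_{M,N}\ot P)$ and postcompose with $(M\ot e_{N,P})e_{M,N\square P}$, and then use \eqref{eqn: monoidal structure} twice. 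Both reduce to the same morphism $M\ot N\ot P\to M\ot N\ot P$ built from actions, coactions and braidings, so the associators coincide.
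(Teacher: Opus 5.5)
Your computations are sound, but the proposal tacitly treats $\Mm$ as regular and coregular. The theorem does not assume this: $\Mm$ only has split idempotents. Proposition \ref{prop: tensor and cotensor by invariants} only says that $q_{M,N}$ is a coequalizer and $e_{M,N}$ an equalizer in $\Mm$. That is not enough for three things you use:
\begin{enumerate}
\item To descend the outer actions to $M\ot_H N$, you need $H\ot q_{M,N}$ and $q_{M,N}\ot H$ to be coequalizers of the tensored pairs. Dually, $H\ot e_{M,N}$ and $e_{M,N}\ot H$ must be equalizers for the outer coactions on $M\square^H N$.
\item To define the associators of Remark \ref{remark: constraints of tetramodules} at all, you need $M\ot q_{N,P}$ and $q_{M,N}\ot P$ to be coequalizers, and $M\ot e_{N,P}$ and $e_{M,N}\ot P$ to be equalizers.
\item Your cancellations need epi/mono properties that tensoring does not guarantee in general. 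You precompose with $H\ot q_{M,N}$, $q_{M,N}\ot H$ and $q_{M\ot_H N,P}(q_{M,N}\ot P)$, which must be epi. You postcompose with $H\ot e_{M,N}$ and $(M\ot e_{N,P})e_{M,N\square P}$, which must be mono.
\end{enumerate}
Citing Proposition \ref{prop:monoidaltetramodules} or Remark \ref{remark: constraints of tetramodules} does not help, since both are stated under (co)regularity.

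The missing idea, which is the first step of the paper's proof, is that both diagrams are \emph{split}. Transported along $\phi_N\colon H\ot K_N\to N$, they become $M\ot H\ot H\rightrightarrows M\ot H\xrightarrow{\alpha^R_M}M$ and $M\xrightarrow{\rho^R_M}M\ot H\rightrightarrows M\ot H\ot H$, tensored with $K_N$. These are split by $M\ot\eta$, $M\ot H\ot\eta$, resp.\ $M\ot\ep$, $M\ot H\ot\ep$; transported back, the splittings give $q_{M,N}(M\ot i_N)=\id$ and $(M\ot p_N)e_{M,N}=\id$. Split (co)equalizers are absolute, hence preserved by tensoring on either side. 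With this, the constructions behind Proposition \ref{prop:monoidaltetramodules} and all your cancellations become legitimate.

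Once this is added, your argument is a valid route that differs from the paper's. Write $\Gamma_{M,N}$ for the right-hand side of \eqref{eqn: monoidal structure}. Your direct derivation of $e_{M,N}q_{M,N}=\Gamma_{M,N}$ is correct, and it is more explicit than the paper, which only obtains this identity at the end. The paper builds a comparison $\Xi_{M,N}$ from the interchange $\zeta_{M,H,H,N}$ of Theorem \ref{thm:duoidaltetramodules} and the unit constraints. It shows $\Xi_{M,N}=\id$ from $e\Xi q=\Gamma$, $q(M\ot i_N)=\id$ and $\Gamma(M\ot i_N)=e$. Naturality of $\zeta$ and the duoidal axioms then give, all at once, that the tetramodule structures, the bifunctors on morphisms and all constraints coincide. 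You replace this by hand checks on $\Gamma$:
\begin{enumerate}
\item $\Gamma$ is linear from outer to diagonal actions and colinear from diagonal to outer coactions.
\item $\Gamma_{M,H}=\rho^R_M\alpha^R_M$ and $\Gamma_{H,M}=\rho^L_M\alpha^L_M$, which handle the unitors.
\item For the associators, your two reductions agree: over $\Vec_{\Bbbk}$ both give $m_0n_{-1}p_{-2}\ot m_1n_0p_{-1}\ot m_2n_1p_0$.
\end{enumerate}
This costs more computation but does not rely on the duoidal axioms, whose verification the paper omits.

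Two small additions are still needed. First, you must show the products agree on morphisms. This follows from $e(f\ot_H g)q=\Gamma(f\ot g)=(f\ot g)\Gamma=e(f\square^H g)q$. Second, your description of the candidates is off. The left action and left coaction on $M\ot K_N$ are simply $\alpha^L_M\ot K_N$ and $\rho^L_M\ot K_N$; it is the right action and right coaction that are diagonal and involve $\sigma$.
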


\begin{proof}
We suppress the associativity and unit constraints of the ambient
category $\mathcal M$. By Lemma \ref{lemma: left Hopf module isomorphisms}, the morphisms
\[
\phi_N
=
\alpha_N^L(H\otimes i_N)
\colon H\otimes K_N\longrightarrow N,
\qquad
\psi_N
=
(H\otimes p_N)\rho_N^L
\colon N\longrightarrow H\otimes K_N
\]
are mutually inverse morphisms in $^{H}_{H}\Mm$.
Set $T_{M,N}:=M\otimes K_N$. By Proposition \ref{prop: tensor and cotensor by invariants}, $q_{M,N}$ is the coequalizer of
$\alpha_M^R\otimes N$ and $M\otimes\alpha_N^L$, whereas
$e_{M,N}$ is the equalizer of $\rho_M^R\otimes N$ and
$M\otimes\rho_N^L$.
We first explain why these diagrams are preserved by tensoring. Under the identification $N\cong H\otimes K_N$, they become,
respectively, the diagrams
\[\begin{tikzcd}
	M\otimes H\otimes H & M\ot H & M
	\arrow[shift left, from=1-1, to=1-2, "\alpha^{R}_{M}\ot H"]
	\arrow[shift right, from=1-1, to=1-2,"M\ot \mu"']
	\arrow[from=1-2, to=1-3,"\alpha^{R}_{M}"]
\end{tikzcd}\]
and
\[\begin{tikzcd}
	M & M\ot H & M\ot H\ot H
	\arrow[from=1-1, to=1-2,"\rho^{R}_{M}"]
	\arrow[shift left, from=1-2, to=1-3,"\rho^{R}_{M}\ot H"]
	\arrow[shift right, from=1-2, to=1-3, "M\ot\Delta_{H}"']
\end{tikzcd}\]
both tensored on the right with $K_N$. The first diagram is split by
\[
s=M\otimes\eta,
\qquad
t=M\otimes H\otimes\eta,
\]
since
\[
\alpha_M^Rs=\mathrm{id}_M,
\qquad
(M\otimes \mu)t=\mathrm{id}_{M\otimes H},
\qquad
(\alpha_M^R\otimes H)t=s\alpha_M^R.
\]
Dually, the second diagram is split by
\[
r=M\otimes\varepsilon,
\qquad
v=M\otimes H\otimes\varepsilon,
\]
since
\[
r\rho_M^R=\mathrm{id}_M,
\qquad
v(M\otimes\Delta)=\mathrm{id}_{M\otimes H},
\qquad
v(\rho_M^R\otimes H)=\rho_M^Rr.
\]
Tensoring these splittings with $K_N$ proves that the diagrams
defining $q_{M,N}$ and $e_{M,N}$ are split.
Since split equalizers and coequalizers are preserved by every
functor, these diagrams are preserved by tensoring on either
side with any object of $\mathcal M$. Consequently, the constructions in the proof of Proposition \ref{prop:monoidaltetramodules}
apply: the outer actions and diagonal coactions descend to
$M\otimes_HN$, while the diagonal actions and outer coactions
restrict to $M\square^HN$.
Their universal properties give the corresponding monoidal
structures with unit $H$ and the canonical constraints described
in Remark \ref{remark: constraints of tetramodules}.
The construction in the proof of Theorem \ref{thm:duoidaltetramodules} also applies,
since it only requires these relative products and their
preservation by tensoring. Let $\zeta$ denote the resulting interchange.
Using the canonical unit constraints, define the natural
morphism of tetramodules
\[\begin{tikzcd}
	{M\ot_H N} & {(M\square^H H)\ot_H (H\square^H N)} & {(M\ot_H H)\square^H (H\ot_H N)} & {M\square^H N.}
	\arrow["\cong", from=1-1, to=1-2]
	\arrow["{\zeta_{M,H,H,N}}", from=1-2, to=1-3]
	\arrow["\cong", from=1-3, to=1-4]
\end{tikzcd}\]
Write
\[
\Gamma_{M,N}
=
(\alpha_M^R\otimes\alpha_N^L)
(M\otimes\sigma_{H,H}\otimes N)
(\rho_M^R\otimes\rho_N^L).
\]
The defining formula for the interchange, together with the
descriptions of the unit constraints in Remark \ref{remark: constraints of tetramodules}, gives
\[
e_{M,N}\Xi_{M,N}q_{M,N}=\Gamma_{M,N}.
\]
We now prove that the underlying morphism of $\Xi_{M,N}$
is $\mathrm{id}_{T_{M,N}}$.
By Proposition \ref{prop:splittingidempotent}, we have
\[
\rho_N^Li_N=\eta\otimes i_N,
\qquad
p_Ni_N=\mathrm{id}_{K_N}.
\]
Hence $\psi_Ni_N=\eta\otimes K_N$, and therefore
\[
q_{M,N}(M\otimes i_N)
=
(\alpha_M^R\otimes K_N)
(M\otimes\eta\otimes K_N)
=
\mathrm{id}_{T_{M,N}}.
\]
Moreover, naturality of the braiding with respect to $\eta$
and unitality of $\alpha_M^R$ imply
\[
\begin{aligned}
\Gamma_{M,N}(M\otimes i_N)=
(M\otimes\alpha_N^L)(\rho_M^R\otimes i_N)=
(M\otimes\varphi_N)(\rho_M^R\otimes K_N)=e_{M,N}.
\end{aligned}
\]
It follows that
\[
\begin{aligned}
e_{M,N}\Xi_{M,N}=
e_{M,N}\Xi_{M,N}q_{M,N}(M\otimes i_N)=
\Gamma_{M,N}(M\otimes i_N)=e_{M,N}.
\end{aligned}
\]
Since $e_{M,N}$ is a monomorphism, we obtain $\Xi_{M,N}=\mathrm{id}_{T_{M,N}}$.
As $\Xi_{M,N}$ is a morphism of tetramodules, the two induced
tetramodule structures on $T_{M,N}$ coincide.
Naturality of $\Xi$ then shows that the two product bifunctors
also coincide on morphisms.
Equation \eqref{eqn: monoidal structure} follows immediately. Finally, the interchange and unit axioms of Definition \ref{defn: duoidal}
give the coherence identities for the above canonical comparison.
Writing $a^{\otimes_H}$ and $a^{\square^H}$ for the two
associators, these include
\[
\begin{aligned}
&a^{\square^H}_{M,N,P}
(\Xi_{M,N}\square^H\mathrm{id}_P)
\Xi_{M\otimes_HN,P}=
(\mathrm{id}_M\square^H\Xi_{N,P})
\Xi_{M,N\otimes_HP}
a^{\otimes_H}_{M,N,P}.
\end{aligned}
\]
Likewise, writing $\ell$ and $r$ for the corresponding unitors,
one has
\[
\ell^{\square^H}_M\Xi_{H,M}
=
\ell^{\otimes_H}_M,
\qquad
r^{\square^H}_M\Xi_{M,H}
=
r^{\otimes_H}_M.
\]
Since every $\Xi_{M,N}$ is the identity in the chosen
realizations, the associators and unitors of the two monoidal
structures coincide.
Thus the identity functor, with identity structure maps,
is a strict monoidal isomorphism.
\end{proof}

\begin{remark}
In the realizations of Theorem \ref{theorem: monoidal structures tetramodules}, write
\[
T_{M,N}:=M\otimes{}_HN=M\otimes_HN=M\square^HN.
\]
The common tetramodule structure on $T_{M,N}$ is characterized
by the identities
\[
\begin{aligned}
q_{M,N}(\alpha_M^L\otimes N)
&=
\alpha_{T_{M,N}}^L(H\otimes q_{M,N}),
\\
q_{M,N}(M\otimes\alpha_N^R)
&=
\alpha_{T_{M,N}}^R(q_{M,N}\otimes H),
\\
(\rho_M^L\otimes N)e_{M,N}
&=
(H\otimes e_{M,N})\rho_{T_{M,N}}^L,
\\
(M\otimes\rho_N^R)e_{M,N}
&=
(e_{M,N}\otimes H)\rho_{T_{M,N}}^R.
\end{aligned}
\]
Thus the actions are induced through the coequalizer $q_{M,N}$,
while the coactions are induced through the equalizer $e_{M,N}$.
Their existence and uniqueness follow from the constructions
and universal properties established in Theorem \ref{theorem: monoidal structures tetramodules}.
\end{remark}

\begin{corollary}\label{cor: monoidal structures of tetramodules}
Let $(\mathcal M,\otimes,I,\sigma)$ be a braided monoidal
category with split idempotents, and let $H$ be an object in $\mathsf{Hopf}(\mathcal M)$ whose antipode is an isomorphism.
Then the canonical duoidal structure $({}_H^H\mathcal M_H^H,\otimes_H,H,\square^H,H)$ is strong.

More precisely, in the common realizations of Theorem \ref{theorem: monoidal structures tetramodules},
the two monoidal products coincide and the interchange is,
with the canonical associativity constraints understood,
\[
\zeta_{A,B,C,D}
=
\mathrm{id}_A\otimes_H c_{B,C}\otimes_H\mathrm{id}_D,
\]
where $c$ is the canonical braiding on
$({}_H^H\mathcal M_H^H,\otimes_H,H)$.
Thus this duoidal structure is the one induced by that braiding.
\end{corollary}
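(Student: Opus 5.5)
The plan is to reduce the statement to the Eckmann--Hilton type argument behind Remark \ref{remark: strong duoidal is braided}, using Theorem \ref{theorem: monoidal structures tetramodules} to make both monoidal structures literally equal. First, work in the common realizations $T_{M,N}=M\otimes{}_HN=M\otimes_HN=M\square^HN$ of Theorem \ref{theorem: monoidal structures tetramodules}. There the identity functor is a strict monoidal isomorphism, so $\otimes_H$ and $\square^H$ coincide together with their associators and unitors. I would then check that the unit data of the duoidal structure from Theorem \ref{thm:duoidaltetramodules} become identities. These data are $\Delta_H=\Lambda_H$, $m_H=\Upsilon_H$ and $\varepsilon_H=u_H=\id_H$. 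Since the unitors of the two structures agree in these realizations, $m_H$ and $\Delta_H$ are mutually inverse unit isomorphisms. Thus we obtain a duoidal category $(\Cc,\otimes_H,H,\otimes_H,H)$ in which the two products, units and constraints agree, and the only unknown piece is $\zeta$.

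Next, I would define $c_{B,C}\colon B\otimes_H C\to C\otimes_H B$ as $\zeta_{H,B,C,H}$ precomposed and postcomposed with the canonical unit isomorphisms $B\cong B\otimes_H H$, $C\cong H\otimes_H C$, and so on. Using naturality of $\zeta$, the two associativity axioms \eqref{eq:assoc1}, \eqref{eq:assoc2} and the unit axioms \eqref{eq:unit1}, \eqref{eq:unit2}, the standard argument (as in \cite[Propositions 6.10, 6.11]{Aguiar}) gives two facts. The first is that $\zeta_{A,B,C,D}=\id_A\otimes_H c_{B,C}\otimes_H\id_D$. To see this, split $\zeta_{A,B,C,D}$ through $\zeta_{A,H,H,D}$ and $\zeta_{H,B,C,H}$ via \eqref{eq:assoc1} and \eqref{eq:assoc2}, and note that $\zeta_{A,H,H,D}$ is the identity by the unit axioms. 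The second is that $c$ satisfies both hexagon identities, by specialising the associativity axioms to arguments equal to $H$. It remains to show that $c$ is invertible; then $\zeta$ is invertible, the duoidal structure is strong, and $c$ is the canonical braiding.

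The main obstacle is invertibility, and this is where the bijectivity of $S$ enters. I would compute $c$ explicitly using \eqref{eqn: monoidal structure} and the defining property \eqref{eqn: UP of zeta} of $\zeta$. Composing $c_{B,C}q_{B,C}$ with $e_{C,B}$ should give a morphism built from $\rho^R_B$, $\rho^L_C$, $\sigma$, the actions, and the idempotent ${}_C\Pi$. Under the isomorphism of Lemma \ref{lemma: left Hopf module isomorphisms} this restricts on ${}_HB\otimes{}_HC$ to the Yetter--Drinfeld braiding, which is expressed with $S$. I would then write down a candidate inverse by the same formula with $S^{-1}$ and $\sigma^{-1}$, checking it through the split (co)equalizers of Theorem \ref{theorem: monoidal structures tetramodules}. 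Alternatively, I would appeal to Schauenburg's equivalence ${}^H_H\Mm^H_H\simeq$ Yetter--Drinfeld modules \cite{Schauenburg}, under which $c$ corresponds to the Yetter--Drinfeld braiding, which is invertible exactly when $S$ is. I would try the direct computation first and fall back on the equivalence. Finally, $c$ coincides with the canonical braiding, since the latter is by definition the braiding transported along this equivalence.
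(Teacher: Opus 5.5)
Your route is viable, but it runs in the opposite direction from the paper's, and one step rests on a false justification.

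\textbf{Comparison of the two routes.} The paper takes the canonical braiding $c$ on $({}^H_H\Mm^H_H,\ot_H,H)$ as input from \cite[Theorem 4.3.1]{Drabant}. That is exactly where invertibility of $S$ enters, and it makes $c$ invertible at no cost. The paper then asserts that substituting the common realizations of Theorem~\ref{theorem: monoidal structures tetramodules} into the defining formulas of $\zeta$ and $c$ gives $\zeta_{A,B,C,D}=\id_A\ot_H c_{B,C}\ot_H\id_D$. Strongness follows because the unit maps are the canonical isomorphisms. You instead define $c:=\zeta_{H,-,-,H}$ and obtain the factorization and the hexagons formally from the duoidal axioms. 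The comparison with $\zeta$ is then free, and all the work moves to two places: invertibility of this single map, and matching it with Drabant's braiding. Over vector spaces this map is $b\ot_H c\mapsto b_{(-1)}c_{(0)}\square b_{(0)}c_{(1)}$, the map $\beta$ in the proof of Theorem~\ref{prop:tetramodule-braided-duoidal-obstruction}. The two arguments are complementary: yours explains why $\zeta$ must have the stated shape, while the paper's outsources invertibility.

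\textbf{The incorrect step.} You claim that $\zeta_{A,H,H,D}$ is the identity by the unit axioms, but it is not. Axioms \eqref{eq:unit1} and \eqref{eq:unit2} only control $\zeta_{I,I,A,B}$, $\zeta_{A,B,I,I}$, $\zeta_{J,A,J,B}$ and $\zeta_{A,J,B,J}$. By contrast, $\zeta_{A,J,I,D}\colon A\circ D\to A\bullet D$ is the comparison between the two products. It is not forced to be trivial even when $\circ=\bullet$ with the same constraints and identity unit maps. For instance, on $\mathbb{Z}$-graded vector spaces with the plain flip, take $\zeta(a\ot b\ot c\ot d)=q^{|a||d|}\,a\ot c\ot b\ot d$ with $q\neq 0,1$. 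This satisfies all duoidal axioms, yet $\zeta_{A,I,I,D}\neq\id$ and $\zeta\neq\id\ot\zeta_{I,-,-,I}\ot\id$. So your Eckmann--Hilton step genuinely needs extra input.

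\textbf{How to repair it.} Here the needed input is available: it is the identity $\Xi_{M,N}=\id$ proved inside Theorem~\ref{theorem: monoidal structures tetramodules}. The theorem's statement that the identity functor is strict monoidal does not by itself give this. With $\Xi=\id$, suitable instances of \eqref{eq:assoc1} and \eqref{eq:assoc2} give $\zeta_{A,H,C,X}=\id$ and $\zeta_{X,B,H,D}=\id$, up to unit constraints. Hence $\zeta_{A,B,C,D}=\id_A\ot_H\zeta_{H,B,C,D}=\id_A\ot_H c_{B,C}\ot_H\id_D$.

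\textbf{Two smaller points.}
\begin{enumerate}
\item Your fallback through Schauenburg's equivalence, as recorded in Theorem~\ref{thm:equivalencetetraYD}, is only available over vector spaces. For a general braided $\Mm$ you need a braided version of the equivalence (cf.\ \cite{BespalovDrabant}), or a direct construction of the inverse using $S^{-1}$ and $\sigma^{-1}$. Only the implication ``$S$ invertible $\Rightarrow$ $c$ invertible'' is needed, not an equivalence.
\item In a general $\Mm$, the canonical braiding is the one of \cite[Theorem 4.3.1]{Drabant}, not one that is transported by definition. Identifying it with your $\zeta_{H,-,-,H}$ therefore requires an actual comparison of the two formulas.
\end{enumerate}
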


\begin{proof}
By Theorem \ref{theorem: monoidal structures tetramodules}, the equalizer and coequalizer diagrams defining
the relative products are split.
Since split equalizers and coequalizers are preserved by every
functor, these diagrams are preserved by tensoring on either
side with any object of $\mathcal M$.
Hence the construction of Theorem \ref{thm:duoidaltetramodules} applies under the present
assumptions. Let $c$ denote the canonical braiding on
$({}_H^H\mathcal M_H^H,\otimes_H,H)$, which exists since the
antipode of $H$ is an isomorphism \cite[Theorem 4.3.1]{Drabant}.
Use the common realizations of Theorem \ref{theorem: monoidal structures tetramodules}, so that the two
monoidal products and their constraints coincide.
Substituting these realizations into the defining formulas
for the canonical interchange and the braiding gives
\[
\zeta_{A,B,C,D}
=
\mathrm{id}_A\otimes_H c_{B,C}\otimes_H\mathrm{id}_D,
\]
where associativity constraints are suppressed.
Since $c_{B,C}$ is invertible, every component of $\zeta$
is invertible. The unit structure maps are the canonical isomorphisms
\[
H\longrightarrow H\square^HH,
\qquad
H\otimes_HH\longrightarrow H,
\qquad
\mathrm{id}_H\colon H\longrightarrow H.
\]
Therefore the duoidal structure is strong, and its interchange
is precisely the one induced by the canonical braiding.
\end{proof}

\begin{example}
Let $H$ be a $\Bbbk$-Hopf algebra, and consider the symmetric
monoidal category $(\Vec_{\Bbbk}, \ot_{\Bbbk}, \Bbbk, \tau)$.
For an $H$-tetramodule $N$, we may choose
\[
{}_H N:={}^{\operatorname{co}H}N
=\{n\in N\mid \rho_N^L(n)=1_H\otimes n\}.
\]
The splitting of the idempotent of Proposition \ref{prop:splittingidempotent} is then given by
the inclusion $i_N:{}_H N\hookrightarrow N$ and the projection
\[
p_N:N\longrightarrow {}_H N,\ n\longmapsto S(n_{(-1)})\triangleright n_{(0)}.
\]
Writing $H^+=\ker\varepsilon$, the canonical quotient map induces
an isomorphism
\[
{}^{\operatorname{co}H}N
\xrightarrow{\ \cong\ }
N/H^+N
\cong \Bbbk\otimes_H N,
\ n\longmapsto[n],
\]
whose inverse is $[n]\mapsto p_N(n)$.
Here $\Bbbk$ is regarded as a right $H$-module via $\varepsilon$. By Proposition \ref{prop: tensor and cotensor by invariants} and Theorem \ref{theorem: monoidal structures tetramodules}, for any $H$-tetramodules
$M$ and $N$, both relative products can be realized on the vector
space $M\otimes({}_H N)$, with the induced tetramodule structure:
\[
M\otimes_H N
\cong M\otimes({}_H N)
\cong M\square^H N.
\]
Consequently, the canonical duoidal category $({}_H^H\mm_H^H,\otimes_H,H,\square^H,H)$
can be represented using the same bifunctor
$M\odot N:=M\otimes({}_H N)$ for both monoidal products. If, in addition, the antipode of $H$ is bijective, the canonical
interchange corresponds, under these identifications, to the
interchange induced by the canonical braiding on
$({}_H^H\mm_H^H,\otimes_H,H)$, as made explicit through
the equivalence with Yetter--Drinfeld modules in
Proposition \ref{prop:duoidalequivalenceYetterDrinfeld}.
\end{example}

We end this section by considering the category of Yetter--Drinfeld modules over $H$. We will work over the symmetric monoidal category $(\Vec_{\Bbbk}, \ot_{\Bbbk}, \Bbbk, \tau)$. \medskip

\noindent\textbf{Transporting duoidal structures to Yetter--Drinfeld modules.} Let $H$ be a Hopf algebra with bijective antipode. We recall the following definition:

\begin{definition}
    A (right-right) \textit{Yetter--Drinfeld module} over $H$ is a tuple $(V,\rho,\triangleleft)$, where $(V,\rho)$ is a right $H$-comodule, $(V,\triangleleft)$ is a right $H$-module and the following compatibility condition holds:
\begin{equation}\label{eq:compcondYDmod}
\rho(m\triangleleft b)=
(m_{0}\triangleleft b_{2})\ot S(b_{1})m_{1}b_{3}, \qquad m\in V, b\in H.
\end{equation}
A morphism of Yetter--Drinfeld modules is a right $H$-linear and right $H$-colinear map. We denote the category of Yetter--Drinfeld modules over $H$ and their morphisms by $\mathcal{YD}^{H}_{H}$. 
\end{definition}

The category $(\mathcal{YD}^{H}_{H},\ot,\Bbbk)$ is monoidal as follows; given $M,N$ in $\mathcal{YD}^{H}_{H}$, the object $M\ot N$ is in $\mathcal{YD}^{H}_{H}$ with the diagonal action and the diagonal coaction and $\Bbbk$ is in $\mathcal{YD}^{H}_{H}$ via the unit and the counit of $H$. The constraints are those of vector spaces regarded as morphisms of Yetter--Drinfeld modules.
Moreover, the monoidal category $(\mathcal{YD}^{H}_{H},\ot,\Bbbk)$ is braided with braiding defined, for all $M,N$ in $\mathcal{YD}^{H}_{H}$, in the following way:
\begin{equation}\label{def:bradingYD}
\sigma^{\mathcal{YD}}_{M,N}:M\ot N\to N\ot M,\ m\ot n\mapsto n_{0}\ot(m\triangleleft n_{1})
\end{equation}
and its inverse $(\sigma^{\mathcal{YD}}_{M,N})^{-1}$ is defined by $(\sigma^{\mathcal{YD}}_{M,N})^{-1}(n\ot m)=(m\triangleleft S^{-1}(n_{1}))\ot n_{0}$.

\begin{theorem}[{\cite[Theorem 5.7]{Schauenburg}}]\label{thm:equivalencetetraYD}
There is an equivalence of categories ${}^H_H\mm^H_H\overset{F}{\underset{G}{\rightleftharpoons}} \Yd^H_H$. Explicitly, this is constructed as follows.
\begin{itemize}
    \item $F(M)= {}_HM = {}^{\mathrm{co}H}M$, which is in $\mathcal{YD}^{H}_{H}$ with restricted right $H$-coaction and right $H$-action $m\triangleleft h:=S(h_{1})\cdot m\cdot h_{2}$, for $m\in{}^{\mathrm{co}H}M$ and $h\in H$. Moreover, for any tetramodule morphism $f\colon M\to X$, $F(f) = p_Xfi_M = p_Xf\vert_{{}_HM}$.
    \item $G(N)=H\ot N$, where $\rho^{L}_{G(N)}=\Delta\ot\id$, $\rho^{R}_{G(N)}(h\ot n)=h_{1}\ot n_{(0)}\ot h_{2}n_{(1)}$ (diagonal coaction), $h'\triangleright(h\ot n)=h'h\ot n$, and $(h\ot n)\triangleleft h'=hh'_{1}\ot(n\triangleleft h'_{2})$ (diagonal action). Moreover, for any Yetter--Drinfeld module morphism $g$, $G(g) = \id_H\ot g$.
\end{itemize}
This equivalence is moreover monoidal, for the monoidal structures $(\Yd^H_H, \ot, \Bbbk)$ and $({}^H_H\mm^H_H, \ot_H, H) \cong ({}^H_H\mm^H_H, \square^H, H)$ as described before. 
\end{theorem}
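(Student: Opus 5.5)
The plan is to verify directly that $F$ and $G$ are well-defined functors, construct natural isomorphisms $GF\cong\id$ and $FG\cong\id$, and then equip $F$ (or $G$) with a monoidal structure. For $F$, Proposition \ref{prop:splittingidempotent} (with $\Mm=\Vec_\Bbbk$) identifies ${}_HM$ with ${}^{\mathrm{co}H}M$, and $p_M(m)=S(m_{(-1)})m_{(0)}$ is its projection.

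First, the right coaction of $M$ restricts to ${}^{\mathrm{co}H}M$ because the bicomodule compatibility gives $(\rho^L\ot\id)\rho^R=(\id\ot\rho^R)\rho^L$. Next, $m\tl h=S(h_1)mh_2$ lands in the coinvariants: since $\rho^L$ is $H$-bilinear,
\[
\rho^L(S(h_1)mh_2)=S(h_2)h_3\ot S(h_1)mh_4=1\ot m\tl h .
\]
Associativity and unitality of $\tl$ are immediate. The Yetter--Drinfeld condition \eqref{eq:compcondYDmod} follows from the bilinearity of $\rho^R$ and the anti-comultiplicativity of $S$. On morphisms, $F(f)=f|_{{}^{\mathrm{co}H}M}$, since $f$ preserves coinvariants and $p_X$ is the identity there.

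For $G$, I would check that $H\ot N$ with the stated structures is a tetramodule. The only nontrivial point is that the diagonal right coaction is compatible with the diagonal right action. This is exactly where the Yetter--Drinfeld condition is used: in the computation $h_1b_1\ot\cdots\ot h_2b_2S(b_3)n_1b_5$, the factors $b_2S(b_3)$ collapse.

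The natural isomorphisms are as follows. The isomorphism $GF(M)=H\ot{}^{\mathrm{co}H}M\to M$ is $\phi_M$ from Lemma \ref{lemma: left Hopf module isomorphisms}, with inverse $\psi_M$. It is already a left Hopf module map, so it remains to check that it respects the right action and right coaction; both are short Sweedler computations using the definition of $\tl$. For $FG(N)$, the coinvariants of $H\ot N$ are $1\ot N\cong N$, and the induced action is $S(h_1)(1\ot n)h_2=S(h_1)h_2\ot n\tl h_3=1\ot n\tl h$, so $FG(N)\cong N$ naturally.

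For monoidality, I would use the common realization $M\ot_H N\cong M\ot{}^{\mathrm{co}H}N$ of Theorem \ref{theorem: monoidal structures tetramodules}, together with Proposition \ref{prop: tensor and cotensor by invariants}. I would define
\[
J_{M,N}\colon {}^{\mathrm{co}H}M\ot{}^{\mathrm{co}H}N\to{}^{\mathrm{co}H}(M\ot_HN),\qquad m\ot n\mapsto m\ot_H n .
\]
Its inverse is obtained via $M\ot_H N\cong (H\ot{}^{\mathrm{co}H}M)\ot_H N\cong H\ot({}^{\mathrm{co}H}M\ot{}^{\mathrm{co}H}N)$, using $\psi_M$ and the fact that $H$-balancing absorbs $H$. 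I would then check that $J$ intertwines the diagonal Yetter--Drinfeld action and coaction with the induced ones; the action needs $S(h_1)(m\ot_H n)h_2=S(h_1)m h_2\ot_H S(h_3)nh_4$, which follows by inserting $h_2S(h_3)$. The unit is handled by ${}^{\mathrm{co}H}H=\Bbbk1$.

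Coherence with the associators reduces to an equality of maps on elements $m\ot n\ot p$ and holds trivially. The isomorphism with $\square^H$ is then transported along the strict isomorphism of Theorem \ref{theorem: monoidal structures tetramodules}.

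I expect the main obstacle to be proving that $J$ is bijective, that is, that $M\ot_HN$ really is $H\ot({}^{\mathrm{co}H}M\ot{}^{\mathrm{co}H}N)$ as a left Hopf module. I would approach this by first establishing the general fact ${}^{\mathrm{co}H}(X\ot_H N)\cong{}^{\mathrm{co}H}X\ot_H N$ for a left Hopf module $X$. That reduces the claim to ${}^{\mathrm{co}H}M\ot(H\ot_H N)$, where the twisting by $S$ must be tracked carefully.
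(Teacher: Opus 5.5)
Your argument is correct. The computations you sketch all go through: the restriction of $\rho^R$ to ${}^{\mathrm{co}H}M$, the coinvariance of $S(h_1)mh_2$, the Yetter--Drinfeld condition from the bilinearity of $\rho^R$, the collapse of $b_2S(b_3)$ in $G(N)$, the fact that $\phi_M$ respects the right structures, and $S(h_1)mh_2\ot_H S(h_3)nh_4=S(h_1)m\ot_H nh_2$ for $J$. The paper gives no proof of its own here and only quotes \cite[Theorem 5.7]{Schauenburg}. Your direct verification is therefore a self-contained substitute for the citation rather than a different route from an argument in the text. It uses the structure theorem for Hopf modules for $GF\cong\id$ and $FG\cong\id$, plus an explicit strong monoidal structure $J$ on $F$. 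It also has the advantage of tying the monoidal statement to the realizations of Theorem \ref{theorem: monoidal structures tetramodules} used elsewhere in the paper.

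The step you single out as the main obstacle disappears once you use that realization fully.
\begin{itemize}
\item In $M\ot_HN=M\ot{}^{\mathrm{co}H}N$ the coactions are induced through $e_{M,N}(m\ot n)=m_{(0)}\ot m_{(1)}n$.
\item The bicomodule compatibility of $M$ gives $(\rho^L_M\ot N)e_{M,N}=(H\ot e_{M,N})(\rho^L_M\ot{}^{\mathrm{co}H}N)$.
\item By the remark following Theorem \ref{theorem: monoidal structures tetramodules}, and since $H\ot e_{M,N}$ is injective, the left coaction of $M\ot_HN$ is $\rho^L_M\ot{}^{\mathrm{co}H}N$. Its coinvariants are ${}^{\mathrm{co}H}M\ot{}^{\mathrm{co}H}N$.
\item For coinvariant $n$ one has $q_{M,N}(m\ot n)=mn_{(-1)}\ot p_N(n_{(0)})=m\ot n$.
\end{itemize}
So your $J_{M,N}$ is simply the inclusion of ${}^{\mathrm{co}H}M\ot{}^{\mathrm{co}H}N$ into $M\ot{}^{\mathrm{co}H}N$, and its image is exactly ${}^{\mathrm{co}H}(M\ot_HN)$. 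The detour through ${}^{\mathrm{co}H}(X\ot_HN)\cong{}^{\mathrm{co}H}X\ot_HN$ is therefore not needed.

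If you pursue it anyway, remember that the left coaction on $M\ot_HN$ is diagonal. The identification ${}^{\mathrm{co}H}M\ot_HN\to{}^{\mathrm{co}H}(M\ot_HN)$, with ${}^{\mathrm{co}H}M$ carrying $\tl$, then has to be the twisted map $m\ot n\mapsto S(n_{(-1)})m\ot_Hn_{(0)}$. This map is $\tl$-balanced and lands in the coinvariants. The naive map $m\ot n\mapsto m\ot_Hn$ is neither $\tl$-balanced nor coinvariant when $n$ is not coinvariant.
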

\begin{remark}\label{rmk:braidedequivalence}
    If $H$ is a Hopf algebra with bijective antipode, using the explicit monoidal structure, one can transport the braided structure of $(\mathcal{YD}^{H}_{H},\ot,\Bbbk,\sigma^{\mathcal{YD}})$ to obtain a braided monoidal equivalence with $({}^{H}_{H}{\mm}^{H}_{H},\ot_{H},H,\sigma^{\ot_{H}}) 
    $, where $\sigma^{\ot_H}$ is defined, for all $M,N$ in ${}^{H}_{H}{\mm}^{H}_{H}$ by
\begin{equation}\label{Worbraiding}
\sigma^{\ot_H}_{M,N}:M\ot_{H}N\to N\ot_{H}M,\ m\ot_{H}n\mapsto (m_{-2}\tr n_{0}\tl S(n_{1}))\ot_{H}(S(m_{-1})\tr m_{0}\tl n_{2}).
\end{equation}
Similarly, one can obtain a braiding $\sigma^{\square^H}$ on $(^{H}_{H}\mm^{H}_{H},\square^{H},H)$ such that this becomes braided equivalent to $(\mathcal{YD}^{H}_{H},\ot,\Bbbk,\sigma^{\mathcal{YD}})$.
The braiding for $({}^{H}_{H}{\mm}^H_H, \ot_H, H)$ was already discussed in \cite[Theorem 6.3]{Schauenburg}, where it was noticed that this braiding is the unique one such that $\sigma^{\ot_H}_{M,N}(m\ot n) = n\ot m$ for all $m\in {}_{H}{M}$ and $n\in N_H$.
\end{remark}

In general, the braided monoidal category of Yetter--Drinfeld modules may admit
non-trivial infinitesimal braidings.

\begin{example}\label{ex:infbraidYD}
For instance, assume that $\operatorname{char}(\Bbbk)=0$
and let $H:=\Bbbk\mathbb Z=\Bbbk[g,g^{-1}]$.
A right $H$-comodule is equivalently a $\mathbb Z$-graded vector space $M=\bigoplus_{p\in\mathbb Z}M_p$, where $\rho(m)=m\otimes g^p$ for $m\in M_p$. Since $\mathbb Z$ is abelian, the
Yetter--Drinfeld compatibility condition implies that the $H$-action preserves
the homogeneous components, namely $M_p\triangleleft g^r\subseteq M_p$ for all $p,r\in\mathbb Z$. For $M,N\in\mathcal{YD}^H_H$, define a natural transformation $t_{M,N}\colon M\otimes N\longrightarrow M\otimes N$ on homogeneous elements by
\[
t_{M,N}(m\otimes n):=pq\,m\otimes n,
\qquad m\in M_p,\quad n\in N_q.
\]
Since morphisms of Yetter--Drinfeld modules preserve the grading, the family
$t=(t_{M,N})_{M,N}$ is natural, and each $t_{M,N}$ is both $H$-linear and
$H$-colinear. Moreover, for homogeneous elements
$m\in M_p$, $n\in N_q$, and $u\in P_r$, the two infinitesimal braid identities
reduce, respectively, to
\[
p(q+r)=pq+pr
\qquad\text{and}\qquad
(p+q)r=pr+qr.
\]
Indeed, the Yetter--Drinfeld braiding and its inverse are given by
\[
\sigma^{\mathcal{YD}}_{M,N}(m\otimes n)
 =n\otimes(m\triangleleft g^q),
\qquad
(\sigma^{\mathcal{YD}}_{M,N})^{-1}(n\otimes m)
 =(m\triangleleft g^{-q})\otimes n.
\]
Therefore, $t$ is an infinitesimal braiding on
$(\mathcal{YD}^H_H,\otimes,\Bbbk,\sigma^{\mathcal{YD}})$, and it is non-zero; for
example, on one-dimensional Yetter--Drinfeld modules concentrated in degree
$1$ it is the identity.
Consequently, non-trivial infinitesimal braidings on Yetter--Drinfeld modules
can be transported, through the braided monoidal equivalence between
$\mathcal{YD}^H_H$ and ${}_H^H\mathcal{M}_H^H$, to non-trivial infinitesimal
braidings on the corresponding braided monoidal category of tetramodules.
\end{example}

We recall from \cite[Definition 6.54]{Aguiar} that, given two duoidal categories $(\Cc,\circ,I,\bullet,J),(\Cc',\circ',I',\bullet',J')$, a \textit{double lax monoidal functor} $(F,\phi^{\circ}_{X,Y},\phi^{\circ}_{0},\phi^{\bullet}_{X,Y},\phi^{\bullet}_{0}):(\Cc,\circ,I,\bullet,J)\to(\Cc',\circ',I',\bullet',J')$ is the datum of:
\begin{itemize}
    \item[1)] $F:\Cc\to\Cc'$ is a functor;
    \item[2)] $(F,\phi^{\circ}_{X,Y},\phi^{\circ}_{0}):(\Cc,\circ,I)\to(\Cc',\circ',I')$ is lax monoidal;
    \item[3)] $(F,\phi^{\bullet}_{X,Y},\phi^{\bullet}_{0}):(\Cc,\bullet,J)\to(\Cc',\bullet',J')$ is lax monoidal.
\end{itemize}
    Moreover, the following diagrams are required to be commutative (where we simply write $\circ$ and $\bullet$ instead of $\circ'$ and $\bullet'$):
\[
\begin{tikzcd}[column sep=huge, row sep=huge]
(F(A) \bullet F(B)) \circ (F(C) \bullet F(D)) \arrow[r, "\zeta^{\mathcal{\Cc'}}"] \arrow[d, "\phi^{\bullet}_{A,B} \circ \phi^{\bullet}_{C,D}"'] & (F(A) \circ F(C)) \bullet (F(B) \circ F(D)) \arrow[d, "\phi^{\circ}_{A,C} \bullet \phi^{\circ}_{B,D}"] \\
F(A \bullet B) \circ F(C \bullet D) \arrow[d, "\phi^{\circ}_{A \bullet B, C \bullet D}"'] & F(A \circ C) \bullet F(B \circ D) \arrow[d, "\phi^{\bullet}_{A \circ C, B \circ D}"] \\
F((A \bullet B) \circ (C \bullet D)) \arrow[r, "F(\zeta^{\mathcal{C}})"'] & F((A \circ C) \bullet (B \circ D)),
\end{tikzcd}
\]
\[\begin{tikzcd}
	I' & F(I) & F(I\bullet I) & J' & F(J) & F(J\circ J) \\
	I'\bullet I' && F(I)\bullet F(I), & J'\circ J' && F(J)\circ F(J),
	\arrow[from=1-1, to=1-2,"\phi^{\circ}_{0}"]
	\arrow[from=1-1, to=2-1,"\Delta^{\bullet}_{I}"']
	\arrow[from=1-2, to=1-3,"F(\Delta^{\bullet}_{I})"]
	\arrow[from=1-4, to=1-5,"\phi^{\bullet}_{0}"]
	\arrow[from=1-6, to=1-5,"F(m^{\circ}_{J})"']
	\arrow[from=2-1, to=2-3,"\phi^{\circ}_{0}\bullet\phi^{\circ}_{0}"']
	\arrow[from=2-3, to=1-3,"\phi^{\bullet}_{I,I}"']
	\arrow[from=2-4, to=1-4,"m^{\circ}_{J}"]
	\arrow[from=2-4, to=2-6,"\phi^{\bullet}_{0}\circ\phi^{\bullet}_{0}"']
	\arrow[from=2-6, to=1-6, "\phi^{\circ}_{J,J}"']
\end{tikzcd}\]
and $\phi^{\bullet}_{0}\varepsilon^{\bullet}_{I}=F(\varepsilon^{\bullet}_{I})\phi^{\circ}_{0}$. A duoidal lax monoidal functor is called \textit{strong} if $\phi^{\circ}_{X,Y},\phi^{\circ}_{0},\phi^{\bullet}_{X,Y},\phi^{\bullet}_{0}$ are isomorphisms in $\Cc'$. This is a \textit{duoidal equivalence} if it is a monoidal equivalence for both the underlying categories.

Dually, one can define \textit{(strong) double colax monoidal functors}, see \cite[Definition 6.55]{Aguiar}.\medskip

One can easily prove the following result. 
\begin{lemma}\label{lem:transportequiv}
    Let $(\Cc,\circ,I,\bullet,J)$ be a duoidal category. Let $(\Dd,\circ',I')$ be a monoidal category for which there exists a monoidal equivalence by a lax monoidal functor $(F,\phi^{\circ},\phi^{\circ}_{0}):(\Cc,\circ,I)\to(\Dd,\circ',I')$ with inverse colax monoidal functor $(G, \psi^{\circ'}, \psi^{\circ'}_0)$, then $(\Cc,\circ,I,\bullet,J)$ is duoidally equivalent to $(\Dd,\circ',I',\bullet',J')$ where $X\bullet' Y:=F(G(X)\bullet G(Y))$, $J'=F(J)$.

    Dually, let $(\Dd, \bullet', J')$ be a monoidal category for which there exists a monoidal equivalence by a colax monoidal functor $(F,\phi^{\bullet},\phi^{\bullet}_{0}):(\Cc,\bullet,J)\to(\Dd,\bullet',J')$ with inverse lax monoidal functor $(G,\psi^{\bullet'}, \psi^{\bullet'}_0)$, then one obtains that $(\Cc,\circ,I,\bullet,J)$ is duoidally equivalent to $(\Dd,\circ',I',\bullet',J')$ where $X\circ' Y:=F(G(X)\circ G(Y))$ and $I'=F(I)$.
\end{lemma}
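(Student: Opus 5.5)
Throughout, the plan is to transport the missing monoidal product along the equivalence and then check the duoidal axioms by conjugating those of $\Cc$; the second statement is the formal dual of the first. For the first statement I define $X\bullet' Y:=F(G(X)\bullet G(Y))$ and $J':=F(J)$. The associator and unitors of $\bullet'$ are obtained by conjugating $a^\bullet,l^\bullet,r^\bullet$ with $F$, inserting the unit and counit isomorphisms $\eta\colon \id\Rightarrow GF$ and $\epsilon\colon FG\Rightarrow\id$ of the equivalence. For instance, $a^{\bullet'}_{X,Y,Z}$ is the composite
\[
F\bigl(G F(GX\bullet GY)\bullet GZ\bigr)\xrightarrow{F(\eta^{-1}\bullet\id)} F\bigl((GX\bullet GY)\bullet GZ\bigr)\xrightarrow{F(a^\bullet)} F\bigl(GX\bullet (GY\bullet GZ)\bigr)\xrightarrow{F(\id\bullet\eta)} F\bigl(GX\bullet GF(GY\bullet GZ)\bigr).
\]
The pentagon and triangle axioms follow by naturality of $\eta$ together with those of $\bullet$. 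This is the standard fact that a monoidal structure transports along an equivalence, making $(F,\ \phi^\bullet_{A,B}=F(\eta_A\bullet\eta_B),\ \phi^\bullet_0=\id)$ a strong monoidal equivalence $(\Cc,\bullet,J)\to(\Dd,\bullet',J')$.

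Next I define the duoidal data on $\Dd$. The interchange is
\[
\zeta'_{X,Y,Z,W}\colon (X\bullet' Y)\circ'(Z\bullet' W)\xrightarrow{\ \phi^{\circ}\ } F\bigl(GF(GX\bullet GY)\circ GF(GZ\bullet GW)\bigr)\xrightarrow{\ F(\eta^{-1}\circ\eta^{-1})\,F(\zeta)\ } \cdots,
\]
that is, $\zeta'$ is $F(\zeta_{GX,GY,GZ,GW})$ sandwiched between the comparison maps $\phi^\circ$, $\psi^{\circ'}$ and the unit isomorphisms. For this to make sense the comparison maps must be invertible where they are used. Since $F$ is an equivalence whose lax structure has a colax inverse, I first argue that $\phi^\circ$ and $\psi^{\circ'}$ are mutually inverse up to $\eta,\epsilon$; this is the usual doctrinal adjunction fact that a monoidal equivalence is strong. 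The remaining structure maps are defined similarly: $\Delta'=F(\eta\bullet\eta)\circ F(\Delta^\bullet_I)\circ\phi^\circ_0$, $m'$ is the transport of $m^\circ_J$, and $\varepsilon'=F(\varepsilon^\bullet_I)\phi^\circ_0$.

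With these definitions, $F$ becomes a double (strong) monoidal functor essentially by construction: the hexagon in the definition of double lax monoidal functor is exactly the defining formula of $\zeta'$, and the unit diagrams are the definitions of $\Delta',m',\varepsilon'$. The duoidal axioms \eqref{eq:assoc1}--\eqref{eq:unit2} for $\Dd$ follow by writing every object of $\Dd$ up to isomorphism as $F$ of an object of $\Cc$. Concretely, we precompose and postcompose with the invertible comparison maps, and each axiom reduces to $F$ applied to the corresponding axiom in $\Cc$, plus naturality of $\phi^\circ,\eta$ and the monoidal coherence of $(F,\phi^\circ)$. The second statement follows by the same argument with the roles of $\circ$ and $\bullet$ (and of lax and colax) exchanged.

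The main obstacle is the bookkeeping in verifying \eqref{eq:assoc1} and \eqref{eq:assoc2}. These axioms involve both products, so their verification mixes the coherence of $\phi^\circ$ with the transported associator $a^{\bullet'}$, and several instances of $\eta$ must be cancelled. I would handle this by first replacing $\Dd$ with the equivalent image of $F$ (using $\epsilon$). In that setting $\bullet'$ is literally $F(-\bullet-)$ on objects $F(A)$, and the comparisons reduce to $\phi^\circ$ alone, so each axiom becomes a diagram chase with $F(\text{axiom in }\Cc)$ in the middle.
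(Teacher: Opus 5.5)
Your proposal is correct and follows essentially the same route as the paper's proof. Both transport $\bullet$ as $F(G(-)\bullet G(-))$ with $J'=F(J)$, use that a monoidal equivalence is strong so that $\phi^\circ$ is invertible, define $\zeta'$ by conjugating $F(\zeta_{GA,GB,GC,GD})$ with the comparison isomorphisms, take $m'=F(m^\circ_J)\phi^\circ_{J,J}$, $\varepsilon'=F(\varepsilon^\bullet_I)\phi^\circ_0$ and $\Delta'$ analogously, and reduce every axiom to the corresponding one in $\Cc$; the paper merely states that this last step can be checked.

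There are two small points to fix. First, as in the paper, strongness of $F$ has to be read into the phrase ``monoidal equivalence''. It cannot be derived from a lax functor having a colax inverse equivalence, since that alone does not force $\phi^\circ$ to be invertible. Second, your $\Delta'$ lands in $F(GFI\bullet GFI)$ rather than in $I'\bullet' I'=F(GI'\bullet GI')$, so it needs the extra factor $F\bigl(G(\phi^\circ_0)^{-1}\bullet G(\phi^\circ_0)^{-1}\bigr)$.
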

\begin{invisible}
\begin{proof}
    Since monoidal equivalences are by strong monoidal functors, we have 
\begin{align*}
(A\bullet'B)\circ'(C\bullet'D)&=F(G(A)\bullet G(B))\circ' F(G(C)\bullet G(D))\cong F((G(A)\bullet G(B))\circ (G(C)\bullet G(D))),\\
(A\circ' C)\bullet'(B\circ'D)&=F(G(A\circ' C)\bullet G(B\circ'D))\cong F((G(A)\circ G(C))\bullet(G(B)\circ G(D))).
\end{align*}
Hence one defines 
\[
\zeta^{\Dd}_{A,B,C,D}:=F(\psi^{\circ}_{A,C}\bullet\psi^{\circ}_{B,D})F(\zeta^{\Cc}_{G(A),G(B),G(C),G(D)})\phi^{\circ}_{F(G(A)\bullet G(B)),F(G(C)\bullet G(D))}.
\]
Moreover, $I'\bullet'I'=F(G(I')\bullet G(I'))\cong F(I\bullet I)$, $I'\cong F(I)$, $J'\circ'J'=F(J)\circ'F(J)\cong F(J\circ J)$, and $J'=F(J)$, so one defines 
\[
\Delta^{\bullet'}_{I'}:=F(\phi^{\circ}_{0}\bullet\phi^{\circ}_{0})F(\Delta^{\bullet}_{I})\phi^{\circ}_{0},\qquad m^{\circ'}_{J'}:=F(m^{\circ}_{J})\phi^{\circ}_{J,J},\qquad \varepsilon^{\bullet'}_{I'}:=F(\varepsilon^{\bullet}_{I})\phi^{\circ}_{0}.
\]
One can then check that $(\Dd,\circ',I',\bullet',J')$ becomes a duoidal category such that $(F,\phi^{\circ}_{X,Y},\phi^{\circ}_{0},\phi^{\bullet}_{X,Y},\phi^{\bullet}_{0}):(\Cc,\circ,I,\bullet,J)\to(\Dd,\circ',I',\bullet',J')$ becomes a duoidal equivalence, where $\phi^{\bullet}_{X,Y}:=F(\epsilon_X\bullet\epsilon_Y)$ and $\phi^{\bullet}_{0}:=\id_{F(J)}$ and $\epsilon_X:GF(X)\to X$, $\epsilon_Y:GF(Y)\to Y$ are the isomorphisms in $\Cc$. 

Likewise, in the dual setting
\begin{align*}
    &(A\bullet' B)\circ' (C\bullet' D) = F(G(A\bullet 'B)\circ G(C\bullet' D)) \cong F((G(A)\bullet G(B))\circ (G(C)\bullet G(D))),
    \\&(A\circ' C)\bullet' (B\circ' D) = F(G(A)\circ G(C))\bullet' F(G(B)\circ G(D)) \cong F((G(A)\circ G(C))\bullet (G(B)\circ G(D))).
\end{align*}
Hence one defines
\[
\zeta^D_{A,B,C,D} = (\phi^{\bullet}_{G(A)\circ G(C), G(B)\circ G(D)})^{-1}F(\zeta^{\Cc}_{G(A), G(B), G(C), G(D)})F((\psi^{\bullet}_{A,B})^{-1}\circ (\psi^{\bullet}_{C,D})^{-1}).
\]
\end{proof}
\end{invisible}

By Theorem \ref{thm:equivalencetetraYD} and Lemma \ref{lem:transportequiv}, we obtain the following result.

\begin{proposition}\label{prop:duoidalequivalenceYetterDrinfeld}
    Let $H$ be a Hopf algebra with bijective antipode, then $(^{H}_{H}\mm^{H}_{H},\ot_{H},H,\square^{H},H)$ is duoidally equivalent to $(\mathcal{YD}^{H}_{H},\ot,\Bbbk,\ot,\Bbbk)$.
\end{proposition}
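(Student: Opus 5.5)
Our plan is to obtain the duoidal equivalence by transporting the duoidal structure of tetramodules along the monoidal equivalence of Theorem \ref{thm:equivalencetetraYD}, and then to identify the transported structure with the one on $\Yd^H_H$ induced by $\sigma^{\Yd}$. We fix the common realization of Theorem \ref{theorem: monoidal structures tetramodules}, so that $\ot_H$ and $\square^H$ agree on ${}^H_H\mm^H_H$, including their constraints. Theorem \ref{thm:equivalencetetraYD} gives a strong monoidal equivalence $F\colon ({}^H_H\mm^H_H,\ot_H,H)\to(\Yd^H_H,\ot,\Bbbk)$ with quasi-inverse $G$. Since strong monoidal functors are both lax and colax, we can apply the first part of Lemma \ref{lem:transportequiv} with $\circ=\ot_H$ and $\bullet=\square^H$. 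This yields a duoidal category $(\Yd^H_H,\ot,\Bbbk,\bullet',J')$ that is duoidally equivalent to the tetramodule one, where $X\bullet'Y=F(G(X)\square^H G(Y))$ and $J'=F(H)$.

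Next we identify $\bullet'$ with $\ot$ and $J'$ with $\Bbbk$. Because $\square^H$ equals $\ot_H$ in the chosen realization, we get $F(G(X)\square^H G(Y))=F(G(X)\ot_H G(Y))\cong F G(X)\ot FG(Y)\cong X\ot Y$ through the monoidal structure of $F$ and the counit of the equivalence. Similarly $F(H)\cong\Bbbk$ via $\phi_0$. These isomorphisms are natural and respect the constraints, so the identity functor on $\Yd^H_H$, equipped with them, becomes a monoidal isomorphism $(\Yd^H_H,\bullet',J')\cong(\Yd^H_H,\ot,\Bbbk)$. We then transport the second monoidal structure along it once more, using the dual half of Lemma \ref{lem:transportequiv} or a direct check. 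This gives a duoidal structure $(\Yd^H_H,\ot,\Bbbk,\ot,\Bbbk)$ that is duoidally equivalent to the tetramodule duoidal category.

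It remains to see that the interchange obtained this way is the canonical one, that is, the interchange $\id\ot\sigma^{\Yd}\ot\id$ of Remark \ref{remark: strong duoidal is braided}. By Corollary \ref{cor: monoidal structures of tetramodules}, the tetramodule interchange is $\id\ot_H c\ot_H\id$, where $c$ is the canonical braiding. Remark \ref{rmk:braidedequivalence} says that $F$ carries $c=\sigma^{\ot_H}$ to $\sigma^{\Yd}$, i.e.\ $F$ is braided. So the transported interchange is $\id\ot\sigma^{\Yd}\ot\id$, up to the coherence isomorphisms of $F$. The unit maps $\Delta,m,\varepsilon$ are transported to the canonical unit isomorphisms of $\Bbbk$.

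We expect the main obstacle to be this last verification. It requires checking that the braiding on tetramodules used in Corollary \ref{cor: monoidal structures of tetramodules} is the same braiding that Remark \ref{rmk:braidedequivalence} shows $F$ to preserve. Our first attempt is a direct computation on invariants: for $m\in{}_HM$ and $n\in{}_HN$, the image $F(\zeta)$ sends $m\ot n$ to $n_{0}\ot (m\tl n_{1})$, using \eqref{Worbraiding} together with $m_{-1}\ot m_0=1\ot m$.
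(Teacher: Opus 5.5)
Your argument is correct. Its first two steps are the same as in the paper: you transport along $F$ with Lemma \ref{lem:transportequiv}, then identify $\bullet'$ and $J'$ with $\ot$ and $\Bbbk$. The difference lies in how the interchange is identified.

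The paper does this by direct computation. It takes Schauenburg's explicit monoidal structure maps of $G$ for \emph{both} $\ot_H$ and $\square^H$, pushes the tetramodule interchange through them, and takes left coinvariants. This gives $(a\ot b)\ot(c\ot d)\mapsto(a\ot c_{(0)})\ot((b\tl c_{(1)})\ot d)$.

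You instead work in the common realization of Theorem \ref{theorem: monoidal structures tetramodules}. This is harmless, since changing the chosen (co)equalizers is a duoidal isomorphism, and it means only the $\ot_H$-structure of $F$ is needed. You then combine Corollary \ref{cor: monoidal structures of tetramodules} with the braidedness of $F$ from Remark \ref{rmk:braidedequivalence}.

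Your route explains conceptually why the answer must be $\id\ot\sigma^{\Yd}\ot\id$: strong duoidal structures come from braidings, and braided equivalences transport them. The paper's computation is self-contained. Yours moves the real content into the Corollary, whose proof in the paper is a one-line assertion, and which the paper itself says is made explicit by this very proposition.

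On the step you flag: the check you sketch evaluates \eqref{Worbraiding} on coinvariants. That only recovers that $F$ carries $\sigma^{\ot_H}$ to $\sigma^{\Yd}$, which is already Remark \ref{rmk:braidedequivalence}. It does not show that the braiding $c$ of the Corollary is \eqref{Worbraiding}.

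What you should compute is $c$ itself. In the common realization, $c_{M,N}$ is $\zeta_{H,M,N,H}$ up to unit constraints. Concretely, it is the map $\beta_{M,N}$ of \eqref{eq:tetramodule-beta} followed by the inverse of $\alpha_{N,M}$ from \eqref{eq:tetramodule-alpha}, and that inverse is $x\square y\mapsto x_{(0)}\ot_H S(x_{(1)})y$. The composite is exactly \eqref{Worbraiding}. For $m\in{}_HM$ and $n\in{}_HN$ it gives $n_{(0)}\ot_H S(n_{(1)})\,m\,n_{(2)}=n_{(0)}\ot_H(m\tl n_{(1)})$, which is $\sigma^{\Yd}$ directly. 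So this one computation replaces both the appeal to \eqref{Worbraiding} and the appeal to Remark \ref{rmk:braidedequivalence}.
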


\begin{proof}
    We consider the monoidal equivalence $F:(^{H}_{H}\mm^{H}_{H},\ot_{H},H)\to(\mathcal{YD}^{H}_{H},\ot,\Bbbk)$ given in 1) of Theorem \ref{thm:equivalencetetraYD}, with inverse $G:(\mathcal{YD}^{H}_{H},\ot,\Bbbk)\to(^{H}_{H}\mm^{H}_{H},\ot_{H},H)$. By Lemma \ref{lem:transportequiv}, we know that $(^{H}_{H}\mm^{H}_{H},\ot_{H},H,\square^{H},H)$ is duoidally equivalent to $(\mathcal{YD}^{H}_{H},\ot,\Bbbk,\bullet,J)$, where $\bullet$ and $J$ are defined as follows. Given $A$ and $B$ in $\mathcal{YD}^{H}_{H}$, recalling that $G$ is also a monoidal equivalence, namely $G:(\mathcal{YD}^{H}_{H},\ot,\Bbbk)\to(^{H}_{H}\mm^{H}_{H},\square^{H},H)$, we get
\[
A\bullet B=F(G(A)\square^{H}G(B))\cong FG(A\ot B)\cong A\ot B,\qquad J=F(H)={}^{\mathrm{co}H}H\cong\Bbbk.
\]
The transported interchange can be identified explicitly. Recall that $G(A) = H\ot A$ for $A$ in $\mathcal{YD}^{H}_{H}$. Given $A,B$ in $\mathcal{YD}^H_H$, the monoidal structures are given by \cite[Theorem 5.7]{Schauenburg}
\[
G(A)\ot_{H}G(B)\to G(A\ot B),\ (h\ot a)\ot_{H}(k\ot b)\mapsto hk_{1}\ot(a\triangleleft k_{2})\ot b,
\]
and 
\[
G(A\ot B)\to G(A)\square^{H}G(B), \ h\ot a\ot b\mapsto(h_{1}\ot a_{(0)})\square^H(h_{2}a_{(1)}\ot b).
\]
The inverse of the latter map is the restriction of $h\ot a\ot k\ot b\mapsto h\varepsilon(k)\ot a\ot b$. Using these identifications and then taking left coinvariants, the interchange of tetramodules becomes
\[
(a\ot b)\ot(c\ot d)\longmapsto(a\ot c_{(0)})\ot((b\triangleleft c_{(1)})\ot d).
\]
Thus, the transported interchange is $\mathrm{id}_{A}\ot\sigma^{\mathcal{YD}}_{B,C}\ot\mathrm{id}_{D}$, with $\sigma^{\mathcal{YD}}$ as in \eqref{def:bradingYD}. The unit maps
become the canonical identifications involving $\Bbbk$. Hence $(^{H}_{H}\mm^{H}_{H},\ot_{H},H,\square^{H},H)$ is duoidally equivalent to $(\mathcal{YD}^{H}_{H},\ot,\Bbbk,\ot,\Bbbk)$. 
\end{proof}

\begin{remark}
We observe that the category $\mathcal{YD}^{H}_{H}$ can be equipped with other duoidal structures. We give two examples.
\begin{itemize}
    \item[1)] If $H$ is a finite dimensional Hopf algebra, the category $(\mathcal{YD}^{H}_{H},\ot,\Bbbk)$ is monoidally equivalent to $(\mm_{D(H)},\ot,\Bbbk)$, where $D(H)$ is the so-called \textit{Drinfeld double} of $H$ introduced in \cite{Drinfeld}. We know that braidings on the latter monoidal category correspond bijectively to quasitriangular structures on $D(H)$. Any braiding on $(\mm_{D(H)},\ot,\Bbbk)$ can be lifted to $(\mathcal{YD}^{H}_{H},\ot,\Bbbk)$. Choosing a braiding different from the canonical one we get a duoidal structure on $(\mathcal{YD}^{H}_{H},\ot,\Bbbk,\ot,\Bbbk)$ which is different from the one obtained above. In fact, the interchange law obtained from the braiding changes.
    \item[2)] It is known that the forgetful functor $\mathcal{YD}^{H}_{H}\to\mm^{H}$ creates small limits and colimits, see e.g.\ \cite[Lemma 5.3]{AgGoVe}. Therefore, the category $\mathcal{YD}^{H}_{H}$ has binary products $\times$ and terminal object $\mathds{1}$, which are constructed as in $\mm^{H}$. More precisely, one has $\times=\oplus$ and $\mathds{1}=\{0\}$. Therefore, we have a duoidal category $(\mathcal{YD}^{H}_{H},\ot,\Bbbk,\oplus,\{0\})$, see \cite[Example 6.19 and Remark 6.21]{Aguiar}.
\end{itemize}

\end{remark}

In the next section, we introduce pre-Cartier duoidal categories and we study this notion for bimodule and bicomodule categories.

\section{Pre-Cartier duoidal categories and applications to bi(co)modules}\label{sec:preCartier}

In this section, we introduce the notion of a \textit{pre-Cartier duoidal category}. In order to do this, we first recall the notion of a \textit{braided duoidal category}. For notational ease, we will often write actions by concatenation.

\begin{definition}[{\cite[Definition 6.5]{Aguiar}}]
A duoidal category $(\Cc,\circ,I,\bullet,J)$ is $\circ$-braided if the monoidal category $(\Cc,\circ,I)$ has a braiding $\sigma^{\circ}$ and the diagrams in \eqref{diagramscircbraided} commute.
\begin{equation}\label{diagramscircbraided}
\begin{tikzcd}
	(A\bullet B)\circ(C\bullet D) & (A\circ C)\bullet(B\circ D) && J\circ J \\
	(C\bullet D)\circ(A\bullet B) & (C\circ A)\bullet(D\circ B) & J & J\circ J
	\arrow[from=1-1, to=1-2,"\zeta_{A,B,C,D}"]
	\arrow[from=1-1, to=2-1,"\sigma^{\circ}_{A\bullet B, C\bullet D}"']
	\arrow[from=1-2, to=2-2,"\sigma^{\circ}_{A,C}\bullet\sigma^{\circ}_{B,D}"]
	\arrow[from=1-4, to=2-3,"m^{\circ}_{J}"']
	\arrow[from=1-4, to=2-4,"\sigma^{\circ}_{J,J}"]
	\arrow[from=2-1, to=2-2,"\zeta_{C,D,A,B}"']
	\arrow[from=2-4, to=2-3,"m^{\circ}_{J}"]
\end{tikzcd}
\end{equation}
Similarly, a duoidal category $(\Cc,\circ,I,\bullet,J)$ is $\bullet$-braided if the monoidal category $(\Cc,\bullet,J)$ has braiding $\sigma^{\bullet}$ and the diagrams in \eqref{diagramsbulletbraided} commute.
\begin{equation}\label{diagramsbulletbraided}
\begin{tikzcd}
	(A\bullet B)\circ(C\bullet D) & (A\circ C)\bullet(B\circ D) && I\bullet I \\
	(B\bullet A)\circ(D\bullet C) & (B\circ D)\bullet(A\circ C) & I & I\bullet I
	\arrow[from=1-1, to=1-2,"\zeta_{A,B,C,D}"]
	\arrow[from=1-1, to=2-1,"\sigma^{\bullet}_{A,B}\circ\sigma^{\bullet}_{C,D}"']
	\arrow[from=1-2, to=2-2,"\sigma^{\bullet}_{A\circ C, B\circ D}"]
	\arrow[from=2-3, to=1-4,"\Delta^{\bullet}_{I}"]
	\arrow[from=2-4, to=1-4,"\sigma^{\bullet}_{I,I}"']
	\arrow[from=2-1, to=2-2,"\zeta_{B,A,D,C}"']
	\arrow[from=2-3, to=2-4,"\Delta^{\bullet}_{I}"']
\end{tikzcd}
\end{equation}
A duoidal category $(\Cc,\circ,I,\bullet,J)$ is \textit{braided} if it is $\circ$-braided and $\bullet$-braided. We denote such a duoidal category by $(\Cc,\circ,I,\sigma^{\circ},\bullet,J,\sigma^{\bullet})$.
\end{definition}

First, we have the following result.

\begin{proposition}\label{prop:braidedduoidalbimod}
The following statements hold:
\begin{itemize}
    \item[1)] Let $(H,\Rr)$ be a quasitriangular bialgebra with $\Rr^{-1} = \overline{\Rr}^i\ot \overline{\Rr}_i$. 
Then the duoidal category $(_{H}\mm_{H},\ot_{H},H,\ot,\Bbbk)$ is $\bullet$-braided, 
where $\bullet=\ot$ and 
\begin{equation}\label{eq:braidingbimod}
\sigma^{\bullet}_{M,N}(m\ot n)=\overline{\Rr}^{i}n\Rr^{j}\ot\overline{\Rr}_{i}m\Rr_{j}.
\end{equation}
\item[2)] If $(H, \Rr)$ is a coquasitriangular bialgebra 
then $({}^{H}{\mm}^H, \ot, \Bbbk,\square^H, H)$ is a $\circ$-braided duoidal category, where 
where $\circ=\ot$ and
    \begin{align}
&\sigma_{M,N}^{\circ}(m\ot n) = \Rr^{-1}(m_{(-1)}\ot n_{(-1)})n_{(0)}\ot m_{(0)}\Rr(m_{(1)}\ot n_{(1)}).\label{braidingbicom}
    \end{align}
\end{itemize}
\end{proposition}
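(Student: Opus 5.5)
For 1), the braiding \eqref{eq:braidingbimod} on $({}_H\mm_H,\ot,\Bbbk)$ comes from Remark \ref{rmk:braidingbimodquasi}: it is induced by the quasitriangular structure $(\id\ot\tau\ot\id)((\Rr^{-1})^{\op}\ot\Rr^{\op})$ on $H\ot H^{\op}$. So I take the braiding itself as given, and only the two diagrams in \eqref{diagramsbulletbraided} need checking, with $\circ=\ot_H$, $I=H$, $\Delta^\bullet_I=\Delta_H$, and $\zeta$ as in Example \ref{ex:duoidalbimodules}.

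For the unit diagram, I need $\sigma^\bullet_{H,H}\Delta=\Delta$. Evaluating at $h$ gives $\sigma^\bullet_{H,H}(h_1\ot h_2)=\Rr^{-1}\Delta^{\op}(h)\Rr$. By \eqref{qtr1}, $\Delta^{\op}(h)=\Rr\Delta(h)\Rr^{-1}$, so this equals $\Rr^{-1}\Rr\Delta(h)\Rr^{-1}\Rr=\Delta(h)$. This is exactly the reason for choosing $\Rr^{-1}(\cdot)\Rr$ rather than $\Rr^{\op}(\cdot)\Rr$.

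For the hexagon-type square, I evaluate both paths on $(a\ot b)\ot_H(c\ot d)$. Going via $\sigma^\bullet_{A,B}\ot_H\sigma^\bullet_{C,D}$ and then $\zeta_{B,A,D,C}$ gives
\[
(\overline{\Rr}^ib\Rr^j\ot_H\overline{\Rr}^kd\Rr^l)\ot(\overline{\Rr}_ia\Rr_j\ot_H\overline{\Rr}_kc\Rr_l).
\]
Going via $\zeta$ and then $\sigma^\bullet_{A\ot_HC,B\ot_HD}$ gives
\[
(\overline{\Rr}^ib\ot_Hd\Rr^j)\ot(\overline{\Rr}_ia\ot_Hc\Rr_j),
\]
since the bimodule structure on $M\ot_H N$ uses the outer actions. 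To compare these, I move the inner factors across $\ot_H$: $\Rr^j$ moves from $b$ onto $d$, and $\Rr_j$ moves from $a$ onto $c$. The first expression then becomes
\[
(\overline{\Rr}^ib\ot_H\Rr^j\overline{\Rr}^kd\Rr^l)\ot(\overline{\Rr}_ia\ot_H\Rr_j\overline{\Rr}_kc\Rr_l).
\]
Here the two middle elements combine into $\Rr\Rr^{-1}=1\ot1$ in $H\ot H$, which yields the second expression. The one point to watch is that these moves are legitimate in the tensor product $(B\ot_HD)\ot(A\ot_HC)$ with tensor components paired correctly. They are, since the elements live in $H\ot H$ and act on the two tensor factors separately.

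Part 2) is the formal dual of part 1). The braiding \eqref{braidingbicom} on $({}^H\mm^H,\ot,\Bbbk)$ is the one from Remark \ref{rmk:braidingbimodquasi}, so the $\circ$-braided diagrams \eqref{diagramscircbraided} are what must be shown. The unit diagram requires $m_H\sigma^\circ_{H,H}=m_H$, that is, $\Rr^{-1}(a_1\ot b_1)b_2a_2\Rr(a_3\ot b_3)=ab$. This follows by applying \eqref{CQT3} to the last two factors and then using convolution invertibility. For the square, $\zeta$ from the bicomodule example is $(v\square w)\ot(u\square t)\mapsto(v\ot u)\square(w\ot t)$. The coefficients agree once the cotensor conditions $v_{(0)}\ot v_{(1)}\ot w=v\ot w_{(-1)}\ot w_{(0)}$ are used to identify the inner right coaction of $v$ with the left coaction of $w$. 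The remaining factors $\Rr(v_{(1)}\ot u_{(1)})\Rr^{-1}(w_{(-1)}\ot t_{(-1)})$, together with the diagonal coactions, then cancel as $\Rr\ast\Rr^{-1}=\ep\ot\ep$. I expect the main obstacle to be the Sweedler bookkeeping in this dual square, which requires applying the cotensor conditions with the correct coassociativity indices.
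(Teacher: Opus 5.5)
Your proposal is correct, but it takes a different route from the paper. For 1) the paper only invokes \cite[Proposition 4.14]{ABCS}, which is proved there for quasitriangular Hopf algebras, and adds that the antipode is not used. For 2) it states that the dual computations go through. You instead verify the two diagrams directly, and your computations check out.

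In 1), the square of \eqref{diagramsbulletbraided} needs only $\Rr\Rr^{-1}=1\ot 1$ once $\Rr^j$ and $\Rr_j$ are slid across the two balanced tensor products; as you note, this is legitimate termwise. The unit diagram is exactly \eqref{qtr1}.

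In 2), the unit diagram is \eqref{CQT3} plus convolution invertibility. For the square, use the cotensor identity $a_{(0)}\ot a_{(1)}\ot b_{(-1)}\ot b_{(0)}=a_{(0)}\ot a_{(1)1}\ot a_{(1)2}\ot b$ for $a\square b$, and its analogue for $c\square d$. This turns the extra factor $\Rr(a_{(1)}\ot c_{(1)})\Rr^{-1}(b_{(-1)}\ot d_{(-1)})$ into $(\Rr\ast\Rr^{-1})(a_{(1)}\ot c_{(1)})=\ep(a_{(1)})\ep(c_{(1)})$. What is really compared are the individual coactions in $\sigma^\circ_{A,C}$, $\sigma^\circ_{B,D}$ against the outer coactions in $\sigma^\circ_{A\square B,C\square D}$. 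The diagonal coactions you mention play no role in this cancellation.

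What your route buys is transparency:
\begin{itemize}
\item It makes the paper's remark that no antipode is needed evident.
\item It shows that the square holds for any conjugation $X^{-1}(\cdot)X$ by an invertible $X\in H\ot H$, and dually for any convolution-invertible form. So the quasitriangular axioms enter only through the unit diagram and through $\sigma^\bullet,\sigma^\circ$ being braidings.
\item It explains why Remark \ref{rmk:braidingbimodquasi} prefers the braiding $\Rr^{-1}(\cdot)\Rr$ over $\Rr^{\op}(\cdot)\Rr$, since the latter would force $\Rr^{\op}\Rr=1\ot 1$.
\end{itemize}
The paper's route is shorter but rests on an external result stated under stronger hypotheses. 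Both approaches take the braiding property itself from Remark \ref{rmk:braidingbimodquasi}.
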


\begin{proof}
1). In \cite[Proposition 4.14]{ABCS} it is proven that, given a quasitriangular Hopf algebra $(H,\Rr)$, the duoidal category $(_{H}\mm_{H},\ot_{H},H,\ot,\Bbbk)$ is $\ot$-braided with braiding defined as in \eqref{eq:braidingbimod}. We simply point out that the antipode is not needed. 

2). 
The category $({}^{H}{\mm}^H, \ot,\Bbbk, \sigma^{\circ})$ is a braided monoidal category as observed in Remark \ref{rmk:braidingbimodquasi}. Dual to the computations in \cite[Proposition 4.14]{ABCS}, one verifies immediately that (\ref{diagramscircbraided}) holds.
\end{proof}

\begin{lemma}\label{lem:braidedduoidal}
Let $(\mathcal{C},\ot,I,\sigma)$ be a braided monoidal category and equip $(\mathcal{C},\ot,I,\ot,I)$ with interchange $\zeta_{A,B,C,D}=\mathrm{id}_{A}\ot\sigma_{B,C}\ot\mathrm{id}_{D}$. With $\sigma$ as the braiding for both products, this duoidal category is $\circ$-braided, or $\bullet$-braided, if and only if $\sigma$ is a symmetry. In that case it is braided duoidal.
\end{lemma}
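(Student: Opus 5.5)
We verify the two diagrams of \eqref{diagramscircbraided} and \eqref{diagramsbulletbraided} directly, using only the hexagon axioms, naturality of $\sigma$ and the braid relation, and then extract the symmetry condition from a suitable specialisation. We may take $\Cc$ strict. The unit maps $\Delta^\bullet_I$, $m^\circ_J$, $\varepsilon^\bullet_I$ are the canonical identifications, so $\sigma_{I,I}=\id_I$ makes the unit triangles commute trivially. Everything therefore reduces to the squares.

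\textbf{Step 1: reduce the $\circ$-square to a braid comparison.} Expand $\sigma_{A\ot B,C\ot D}$ with the hexagon identities as
\[
(\sigma_{A,C}\ot\id\ot\id)(\id\ot\sigma_{B,C}\ot\id)(\id\ot\id\ot\sigma_{A,D})(\id\ot\sigma_{B,D}\ot\id)\quad\text{(after suitable reordering)},
\]
that is, as the canonical braid word moving the strands $A,B$ past $C,D$. The top-right path is then $(\sigma_{A,C}\ot\sigma_{B,D})(\id\ot\sigma_{B,C}\ot\id)$. The bottom-left path is $(\id\ot\sigma_{D,A}\ot\id)\sigma_{A\ot B,C\ot D}$. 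Both are elements of the braid group image on four strands with the same underlying permutation $ABCD\mapsto CADB$. Comparing them by naturality and the braid relation, the two paths should differ exactly by a crossing of strands $A$ and $D$: one side crosses $A$ over $D$ and the other under. Concretely, the square commutes iff
\[
\id_C\ot\sigma_{D,A}\sigma_{A,D}\ot\id_B=\id
\]
after cancelling invertible braidings. A cleaner route is to specialise $B=C=I$. The square then reads $\sigma_{A,D}=\sigma^{-1}_{D,A}$ directly, since the left side becomes $\sigma_{A\ot I, I\ot D}=\sigma_{A,D}$ while the right side collapses to $\sigma_{D,A}^{-1}$ (or its mirror, depending on conventions). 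So commutativity forces $\sigma_{D,A}\sigma_{A,D}=\id$ for all $A,D$, i.e.\ $\sigma$ is a symmetry.

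\textbf{Step 2: converse for the $\circ$-square.} If $\sigma$ is a symmetry, the coherence theorem for symmetric monoidal categories makes any two composites of braidings with the same underlying permutation equal. Both paths realise $ABCD\mapsto CADB$, so the square commutes.

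\textbf{Step 3: the $\bullet$-square.} This is handled identically: top-right is $\sigma_{A\ot C,B\ot D}(\id\ot\sigma_{B,C}\ot\id)$ and bottom-left is $(\id\ot\sigma_{A,D}\ot\id)(\sigma_{A,B}\ot\sigma_{C,D})$, both with permutation $ABCD\mapsto BDAC$. Specialising $A=D=I$ gives $\sigma_{C,B}=\sigma^{-1}_{B,C}$ (up to orientation), so commutativity again forces symmetry. The symmetric coherence theorem gives the converse. Since each one-sided condition is equivalent to $\sigma$ being a symmetry, in that case both hold and the category is braided duoidal.

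\textbf{Main obstacle.} The delicate step is the bookkeeping in the specialisation. After setting two objects to $I$, we must make sure the surviving crossings are genuinely one over and one under, producing $\sigma_{D,A}\sigma_{A,D}$, and not two identical crossings that would cancel to a tautology. We will do this carefully with the unit identities $\sigma_{X,I}=\id=\sigma_{I,X}$ and the hexagon expansions. If that specialisation turned out degenerate, we would instead take $A=D=I$ in the $\circ$-square, where the crossing of $B$ and $C$ appears on both paths.
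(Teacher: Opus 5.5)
Your proposal is correct and follows essentially the paper's route. Specialising $B=C=I$ in the $\circ$-square gives $\sigma_{D,A}\sigma_{A,D}=\id_{A\ot D}$, specialising $A=D=I$ in the $\bullet$-square gives $\sigma_{C,B}\sigma_{B,C}=\id_{B\ot C}$, and the converse follows from symmetric coherence. The specialisation is not degenerate, so your fallback is unnecessary; it would in fact fail, since $A=D=I$ collapses both sides of the $\circ$-square to $\sigma_{B,C}$.
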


\begin{proof}
In the $\circ$-braiding compatibility, put $B=C=I$. Suppressing unit constraints gives $\sigma_{D,A}\sigma_{A,D}=\mathrm{id}_{A\ot D}$, for all $A,D\in\Cc$. The same condition follows from the $\bullet$-braiding compatibility by putting $A=D=I$. Conversely, symmetry makes both compatibility diagrams commute by symmetric monoidal coherence; the unit conditions follow as well. This is also the criterion given in \cite[Proposition 6.13]{Aguiar}.
\end{proof}

As a consequence, we have the following result.

\begin{proposition}\label{prop:braidedduoidalequivalencetetraYetter}
Let $H$ be a Hopf algebra with bijective antipode. With the two braidings transported from $\sigma^{\mathcal{YD}}$, the duoidal category $(^{H}_{H}\mm^{H}_{H},\ot_{H},H,\square^{H},H)$ is braided if and only if $\sigma^{\mathcal{YD}}$ is a symmetry. This forces $H\cong\Bbbk$.
\end{proposition}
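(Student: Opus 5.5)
We reduce everything to Yetter--Drinfeld modules. By Proposition \ref{prop:duoidalequivalenceYetterDrinfeld}, the canonical duoidal structure on tetramodules is duoidally equivalent to $(\mathcal{YD}^H_H,\ot,\Bbbk,\ot,\Bbbk)$ with interchange $\zeta_{A,B,C,D}=\id_A\ot\sigma^{\mathcal{YD}}_{B,C}\ot\id_D$ and unit maps the canonical identifications. Transporting $\sigma^{\mathcal{YD}}$ along the same equivalences gives braidings on both $(\ot_H,H)$ and $(\square^H,H)$ (Remark \ref{rmk:braidedequivalence}). The compatibility diagrams \eqref{diagramscircbraided} and \eqref{diagramsbulletbraided} are preserved and reflected by a duoidal equivalence that is braided for each product. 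Hence the tetramodule duoidal category is $\circ$-braided (resp.\ $\bullet$-braided, braided) if and only if $(\mathcal{YD}^H_H,\ot,\Bbbk,\ot,\Bbbk)$ with $\sigma^{\mathcal{YD}}$ on both sides has the same property. By Lemma \ref{lem:braidedduoidal}, this holds exactly when $\sigma^{\mathcal{YD}}$ is a symmetry. That proves the first claim.

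For the second claim we show that if $\sigma^{\mathcal{YD}}$ is a symmetry then $H\cong\Bbbk$. Take $M=N=H$ with regular right coaction $\Delta$ and the right adjoint action $m\tl h=S(h_1)mh_2$. This is the standard Yetter--Drinfeld module (it is $F$ of the free tetramodule $H\ot H$). The symmetry condition reads $\sigma^{\mathcal{YD}}_{N,M}\sigma^{\mathcal{YD}}_{M,N}=\id$, i.e.
\[
m_0\tl n_2\ot (n_0\tl (m_1\tl n_1))=m\ot n\quad\text{for all }m,n.
\]
A cleaner test object is $H$ with trivial coaction $h\mapsto h\ot 1$ paired with $H$ with regular action and coaction $(\Delta,\,m\tl h=S(h_1)mh_2)$. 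Alternatively we can use $\Bbbk$-type modules: let $M=\Bbbk$ with trivial coaction, and let $N$ be arbitrary. The condition then becomes $\Bbbk\tl n_1\ot n_0$, which is trivial. So we need both modules nontrivial.

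The main obstacle is choosing test objects that make the computation short. I would take $M$ to be $H$ with coaction $\Delta$ and action $m\tl h=S(h_1)mh_2$, and $N$ to be $H$ with the trivial action via $\varepsilon$ and coaction $n\mapsto n_2\ot S(n_1)n_3$. I would check the YD condition \eqref{eq:compcondYDmod} for this $N$, and if it fails, replace $N$ by the free-type YD module $V\ot H$ from the equivalence $G$. Then I would evaluate the double braiding on $1\ot n$ and on $m\ot 1$, and apply $\varepsilon$ on suitable legs. The aim is an identity of the form $m\tl h=\varepsilon(h)m$ or $h_1\ot h_2=h\ot 1$. Applying $\varepsilon\ot\id$ to the latter gives $h=\varepsilon(h)1$, so $H=\Bbbk1_H$.

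A fallback argument: a symmetric braiding on $\mathcal{YD}^H_H\simeq{}_{D(H)}\mm$, for finite dimensional $H$, would make the canonical $R$-matrix of $D(H)$ triangular. Restricting the relation $\Rr^{\op}\Rr=1$ to $H^*\ot H$ via $\varepsilon$ forces $\sum e_i\ot e^i=1\ot\varepsilon$, which gives $\dim H=1$. I would also try the comodule $N=H$ against the one-dimensional YD module given by a grouplike $g$ with trivial action, which yields $n\tl g=n$ for all such $g$. However, this is weaker than what we need, so the regular-module computation remains the main step.
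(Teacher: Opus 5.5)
Your treatment of the first claim matches the paper's argument: transport along Proposition \ref{prop:duoidalequivalenceYetterDrinfeld}, then apply Lemma \ref{lem:braidedduoidal}.

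The gap is in the second claim, that a symmetric $\sigma^{\mathcal{YD}}$ forces $H\cong\Bbbk$. The paper settles this by citing \cite{PareigisbrYD}. You instead announce a direct computation but never complete it, and your concrete test object fails. Take $N=H$ with $n\tl b=\varepsilon(b)n$ and $\rho(n)=n_2\ot S(n_1)n_3$. For this to lie in $\mathcal{YD}^H_H$, \eqref{eq:compcondYDmod} would require $\varepsilon(b)n_2\ot S(n_1)n_3=n_2\ot S(b_1)S(n_1)n_3b_2$. This fails in general, for instance for Sweedler's four-dimensional Hopf algebra. The other pieces do not close the gap either. The replacement you gesture at is left unspecified. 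Your displayed double-braiding formula is not well formed. The Drinfeld double fallback covers only finite-dimensional $H$ and is itself only sketched. So, as written, $H\cong\Bbbk$ is not proved.

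Your instinct is close to a complete argument.
\begin{itemize}
\item Keep $M=H$ with coaction $\Delta$ and action $m\tl h=S(h_1)mh_2$.
\item Give $N=H$ the regular action $n\tl b=nb$ together with the coaction $\rho(n)=n_2\ot S(n_1)n_3$. This is the Yetter--Drinfeld module induced from the trivial comodule $\Bbbk$. Both sides of \eqref{eq:compcondYDmod} then equal $n_2b_2\ot S(b_1)S(n_1)n_3b_3$.
\item Since $\rho(1)=1\ot 1$ in $N$, you get $\sigma^{\mathcal{YD}}_{M,N}(m\ot 1)=1\ot m$ and $\sigma^{\mathcal{YD}}_{N,M}(1\ot m)=m_1\ot m_2$.
\item Symmetry therefore gives $\Delta(m)=m\ot 1$ for every $m\in H$.
\item Applying $\varepsilon\ot\id$ yields $m=\varepsilon(m)1_H$, i.e.\ $H=\Bbbk 1_H\cong\Bbbk$.
\end{itemize}
With this computation, or with the citation the paper uses, your proof is complete.
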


\begin{proof}
By Proposition \ref{prop:duoidalequivalenceYetterDrinfeld}, $(^{H}_{H}\mm^{H}_{H},\ot_{H},H,\square^{H},H)$ is duoidally equivalent to $(\mathcal{YD}^{H}_{H},\ot,\Bbbk,\ot,\Bbbk)$. Therefore, by Lemma \ref{lem:braidedduoidal}, $(^{H}_{H}\mm^{H}_{H},\ot_{H},H,\square^{H},H)$ is braided if and only if $\sigma^{\mathcal{YD}}$ is a symmetry. This forces $H\cong\Bbbk$ as proven in \cite{PareigisbrYD}.
\end{proof}

The previous result can be extended to an arbitrary bialgebra $H$.

\begin{theorem}\label{prop:tetramodule-braided-duoidal-obstruction}
Let $H$ be a bialgebra and consider the duoidal category $({}_H^H\mm_H^H,\otimes_H,H,\square^H,H)$. Then the following conditions are equivalent:
\begin{enumerate}
    \item $H\cong \Bbbk$;
    \item this duoidal category is $\circ$-braided,
          where $\circ=\otimes_H$;
    \item this duoidal category is $\bullet$-braided,
          where $\bullet=\square^H$;
    \item this duoidal category is braided.
\end{enumerate}
\end{theorem}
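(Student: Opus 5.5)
The plan is to prove $(1)\Rightarrow(4)\Rightarrow(2),(3)$ directly, and then to show that either one-sided braiding forces $H\cong\Bbbk$. The argument extracts a braiding on the relevant unit and applies the obstruction already established for bimodules and bicomodules. For $(1)\Rightarrow(4)$: if $H\cong\Bbbk$, then ${}^H_H\mm^H_H\cong\Vec_\Bbbk$. Both $\ot_H$ and $\square^H$ become $\ot_\Bbbk$, and the interchange of Example~\ref{example: tetramodules duoidal} becomes $\id\ot\tau\ot\id$. Since $\tau$ is a symmetry, Lemma~\ref{lem:braidedduoidal} shows the structure is braided. The implications $(4)\Rightarrow(2)$ and $(4)\Rightarrow(3)$ hold by definition.

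For $(2)\Rightarrow(1)$, suppose $\sigma^\circ$ is a braiding on $({}^H_H\mm^H_H,\ot_H,H)$ satisfying \eqref{diagramscircbraided}. Set $A=D=H$, the $\square^H$-unit, in the first square and use the unit constraints $\Lambda,\Lambda'$. The interchange $\zeta_{H,B,C,H}$ then reduces to a canonical comparison $\theta_{B,C}\colon B\ot_H C\to B\square^H C$, namely the map $e$ of Theorem~\ref{theorem: monoidal structures tetramodules} in the Hopf case, given by $b\ot_H c\mapsto b_{(0)}\ot_H\cdots$. I expect the square to yield $\theta_{C,B}\sigma^\circ_{B,C}=\sigma^\circ_{H,C}\square\,\sigma^\circ_{B,H}\circ\theta_{B,C}$ up to unit constraints. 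Because $\sigma^\circ$ restricted to the unit is trivial, the right-hand side is $\theta_{B,C}$ composed with the canonical switch. Evaluating on free tetramodules such as $B=C=H\ot H$, or on $G(\Bbbk)=H$, then gives an explicit identity in $H$.

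The cleanest route I would try first avoids this computation: compare the structure with the known bimodule obstruction. The forgetful functor to $({}_H\mm_H,\ot_H,H)$ is strong monoidal, but it is not full, so a braiding does not transfer directly. I would therefore instead use naturality of $\sigma^\circ$ along the tetramodule maps $H\ot H\to H\ot H$ given by right multiplication in the free-tetramodule components. This pins $\sigma^\circ_{H\ot H,H\ot H}$ down to an element analogous to a canonical $R$-matrix. It should satisfy conditions 1)--5) of Theorem~\ref{thm:braidingsbimodtensorA} after applying $\ep$ to the comodule legs, which requires $\ep$ to be a bimodule map on the appropriate legs. The augmentation argument of Proposition~\ref{prop:trivialcanonicalform}~2) then gives $h=\ep(h)1$, that is, $H\cong\Bbbk$.

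For $(3)\Rightarrow(1)$, I would argue dually. Setting $B=C=H$ (the $\ot_H$-unit) in \eqref{diagramsbulletbraided} relates $\sigma^\bullet$ to the same comparison $\theta$. A canonical $R$-form on the coalgebra $H$ is extracted by applying $\ep^{\ot4}$, as in \eqref{eqn: canonical r comatrix}. Since $H$ has the grouplike element $1$, Proposition~\ref{prop:trivialcanonicalform}~1) forces $H\cong\Bbbk 1$. The main obstacle is the extraction step in both directions. One must check that the tetramodule-level braiding, which lives on a smaller hom-set than the bimodule or bicomodule braiding, still satisfies enough of the conditions, specifically the normalization \eqref{cond2braiding} or \eqref{cond: normalizing condition} and \eqref{cond1braiding} or \eqref{cond: comatrix 3}. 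I would obtain normalization from the unit diagrams of \eqref{diagramscircbraided} and \eqref{diagramsbulletbraided} involving $m^\circ_J$ and $\Delta^\bullet_I$, and obtain the linearity condition from naturality along the tetramodule endomorphisms of free tetramodules $H\ot V\ot H$.
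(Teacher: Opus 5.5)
Your treatment of $(1)\Rightarrow(4)\Rightarrow(2),(3)$ is fine, but there is a genuine gap in $(2)\Rightarrow(1)$ and $(3)\Rightarrow(1)$. The route you favour cannot work. That route extracts a canonical $R$-matrix or $R$-form from $\sigma^\circ$ or $\sigma^\bullet$ and then invokes Proposition~\ref{prop:trivialcanonicalform}. The extractions in Theorem~\ref{thm:braidingsbimodtensorA} and Theorem~\ref{theorem: braidings on bicomodules} rely on naturality along all bimodule maps $A\ot A\to M$, $a\ot b\mapsto amb$, respectively along the bicomodule maps $\hat f\colon V\to C\ot C$ of Lemma~\ref{lemma: induced bicomodule morphism}. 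These are not tetramodule morphisms, so the extraction does not transfer.

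More decisively, the only duoidal input you plan to use is the unit diagrams in \eqref{diagramscircbraided} and \eqref{diagramsbulletbraided}, and these are vacuous here: $I=J=H$, and every braiding satisfies $\sigma_{H,H}=\id$ up to unit constraints. So, if it worked, your argument would show that $({}^H_H\mm^H_H,\ot_H,H)$ or $({}^H_H\mm^H_H,\square^H,H)$ admits no braiding at all unless $H\cong\Bbbk$. That is false: for a Hopf algebra with bijective antipode (e.g.\ a group algebra), Remark~\ref{rmk:braidedequivalence} provides such braidings. Any correct proof must use the first square of the compatibility diagram in an essential way.

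Your abandoned first attempt pointed in that direction, but it is mis-set. The map $\zeta_{H,B,C,H}$ goes from $B\ot_H C$ to $(H\ot_H C)\square^H(B\ot_H H)\cong C\square^H B$, not to $B\square^H C$. There is also no canonical switch $B\square^H C\to C\square^H B$, since the flip does not preserve cotensor products. Write $\alpha_{M,N}=\zeta_{M,H,H,N}\colon M\ot_H N\to M\square^H N$ and $\beta_{M,N}=\zeta_{H,M,N,H}\colon M\ot_H N\to N\square^H M$. The square at unit entries then gives $\alpha_{N,M}\sigma^\circ_{M,N}=\beta_{M,N}$ and $\beta_{N,M}\sigma^\circ_{M,N}=\alpha_{M,N}$, respectively $\sigma^\bullet_{M,N}\alpha_{M,N}=\beta_{M,N}$ and $\sigma^\bullet_{M,N}\beta_{N,M}=\alpha_{N,M}$.

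The missing idea is how to exploit these identities without knowing $\sigma$. Take $F=H\ot H$ with outer actions and diagonal coactions, $G=H\ot H$ with diagonal actions and outer coactions, and the elements $f=1\ot1$, $g_h=1\ot h$. Then $\alpha_{F,G}(f\ot_H g_h)=f\square g_h=\beta_{G,F}(g_h\ot_H f)$. In the $\bullet$ case, applying $\sigma^\bullet_{F,G}$ yields $\beta_{F,G}(f\ot_H g_h)=\alpha_{G,F}(g_h\ot_H f)$, i.e.\ $1\ot h_1\ot1\ot h_2=1\ot h_1\ot h_2\ot1$ in $H^{\ot4}$, whence $h=\ep(h)1$. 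In the $\circ$ case, use the outer left action of $F$ to write $\sigma^\circ_{F,G}(f\ot_H g_h)=\sum_i(a_i\ot b_i)\ot_H(1\ot d_i)$. Combining both identities with counit manipulations then forces $1\ot h=\ep(h)1\ot1$.
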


\begin{proof}
For $H=\Bbbk$, both monoidal products are the ordinary tensor product
of vector spaces, and the usual flip gives a braided duoidal
structure.\\
Conversely, suppose that $\sigma^{\bullet}$ is a braiding on
$({}_H^H\mm_H^H,\square^H,H)$ compatible with the canonical
interchange. Suppressing the canonical unit constraints, define
\begin{align*}
\alpha_{M,N}
&=\zeta_{M,H,H,N}:
M\otimes_H N\longrightarrow M\square^H N,\\
\beta_{M,N}
&=\zeta_{H,M,N,H}:
M\otimes_H N\longrightarrow N\square^H M.
\end{align*}
Their explicit formulas are
\begin{align}
\alpha_{M,N}(m\otimes_H n)
&=m_0n_{-1}\square^H m_1n_0,
\label{eq:tetramodule-alpha}\\
\beta_{M,N}(m\otimes_H n)
&=m_{-1}n_0\square^H m_0n_1.
\label{eq:tetramodule-beta}
\end{align}
Indeed, the inverses of the cotensor unit constraints are the
coactions, whereas the tensor unit constraints are the actions. The $\bullet$-braiding compatibility reads
\[
\sigma^{\bullet}_{A\otimes_H C,B\otimes_H D}\zeta_{A,B,C,D}
=
\zeta_{B,A,D,C}(\sigma^{\bullet}_{A,B}\otimes_H \sigma^{\bullet}_{C,D}).
\]
Taking $(A,B,C,D)=(M,H,H,N)$ and $(H,N,M,H)$, respectively,
and using the unit identities for a braiding, we obtain
\begin{equation}\label{eq:tetramodule-comparison-identities}
\sigma^{\bullet}_{M,N}\alpha_{M,N}=\beta_{M,N},
\qquad
\sigma^{\bullet}_{M,N}\beta_{N,M}=\alpha_{N,M}.
\end{equation}
Consider two tetramodules $F$ and $G$, both with underlying
vector space $H\otimes H$. On $F$ take outer actions and diagonal
coactions:
\begin{align*}
r(a\otimes b)s
&=ra\otimes bs,\\
\lambda_F(a\otimes b)
&=a_1b_1\otimes(a_2\otimes b_2),\\
\rho_F(a\otimes b)
&=(a_1\otimes b_1)\otimes a_2b_2.
\end{align*}
On $G$ take diagonal actions and outer coactions:
\begin{align*}
r(a\otimes b)s
&=r_1as_1\otimes r_2bs_2,\\
\lambda_G(a\otimes b)
&=a_1\otimes(a_2\otimes b),\\
\rho_G(a\otimes b)
&=(a\otimes b_1)\otimes b_2.
\end{align*}
These are the external products $H\boxtimes_1H$ and
$H\boxtimes_2H$ of \cite[Section~2.2.1]{Shoikhet}.
Their tetramodule axioms follow from the bialgebra identities
and do not require an antipode.\\
Set $f=1\otimes1\in F$ and, for $h\in H$, set
$g_h=1\otimes h\in G$. We have
\begin{align*}
\lambda_F(f)&=1\otimes f,
&
\rho_F(f)&=f\otimes1,\\
\lambda_G(g_h)&=1\otimes g_h,
&
\rho_G(g_h)&=g_{h_1}\otimes h_2,
\end{align*}
as well as $fh=1\otimes h$ and $hf=h\otimes1$ in $F$.
In particular, $f\otimes g_h$ belongs to $F\square^HG$, and
\eqref{eq:tetramodule-alpha}--\eqref{eq:tetramodule-beta} give
\begin{equation}\label{eq:tetramodule-common-image}
\alpha_{F,G}(f\otimes_Hg_h)
=
f\square^Hg_h
=
\beta_{G,F}(g_h\otimes_Hf).
\end{equation}
Applying $\sigma^{\bullet}_{F,G}$ to \eqref{eq:tetramodule-common-image} and using
\eqref{eq:tetramodule-comparison-identities}, we obtain
\[
\beta_{F,G}(f\otimes_Hg_h)
=
\alpha_{G,F}(g_h\otimes_Hf).
\]
Expanding both sides yields
\[
g_{h_1}\square^H(fh_2)
=
g_{h_1}\square^H(h_2f).
\]
Under the inclusion
$G\square^HF\hookrightarrow G\otimes F=H^{\otimes4}$,
this becomes
\[
1\otimes h_1\otimes1\otimes h_2
=
1\otimes h_1\otimes h_2\otimes1.
\]
Applying
$\varepsilon\otimes\varepsilon\otimes\mathrm{id}\otimes\mathrm{id}$ to the latter gives $1\otimes h=h\otimes1$.
Applying $\varepsilon\otimes\mathrm{id}$ once more gives
$h=\varepsilon(h)1$ for every $h\in H$.
Thus the unit $\Bbbk\to H$ and the counit $H\to\Bbbk$ are inverse
bialgebra isomorphisms.\\
Suppose now that $\sigma^\circ$ is a braiding on
$({}_H^H\mm_H^H,\otimes_H,H)$ compatible with the canonical
interchange. Its compatibility condition reads
\[
\zeta_{C,D,A,B}\sigma^\circ_{A\square_H B,C\square_H D}
=
(\sigma^\circ_{A,C}\square_H \sigma^\circ_{B,D})\zeta_{A,B,C,D}.
\]
Specializing to $(A,B,C,D)=(H,M,N,H)$ and $(M,H,H,N)$,
respectively, and suppressing the canonical unit constraints,
we obtain
\[
\alpha_{N,M}\sigma^\circ_{M,N}=\beta_{M,N},
\qquad
\beta_{N,M}\sigma^\circ_{M,N}=\alpha_{M,N}.
\]
Fix $h\in H$. Since $F=H\otimes H$ has the outer left action,
we may write
\[
\sigma^\circ_{F,G}(f\otimes_H g_h)
=
\sum_i(a_i\otimes b_i)\otimes_H(1\otimes d_i)
\]
for suitable $a_i,b_i,d_i\in H$. Indeed, balancing gives
\[
(a\otimes b)\otimes_H(u\otimes v)
=
(a u_1\otimes b u_2)\otimes_H(1\otimes v).
\]
The preceding comparison identities give
\begin{align*}
\alpha_{G,F}\sigma^\circ_{F,G}(f\otimes_H g_h)
&=\beta_{F,G}(f\otimes_H g_h),\\
\beta_{G,F}\sigma^\circ_{F,G}(f\otimes_H g_h)
&=\alpha_{F,G}(f\otimes_H g_h).
\end{align*}
Expand the first equality in $G\otimes F=H^{\otimes4}$ and apply
$\mathrm{id}\otimes\varepsilon\otimes\mathrm{id}\otimes\mathrm{id}$.
Expand the second equality in $F\otimes G=H^{\otimes4}$ and apply
$\mathrm{id}\otimes\mathrm{id}\otimes\varepsilon\otimes\mathrm{id}$.
Using the explicit formulas for $\alpha$ and $\beta$, we obtain
\begin{align*}
&\sum_i a_i d_{i1}\otimes b_i\otimes d_{i2}
=1\otimes1\otimes h,\\
&\sum_i a_i\otimes d_{i1}\otimes b_i d_{i2}
=1\otimes1\otimes h.
\end{align*}
Applying $\mathrm{id}\otimes\varepsilon\otimes\mathrm{id}$ to
the first equality and
$\mathrm{id}\otimes\mathrm{id}\otimes\varepsilon$ to the second
gives
\begin{align*}
&\sum_i \varepsilon(b_i)a_i d_{i1}\otimes d_{i2}
=1\otimes h,\\
&\sum_i \varepsilon(b_i)a_i\otimes d_i
=\varepsilon(h)1\otimes1.
\end{align*}
Apply the linear map $H\otimes H\rightarrow H\otimes H,\ a\otimes d\mapsto a d_1\otimes d_2$
to the latter equality. Comparing with the former yields $1\otimes h=\varepsilon(h)1\otimes1$. Hence $h=\varepsilon(h)1$ for every $h\in H$, and therefore
$H\cong \Bbbk$.\\
Together with the $\bullet$-braided case and the braided
duoidal structure for $H=\Bbbk$, this proves all the asserted
equivalences.
\end{proof}

\begin{remark}
Compatibility with the canonical interchange is essential.
Theorem \ref{prop:tetramodule-braided-duoidal-obstruction}
concerns braidings making this fixed duoidal structure
$\bullet$-braided; it does not rule out braidings on either
underlying monoidal category considered separately.
\end{remark}

We introduce the following definition.

\begin{definition}\label{def:preCartier}
    Let $(\Cc,\circ,I,\bullet,J)$ be a duoidal category such that $\Cc$ is pre-additive and both monoidal products are additive in each variable. 
    
    If it it $\circ$-braided, we call it $\circ$-pre-Cartier if there exists an infinitesimal braiding $t^{\circ}$ such that $(\Cc,\circ,I,\sigma^{\circ},t^{\circ})$ is pre-Cartier and the diagrams in \eqref{diagcircprecartier} commute.
\begin{equation}\label{diagcircprecartier}
\begin{tikzcd}
	(A\bullet B)\circ(C\bullet D) & (A\circ C)\bullet(B\circ D) &&& J\circ J \\
	(A\bullet B)\circ(C\bullet D) & (A\circ C)\bullet(B\circ D) && J & J\circ J
	\arrow[from=1-1, to=1-2,"\zeta_{A,B,C,D}"]
	\arrow[from=1-1, to=2-1,"t^{\circ}_{A\bullet B, C\bullet D}"']
	\arrow[from=1-2, to=2-2,"(A\circ C)\bullet t^{\circ}_{B,D}+t^{\circ}_{A,C}\bullet(B\circ D)"]
	\arrow[from=1-5, to=2-4,"0"']
	\arrow[from=1-5, to=2-5,"t^{\circ}_{J,J}"]
	\arrow[from=2-1, to=2-2,"\zeta_{A,B,C,D}"']
	\arrow[from=2-5, to=2-4,"m^{\circ}_{J}"]
\end{tikzcd}
\end{equation}
If it is $\bullet$-braided, we call it $\bullet$-pre-Cartier if there exists an infinitesimal braiding $t^{\bullet}$ such that $(\Cc,\bullet,J,\sigma^{\bullet},t^{\bullet})$ is pre-Cartier and the diagrams in \eqref{diagbulletprecartier} commute.
\begin{equation}\label{diagbulletprecartier}
\begin{tikzcd}
	(A\bullet B)\circ(C\bullet D) & (A\circ C)\bullet(B\circ D) && I\bullet I \\
	(A\bullet B)\circ(C\bullet D) & (A\circ C)\bullet(B\circ D) & I & I\bullet I
	\arrow[from=1-1, to=1-2,"\zeta_{A,B,C,D}"]
	\arrow[from=1-1, to=2-1,"(A\bullet B)\circ t^{\bullet}_{C,D}+t^{\bullet}_{A,B}\circ(C\bullet D)"']
	\arrow[from=1-2, to=2-2,"t^{\bullet}_{A\circ C,B\circ D}"]
	\arrow[from=2-3, to=1-4,"0"]
	\arrow[from=2-4, to=1-4,"t^{\bullet}_{I,I}"']
	\arrow[from=2-1, to=2-2,"\zeta_{A,B,C,D}"']
	\arrow[from=2-3, to=2-4,"\Delta^{\bullet}_{I}"']
\end{tikzcd}
\end{equation}
A braided duoidal category $(\Cc,\circ,I,\bullet,J)$ is \textit{pre-Cartier} if it is $\circ$-pre-Cartier and $\bullet$-pre-Cartier. 
\end{definition}

\begin{remark}
We recall that the axioms of a pre-Cartier category are obtained by looking at the first degree in $\hslash$ and asking the following morphism  
\[
c_{X,Y}:=\sigma_{X,Y}(\id_{X\ot Y}+\hslash t_{X,Y}+\mathcal{O}(\hslash^2))
\]
to be a braiding, given a pre-additive braided monoidal category $(\Cc,\ot,I,\sigma)$. The definition of $\circ$-pre-Cartier duoidal category is obtained asking that the latter morphism satisfies \eqref{diagramscircbraided}. Indeed, at first order in $\hslash$, we have
\[
\zeta_{C,D,A,B}\sigma^{\circ}_{A\bullet B,C\bullet D}t^{\circ}_{A\bullet B,C\bullet D}=(\sigma^{\circ}_{A,C}\bullet\sigma^{\circ}_{B,D}t^{\circ}_{B,D})\zeta_{A,B,C,D}+(\sigma^{\circ}_{A,C}t^{\circ}_{A,C}\bullet\sigma^{\circ}_{B,D})\zeta_{A,B,C,D},
\]
hence
\[
(\sigma^{\circ}_{A,C}\bullet\sigma^{\circ}_{B,D})\zeta_{A,B,C,D}t^{\circ}_{A\bullet B,C\bullet D}=(\sigma^{\circ}_{A,C}\bullet\sigma^{\circ}_{B,D}t^{\circ}_{B,D})\zeta_{A,B,C,D}+(\sigma^{\circ}_{A,C}t^{\circ}_{A,C}\bullet\sigma^{\circ}_{B,D})\zeta_{A,B,C,D},
\]
which is equivalent to
\[
\zeta_{A,B,C,D}t^{\circ}_{A\bullet B,C\bullet D}=((A\circ C)\bullet t^{\circ}_{B,D}+t^{\circ}_{A,C}\bullet(B\circ D))\zeta_{A,B,C,D}.
\]
Moreover, we have $0=m^{\circ}_{J}\sigma^{\circ}_{J,J}t^{\circ}_{J,J}=m^{\circ}_{J}t^{\circ}_{J,J}$. Similarly, the definition of $\bullet$-pre-Cartier duoidal category is obtained.
\end{remark}

\begin{proposition}\label{prop:trivialpreCartier}
Let $(\Cc,\ot,I,\sigma)$ be a pre-additive symmetric monoidal category whose tensor product is additive in each variable. For the associated braided duoidal category $(\Cc,\ot,I,\sigma,\ot,I,\sigma)$, a $\circ$-pre-Cartier structure necessarily has $t^{\circ}=0$, and a $\bullet$-pre-Cartier structure necessarily has $t^{\bullet}=0$.
\end{proposition}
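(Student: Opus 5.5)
The plan is to specialise the compatibility diagram \eqref{diagcircprecartier} (resp.\ \eqref{diagbulletprecartier}) at unit objects, exactly as in the proof of Lemma \ref{lem:braidedduoidal}. Here both products are $\ot$, both units are $I$, and the interchange is $\zeta_{A,B,C,D}=\id_A\ot\sigma_{B,C}\ot\id_D$. We suppress unit constraints throughout; since $t$ is natural, it commutes with the unitors, so this is harmless.

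First we record that $t_{X,I}=0=t_{I,X}$ for every $X$. Apply \eqref{eqn: inf braid 1} with $Y=Z=I$. This gives $t_{X,I}=t_{X,I}+\sigma^{-1}_{X,I}t_{X,I}\sigma_{X,I}$. The braiding with the unit is trivial, so $t_{X,I}=2t_{X,I}$ and hence $t_{X,I}=0$. The same argument with \eqref{eqn: inf braid 2} gives $t_{I,X}=0$.

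For the $\circ$-case, take $B=C=I$ in the left square of \eqref{diagcircprecartier}. Then $\zeta_{A,I,I,D}=\id_{A\ot D}$. The left vertical arrow becomes $t^{\circ}_{A,D}$, and the right vertical arrow becomes $(A\ot I)\ot t^{\circ}_{I,D}+t^{\circ}_{A,I}\ot(I\ot D)=0$ by the previous paragraph. Commutativity therefore forces $t^{\circ}_{A,D}=0$ for all $A,D$.

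For the $\bullet$-case, take $A=D=I$ in the left square of \eqref{diagbulletprecartier}. Now $\zeta_{I,B,C,I}=\sigma_{B,C}$. The top-left arrow becomes $(I\ot B)\ot t^{\bullet}_{C,I}+t^{\bullet}_{I,B}\ot(C\ot I)=0$, while the right arrow becomes $t^{\bullet}_{C,B}$. Commutativity reads $t^{\bullet}_{C,B}\sigma_{B,C}=\sigma_{B,C}\circ 0=0$. Since $\sigma_{B,C}$ is invertible, $t^{\bullet}_{C,B}=0$.

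The only delicate point is identifying $\zeta_{A,I,I,D}$ and $\zeta_{I,B,C,I}$ up to unitors. This uses $\sigma_{I,X}=\id$ modulo unit constraints, which holds in any braided category. The hypothesis that $\sigma$ is a symmetry is needed only so that the duoidal category is $\circ$- and $\bullet$-braided (Lemma \ref{lem:braidedduoidal}); the vanishing argument itself does not use it. The unit diagrams hold trivially once $t=0$.
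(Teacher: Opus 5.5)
Your proof is correct and follows the paper's argument: you show $t_{X,I}=0=t_{I,X}$ from the infinitesimal braid identities (the paper cites \cite[Remark 1.2 (ii)]{ABSW} for this instead of reproving it), and then you specialize the interchange compatibility diagrams at unit objects. The one difference is in the $\bullet$-case: the paper reuses the substitution $B=C=I$, where $\zeta_{A,I,I,D}=\id_{A\ot D}$ and no inverse of $\sigma$ is needed, while your choice $A=D=I$ gives $\zeta_{I,B,C,I}=\sigma_{B,C}$, which you then cancel using invertibility of $\sigma$; this is equally valid.
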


\begin{proof}
The infinitesimal braid identities imply $t_{X,I} =0=t_{I,X}$ for every $X\in\Cc$, see \cite[Remark 1.2 (ii)]{ABSW}. In \eqref{diagcircprecartier}, put $B=C=I$. Since $\zeta_{A,I,I,D}=\mathrm{id}_{A\ot D}$, the right-hand side is zero and $t^{\circ}_{A,D}=0$. The same substitution in \eqref{diagbulletprecartier} gives $t^{\bullet}_{A,D}=0$.
\end{proof}




We now study the one-sided pre-Cartier structures of the duoidal categories of bimodules and bicomodules. Proposition \ref{prop:trivialcanonicalform} prevents these categories from carrying braidings on both products over a nontrivial bialgebra. The braidings of Proposition \ref{prop:braidedduoidalbimod}, however, admit nonzero infinitesimal braidings compatible with the interchange.

\begin{lemma}\label{lem:infbraidbimod}
    Let $(H,\Rr)$ be a quasitriangular bialgebra such that $(H,(\Rr^{-1})^{\mathrm{op}},\chi)$ is pre-Cartier. Then, the braided monoidal category $(_{H}\mm_{H},\ot,\Bbbk,\sigma^{\bullet})$, with $\sigma^{\bullet}$ defined as in 1) of Proposition \ref{prop:braidedduoidalbimod}, is pre-Cartier with infinitesimal braiding defined by
\begin{equation}\label{infbraidingbimod}
t_{X,Y}:X\ot Y\to X\ot Y,\ x\ot y\mapsto \chi^{i}x\ot\chi_{i}y-x\chi^{i}\ot y\chi_{i}
\end{equation}
for all $X,Y$ in ${}_{H}\mm_{H}$.
\end{lemma}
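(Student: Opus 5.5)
The plan is to reduce the statement to the module-theoretic characterization \cite[Theorem 2.6]{ABSW}. Following Remark \ref{rmk:braidingbimodquasi}, we identify $({}_H\mm_H,\ot,\Bbbk)$ with $({}_{H\ot H^{\op}}\mm,\ot,\Bbbk)$. Under this identification, the braiding $\sigma^{\bullet}$ of 1) of Proposition \ref{prop:braidedduoidalbimod} is the braiding of the quasitriangular structure
\[
\Ss=(\id\ot\tau\ot\id)\bigl((\Rr^{-1})^{\op}\ot\Rr^{\op}\bigr)
\]
on $H\ot H^{\op}$. It therefore suffices to produce an infinitesimal $\Ss$-matrix $X\in (H\ot H^{\op})^{\ot 2}$ whose action $X\cdot(x\ot y)$ is the map \eqref{infbraidingbimod}. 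The natural candidate is
\[
X=(\chi^i\ot 1)\ot(\chi_i\ot 1)-(1\ot\chi^i)\ot(1\ot\chi_i).
\]
Acting with it on $x\ot y$, where the second tensor factor of $H\ot H^{\op}$ acts on the right, gives exactly $\chi^ix\ot\chi_iy-x\chi^i\ot y\chi_i$. Naturality and the infinitesimal braid identities then follow from \cite[Theorem 2.6]{ABSW}.

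The first step is to show that $\chi$, viewed in $H^{\op}\ot H^{\op}$, is an infinitesimal matrix for $(H^{\op},\Rr^{\op})$. Write $\Rr':=(\Rr^{-1})^{\op}$, so that $\Rr'^{-1}=\Rr^{\op}$ in $H\ot H$. Condition \eqref{cqtr1} transfers immediately, since the coproduct of $H^{\op}$ is that of $H$. For \eqref{cqtr2}, the term $(\Rr^{\op})^{-1}_{12}\cdot_{\op}\chi_{13}\cdot_{\op}\Rr^{\op}_{12}$ computed in $H^{\op}$ equals $\Rr^{\op}_{12}\chi_{13}\Rr'_{12}=\Rr'^{-1}_{12}\chi_{13}\Rr'_{12}$ in $H$. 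Hence \eqref{cqtr2} for $(H^{\op},\Rr^{\op})$ is literally \eqref{cqtr2} for $(H,\Rr')$, and the same holds for \eqref{cqtr3}. Since all three conditions are linear in $\chi$, $-\chi$ is also an infinitesimal matrix for $(H^{\op},\Rr^{\op})$.

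The second step is a general lemma. If $\chi_A$ is an infinitesimal matrix for $(A,\Rr_A)$ and $\chi_B$ is one for $(B,\Rr_B)$, then $X=(\chi_A)_{[A]}+(\chi_B)_{[B]}$ is an infinitesimal matrix for $A\ot B$ with $\Ss=(\id\ot\tau\ot\id)(\Rr_A\ot\Rr_B)$. Here $(\chi_A)_{[A]}$ denotes $\chi_A$ placed in the $A$-slots of $(A\ot B)^{\ot2}$ with $1$ in the $B$-slots, and similarly for $(\chi_B)_{[B]}$. Applied with $A=H$, $\chi_A=\chi$ and $B=H^{\op}$, $\chi_B=-\chi$, this produces the candidate $X$ above. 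Condition \eqref{cqtr1} follows factorwise, because $\Delta_{A\ot B}$ is the factorwise coproduct reshuffled and elements of the $A$-slots commute with elements of the $B$-slots.

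The main obstacle is \eqref{cqtr2} and \eqref{cqtr3} for $X$, where the conjugation involves the mixed element $\Ss$. The plan is to write $\Ss_{12}=(\Rr_A)_{12}(\Rr_B)_{12}$ with the two factors living in disjoint, mutually commuting slots. Then
\[
\Ss^{-1}_{12}\,(\chi_A)_{[A],13}\,\Ss_{12}=(\Rr_A)^{-1}_{12}\,(\chi_A)_{13}\,(\Rr_A)_{12},
\]
placed in the $A$-slots, because the $(\Rr_B)_{12}$ factors commute past and cancel; the analogous identity holds on the $B$-side. Summing the two factorwise identities \eqref{cqtr2} gives \eqref{cqtr2} for $X$, and \eqref{cqtr3} is handled in the same way. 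This is only careful bookkeeping of the slot reshuffling $\id\ot\tau\ot\id$, which I expect to be the only delicate point. Finally, \cite[Theorem 2.6]{ABSW} yields that $t_{X,Y}=X\cdot-$ is an infinitesimal braiding of $({}_H\mm_H,\ot,\Bbbk,\sigma^{\bullet})$, and it has the stated form \eqref{infbraidingbimod}.
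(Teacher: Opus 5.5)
Your proposal is correct and follows the paper's proof. Both arguments transfer $\chi$ to an infinitesimal matrix of $(H^{\op},\Rr^{\op})$, combine the two pieces on $H\ot H^{\op}$ with $\Ss=(\id\ot\tau\ot\id)((\Rr^{-1})^{\op}\ot\Rr^{\op})$ to obtain $\chi^i\ot 1\ot\chi_i\ot 1-1\ot\chi^i\ot 1\ot\chi_i$, and then conclude via \cite[Theorem 2.6]{ABSW}; the only difference is that you verify the tensor-product step directly by the slot-wise commutation argument, whereas the paper cites \cite[Proposition 2.15 and the remark afterwards]{ABSW}.
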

\begin{proof}
    As observed in Remark \ref{rmk:braidingbimodquasi}, the braiding $\sigma$ corresponds to a quasitriangular structure on $H\ot H^{\mathrm{op}}$ obtained using the quasitriangular structure $(\Rr^{-1})^{\mathrm{op}}$ on $H$ and $\Rr^{\mathrm{op}}$ on $H^{\mathrm{op}}$. Since $(H,(\Rr^{-1})^{\mathrm{op}},\chi)$ is pre-Cartier, we have that $(H^{\mathrm{op}},\Rr^{\mathrm{op}},\chi)$ is also pre-Cartier, as $\chi\cdot_{\op}\Delta(\cdot)=\Delta(\cdot)\chi=\chi\Delta(\cdot)=\Delta(\cdot)\cdot_{\op}\chi$ and
\begin{align*}
(\id\ot\Delta)(\chi)&=\chi_{12}+\Rr^{\mathrm{op}}_{12}\chi_{13}(\Rr^{\mathrm{op}})^{-1}_{12}=\chi_{12}+(\Rr^{\mathrm{op}})^{-1}_{12}\cdot_{\op}\chi_{13}\cdot_{\op}\Rr^{\mathrm{op}}_{12},\\
(\Delta\ot\id)(\chi)&=\chi_{23}+\Rr^{\mathrm{op}}_{23}\chi_{13}(\Rr^{\mathrm{op}})^{-1}_{23}=\chi_{23}+(\Rr^{\mathrm{op}})^{-1}_{23}\cdot_{\op}\chi_{13}\cdot_{\op}\Rr^{\mathrm{op}}_{23}.
\end{align*}
Therefore, by \cite[Proposition 2.15 and the remark afterwards]{ABSW} we know that the quasitriangular bialgebra $\big(H\ot H^{\mathrm{op}},(\id\ot\tau\ot\id)((\Rr^{-1})^{\mathrm{op}}\ot\Rr^{\mathrm{op}})\big)$ is moreover pre-Cartier with infinitesimal $\Rr$-matrix $\chi^{i}\ot1_{H}\ot\chi_{i}\ot1_{H}-1_{H}\ot\chi^{i}\ot1_{H}\ot\chi_{i}$. The latter pre-Cartier structure clearly corresponds to the infinitesimal braiding $t$ given in \eqref{infbraidingbimod}. 
\begin{invisible}
\[
\begin{split}
(\chi^{i}\ot1_{H}\ot\chi_{i}\ot1_{H}-1_{H}\ot\chi^{i}\ot1_{H}\ot\chi_{i})(x\ot y)&=(\chi^{i}\ot1_{H})x\ot(\chi_{i}\ot1_{H})y-(1_{H}\ot\chi^{i})x\ot(1_{H}\ot\chi_{i})y\\&=\chi^{i}x\ot\chi_{i}y-x\chi^{i}\ot y\chi_{i}
\end{split}
\]
\end{invisible}
\end{proof}
\begin{remark}\label{remark: why this infinitesimal braiding}
    As remarked in \cite[Proposition 2.15 and the remark afterwards]{ABSW}, any linear combination $\alpha\chi_{13}+\beta\chi_{24}$ is an infinitesimal $R$-matrix for the quasitriangular bialgebra $\big(H\ot H^{\mathrm{op}},(\id\ot\tau\ot\id)((\Rr^{-1})^{\mathrm{op}}\ot\Rr^{\mathrm{op}})\big)$. However, in view of Proposition \ref{prop:bimodulespreCartier} we are interested in the specific infinitesimal $R$-matrix $\chi_{13}-\chi_{24}$. This is explained more thoroughly in Remark \ref{remark: only linear combination}.
\end{remark}

\begin{corollary}\label{cor: preCartier for triangular}
    Let $(H,\Rr,\chi)$ be a pre-Cartier triangular bialgebra. Then $(_{H}\mm_{H},\ot,\Bbbk,\sigma)$ is pre-Cartier with infinitesimal braiding $t$ as in \eqref{infbraidingbimod}.
\end{corollary}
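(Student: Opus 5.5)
The idea is to reduce Corollary \ref{cor: preCartier for triangular} to Lemma \ref{lem:infbraidbimod}. That lemma requires $(H,(\Rr^{-1})^{\mathrm{op}},\chi)$ to be pre-Cartier. Here we are instead given that $(H,\Rr,\chi)$ is pre-Cartier and triangular.

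Triangularity means $\Rr^{-1}=\Rr^{\mathrm{op}}$. Applying the flip gives $(\Rr^{-1})^{\mathrm{op}}=(\Rr^{\mathrm{op}})^{\mathrm{op}}=\Rr$. So the quasitriangular structure $(\Rr^{-1})^{\mathrm{op}}$ used in Lemma \ref{lem:infbraidbimod} is literally $\Rr$. The hypothesis that $(H,(\Rr^{-1})^{\mathrm{op}},\chi)$ is pre-Cartier is then exactly the assumption that $(H,\Rr,\chi)$ is pre-Cartier.

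Next I would identify the braidings. By Remark \ref{rmk:braidingbimodquasi}, the braiding $\sigma^{\bullet}$ of Proposition \ref{prop:braidedduoidalbimod} is
\[
\sigma^{\bullet}_{X,Y}(x\ot y)=\Rr^{-1}(y\ot x)\Rr .
\]
The other braiding in that remark is
\[
\sigma_{X,Y}(x\ot y)=\Rr^{\mathrm{op}}(y\ot x)\Rr .
\]
Since $\Rr^{-1}=\Rr^{\mathrm{op}}$, these two braidings coincide. Hence, whichever of them $\sigma$ denotes in the statement, Lemma \ref{lem:infbraidbimod} applies directly. It gives that $(_{H}\mm_{H},\ot,\Bbbk,\sigma)$ is pre-Cartier with infinitesimal braiding $t$ as in \eqref{infbraidingbimod}.

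The only delicate point is making sure that the braiding $\sigma$ in the corollary is the one coming from the triangular structure, and that the two candidate braidings really agree. This rests entirely on $\Rr^{-1}=\Rr^{\mathrm{op}}$, so no further computation is needed. As a side remark, $\sigma$ is then a symmetry, since $H\ot H^{\mathrm{op}}$ is triangular with $(\id\ot\tau\ot\id)(\Rr\ot\Rr^{\mathrm{op}})$. This is not required for the claim.
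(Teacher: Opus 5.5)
Your proposal is correct and follows the intended argument: triangularity gives $(\Rr^{-1})^{\mathrm{op}}=\Rr$, so the hypothesis of Lemma \ref{lem:infbraidbimod} reduces exactly to $(H,\Rr,\chi)$ being pre-Cartier. Likewise, the two braidings of Remark \ref{rmk:braidingbimodquasi} coincide, so the lemma applies verbatim; the paper records this as an immediate corollary without further proof.
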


Dually, we obtain the following infinitesimal braiding for bicomodules. For these results we rely on the classification of coquasitriangular structures on the tensor product of bialgebras \cite[Theorem 2.10]{chen2}.

\begin{lemma}\label{lem:infbraidbicomod}
    Let $(H,\Rr,\chi)$ be a pre-Cartier coquasitriangular bialgebra. Then, the braided monoidal category $({}^H\mm^H, \ot,  \Bbbk, \sigma^{\circ})$, where $\sigma^{\circ}$ is defined as in 2) of Proposition \ref{prop:braidedduoidalbimod}, is pre-Cartier with infinitesimal braiding defined by
    \begin{equation}\label{infbraidingbicomod}
        t_{X,Y}\colon X\ot Y\to X\ot Y,\ x\ot y\mapsto \chi(x_{-1}\ot y_{-1})x_0\ot y_0-\chi(x_1\ot y_1)x_0\ot y_0
    \end{equation}
    for all $X,Y$ in ${}^H\mm^H$.
\end{lemma}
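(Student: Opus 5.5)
The proof dualizes Lemma \ref{lem:infbraidbimod}. We identify ${}^H\mm^H$ with the category of right $H\ot H^{\cop}$-comodules. A bicomodule $(X,\rho^L,\rho^R)$ gets the right coaction $x\mapsto x_0\ot x_1\ot S$-free reindexing $x_{(0)}\ot (x_{(1)}\ot x_{(-1)})$, where the left coaction is moved to the second tensor factor. Under this identification, the braiding $\sigma^\circ$ of Proposition \ref{prop:braidedduoidalbimod} corresponds, via Remark \ref{rmk:braidingbimodquasi} and \cite[Theorem 2.10]{chen2}, to the coquasitriangular structure on $H\ot H^{\cop}$ given by
\[
\Rr_\ot\big((a\ot b)\ot(c\ot d)\big)=\Rr(a\ot c)\,\Rr^{-1}(b\ot d).
\]
Here $H^{\cop}$ carries the coquasitriangular form $\Rr^{-1}$. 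This is the dual of the choice $(\Rr^{-1})^{\op}\ot\Rr^{\op}$ made for bimodules.

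\textbf{Step 1.} Show that $(H^{\cop},\Rr^{-1},-\chi)$ is a pre-Cartier coquasitriangular bialgebra.
\begin{itemize}
\item Condition \eqref{pcct1} for $H^{\cop}$ is \eqref{pcct1} for $H$ read with the coproduct flipped. It holds because $\chi$ commutes with multiplication in the convolution sense on both sides.
\item For \eqref{pcct2} in $H^{\cop}$, rewrite the original \eqref{pcct2} with reversed Sweedler indices and with $\Rr$ and $\Rr^{-1}$ exchanged.
\item The sign change $\chi\mapsto-\chi$ is forced. Using \eqref{pcct1}, \eqref{CQT3}, \eqref{CQT3'} and convolution-invertibility, the conjugation term moves across so that $\Rr^{-1}\ast\chi\ast\Rr$ becomes $\Rr\ast\chi\ast\Rr^{-1}$ in the reversed order. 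Then $-\chi$ satisfies the additive identity exactly.
\end{itemize}
This mirrors the verification in Lemma \ref{lem:infbraidbimod} for $H^{\op}$, and \eqref{pcct3} is handled symmetrically.

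\textbf{Step 2.} Invoke the comodule dual of \cite[Proposition 2.15]{ABSW}: if $(A,\Rr_A,\chi_A)$ and $(B,\Rr_B,\chi_B)$ are pre-Cartier coquasitriangular, then $A\ot B$ with the product form is pre-Cartier with infinitesimal form
\[
\chi_A\ot\ep\ot\chi_B\ot\ep+\ep\ot\chi_A\ot\ep\ot\chi_B,
\]
suitably reindexed. If this dual statement is not available, I would verify it directly. Conditions \eqref{pcct2} and \eqref{pcct3} on the product split into two commuting pieces, because the cross terms of $\Rr_\ot$ factor and cancel via $\ep$.

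\textbf{Step 3.} Apply \cite[Theorem 3.2]{ABSW} to $H\ot H^{\cop}$. It gives an infinitesimal braiding on ${}^H\mm^H$, and written back in bicomodule notation it is exactly $t$ in \eqref{infbraidingbicomod}.
\begin{itemize}
\item The $H^{\cop}$ part, with the form $-\chi$, reads the left coactions, since the left coaction was placed in the second factor. It produces a term $\chi(x_{-1}\ot y_{-1})$ with a sign.
\item The $H$ part produces $\chi(x_1\ot y_1)$.
\end{itemize}

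\textbf{Main obstacle.} I expect the crux to be the sign and index bookkeeping in Steps 1 and 3. Everything hinges on whether the left-coaction term carries $+\chi$ and the right-coaction term $-\chi$, as displayed, or the reverse. If the signs do not come out this way, I would fall back on a direct check of \eqref{eqn: inf braid 1} and \eqref{eqn: inf braid 2}. Naturality, and colinearity of $t_{X,Y}$ for the diagonal coactions, are immediate from \eqref{pcct1}. For the braid identities I would expand $t_{X,Y\ot Z}$ using \eqref{pcct2} separately for the left-coaction part and the right-coaction part. The conjugation terms $\Rr^{-1}\chi\Rr$ should then match $(\sigma^{-1}\ot\id)(\id\ot t)(\sigma\ot\id)$ computed from \eqref{braidingbicom}. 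The left and right parts of $\sigma^\circ$ involve $\Rr^{-1}$ and $\Rr$ respectively, and this is what dictates the relative sign.
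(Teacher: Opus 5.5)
Your route is the paper's: the lemma is proved dually to Lemma~\ref{lem:infbraidbimod}. One identifies ${}^H\mm^H$ with right $H\ot H^{\cop}$-comodules via $x\mapsto x_{(0)}\ot x_{(1)}\ot x_{(-1)}$, equips each factor with a pre-Cartier structure, takes the product infinitesimal form and applies \cite[Theorem 3.2]{ABSW}. Your identification of $\sigma^{\circ}$ with the form $\Rr(a\ot c)\Rr^{-1}(b\ot d)$, where $(H^{\cop},\Rr^{-1})$ is coquasitriangular, is correct.

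The sign analysis you call the crux is wrong, however, and with your stated choices Step 3 does not produce the displayed formula.

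\emph{No sign is forced.} Conditions \eqref{pcct1}--\eqref{pcct3} are linear in $\chi$, so $(H^{\cop},\Rr^{-1},\chi)$ and $(H^{\cop},\Rr^{-1},-\chi)$ are pre-Cartier simultaneously. The former needs no computation. Convolution on $(H\ot H)^{\cop}$ is the opposite of convolution on $H\ot H$, so the inverse of $\Rr^{-1}$ there is $\Rr$. After substituting $\Delta^{\cop}$, each of \eqref{pcct1}--\eqref{pcct3} for $(H^{\cop},\Rr^{-1},\chi)$ becomes the corresponding identity for $(H,\Rr,\chi)$: \eqref{pcct1} with its two sides exchanged, and \eqref{pcct2}, \eqref{pcct3} after reordering scalar factors. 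So \eqref{CQT3} and \eqref{CQT3'} play no role.

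\emph{The sign of $t$ is free.} Naturality and \eqref{eqn: inf braid 1}--\eqref{eqn: inf braid 2} are linear in $t$, so $t$ and $-t$ are infinitesimal braidings simultaneously. With your choice ($\chi$ on the $H$ factor, $-\chi$ on the $H^{\cop}$ factor), \cite[Theorem 3.2]{ABSW} returns $\chi(x_{(1)}\ot y_{(1)})x_{(0)}\ot y_{(0)}-\chi(x_{(-1)}\ot y_{(-1)})x_{(0)}\ot y_{(0)}=-t_{X,Y}$. You should instead put $-\chi$ on $H$ and $\chi$ on $H^{\cop}$, or simply negate. This is exactly the freedom $\alpha\chi_{13}+\beta\chi_{24}$ that the paper uses in Remark~\ref{remark: why this infinitesimal braiding}.

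\emph{Product form.} In Step 2 the form should be $(a\ot b)\ot(c\ot d)\mapsto\chi_A(a\ot c)\ep(b)\ep(d)+\ep(a)\ep(c)\chi_B(b\ot d)$. The expression you display, read literally, multiplies $\chi_A$ and $\chi_B$.

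With these corrections the argument is complete, and the fallback direct verification is unnecessary.
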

\begin{proof}
Dual to Lemma \ref{lem:infbraidbimod}.
\begin{invisible}
    As observed in Remark \ref{rmk:braidingbimodquasi}, the braiding $\sigma$ corresponds to a coquasitriangular structure on $H\ot H^{\cop}$ obtained using the coquasitriangular structure $(\Rr^{-1})^{\op}$ on $H$ and $\Rr^{\op}$ on $H^{\cop}$. Since $(H, (\Rr^{-1})^{\op}, \chi)$ is pre-Cartier, it holds that $(H^{\cop}, \Rr^{\op}, \chi)$ is also pre-Cartier, as
    \begin{align*}(u_{H^{\cop}}\chi)\ast^{\cop}m_{H^{\cop}} &= (u_H\chi)\ast^{\cop} m_H = m_H\ast (u_H\chi) = (u_H\chi)\ast m_H = m_{H^{\cop}} \ast^{\cop}(u_{H^{\cop}}\chi),
        \\\chi(\id\ot m_{H^{\cop}}) &= \chi(\id\ot m_H)
        \\&= \chi_{12}+ ((\Rr^{\op})^{-1})^{-1}_{12}\ast \chi_{13}\ast (\Rr^{\op})^{-1}_{12}
        \\&= \chi_{12} + \Rr^{\op}_{12}\ast \chi_{13}\ast (\Rr^{\op})^{-1}_{12}
        \\&= \chi_{12} + (\Rr^{\op})^{-1}_{12}\ast^{\cop}\chi_{13}\ast^{\cop} \Rr^{\op}_{12},
        \\\chi(m_{H^{\cop}}\ot \id)&= \chi(m_H\ot \id)
        \\&= \chi_{23}+ ((\Rr^{\op})^{-1})^{-1}_{23}\ast \chi_{13}\ast (\Rr^{\op})^{-1}_{23}
        \\&= \chi_{23} + \Rr^{\op}_{23}\ast \chi_{13}\ast (\Rr^{\op})^{-1}_{23}
        \\&= \chi_{23} + (\Rr^{\op})^{-1}_{23}\ast^{\cop}\chi_{13}\ast^{\cop} \Rr^{\op}_{23}.
    \end{align*}
    Therefore, by \cite[Theorem 2.10]{chen2} we know that the coquasitriangular bialgebra $(H\ot H^{\cop}, ((\Rr^{-1})^{\op}\ot \Rr^{\op})(H\ot \tau\ot \id))$ is pre-Cartier with infinitesimal $\Rr$-form $\chi_{13}-\chi_{24}$. This corresponds to the infinitesimal braiding $t$ given in \eqref{infbraidingbicomod}.
    \[
         (\chi_{13}-\chi_{24})(x_{-1}\ot  x_1 \ot y_{-1}\ot y_1)x_0\ot y_0 = \chi(x_{-1}\ot y_{-1})x_0\ot y_0-\chi(x_1\ot y_1)x_0\ot y_0.\qedhere
         \]
\end{invisible}
\end{proof}

We moreover find the following immediate corollary, dual to Corollary \ref{cor: preCartier for triangular}.

\begin{corollary}
    Let $(H,\Rr, \chi)$ be a pre-Cartier cotriangular bialgebra. Then $({}^H\mm^H, \ot, \Bbbk, \sigma)$ is pre-Cartier with infinitesimal braiding $t$ as in \eqref{infbraidingbicomod}.
\end{corollary}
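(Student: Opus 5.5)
The plan is to reduce the statement to Lemma \ref{lem:infbraidbimod} by exploiting the duality that runs through the paper. A right $H\ot H^{\cop}$-comodule is the same thing as an $H$-bicomodule, with the left coaction converted into a right $H^{\cop}$-coaction. Under this identification, the braiding $\sigma^{\circ}$ of \eqref{braidingbicom} on $({}^H\mm^H,\ot,\Bbbk)$ corresponds to the coquasitriangular structure
\[
\Rr_{\ot}=((\Rr^{-1})^{\op}\ot\Rr^{\op})(\id\ot\tau\ot\id)
\]
on $H\ot H^{\cop}$, as recalled in Remark \ref{rmk:braidingbimodquasi} via \cite[Theorem 2.10]{chen2}. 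I would first pin down the exact ordering of the factors, so that the formula $\sigma^{\Rr_{\ot}}(x\ot y)=y_{(0)}\ot x_{(0)}\Rr_{\ot}(x_{(1)}\ot y_{(1)})$ reproduces \eqref{braidingbicom} on the nose.

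Next, I would check that $\chi$ is an infinitesimal form for both tensor factors. Pre-Cartier structures of $(H,\Rr)$ are stated for the pair $(H,\Rr)$. The braiding \eqref{braidingbicom} involves $\Rr^{-1}$ on the left coactions and $\Rr$ on the right coactions, so I need $(H,(\Rr^{-1})^{\op},\chi)$ and $(H^{\cop},\Rr^{\op},\chi)$ to be pre-Cartier. This is the exact dual of the verification in Lemma \ref{lem:infbraidbimod}. Condition \eqref{pcct1} is symmetric under passing to $H^{\cop}$. In \eqref{pcct2} and \eqref{pcct3}, the conjugation $\Rr^{-1}\ast\chi\ast\Rr$ turns into conjugation by $\Rr^{\op}$ once the convolution is taken in $H^{\cop}$, exactly as $\Rr^{-1}_{12}\chi_{13}\Rr_{12}$ became $\Rr^{\op}_{12}\chi_{13}(\Rr^{\op})^{-1}_{12}$ in the bimodule proof. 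This step is where I expect the main obstacle. The hypothesis is phrased for $(H,\Rr,\chi)$, not for $(\Rr^{-1})^{\op}$, so I would check carefully whether $\chi$, or perhaps $\chi\tau$ or $-\chi\tau$, is the correct infinitesimal form for $(H,(\Rr^{-1})^{\op})$. If the check requires a twist, I would absorb it into the sign or ordering of the first term of \eqref{infbraidingbicomod}.

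Finally, I would invoke the dual of \cite[Proposition 2.15]{ABSW}: for pre-Cartier coquasitriangular $(A,\Rr_A,\chi_A)$ and $(B,\Rr_B,\chi_B)$, the tensor product $(A\ot B,\Rr_A\ot\Rr_B)$, suitably twisted by $\tau$, is pre-Cartier with any linear combination $\alpha\chi_{13}+\beta\chi_{24}$. The dualization is routine, since each identity only involves one tensor factor and the factors are convolution-independent. I would take $\alpha=1$ and $\beta=-1$. Applying \cite[Theorem 3.2]{ABSW} to $H\ot H^{\cop}$ then produces the infinitesimal braiding
\[
x\ot y\mapsto(\chi_{13}-\chi_{24})(x_{-1}\ot x_1\ot y_{-1}\ot y_1)\,x_0\ot y_0,
\]
which is precisely \eqref{infbraidingbicomod}. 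The lemma follows after transporting along the isomorphism of braided monoidal categories ${}^H\mm^H\cong\mm^{H\ot H^{\cop}}$.
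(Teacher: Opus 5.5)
Your architecture (pass to right $H\ot H^{\cop}$-comodules, split the form as a tensor product, take $\chi_{13}-\chi_{24}$, apply \cite[Theorem 3.2]{ABSW}) is the one behind Lemma \ref{lem:infbraidbicomod}. The paper obtains the statement at once from that lemma, dually to Corollary \ref{cor: preCartier for triangular}. The gap is that you never use the hypothesis that singles out this statement, namely cotriangularity. As a result, the step you flag as the main obstacle is left open.

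Cotriangularity, $\Rr^{-1}=\Rr\tau$, gives $(\Rr^{-1})^{\op}=\Rr$ and $\Rr^{\op}=\Rr^{-1}$. This closes each open point:
\begin{itemize}
\item $(H,(\Rr^{-1})^{\op},\chi)$ is literally the given pre-Cartier triple $(H,\Rr,\chi)$.
\item $(H^{\cop},\Rr^{\op},\chi)=(H^{\cop},\Rr^{-1},\chi)$ is pre-Cartier. In $H^{\cop}$ both the Sweedler indices and the convolution order are reversed, so the convolution inverse of $\Rr^{-1}$ there is $\Rr$. Its identities \eqref{pcct1}--\eqref{pcct3} then coincide, after reindexing, with those of $(H,\Rr,\chi)$.
\item Let $H$ carry the right coaction and $H^{\cop}$ the left one. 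Your $\Rr_{\ot}$ then yields $\Rr(n_{-1}\ot m_{-1})\,n_0\ot m_0\,\Rr^{-1}(n_1\ot m_1)$. This equals \eqref{braidingbicom} exactly because $\Rr(b\ot a)=\Rr^{-1}(a\ot b)$, which also settles your worry about the factor ordering.
\end{itemize}

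Your proposed fallback is not admissible. The statement fixes both the braiding $\sigma$ and the formula \eqref{infbraidingbicomod} for $t$. Replacing $\chi$ by $\pm\chi\tau$ and absorbing the change into the first term of $t$ would prove a different claim, not this one.

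In fact no twist is ever needed. Suppose you choose the factor assignment that reproduces \eqref{braidingbicom} for an arbitrary coquasitriangular $\Rr$: left coactions become a right $H^{\cop}$-coaction carrying $\Rr^{-1}$, and right coactions carry $\Rr$. Then the two factor conditions are $(H^{\cop},\Rr^{-1},\chi)$ and $(H,\Rr,\chi)$, and both are equivalent to the hypothesis.
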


We now obtain the following results on the pre-Cartier structures of the duoidal categories of bi(co)modules.

\begin{proposition}\label{prop:bimodulespreCartier}
Under the assumptions of Lemma \ref{lem:infbraidbimod}, $(_{H}\mm_{H},\ot_{H},H,\ot,\Bbbk)$ is $\bullet$-pre-Cartier with braiding \eqref{eq:braidingbimod} and infinitesimal braiding \eqref{infbraidingbimod}.\\
Dually, under the assumptions of Lemma \ref{lem:infbraidbicomod}, $({}^H\mm^{H},\ot,\Bbbk,\square^H,H)$ is $\circ$-pre-Cartier with braiding \eqref{braidingbicom} and infinitesimal braiding \eqref{infbraidingbicomod}.
\end{proposition}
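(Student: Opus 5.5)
By Lemma \ref{lem:infbraidbimod} and Proposition \ref{prop:braidedduoidalbimod}, $(_{H}\mm_{H},\ot,\Bbbk,\sigma^{\bullet},t)$ is already a pre-Cartier braided monoidal category and the duoidal category is $\bullet$-braided. So the proof reduces to checking the two diagrams in \eqref{diagbulletprecartier}. Here $\circ=\ot_H$, $\bullet=\ot$, $I=H$ and $\Delta^\bullet_I=\Delta_H$.

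\textbf{Unit condition.} We must show $t_{H,H}\Delta_H=0$. For $h\in H$,
\[
t_{H,H}(h_1\ot h_2)=\chi^ih_1\ot\chi_ih_2-h_1\chi^i\ot h_2\chi_i=\chi\Delta(h)-\Delta(h)\chi ,
\]
which vanishes by \eqref{cqtr1}. This is exactly where the difference $\chi_{13}-\chi_{24}$, rather than an arbitrary linear combination, is needed (cf.\ Remark \ref{remark: why this infinitesimal braiding}).

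\textbf{Interchange condition.} We must show
\[
\zeta\bigl((A\ot B)\ot_H t_{C,D}+t_{A,B}\ot_H(C\ot D)\bigr)=t_{A\ot_H C,B\ot_H D}\,\zeta .
\]
Since $q_{A\ot B,C\ot D}$ is surjective, it suffices to evaluate both sides on $(a\ot b)\ot_H(c\ot d)$. The left side gives
\[
(a\ot_H\chi^ic)\ot(b\ot_H\chi_id)-(a\ot_H c\chi^i)\ot(b\ot_H d\chi_i)+(\chi^ia\ot_H c)\ot(\chi_ib\ot_H d)-(a\chi^i\ot_H c)\ot(b\chi_i\ot_H d).
\]
The bimodule structure on $A\ot_H C$ uses the outer actions, so the right side is
\[
(\chi^ia\ot_H c)\ot(\chi_ib\ot_H d)-(a\ot_H c\chi^i)\ot(b\ot_H d\chi_i).
\]
The two outer terms agree. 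It remains to show the middle terms cancel:
\[
(a\chi^i\ot_H c)\ot(b\chi_i\ot_H d)=(a\ot_H\chi^ic)\ot(b\ot_H\chi_id).
\]
Each side is the image of $a\ot\chi^i c\ot b\ot\chi_i d$ under $q_{A,C}\ot q_{B,D}$ after moving $\chi^i$ across the first balanced product and $\chi_i$ across the second. So the identity holds because balancing in the two tensor factors is independent. I expect writing this out carefully, keeping track of which factor $\chi^i$ versus $\chi_i$ crosses, to be the only delicate point.

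\textbf{Bicomodule case.} This is dual. Here $\circ=\ot$, $J=H$, and $m^\circ_J=m_H$. The unit condition $m_Ht_{H,H}=0$ becomes
\[
\chi(x_1\ot y_1)x_2y_2=\chi(x_2\ot y_2)x_1y_1,
\]
which is \eqref{pcct1}. For the interchange condition we postcompose with the equalizer inclusions $e$. The two outer-coaction terms match directly. The two cross terms cancel because on $u\square w$ in a cotensor product one has $u_{(0)}\ot u_{(1)}\ot w=u\ot w_{(-1)}\ot w_{(0)}$; applying this in both cotensor factors simultaneously identifies $\chi(v_{(1)}\ot w_{(1)})$ with $\chi(u_{(-1)}\ot z_{(-1)})$. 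The infinitesimal braid identities themselves are supplied by Lemma \ref{lem:infbraidbicomod}.
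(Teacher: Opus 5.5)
Your proposal is correct and follows the same route as the paper. The paper likewise evaluates both sides of the interchange diagram on generators $(a\ot b)\ot_H(c\ot d)$ and adds and subtracts the middle terms via the balancing identity $(a\chi^i\ot_H c)\ot(b\chi_i\ot_H d)=(a\ot_H\chi^i c)\ot(b\ot_H\chi_i d)$; it then gets $t^{\bullet}_{H,H}\Delta_H=0$ from \eqref{cqtr1}, and only asserts the bicomodule case by duality, for which your sketch (cotensor relations in both factors, unit condition from \eqref{pcct1}) is exactly the dual computation.
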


\begin{proof}
    By Lemma \ref{lem:infbraidbimod}, the braided monoidal category $(_{H}\mm_{H},\ot,\Bbbk,\sigma)$ is pre-Cartier with infinitesimal braiding $t^{\bullet}$ defined as in \eqref{infbraidingbimod}. We compute
\[
\begin{split}
    t^{\bullet}_{A\ot_{H} C,B\ot_{H} D}\zeta_{A,B,C,D}((a\ot b)&\ot_{H}(c\ot d))=t^{\bullet}_{A\ot_{H} C,B\ot_{H} D}((a\ot_{H} c)\ot(b\ot_{H}d))\\&=\chi^{i}(a\ot_{H} c)\ot\chi_{i}(b\ot_{H}d)-(a\ot_{H}c)\chi^{i}\ot(b\ot_{H}d)\chi_{i}\\&=(\chi^{i}a\ot_{H} c)\ot(\chi_{i}b\ot_{H}d)-(a\ot_{H}c\chi^{i})\ot(b\ot_{H}d\chi_{i})\\&=(a\ot_{H}\chi^{i}c)\ot(b\ot_{H}\chi_{i}d)-(a\ot_{H} c\chi^{i})\ot(b\ot_{H} d\chi_{i})\\&+(\chi^{i}a\ot_{H} c)\ot(\chi_{i}b\ot_{H}d)-(a\chi^{i}\ot_{H}c)\ot(b\chi_{i}\ot_{H} d)\\&=\zeta_{A,B,C,D}((a\ot b)\ot_{H}(\chi^{i}c\ot\chi_{i}d)-(a\ot b)\ot_{H}(c\chi^{i}\ot d\chi_{i}))\\&+\zeta_{A,B,C,D}((\chi^{i}a\ot\chi_{i}b)\ot_{H}(c\ot d)-(a\chi^{i}\ot b\chi_{i})\ot_{H}(c\ot d))\\&=\zeta_{A,B,C,D}((A\ot B)\ot_{H}t^{\bullet}_{C,D}+t^{\bullet}_{A,B}\ot_{H}(C\ot D))((a\ot b)\ot_{H}(c\ot d))
\end{split}
\]
and, utilizing \eqref{cqtr1}, $t^{\bullet}_{H,H}\Delta^{\bullet}_{H}(x)=t^{\bullet}_{H,H}(x_{1}\ot x_{2})=\chi^{i}x_{1}\ot\chi_{i}x_{2}-x_{1}\chi^{i}\ot x_{2}\chi_{i}=0$.

The bicomodule statement follows by duality: the braiding \eqref{braidingbicom} and infinitesimal braiding \eqref{infbraidingbicomod} satisfy the two compatibility
conditions in \eqref{diagcircprecartier}.
\begin{invisible}
    By Theorem \ref{thm:infbraidbicomod} the braided monoidal category $({}^H\mm^H, \square^H, H, \sigma^{\bullet})$ has no non-trivial infinitesimal braiding. Equation \eqref{diagbulletprecartier} is clearly satisfied with $t^{\bullet} = 0$. By Lemma \ref{lem:infbraidbicomod} $({}^H\mm^H, \ot, \Bbbk, \sigma)$ is pre-Cartier with infinitesimal braiding $t^{\circ}$ defined as in \eqref{infbraidingbicomod}. Let $A,B,C,D$ be in ${}^H\mm^H$ and $a\square b\in A\square B$, $c\square d\in C\square D$, then it follows that $a_0\ot a_1\ot b = a\ot b_{-1} \ot b_0$ and $c_0\ot c_1\ot d = c\ot d_{-1}\ot d_0$. Hence
        \begin{align*}
            \zeta_{A,B,C,D}t^{\circ}_{A\square B, C\square D}(a\square b &\ot c\square d)
             =\zeta_{A,B,C,D}(\chi(a_{-1}\ot c_{-1})a_0\square b\ot c_0\square d - \chi(b_1\ot d_1)a\square b_0\ot c\square d_0)
            \\&= \chi(a_{-1}\ot c_{-1})a_0\ot c_0\square b\ot d - \chi(b_1\ot d_1)a\ot c\square b_0\ot d_0
            \\&= \chi(a_{-1}\ot c_{-1})a_0\ot c_0\square b\ot d + \chi(a_1\ot d_{-1})a_0\ot c\square b\ot d_0
            \\&\hspace{1em} - \chi(a_1\ot d_{-1})a_0\ot c\square b\square d_0 - \chi(b_1\ot d_1)a\ot c\square b_0\ot d_0
            \\&= \chi(a_{-1}\ot c_{-1})a_0\ot c_0\square b\ot d + \chi(b_{-1}\ot d_{-1})a\ot c\square b_0\ot d_0
            \\&\hspace{1em} - \chi(a_1\ot c_{1})a_0\ot c_0\square b\square d - \chi(b_1\ot d_1)a\ot c\square b_0\ot d_0
            \\&= (t^{\circ}_{A,C}\square B\ot D + A\ot C\square t^{\circ}_{B,D})\zeta_{A,B,C,D}(a\square b\ot c\square d).
        \end{align*}
        Hence the first identity of \eqref{diagcircprecartier} holds. Moreover, utilizing \eqref{pcct1},
        \[
        m_H^{\circ}t^{\circ}_{H,H}(x\ot y) = \chi(x_{-1}\ot y_{-1})x_0y_0 - \chi(x_1\ot y_1)x_0y_0 = \chi(x_1\ot y_1)x_2y_2 - \chi(x_2\ot y_2)x_1y_1 = 0.
        \]
\end{invisible}
\end{proof}

\begin{remark}\label{remark: only linear combination}
    In Remark \ref{remark: why this infinitesimal braiding} we noticed that every linear combination $\alpha\chi_{13}+\beta\chi_{24}$ induces an infinitesimal braiding on $({}_H\mm_H, \ot, \Bbbk,\sigma)$, namely by
    \begin{equation}\label{eqn: general infinitesimal braiding}
    t_{X,Y}^{\alpha,\beta}:X\ot Y \to X\ot Y,\ x\ot y\mapsto \alpha\chi^ix\ot \chi_iy +\beta x\chi^i\ot y\chi_i.
    \end{equation}
If $\chi\not=0$, it defines a $\bullet$-pre-Cartier structure on the bimodule duoidal category if and only if $\alpha+\beta=0$. Indeed, the unit condition evaluated at $1_{H}$ gives $t^{\alpha,\beta}_{H,H
}\Delta(1_{H})=(\alpha+\beta)\chi=0$, hence $\alpha+\beta=0$. The vice versa follows from the previous proposition by scalar multiplication.

Similarly, for bicomodules one can define
\[
t^{\alpha,\beta}_{M,N}(m\ot n):=\alpha\chi(m_{(-1)}\ot n_{(-1)})m_{(0)}\ot n_{(0)}+\beta m_{(0)}\ot n_{(0)}\chi(m_{(1)}\ot n_{(1)}).
\]
Applying $\varepsilon$ to $m_{H}t^{\alpha,\beta}_{H,H}(a\ot b)=0$ gives $(\alpha+\beta)\chi(a\ot b)=0$. If $\chi=0$, every choice of coefficients gives the same zero structure. The uniqueness up to scalar just described is only within these specified two-term families; it is not a classification of all infinitesimal braidings.
\end{remark}

We provide two easy explicit examples of one-sided pre-Cartier structures for bimodules and bicomodules.

\begin{example}\label{ex:1}
Let $H=\Bbbk[x]$ with $x$ primitive, $\Rr=1\ot1$, and $\chi=x\ot x$. Commutativity and primitivity give the pre-Cartier identities, so $(H,\Rr,\chi)$ is Cartier triangular. An $H$-bimodule $M$ is a vector space with two commuting endomorphisms $L_{M}(m)=xm$ and $R_{M}(m)=mx$. Proposition \ref{prop:bimodulespreCartier} makes $(_{H}\mm_{H},\ot_{H},H,\ot,\Bbbk)$ $\bullet$-pre-Cartier with the ordinary flip and 
\[
t^{\bullet}_{M,N}=L_{M}\ot L_{N}-R_{M}\ot R_{N}.
\]
In particular, take the one-dimensional bimodule $M=\Bbbk m$ with $xm=m$ and $mx=0$. Then $t^{\bullet}_{M,M}(m\ot m)=m\ot m$, so the infinitesimal braiding is nonzero. The interchange compatibility amounts to the cancellation of the middle actions using $ax\ot_{H}c=a\ot_{H}xc$.
\end{example}

\begin{example}\label{ex:2}
Let $H=\Bbbk[g,g^{-1}]$, with $g$ grouplike, and put
\[
\Rr(a\ot b)=\varepsilon(a)\varepsilon(b),\qquad \chi(g^{p}\ot g^{q})=pq\qquad (p,q\in\mathbb{Z}),
\]
extending the second formula bilinearly; integers are viewed as elements of $\Bbbk$. The identities $p(q+r)=pq+pr$ and $(p+q)r=pr+qr$ show that $(H,\Rr,\chi)$ is pre-Cartier cotriangular.\\
An $H$-bicomodule is a $\mathbb{Z}^{2}$-graded vector space $M=\bigoplus_{p,q}{M_{p,q}}$, where $m\in M_{p,q}$ has coactions $g^{p}\ot m$ and $m\ot g^{q}$. For $m\in M_{p,q}$ and $n\in N_{r,s}$, by proposition \ref{prop:bimodulespreCartier} the infinitesimal braiding is
\[
t^{\circ}_{M,N}(m\ot n)=(pr-qs)m\ot n.
\]
Together with the flip, it makes $(^{H}\mm^{H},\ot,\Bbbk,\square^{H},H)$ $\circ$-pre-Cartier. For a one-dimensional $M$ concentrated in degree $(1,0)$, $t^{\circ}_{M,M}=\mathrm{id}_{M\ot M}\not=0$.
\end{example}

We observe that for the given quasitriangular structure $(\id\ot\tau\ot\id)((\Rr^{-1})^{\mathrm{op}}\ot\Rr^{\mathrm{op}})$ on $H\ot H^{\mathrm{op}}$, one could find other infinitesimal $\Rr$-matrices in addition to $\chi_{13}-\chi_{24}$, hence other infinitesimal braidings besides $t^{\bullet}$ defined as in \eqref{infbraidingbimod}. Dually, the pre-Cartier coquasitriangular structure $(\Rr\ot\Rr^{-1})(\id\ot\tau\ot\id)$ on $H\ot H^{\cop}$ allows for other infinitesimal $\Rr$-forms in addition to $\chi_{24}-\chi_{13}$, and thus other infinitesimal braidings besides $t^{\circ}$ defined as in \eqref{infbraidingbicomod}.

\begin{lemma}\label{lem:newchiconjugation}
    Let $(H,\Rr,\chi)$ be a pre-Cartier triangular bialgebra. Then $(H,\Rr,\chi^{\Rr}=\Rr^{-1}\chi^{\mathrm{op}}\Rr)$ is a pre-Cartier triangular bialgebra. Moreover, $(H,\Rr,\chi^{\Rr})$ is Cartier if and only if $(H,\Rr,\chi)$ is Cartier, in which case $\chi^{\Rr}=\chi$.

    Dually, if $(H,\Rr, \chi)$ is a pre-Cartier cotriangular bialgebra, then $(H,\Rr, \chi^{\Rr} = \Rr^{-1}\ast \chi^{\op}\ast \Rr)$ is a pre-Cartier cotriangular bialgebra. Moreover, $(H,\Rr, \chi^{\Rr})$ is Cartier if and only if $(H,\Rr,\chi)$ is Cartier, in which case $\chi^{\Rr} = \chi$.
\end{lemma}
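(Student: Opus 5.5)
I would prove Lemma \ref{lem:newchiconjugation} by conjugating the infinitesimal braiding of ${}_H\mm$ by the braiding, rather than by manipulating \eqref{cqtr1}--\eqref{cqtr3} directly. For a symmetric pre-Cartier category $(\Mm,\ot,I,\sigma,t)$, the family
\[
t'_{X,Y}:=\sigma_{X,Y}^{-1}t_{Y,X}\sigma_{X,Y}=\sigma_{Y,X}t_{Y,X}\sigma_{X,Y}
\]
is natural. It is also again an infinitesimal braiding. To see this, apply \eqref{eqn: inf braid 2} to $t_{Y\ot Z,X}$ and conjugate with $\sigma_{X,Y\ot Z}=(\id_Y\ot\sigma_{X,Z})(\sigma_{X,Y}\ot\id_Z)$. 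The summand $\id_Y\ot t_{Z,X}$ becomes $(\sigma^{-1}_{X,Y}\ot\id_Z)(\id_Y\ot t'_{X,Z})(\sigma_{X,Y}\ot\id_Z)$. The summand $(\id_Y\ot\sigma^{-1}_{Z,X})(t_{Y,X}\ot\id_Z)(\id_Y\ot\sigma_{Z,X})$ collapses to $t'_{X,Y}\ot\id_Z$, because $\sigma^{-1}_{Z,X}=\sigma_{X,Z}$ by symmetry. This gives exactly \eqref{eqn: inf braid 1} for $t'$, and \eqref{eqn: inf braid 2} follows symmetrically. Symmetry is used precisely here, which is why triangularity is assumed.

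Next I would transport this through \cite[Theorem 2.6]{ABSW}, using that for triangular $(H,\Rr)$ the category ${}_H\mm$ is symmetric. Evaluate $t'$ on $m\ot n$. First $\sigma^{\Rr}$ multiplies by $\Rr^{\op}$ and flips, then $t$ multiplies by $\chi$, then $(\sigma^{\Rr})^{-1}$ flips and multiplies by $\Rr^{-1}$. The net effect is multiplication by $\Rr^{-1}\chi^{\op}\Rr$ on $M\ot N$, the flips turning $\chi$ into $\chi^{\op}$. Hence the element associated to $t'$ is $\chi^{\Rr}$, and the bijection of \cite[Theorem 2.6]{ABSW} shows $(H,\Rr,\chi^{\Rr})$ is pre-Cartier. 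The main bookkeeping risk is getting $\Rr$ versus $\Rr^{\op}$ right in this step. If it fails, I would instead check \eqref{cqtr2} for $\chi^{\Rr}$ directly. That check applies \eqref{qtr2}, \eqref{qtr3} and \eqref{cqtr3} for $\chi$, together with the quantum Yang--Baxter equation \eqref{eqn: qybe}.

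For the Cartier statement, the condition $\Rr\chi=\chi^{\op}\Rr$ from \eqref{eq:ctr2} gives $\chi^{\op}=\Rr\chi\Rr^{-1}$. Then $\chi^{\Rr}=\Rr^{-1}\Rr\chi\Rr^{-1}\Rr=\chi$, so $(H,\Rr,\chi^{\Rr})$ is Cartier. Conversely, suppose $\chi^{\Rr}$ is Cartier. Applying the map $\chi\mapsto\chi^{\Rr}$ twice returns $\chi$: this uses $\Rr^{-1}=\Rr^{\op}$, which gives $(\chi^{\Rr})^{\op}=\Rr\chi\Rr^{-1}$ and hence $(\chi^{\Rr})^{\Rr}=\Rr^{-1}\Rr\chi\Rr^{-1}\Rr=\chi$. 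So the forward direction applied to $\chi^{\Rr}$ shows that $\chi$ is Cartier.

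The cotriangular statement is the formal dual. I would repeat the argument in $\mm^H$, using \cite[Theorem 3.2]{ABSW}, with products replaced by convolution.
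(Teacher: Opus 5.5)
Your argument is correct. For the pre-Cartier part, however, it takes a different route from the paper.

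The paper works entirely inside $H\ot H\ot H$. It checks \eqref{cqtr1} for $\chi^{\Rr}$ by applying \eqref{qtr1} twice. It checks \eqref{cqtr2} by expanding $(\id\ot\Delta)(\Rr^{-1}\chi^{\op}\Rr)$ with \eqref{qtr2} and the flipped form of \eqref{cqtr3}, then using $\Rr^{\op}=\Rr^{-1}$ to cancel the $\Rr_{13}$ factors. The check of \eqref{cqtr3} is similar, and the cotriangular case is left as dual.

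You instead prove a categorical fact: in a symmetric pre-Cartier category, $t'_{X,Y}=\sigma^{-1}_{X,Y}t_{Y,X}\sigma_{X,Y}$ is again an infinitesimal braiding. You then transport it through \cite[Theorem 2.6]{ABSW}. Your verification is sound; symmetry is needed only to cancel $\sigma^{-1}_{X,Z}\sigma^{-1}_{Z,X}$ and $\sigma_{Z,X}\sigma_{X,Z}$. The convention you were worried about also works out, though your phrasing has the order reversed. $\sigma^{\Rr}_{M,N}$ is multiplication by $\Rr$ followed by the flip (equivalently, the flip followed by multiplication by $\Rr^{\op}$). $(\sigma^{\Rr}_{M,N})^{-1}$ is the flip followed by multiplication by $\Rr^{-1}$. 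Hence $t'_{M,N}$ is multiplication by $\Rr^{-1}\chi^{\op}\Rr$, and $t'_{H,H}(1\ot 1)=\chi^{\Rr}$.

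Your route has three advantages:
\begin{itemize}
\item It explains where $\chi^{\Rr}$ comes from and isolates exactly where triangularity enters.
\item It makes the cotriangular statement genuinely formal: the same categorical fact together with \cite[Theorem 3.2]{ABSW} produces $\Rr^{-1}\ast\chi^{\op}\ast\Rr$.
\item It gives a conceptual form of the Cartier clause, since $t$ is Cartier exactly when $t'=t$, and symmetry gives $t''=t$.
\end{itemize}
The paper's computation, by contrast, is self-contained and does not lean on \cite{ABSW}.

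For the Cartier clause itself, your involution argument via $(\chi^{\Rr})^{\Rr}=\chi$ is valid. It is equivalent to the paper's two-line observation that $\Rr\chi^{\Rr}=\chi^{\op}\Rr$ and $(\chi^{\Rr})^{\op}\Rr=\Rr\chi$, which shows that the two Cartier conditions literally coincide.
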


\begin{proof}
    We compute
\[
\chi^{\Rr}\Delta(\cdot)=\Rr^{-1}\chi^{\mathrm{op}}\Rr\Delta(\cdot)\overset{\eqref{qtr1}}{=}\Rr^{-1}\chi^{\mathrm{op}}\Delta^{\mathrm{op}}(\cdot)\Rr\overset{\eqref{cqtr1}}{=}\Rr^{-1}\Delta^{\mathrm{op}}(\cdot)\chi^{\mathrm{op}}\Rr\overset{\eqref{qtr1}}{=}\Delta(\cdot)\Rr^{-1}\chi^{\mathrm{op}}\Rr=\Delta(\cdot)\chi^{\Rr},
\]
so that \eqref{cqtr1} holds for $(H,\Rr,\chi^{\Rr})$. Now, we compute
\[
\begin{split}
(\id\ot\Delta)(\chi^{\Rr})&=(\id\ot\Delta)(\Rr^{-1}\chi^{\mathrm{op}}\Rr)=(\id\ot\Delta)(\Rr)^{-1}(\id\ot\Delta)(\chi^{\mathrm{op}})(\id\ot\Delta)(\Rr)\\&\overset{\eqref{qtr2},\eqref{cqtr3}}{=}(\Rr_{13}\Rr_{12})^{-1}(\chi^{\mathrm{op}}_{13}+(\Rr^{-1})^{\mathrm{op}}_{13}\chi_{12}^{\mathrm{op}}\Rr^{\mathrm{op}}_{13})(\Rr_{13}\Rr_{12})\\&\overset{(!)}{=}\Rr_{12}^{-1}\Rr_{13}^{-1}\chi^{\mathrm{op}}_{13}\Rr_{13}\Rr_{12}+\Rr_{12}^{-1}\Rr_{13}^{-1}\Rr_{13}\chi_{12}^{\mathrm{op}}\Rr^{-1}_{13}\Rr_{13}\Rr_{12}\\&=\Rr_{12}^{-1}\Rr_{13}^{-1}\chi^{\mathrm{op}}_{13}\Rr_{13}\Rr_{12}+\Rr_{12}^{-1}\chi_{12}^{\mathrm{op}}\Rr_{12}\\&=\chi^{\Rr}_{12}+\Rr^{-1}_{12}\chi^{\Rr}_{13}\Rr_{12},
\end{split}
\]
where $(!)$ follows since $\Rr^{\mathrm{op}}=\Rr^{-1}$. Hence \eqref{cqtr2} holds for $(H,\Rr,\chi^{\Rr})$. Similarly, using that \eqref{qtr3} and \eqref{cqtr2} hold for $(H,\Rr,\chi)$, we get that \eqref{cqtr3} holds for $(H,\Rr,\chi^{\Rr})$.
\begin{invisible}
\[
\begin{split}
    (\Delta\ot\id)(\chi^{\Rr})&=(\Delta\ot\id)(\Rr^{-1}\chi^{\mathrm{op}}\Rr)=(\Delta\ot\id)(\Rr)^{-1}(\Delta\ot\id)(\chi^{\mathrm{op}})(\Delta\ot\id)(\Rr)\\&\overset{\eqref{qtr3},\eqref{cqtr2}}{=}(\Rr_{13}\Rr_{23})^{-1}(\chi^{\mathrm{op}}_{13}+(\Rr^{-1})^{\mathrm{op}}_{13}\chi^{\mathrm{op}}_{23}\Rr^{\mathrm{op}}_{13})(\Rr_{13}\Rr_{23})\\&\overset{(!)}{=}\Rr^{-1}_{23}\Rr_{13}^{-1}\chi^{\mathrm{op}}_{13}\Rr_{13}\Rr_{23}+\Rr^{-1}_{23}\Rr_{13}^{-1}\Rr_{13}\chi^{\mathrm{op}}_{23}\Rr^{-1}_{13}\Rr_{13}\Rr_{23}\\&=\Rr^{-1}_{23}\Rr_{13}^{-1}\chi^{\mathrm{op}}_{13}\Rr_{13}\Rr_{23}+\Rr^{-1}_{23}\chi^{\mathrm{op}}_{23}\Rr_{23}\\&=\chi^{\Rr}_{23}+\Rr^{-1}_{23}\chi^{\Rr}_{13}\Rr_{23}
\end{split}
\]
\end{invisible}
Therefore, $(H,\Rr,\chi^{\Rr})$ is a pre-Cartier triangular bialgebra. Moreover, we have $\Rr\chi^{\Rr}=\Rr\Rr^{-1}\chi^{\mathrm{op}}\Rr=\chi^{\mathrm{op}}\Rr$ and $(\chi^{\Rr})^{\mathrm{op}}\Rr=(\Rr^{-1})^{\mathrm{op}}\chi\Rr^{\mathrm{op}}\Rr=\Rr\chi$. Therefore, $(H,\Rr,\chi^{\Rr})$ is Cartier if and only if $(H,\Rr,\chi)$ is Cartier. In that case $\chi^{\Rr}=\chi$.

The case for $(H,\Rr,\chi)$ a pre-Cartier cotriangular bialgebra is completely dual.
\begin{invisible}
    Let $a,b,c\in H$, then condition \eqref{pcct1} holds, since
        \begin{align*}
            \chi^{\Rr}(a_1\ot b_1)a_2b_2&= \Rr^{-1}(a_1\ot b_1)\chi(b_2\ot a_2)\Rr(a_3\ot b_3)a_4b_4
            \\&\overset{\eqref{CQT3}}{=}\Rr^{-1}(a_1\ot b_1)\chi(b_2\ot a_2)b_3a_3\Rr(a_4\ot b_4)
            \\&\overset{\eqref{pcct1}}{=} \Rr^{-1}(a_1\ot b_1)b_2a_2\chi(b_3\ot a_3)\Rr(a_4\ot b_4)
            \\&\overset{\eqref{CQT3'}}{=} a_1b_1\Rr^{-1}(a_2\ot b_2)\chi(b_3\ot a_3)\Rr(a_4\ot _4)
            \\&= a_1b_1\chi^{\Rr}(a_2\ot b_2).
        \end{align*}
    Condition \eqref{pcct2} holds by the following computation.
    \begin{align*}
        \chi^{\Rr}(a\ot bc) &= \Rr^{-1}(a_1\ot b_1c_1)\chi(b_2c_2\ot a_2)\Rr(a_3\ot b_3c_3)
        \\&\overset{\eqref{pcct3}, \eqref{CQT1}, \eqref{CQT1'}}{=} \Rr^{-1}(a_1\ot b_1)\Rr^{-1}(a_2\ot c_1)\chi(c_2\ot a_3)\Rr(a_4\ot c_3)\Rr(a_5\ot b_2)
        \\&\qquad + \Rr^{-1}(a_1\ot b_1)\Rr^{-1}(a_2\ot c_1)\Rr^{-1}(c_2\ot a_3)\chi(b_2\ot a_4)
        \\&\hspace{13em} \Rr(c_3\ot a_5)\Rr(a_6\ot c_4)\Rr(a_7\ot b_3)
        \\&\overset{(!)}{=} \Rr^{-1}(a_1\ot b_1)\chi^{\Rr}(a_2\ot c)\Rr(a_3\ot b_2)
        \\&\qquad + \Rr^{-1}(a_1\ot b_1)\ep(a_2)\ep(c_1)\chi(b_2\ot a_3)\ep(c_2)\ep(a_4)\Rr(a_5\ot b_3)
        \\&= \Rr^{-1}(a_1\ot b_1)\chi^{\Rr}(a_2\ot c)\Rr(a_3\ot b_2) 
        \\&\qquad + \Rr^{-1}(a_1\ot b_1)\chi(b_2\ot a_2)\Rr(a_3\ot b_3)\ep(c)
        \\&= \Rr^{-1}(a_1\ot b_1)\chi^{\Rr}(a_2\ot c)\Rr(a_3\ot b_2) + \chi^{\Rr}(a\ot b)\ep(c).
    \end{align*}
    In the equality marked by $(!)$ we utilized the cotriangularity of $(H,\Rr)$ twice. Condition \eqref{pcct3} holds by analogous arguments. Notice that
    \begin{align*}
        \Rr(a_1\ot b_1)\chi^{\Rr}(a_2\ot b_2) &= \Rr(a_1\ot b_1)\Rr^{-1}(a_2\ot b_2)\chi(b_3\ot a_3)\Rr(a_4\ot b_4)
        \\&= \chi(b_1\ot a_1)\Rr(a_2\ot b_2),
        \\\chi^{\Rr}(b_1\ot a_1)\Rr(a_2\ot b_2)&= \Rr^{-1}(b_1\ot a_1)\chi(a_2\ot b_2)\Rr(b_3\ot a_3)\Rr(a_4\ot b_4)
        \\&\overset{(!)}{=} \Rr^{-1}(b_1\ot a_1)\chi(a_2\ot b_2),
    \end{align*}
    where the equality marked by $(!)$ follows again by the cotriangularity of $(H,\Rr)$. Hence $(H,\Rr, \chi)$ is Cartier if and only if $(H,\Rr, \chi^{\Rr})$ is.
\end{invisible}
\end{proof}

The preceding lemma yields further one-sided pre-Cartier structures by replacing the infinitesimal datum with its conjugate transpose.

\begin{corollary}\label{cor:otherinfbraidingbimod}
Let $(H,\Rr,\chi)$ be a pre-Cartier triangular bialgebra and set $\chi^{\Rr}=\Rr^{-1}\chi^{\mathrm{op}}\Rr$. Then $(_{H}\mm_{H},\ot_{H},H,\ot,\Bbbk)$ is $\bullet$-pre-Cartier with braiding $\sigma^{\bullet}$ defined as in \eqref{eq:braidingbimod} and infinitesimal braiding $t^{\bullet}$ defined by
\begin{equation}\label{newinfbraidingbimod}
t^{\bullet}_{X,Y}:X\ot Y\to X\ot Y,\ x\ot y\mapsto 
(\chi^{\Rr})^{i}x\ot(\chi^{\Rr})_{i}y-x(\chi^{\Rr})^{i}\ot y(\chi^{\Rr})_{i}
\end{equation}
for all $X,Y\in{}_{H}\mm_{H}$.\\
Dually, let $(H,\Rr,\chi)$ be a pre-Cartier cotriangular bialgebra and $\chi^{\Rr}=\Rr^{-1}*\chi^{\mathrm{op}}*\Rr$. Then $({}^{H}\mm^{H},\ot,\Bbbk,\square^{H},H)$ is $\circ$-pre-Cartier with braiding $\sigma^{\circ}$ defined as in \eqref{braidingbicom} and infinitesimal braiding $t^{\circ}$ defined by
\begin{equation}\label{newinfbraidingbicomod}
t^{\circ}_{X,Y}:X\ot Y\to X\ot Y,\ x\ot y\mapsto 
\chi^{\Rr}(x_{-1}\ot y_{-1})x_{0}\ot y_{0}-x_{0}\ot y_{0}\chi^{\Rr}(x_{1}\ot y_{1}).
\end{equation}
for all $X,Y\in{}^{H}\mm^{H}$.
These structures are nonzero whenever $\chi\not=0$. In the Cartier case they coincide with the structures associated to $\chi$.
\end{corollary}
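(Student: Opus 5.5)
The plan is to reduce Corollary \ref{cor:otherinfbraidingbimod} to results already in hand, with essentially no new computation. First, Lemma \ref{lem:newchiconjugation} says that $(H,\Rr,\chi^{\Rr})$ is again a pre-Cartier triangular bialgebra. Triangularity gives $(\Rr^{-1})^{\mathrm{op}}=\Rr$, since $\Rr^{-1}=\Rr^{\mathrm{op}}$. So $(H,(\Rr^{-1})^{\mathrm{op}},\chi^{\Rr})=(H,\Rr,\chi^{\Rr})$ is pre-Cartier, which is exactly the hypothesis of Lemma \ref{lem:infbraidbimod} with $\chi$ replaced by $\chi^{\Rr}$. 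That lemma, or equivalently Corollary \ref{cor: preCartier for triangular}, then shows that $({}_H\mm_H,\ot,\Bbbk,\sigma^{\bullet})$ is pre-Cartier with the infinitesimal braiding \eqref{newinfbraidingbimod}.

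Next I apply Proposition \ref{prop:bimodulespreCartier} to the datum $(H,\Rr,\chi^{\Rr})$; its assumptions are exactly those of Lemma \ref{lem:infbraidbimod}, which we have just verified. The interchange compatibility in \eqref{diagbulletprecartier} uses only that actions pass through $\ot_H$, so it is insensitive to which infinitesimal $\Rr$-matrix is chosen. The unit condition $t^{\bullet}_{H,H}\Delta=0$ uses only \eqref{cqtr1}, which Lemma \ref{lem:newchiconjugation} supplies for $\chi^{\Rr}$. This gives the $\bullet$-pre-Cartier claim. The bicomodule statement is handled in the same way, using the cotriangular half of Lemma \ref{lem:newchiconjugation}, then Lemma \ref{lem:infbraidbicomod}, then the dual half of Proposition \ref{prop:bimodulespreCartier}. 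Cotriangularity again turns $(\Rr^{-1})^{\mathrm{op}}$ into $\Rr$ wherever the earlier lemma requires it, since the braiding \eqref{braidingbicom} is built from $\Rr$ and $\Rr^{-1}$.

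For the final assertions, the Cartier case is immediate from Lemma \ref{lem:newchiconjugation}: there $\chi^{\Rr}=\chi$, so the structures coincide with those coming from $\chi$.

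The nonvanishing claim is where I expect the main obstacle. First I show $\chi^{\Rr}\neq0$ whenever $\chi\neq 0$. The map $\chi\mapsto\Rr^{-1}\chi^{\mathrm{op}}\Rr$ is invertible, being conjugation by the unit $\Rr$ composed with the flip; dually, convolution by the invertible $\Rr^{\pm1}$ composed with $\tau$. So $\chi^{\Rr}\neq 0$.

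It then remains to show that $t^{\bullet}$ is nonzero on some bimodule. I would evaluate on $X=Y=H\ot H^{\mathrm{op}}$, the regular $H$-bimodule of the enveloping algebra, with left action on the first factor and right action on the second. On $x=y=1\ot1$ the formula gives $\chi^{\Rr}_{13}-\chi^{\Rr}_{24}\in H^{\ot4}$. These two terms live in complementary tensor slots, so they cannot cancel unless $\chi^{\Rr}$ is a scalar multiple of $1\ot1$. That case is excluded because $(\varepsilon\ot\mathrm{id})(\chi^{\Rr})=0$ forces the scalar to vanish.

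Dually, for bicomodules I take $X=Y=H\ot H$ with its outer coactions. The resulting element is nonzero because its image under suitable counit contractions recovers $\chi^{\Rr}$.
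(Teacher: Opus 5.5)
Your proposal is correct and follows the paper's proof. Lemma \ref{lem:newchiconjugation}, together with $(\Rr^{-1})^{\mathrm{op}}=\Rr$ from (co)triangularity, supplies the hypotheses of Lemma \ref{lem:infbraidbimod}/\ref{lem:infbraidbicomod} and Proposition \ref{prop:bimodulespreCartier}; invertibility of $\chi\mapsto\chi^{\Rr}$ gives $\chi^{\Rr}\neq0$; and the Cartier case follows from $\chi^{\Rr}=\chi$. The only difference is the test object for nonvanishing. The paper uses $H$ with the regular left action and the right action through $\varepsilon$ (dually, left coaction $\Delta$ and right coaction $h\mapsto h\ot1$), so that $t^{\bullet}_{H,H}(1\ot1)=\chi^{\Rr}$ directly, whereas your free bimodule $H\ot H$ gives $\chi^{\Rr}_{13}-\chi^{\Rr}_{24}$ and needs an extra counit contraction to isolate $\chi^{\Rr}$, which works equally well.
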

\begin{proof}
Lemma \ref{lem:newchiconjugation} gives a pre-Cartier structure with infinitesimal datum $\chi^{\Rr}$. We can then apply Proposition \ref{prop:bimodulespreCartier}. Since conjugation and the flip are invertible, $\chi^{\Rr}=0$ if and only if $\chi=0$. In the Cartier case, Lemma \ref{lem:newchiconjugation} gives $\chi^{\Rr}=\chi$. To verify nonvanishing in the bimodule case, take $X=Y=H$
with the regular left action and the right action induced by
$\varepsilon$. Then $t^\bullet_{H,H}(1\otimes1)=\chi_R$.
In the bicomodule case, take $X=Y=H$ with left coaction
$\Delta$ and right coaction $h\mapsto h\otimes1$.
Then $(\varepsilon\otimes\varepsilon)
t^\circ_{H,H}(a\otimes b)=\chi_R(a\otimes b)$. Thus, the displayed infinitesimal braidings are nonzero whenever
$\chi_R\ne0$.
\end{proof}

\begin{invisible}
\begin{proof}
Dual to the proof of Proposition \ref{cor:otherinfbraidingbimod}.
By Lemma \ref{lem:newchiconjugation}, $(H,\Rr, \chi^{\Rr})$ is a pre-Cartier cotriangular bialgebra, where $\chi^{\Rr} = \Rr^{-1}\ast \chi^{\op}\ast \Rr$. As observed in Remark \ref{rmk:braidingbimodquasi}, $(H^{\cop}, \Rr^{-1} = \Rr^{\op}, \chi^{\Rr})$ is again a pre-Cartier cotriangular bialgebra. Hence again by \cite[Theorem 2.10]{chen2}, $(H\ot H^{\cop}, (\Rr\ot \Rr^{\op})(\id\ot\tau\ot\id))$ is pre-Cartier with infinitesimal $\Rr$-form $\chi^{\Rr}_{13}-\chi^{\Rr}_{24}$. The latter pre-Cartier structure corresponds to the infinitesimal braiding $t$ given in \eqref{newinfbraidingbicomod}.

    By Theorem \ref{thm:infbraidbicomod}, the braided monoidal category $({}^H\mm^H, \square^H, H, \sigma^{\bullet})$ has no non-trivial infinitesimal braiding, and equation \eqref{diagbulletprecartier} is clearly satisfied with $t^{\bullet} = 0$. We have proven that the braided monoidal category $({}^H\mm^H, \ot, \Bbbk, \sigma^{\circ})$ is pre-Cartier with infinitesimal braiding $t^{\circ}$ defined as in \eqref{newinfbraidingbicomod}. Alike in the proof of Proposition \ref{prop:bimodulespreCartier}, one can show that the conditions in \eqref{diagcircprecartier} are satisfied. Let $A,B,C,D$ be $H$-bicomodules, and $a\square b \in A\square B$, $c\square d \in C\square D$. Then $a_{(0)}\ot a_{(1)}\ot b = a\ot b_{(-1)}\ot b_{(0)}$ and $c_{(0)}\ot c_{(1)}\ot d = c\ot d_{(-1)}\ot d_{(0)}$, hence
    \begin{align*}
        \zeta_{A,B,C,D}&t^{\circ}_{A\square B, C\square D}(a\square b \ot c\square d)
        \\&=\Rr^{-1}(a_{(-3)}\ot c_{(-3)})\chi(c_{(-2)}\ot a_{(-2)})\Rr(a_{(-1)}\ot c_{(-1)})a_{(0)}\ot c_{(0)}\square b\ot d
        \\&\quad - \Rr^{-1}(b_{(1)}\ot d_{(1)})\chi(d_{(2)}\ot b_{(2)})\Rr(b_{(3)}\ot d_{(3)})a\ot c\square b_{(0)}\ot d_{(0)}
        \\&=\Rr^{-1}(a_{(-3)}\ot c_{(-3)})\chi(c_{(-2)}\ot a_{(-2)})\Rr(a_{(-1)}\ot c_{(-1)})a_{(0)}\ot c_{(0)}\square b\ot d
        \\&\quad - \Rr^{-1}(a_{(1)}\ot c_{(1)})\chi(c_{(2)}\ot a_{(2)})\Rr(a_{(3)}\ot c_{(3)})a_{(0)}\ot c_{(0)}\square b\ot d
        \\&\quad + \Rr^{-1}(a_{(1)}\ot c_{(1)})\chi(c_{(2)}\ot a_{(2)})\Rr(a_{(3)}\ot c_{(3)})a_{(0)}\ot c_{(0)}\square b\ot d
        \\&\quad - \Rr^{-1}(b_{(1)}\ot d_{(1)})\chi(d_{(2)}\ot b_{(2)})\Rr(b_{(3)}\ot d_{(3)})a\ot c\square b_{(0)}\ot d_{(0)}
        \\&=\Rr^{-1}(a_{(-3)}\ot c_{(-3)})\chi(c_{(-2)}\ot a_{(-2)})\Rr(a_{(-1)}\ot c_{(-1)})a_{(0)}\ot c_{(0)}\square b\ot d
        \\&\quad - \Rr^{-1}(a_{(1)}\ot c_{(1)})\chi(c_{(2)}\ot a_{(2)})\Rr(a_{(3)}\ot c_{(3)})a_{(0)}\ot c_{(0)}\square b\ot d
        \\&\quad + \Rr^{-1}(b_{(-3)}\ot d_{(-3)})\chi(d_{(-2)}\ot b_{(-2)})\Rr(b_{(-1)}\ot d_{(-1)})a\ot c\square b_{(0)}\ot d_{(0)}
        \\&\quad - \Rr^{-1}(b_{(1)}\ot d_{(1)})\chi(d_{(2)}\ot b_{(2)})\Rr(b_{(3)}\ot d_{(3)})a\ot c\square b_{(0)}\ot d_{(0)}
        \\&= (t^{\circ}_{A,C}\square B\ot D + A\ot C \square t^{\circ}_{B,D})(a\ot c\square b\ot d)
        \\&= (t^{\circ}_{A,C}\square B\ot D + A\ot C\square t^{\circ}_{B,D})\zeta_{A,B,C,D}(a\square b\ot c\square d).
    \end{align*}
    Lastly, we compute that for any $x,y\in H$
    \begin{align*}
      m^{\circ}_H&t^{\circ}_{H,H}(x\ot y)
      \\&= \Rr^{-1}(x_{(-3)}\ot y_{(-3)})\chi(y_{(-2)}\ot x_{(-2)})\Rr(x_{(-1)}\ot y_{(-1)})x_{(0)}y_{(0)} 
      \\&\quad- \Rr^{-1}(x_{(1)}\ot y_{(1)})\chi(y_{(2)}\ot x_{(2)})\Rr(x_{(3)}\ot y_{(3)})x_{(0)}y_{(0)}
      \\&= \Rr^{-1}(x_{1}\ot y_{1})\chi(y_{2}\ot x_{2})\Rr(x_{3}\ot y_{3})x_{4}y_{4} 
      \\&\quad- \Rr^{-1}(x_{2}\ot y_{2})\chi(y_{3}\ot x_{3})\Rr(x_{4}\ot y_{4})x_{1}y_{1}
      \\&\overset{\eqref{CQT3}}{=} \Rr^{-1}(x_{1}\ot y_{1})\chi(y_{2}\ot x_{2})\Rr(x_{4}\ot y_{4})y_{3}x_{3} 
      \\&\quad- \Rr^{-1}(x_{2}\ot y_{2})\chi(y_{3}\ot x_{3})\Rr(x_{4}\ot y_{4})x_{1}y_{1}
      \\&\overset{\eqref{pcct1}}{=} \Rr^{-1}(x_{1}\ot y_{1})\chi(y_{3}\ot x_{3})\Rr(x_{4}\ot y_{4})y_{2}x_{2} 
      \\&\quad- \Rr^{-1}(x_{2}\ot y_{2})\chi(y_{3}\ot x_{3})\Rr(x_{4}\ot y_{4})x_{1}y_{1}
      \\&\overset{\eqref{CQT3'}}{=} \Rr^{-1}(x_{2}\ot y_{2})\chi(y_{3}\ot x_{3})\Rr(x_{4}\ot y_{4})x_{1}y_{1} 
      \\&\quad- \Rr^{-1}(x_{2}\ot y_{2})\chi(y_{3}\ot x_{3})\Rr(x_{4}\ot y_{4})x_{1}y_{1} = 0.\qedhere
    \end{align*}
\end{proof}
\end{invisible}

We end this paper by providing an example of pre-Cartier structure which is not simply one-sided.

\begin{example}\label{ex:fullpreCartier}
Let $\Dd$ be the category whose objects are triples $A=(A_{0},A_{1},D_{A})$, with $A_{0}$, $A_{1}$ vector spaces and $D_{A}\in\mathrm{End}_{\Bbbk}(A_{1})$. A morphism $f\colon A\to B$ is a pair $(f_{0}:A_{0}\to B_{0},f_{1}:A_{1}\to B_{1})$ such that $f_{1}D_{A}=D_{B}f_{1}$. Equivalently, $\Dd=\mathsf{Vec}_{\Bbbk}\times \prescript{}{\Bbbk[x]}{\mm}$. Define two products by
\begin{align*}
    &(A\circ B)_{0}=A_{0}\ot B_{0},\qquad(A\circ B)_{1}=(A_{0}\ot B_{1})\oplus(A_{1}\ot B_{0}),\\
    &(A\bullet B)_{0}=A_{0}\ot B_{0},\qquad (A\bullet B)_{1}=A_{1}\ot B_{1}.
\end{align*}
The corresponding endomorphisms are
\[
D_{A\circ B}=(\mathrm{id}_{A_{0}}\ot D_{B})\oplus(D_{A}\ot\mathrm{id}_{B_{0}}),\qquad D_{A\bullet B}=D_{A}\ot\mathrm{id}_{B_{1}}+\mathrm{id}_{A_{1}}\ot D_{B}.
\]
On morphisms, the products use the same tensor-product and direct-sum formulas. They are additive in each variable. Their units are $I=(\Bbbk,0,0)$, $J= (\Bbbk,\Bbbk,0)$. Both products are symmetric monoidal: their symmetries are induced by the ordinary flip, with the two summands interchanged for the degree-one part of $\sigma^{\circ}$. The interchange 
\[
\zeta_{A,B,C,E}:(A\bullet B)\circ(C\bullet E)\longrightarrow(A\circ C)\bullet(B\circ E)
\]
is the middle-factor flip in degree zero. Its source in degree one is
\[
(A_{0}\ot B_{0}\ot C_{1}\ot E_{1})\oplus(A_{1}\ot B_{1}\ot C_{0}\ot E_{0}).
\]
It sends these two summands, respectively, into the summands
\[
(A_{0}\ot C_{1})\ot(B_{0}\ot E_{1}),\qquad (A_{1}\ot C_{0})\ot(B_{1}\ot E_{0})
\]
of the target, again by the middle-factor flip. The remaining two target summands receive no contribution. The unit structure maps, using canonical identifications, are
\[
\Delta^{\bullet}_{I}=(\mathrm{id}_{\Bbbk},0):I\longrightarrow I\bullet I,\qquad \varepsilon^{\bullet}_{I}=(\mathrm{id}_{\Bbbk},0):I\longrightarrow J,\qquad m^{\circ}_{J}=(\mathrm{id}_{\Bbbk},+):(\Bbbk,\Bbbk\oplus\Bbbk,0)\to(\Bbbk,\Bbbk,0).
\]
These data make $\Dd$ a braided duoidal category. For the underlying duoidal structure one may regard the objects as graded vector spaces in degrees zero and one, carrying an endomorphism that is zero in degree zero. The product $\circ$ is the Cauchy product truncated above degree one, whereas $\bullet$ is the Hadamard product. The interchange is the usual inclusion of matching degree decompositions; compare the untruncated construction in \cite[Example 6.22]{Aguiar}.
More explicitly, the summands of an iterated $\circ$-product are indexed by choices of degrees in $\{0,1\}$ whose sum is at most one. Both paths in either interchange associativity diagram send each such source summand to the same target summand by the same permutation of tensor factors. The unit diagrams reduce to the canonical unit identities and the associativity and unit identities for the codiagonal $\Bbbk\oplus\Bbbk\to\Bbbk$. All these maps commute with the prescribed endomorphisms. The compatibility diagrams for both symmetries hold by the same summand-wise permutation check; the unit maps are symmetric as well. This verifies the braided duoidal axioms.\\
Define
\begin{equation}
t^{\circ}_{A,B}=0,\qquad t^{\bullet}_{A,B}=(0_{A_{0}\ot B_{0}},D_{A}\ot D_{B})
\end{equation}
The maps $t^{\bullet}$ are natural and commute with $D_{A\bullet B}$. Their infinitesimal braid identities reduce in degree one to the two distributive identities
\begin{align*}
    D_{A}\ot D_{B\bullet C}&=D_{A}\ot D_{B}\ot\mathrm{id}+D_{A}\ot\mathrm{id}\ot D_{C},\\
    D_{A\bullet B}\ot D_{C}&=D_{A}\ot\mathrm{id}\ot D_{C}+\mathrm{id}\ot D_{B}\ot D_{C}.
\end{align*}
In degree zero every term is zero. Thus the $\bullet$-monoidal category is pre-Cartier. \\
To check \eqref{diagbulletprecartier}, first consider the source summand $A_{0}\ot B_{0}\ot C_{1}\ot E_{1}$. After applying $\zeta$, the target operator acts as $D_{C}\ot D_{E}$ on its two degree-one factors. This is exactly the contribution of $\mathrm{id}_{A\bullet B}\circ t^{\bullet}_{C,E}$; the other term is zero. On $A_{1}\ot B_{1}\ot C_{0}\ot E_{0}$, the same verification uses $D_{A}\ot D_{B}$ and the contribution of $t^{\bullet}_{A,B}\circ\mathrm{id}_{C\bullet E}$. In degree zero both sides vanish. This proves the interchange compatibility. Since $I_{1}=0$, $t^{\bullet}_{I,I}\Delta^{\bullet}_{I}=0$. The $\circ$-infinitesimal braiding and its two duoidal compatibilities are zero; here additivity of both products ensures that tensoring
a zero morphism gives zero.\\
Finally, take $X=(0,\Bbbk,\mathrm{id}_{\Bbbk})$. We observe that the degree-one component of $t^{\bullet}_{X,X}$ is $\mathrm{id}_{\Bbbk}\ot\mathrm{id}_{\Bbbk}$, so $t^{\bullet}_{X,X}=\mathrm{id}_{X\bullet X}\not=0$.
\end{example}

The notion of pre-Cartier duoidal category will be investigated further in another project. Moreover, since duoidal categories generalize braided monoidal categories by replacing the braiding with an interchange law, one could be interested in introducing a notion of \textit{infinitesimal duoidal category}, generalizing pre-Cartier categories in the same spirit. This would require a great deal of work and goes beyond the scope of this paper. We leave this as a possible future project.
\medskip

\noindent\textbf{Acknowledgments.}
The authors thank J.~Vercruysse for useful suggestions. A.~Sciandra is member of the “National Group for Algebraic and Geometric Structures and their Applications” (GNSAGA-INdAM) and is supported by a postdoctoral fellowship at the ULB within the framework of the PDR project “Reconstruction of modules and algebraic objects from closed and monoidal structures on their representation categories” funded by the FNRS under the grant number T.0318.25F (PI J. Vercruysse). L.~Simons is supported by a joint pre-doctoral fellowship at the ULB and VUB, funded by the FNRS under the grant number $40031463$.

\noindent\textbf{Use of AI.} ChatGPT-6 (Open AI) was used for a final proof-reading of the work and suggested Example \ref{ex:fullpreCartier}. The authors verified all AI-assisted material and take full responsibility for the final text. 

\newpage

\appendix
\section{Proofs of Section \ref{sec:braidingbicomodules}}\label{sec:appendix}


The next lemma provides a bicomodule morphism which will be useful in the classification of braidings on $({}^C\mm^C,\square^{C},C)$.

\begin{lemma}\label{lemma: induced bicomodule morphism}
    Let $V$ be a $C$-bicomodule, and $f\in V^*$. Define
    \begin{equation}\label{eqn: induced bimodule morphism}
        \hat{f}\colon V\to C\otimes C\colon v\mapsto v_{(-1)}\otimes f(v_{(0)})v_{(1)}.
    \end{equation}
    Then $(\ep\ot\ep)\hat{f} = f$ and $\hat{f}$ is a $C$-bicomodule morphism, considering $C\ot C$ in ${}^C\mm^{C}$ with coactions $\Delta\ot\id_{C}$ and $\id_{C}\ot\Delta$.
\end{lemma}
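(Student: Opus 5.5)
The claim has two parts: the counit identity $(\ep\ot\ep)\hat f=f$, and the fact that $\hat f$ is a morphism in ${}^C\mm^C$ when $C\ot C$ carries the coactions $\Delta\ot\id_C$ and $\id_C\ot\Delta$. Both are direct Sweedler computations using only the bicomodule axioms of $V$: counitality and coassociativity of each coaction, plus the compatibility $(\rho^L\ot\id)\rho^R=(\id\ot\rho^R)\rho^L$. This compatibility is what makes the notation $v_{(-1)}\ot v_{(0)}\ot v_{(1)}$ unambiguous, and I will use that notation throughout.

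\textbf{Counit identity.} Apply $\ep\ot\ep$ to $\hat f(v)=v_{(-1)}\ot f(v_{(0)})v_{(1)}$. This gives $\ep(v_{(-1)})f(v_{(0)})\ep(v_{(1)})$. By counitality of the left and right coactions this collapses to $f(v)$, using linearity of $f$ to move the scalars inside.

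\textbf{Left colinearity.} I must show $(\Delta\ot\id)\hat f(v)=v_{(-1)}\ot\hat f(v_{(0)})$.
\begin{itemize}
\item The left-hand side is $v_{(-1)1}\ot v_{(-1)2}\ot f(v_{(0)})v_{(1)}$.
\item By coassociativity of $\rho^L$, together with the compatibility used to keep the right coaction on the innermost factor, this equals $v_{(-1)}\ot v_{(0)(-1)}\ot f(v_{(0)(0)})v_{(0)(1)}$.
\item This last expression is exactly $v_{(-1)}\ot\hat f(v_{(0)})$.
\end{itemize}

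\textbf{Right colinearity.} Symmetrically, I must show $(\id\ot\Delta)\hat f(v)=\hat f(v_{(0)})\ot v_{(1)}$.
\begin{itemize}
\item The left-hand side is $v_{(-1)}\ot f(v_{(0)})v_{(1)1}\ot v_{(1)2}$.
\item By coassociativity of $\rho^R$ this is $v_{(0)(-1)}\ot f(v_{(0)(0)})v_{(0)(1)}\ot v_{(1)}$.
\item The only step needing care is rewriting $v_{(-1)}\ot v_{(0)(0)}\ot v_{(0)(1)}\ot v_{(1)}$ so that the left coaction is applied inside the right one. This is precisely the bicomodule compatibility applied after $\rho^R$.
\end{itemize}

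There is no genuine obstacle in this argument. The main risk is bookkeeping: one has to state explicitly which coaction is iterated at each step, so that the compatibility condition is invoked correctly rather than tacitly.
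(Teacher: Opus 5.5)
Your proposal is correct and follows essentially the same route as the paper. The counit identity comes from counitality of both coactions, and colinearity is checked by the same Sweedler computations using coassociativity of $\rho^L$ (resp.\ $\rho^R$) together with the bicomodule compatibility; the paper writes out only the left case and declares the right one similar.
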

\begin{proof}
    The first claim is immediate by definition of left and right coactions. Let us verify that $\hat{f}$ is a morphism of $C$-bicomodules. 
By definition of the tensor $C$-bicomodule structure it follows that
    \begin{align*}
        \hat{f}(v)_{(-1)}\otimes \hat{f}(v)_{(0)} &= (v_{(-1)}\otimes v_{(1)})_{(-1)} \otimes (v_{(-1)}\otimes v_{(1)})_{(0)} f(v_{(0)})
        \\&= v_{(-1)1}\otimes v_{(-1)2}\otimes v_{(1)}f(v_{(0)})
        \\&= v_{(-1)}\otimes v_{(0)(-1)}\otimes v_{(0)(1)}f(v_{(0)(0)})
        \\&= v_{(-1)}\otimes \hat{f}(v_{(0)})
\end{align*}  
and, similarly, $\hat{f}(v)_{(0)}\otimes \hat{f}(v)_{(1)}=\hat{f}(v_{(0)})\ot v_{(1)}$.
\end{proof}

\begin{myproof}[Proof of Proposition \ref{prop: equivalent conditions canonical R-comatrix}.]
Notice that conditions (\ref{cond: comatrix 1})--(\ref{cond: comatrix 5}) are one-by-one equivalent to stating that, for all $p,q,r,u\in C$, the following equalities hold: 
\begin{align}
        &\RR(p\ot q\ot r_2)r_1 = \RR(p_1\ot q \ot r)p_2,\tag{20'}
        \\&\RR(p_2\ot q \ot r)p_1 =\RR(p\ot q_1\ot r)q_2,\tag{21'}
        \\&\RR(p\ot q_2\ot r)q_1 = \RR(p\ot q \ot r_1)r_2.\tag{22'}
        \\&\RR(p\ot q \ot u)\ep(r) = \RR(p_1\ot q\ot r_1)\RR(p_2\ot r_2\ot u),\tag{23'}
        \\&\RR(p\ot r\ot u)\ep(q) = \RR(p\ot q_1\ot u_1)\RR(q_2\ot r\ot u_2)\tag{24'}.
    \end{align}
    Assume $\RR$ to be convolution-invertible and satisfying conditions (\ref{cond: comatrix 1})--(\ref{cond: comatrix 5}). It suffices to verify the normalizing conditions (\ref{cond: normalizing condition}). By condition (\ref{cond: comatrix 5}) it holds that
    \begin{align*}
        \RR &= (\ep\ot \RR)(\tau\ot\id\ot\id)(\id\ot\Delta\ot\id) 
        \\&= ((\RR\ot \ep)(\id\ot\id\ot\tau)\ast (\ep\ot\RR))(\id\ot\Delta\ot\id)
        \\\iff\ep^{\ot 3} &= \RR^{-1}\ast ((\RR\ot \ep)(\id\ot\id\ot\tau)\ast (\ep\ot\RR))(\id\ot\Delta\ot\id).
    \end{align*}
    Evaluating in an element $p\ot q\ot r\in C\ot C\ot C$ we obtain that
    \begin{align*}
        \ep(p)\ep(q)\ep(r) &= \RR^{-1}(p_1\ot q_1\ot r_1)\RR(p_2\ot q_2\ot r_2)\RR(q_3\ot q_4\ot r_3)
        \\&= \ep(p)\ep(q_1)\ep(r_1)\RR(q_2\ot q_3\ot r_2)
        \\&= \RR(q_1\ot q_2\ot r)\ep(p).
    \end{align*}
    For $p\ot q \ot r= x_2\ot y \ot x_1$ this implies that $\RR(y_1\ot y_2\ot x) = \ep(x)\ep(y)$.\newline
    Similarly, by condition (\ref{cond: comatrix 5})
    \begin{align*}
        \RR &= (\ep\ot \RR)(\tau\ot\id\ot\id)(\Delta\ot\id\ot\id) 
        \\&= ((\RR\ot \ep)(\id\ot\id\ot\tau)\ast (\ep\ot\RR))(\Delta\ot\id\ot\id)
        \\\iff\ep^{\ot 3} &= \RR^{-1}\ast ((\RR\ot \ep)(\id\ot\id\ot\tau)\ast (\ep\ot\RR))(\Delta\ot\id\ot\id).
    \end{align*}
    Evaluating on an element $p\ot q\ot r\in C\ot C\ot C$ we obtain that
    \begin{align*}
        \ep(p)\ep(q)\ep(r)&= \RR^{-1}(p_1\ot q_1\ot r_1)\RR(p_2\ot p_3\ot r_2)\RR(p_4\ot q_2\ot r_3)
        \\&= \RR^{-1}(p_1\ot q_1\ot r_1)\RR(p_4\ot q_2\ot \RR(p_2\ot p_3\ot r_{21})r_{22})
        \\&\stackrel{(\ref{cond: comatrix 3 prime})}{=} \RR^{-1}(p_1\ot q_1\ot r_1)\RR(p_4\ot q_2\ot \RR(p_2\ot p_{32}\ot r_2)p_{31})
        \\&=\RR^{-1}(p_1\ot q_1\ot r_1)\RR(p_2\ot \RR(p_5\ot q_2\ot p_{3})p_{4}\ot r_2)
        \\&\stackrel{(\ref{cond: comatrix 3 prime})}{=} \RR^{-1}(p_1\ot q_1\ot r_1)\RR(p_2\ot q_{21}\ot r_2)\RR(p_4\ot q_{22}\ot p_3)
        \\&= \RR^{-1}(p_1\ot q_1\ot r_1)\RR(p_2\ot q_2\ot r_2)\RR(p_4\ot q_3\ot p_3)
        \\&= \ep(p_1)\ep(q_1)\ep(r)\RR(p_3\ot q_2\ot p_2)
        \\&=\varepsilon(r) \RR(p_2\ot q \ot p_1).
    \end{align*}
    Hence for $p\ot q \ot r = y\ot x_1\ot x_2$ it follows that $\RR(y_2\ot x\ot y_1) = \ep(x)\ep(y)$.
    Assume conversely that $\RR$ satisfies condition (\ref{cond: comatrix 3}) and the normalizing conditions (\ref{cond: normalizing condition}). We first prove that $\RR$ satisfies the cyclic conditions (\ref{cond: cyclic conditions}). For any $p,q,r\in C$ it holds that
    \begin{align*}
        \RR(r\ot p\ot q) &\stackrel{(\ref{cond: normalizing condition})}{=} \RR(p_1\ot p_2\ot r_1)\RR(r_2\ot p_3\ot q)
        \\&= \RR(p_1\ot p_{21}\ot r_1)\RR(r_2\ot p_{22}\ot q)
        \\&\stackrel{(\ref{cond: comatrix 3 prime})}{=} \RR(p_1\ot q_2\ot r_1)\RR(r_2\ot p_2\ot q_1)
        \\&\stackrel{(\ref{cond: comatrix 3 prime})}{=} \RR(p_1\ot q_{22}\ot r)\RR(q_{21}\ot p_2\ot q_1)
        \\&= \RR(p_1\ot q_3\ot r)\RR(q_2\ot p_2\ot q_1)
        \\&\stackrel{(\ref{cond: normalizing condition})}{=} \RR(p_1\ot q_2\ot r)\ep(q_1)\ep(p_2)
        = \RR(p\ot q\ot r).
    \end{align*}
    Hence $\RR$ satisfies the cyclic condition (\ref{cond: cyclic conditions}). From this the additional normalizing condition (\ref{cond: third normalizing condition}) immediately follows by combining \eqref{cond: normalizing condition} and \eqref{cond: cyclic conditions}. We now show that $\RR$ is a convolution-invertible satisfying conditions (\ref{cond: comatrix 1})--(\ref{cond: comatrix 5}), proving that a) and b) are equivalent. Conditions (\ref{cond: comatrix 1}) and (\ref{cond: comatrix 2}) follow immediately by applying \eqref{cond: cyclic conditions} to (\ref{cond: comatrix 3}). For condition (\ref{cond: comatrix 4}) we compute that
    \begin{align*}
        \RR(p_1\ot q\ot r_1)\RR(p_2\ot r_2\ot u)&\stackrel{(\ref{cond: comatrix 1 prime})}{=} \RR(p\ot q\ot r_{2})\RR(r_{1}\ot r_3\ot u)
        \\&\stackrel{(\ref{cond: comatrix 3 prime})}{=} \RR(p\ot q\ot u_2)\RR(r_1\ot r_2\ot u_1)
        \\&\stackrel{(\ref{cond: normalizing condition})}{=} \RR(p\ot q\ot u_2)\ep(r)\ep(u_{1})
        = \RR(p\ot q\ot u)\ep(r).
    \end{align*}
    Similarly, condition (\ref{cond: comatrix 5}) follows since
    \begin{align*}
        \RR(p\ot q_1\ot u_1)\RR(q_2\ot r\ot u_2) &\stackrel{(\ref{cond: comatrix 3 prime})}{=} \RR(p\ot q_{12}\ot u)\RR(q_2\ot r\ot q_{11})
        \\&= \RR(p\ot q_{21}\ot u)\RR(q_{22}\ot r\ot q_1)
        \\&\stackrel{(\ref{cond: comatrix 2 prime})}{=} \RR(p\ot r_2\ot u)\RR(q_2\ot r_1\ot q_1)
        \\&\stackrel{(\ref{cond: normalizing condition})}{=} \RR(p\ot r_2\ot u)\ep(q)\ep(r_1)
        = \RR(p\ot r\ot u)\ep(q).
    \end{align*}
    It remains to prove the invertibility of $\RR$ with respect to the convolution. We claim $\RR(\tau\ot\id)$ to be the inverse of $\RR$. Let $p,q,r\in C$, then
    \begin{align*}
        (\RR\ast \RR(\tau\ot\id))(p\ot q \ot r) &= \RR(p_1\ot q_1\ot r_1)\RR(q_2\ot p_2\ot r_2)
        \\&\stackrel{(\ref{cond: cyclic conditions})}{=} \RR(p_1\ot q_1\ot r_1)\RR(p_2\ot r_2\ot q_2)
        \\&\stackrel{(\ref{cond: comatrix 4 prime})}{=}
        \RR(p\ot q_1\ot q_2)\ep(r)
        \stackrel{(\ref{cond: third normalizing condition})}{=} \ep(p)\ep(q)\ep(r),
        \\(\RR(\tau\ot\id)\ast \RR)(p\ot q\ot r)&= \RR(q_1\ot p_1\ot r_1)\RR(p_2\ot q_2\ot r_2)
        \\&\stackrel{(\ref{cond: comatrix 5 prime})}{=} \RR(q_1\ot q_2\ot r)\ep(p)
        \stackrel{(\ref{cond: normalizing condition})}{=} \ep(p)\ep(q)\ep(r).\qedhere
    \end{align*}
\end{myproof}

\begin{myproof}[Proof of Proposition \ref{prop:solutionYangBaxter}.]
Let $M$ be a fixed $C$-bicomodule, and $p,q,r\in M$. We compute that
    \begin{align*}
        \Upsilon^{12}\Upsilon^{13}\Upsilon^{23}(p\ot q\ot r) 
        &= \RR(q_{(-1)1}\ot q_{(1)2}\ot r_{(-1)1})\RR(p_{(-1)}\ot p_{(1)}\ot q_{(-1)2})
        \\&\hspace{12em}\RR(q_{(-1)3}\ot q_{(1)1}\ot r_{(-1)2})r_{(0)}\ot q_{(0)}\ot p_{(0)}
        \\&= \RR(q_{(-1)11}\ot q_{(1)2}\ot r_{(-1)1})\RR(p_{(-1)}\ot p_{(1)}\ot q_{(-1)12})
        \\&\hspace{12em}\RR(q_{(-1)2}\ot q_{(1)1}\ot r_{(-1)2})r_{(0)}\ot q_{(0)}\ot p_{(0)}
        \\&\overset{(\ref{cond: comatrix 1 prime})}{=} \RR(p_{(-1)2}\ot q_{(1)2}\ot r_{(-1)1})\RR(p_{(-1)1}\ot p_{(1)}\ot q_{(-1)1})
        \\&\hspace{12em}\RR(q_{(-1)2}\ot q_{(1)1}\ot r_{(-1)2})r_{(0)}\ot q_{(0)}\ot p_{(0)}
        \\&\overset{(\ref{cond: comatrix 1 prime})}{=} \RR(p_{(-1)2}\ot q_{(1)2}\ot q_{(-1)22})\RR(p_{(-1)1}\ot p_{(1)}\ot q_{(-1)1})
        \\&\hspace{12em}\RR(q_{(-1)21}\ot q_{(1)1}\ot r_{(-1)})r_{(0)}\ot q_{(0)}\ot p_{(0)}
        \\&\overset{(\ref{cond: comatrix 2 prime})}{=}\RR(p_{(-1)2}\ot q_{(-1)21}\ot q_{(-1)3})\RR(p_{(-1)1}\ot p_{(1)}\ot q_{(-1)1})
        \\&\hspace{12em}\RR(q_{(-1)22}\ot q_{(1)}\ot r_{(-1)})r_{(0)}\ot q_{(0)}\ot p_{(0)}
        \\&=\RR(p_{(-1)2}\ot q_{(-1)12}\ot q_{(-1)3})\RR(p_{(-1)1}\ot p_{(1)}\ot q_{(-1)11})
        \\&\hspace{12em}\RR(q_{(-1)2}\ot q_{(1)}\ot r_{(-1)})r_{(0)}\ot q_{(0)}\ot p_{(0)}
        \\&\overset{(\ref{cond: comatrix 3 prime})}{=} \RR(p_{(-1)2}\ot p_{(1)1}\ot q_{(-1)3})\RR(p_{(-1)1}\ot p_{(1)2}\ot q_{(-1)1})
        \\&\hspace{12em}\RR(q_{(-1)2}\ot q_{(1)}\ot r_{(-1)})r_{(0)}\ot q_{(0)}\ot p_{(0)}
        \\&= \Upsilon^{23}\Upsilon^{13}\Upsilon^{12}(p\ot q\ot r).
    \end{align*}
    Hence the quantum Yang--Baxter equation holds. Analogously, we prove that $\Upsilon$ satisfies the braid equation. Indeed, on the one hand, we get
    \begin{align*}
    \Upsilon^{12}&\Upsilon^{23}\Upsilon^{12}(p\ot q\ot r)
    \\&= \RR(p_{(-1)1}\ot p_{(1)2}\ot q_{(-1)1})\RR(p_{(-1)2}\ot p_{(1)1}\ot r_{(-1)1})
    \\&\hspace{12em}\RR(q_{(-1)2}\ot q_{(1)}\ot r_{(-1)2})r_{(0)}\ot q_{(0)}\ot p_{(0)}
    \\&\overset{(\ref{cond: comatrix 2 prime})}{=} \RR(p_{(-1)1}\ot p_{(-1)21}\ot q_{(-1)1})\RR(p_{(-1)22}\ot p_{(1)}\ot r_{(-1)1})
    \\&\hspace{12em}\RR(q_{(-1)2}\ot q_{(1)}\ot r_{(-1)2})r_{(0)}\ot q_{(0)}\ot p_{(0)}
    \\&= \RR(p_{(-1)11}\ot p_{(-1)12}\ot q_{(-1)1})\RR(p_{(-1)2}\ot p_{(1)}\ot r_{(-1)1})
    \\&\hspace{12em}\RR(q_{(-1)2}\ot q_{(1)}\ot r_{(-1)2})r_{(0)}\ot q_{(0)}\ot p_{(0)}
    \\&\overset{(\ref{cond: normalizing condition})}{=} \ep(p_{(-1)1})\ep(q_{(-1)1})\RR(p_{(-1)2}\ot p_{(1)}\ot r_{(-1)1})\RR(q_{(-1)2}\ot q_{(1)}\ot r_{(-1)2})r_{(0)}\ot q_{(0)}\ot p_{(0)}
    \\&= \RR(p_{(-1)}\ot p_{(1)}\ot r_{(-1)1})\RR(q_{(-1)}\ot q_{(1)}\ot r_{(-1)2})r_{(0)}\ot q_{(0)}\ot p_{(0)}.
    \end{align*}
    On the other hand,
    \begin{align*}
        \Upsilon^{23}\Upsilon^{12}\Upsilon^{23}(p\ot q\ot r)
        &=\RR(q_{(-1)1}\ot q_{(1)}\ot r_{(-1)1})\RR(p_{(-1)1}\ot p_{(1)2}\ot r_{(-1)2})
        \\&\hspace{12em} \RR(p_{(-1)2}\ot p_{(1)1}\ot q_{(-1)2})r_{(0)}\ot q_{(0)}\ot p_{(0)}
        \\&\overset{(\ref{cond: comatrix 2 prime})}{=} \RR(q_{(-1)1}\ot q_{(1)}\ot r_{(-1)1})\RR(p_{(-1)1}\ot p_{(-1)21}\ot r_{(-1)2})
        \\&\hspace{12em} \RR(p_{(-1)22}\ot p_{(1)}\ot q_{(-1)2})r_{(0)}\ot q_{(0)}\ot p_{(0)}
        \\&= \RR(q_{(-1)1}\ot q_{(1)}\ot r_{(-1)1})\RR(p_{(-1)11}\ot p_{(-1)12}\ot r_{(-1)2})
        \\&\hspace{12em} \RR(p_{(-1)2}\ot p_{(1)}\ot q_{(-1)2})r_{(0)}\ot q_{(0)}\ot p_{(0)}
        \\&\overset{(\ref{cond: normalizing condition})}{=} \ep(p_{(-1)1})\ep(r_{(-1)2})\RR(q_{(-1)1}\ot q_{(1)}\ot r_{(-1)1})
        \\&\hspace{12em} \RR(p_{(-1)2}\ot p_{(1)}\ot q_{(-1)2})r_{(0)}\ot q_{(0)}\ot p_{(0)}
        \\&= \RR(q_{(-1)1}\ot q_{(1)}\ot r_{(-1)})\RR(p_{(-1)}\ot p_{(1)}\ot q_{(-1)2})r_{(0)}\ot q_{(0)}\ot p_{(0)}
        \\&\overset{(\ref{cond: comatrix 1 prime})}{=} \RR(q_{(-1)}\ot q_{(1)}\ot r_{(-1)2})\RR(p_{(-1)}\ot p_{(1)}\ot r_{(-1)1})r_{(0)}\ot q_{(0)}\ot p_{(0)}.
    \end{align*}
    
    Finally, we compute
    \begin{align*}
        \Upsilon&\Upsilon\Upsilon(p\ot q)
        \\&= \RR(p_{(-1)1}\ot p_{(1)2}\ot q_{(-1)1})\RR(q_{(-1)2}\ot q_{(1)}\ot p_{(-1)2})\RR(p_{(-1)3}\ot p_{(1)1}\ot q_{(-1)3})q_{(0)}\ot p_{(0)}
        \\&= \RR(p_{(-1)11}\ot p_{(1)2}\ot q_{(-1)1})\RR(q_{(-1)2}\ot q_{(1)}\ot p_{(-1)12})\RR(p_{(-1)2}\ot p_{(1)1}\ot q_{(-1)3})q_{(0)}\ot p_{(0)}
        \\&\overset{(\ref{cond: comatrix 1 prime})}{=} \RR(p_{(-1)1}\ot p_{(1)2}\ot q_{(-1)12})\RR(q_{(-1)2}\ot q_{(1)}\ot q_{(-1)11})\RR(p_{(-1)2}\ot p_{(1)1}\ot q_{(-1)3})q_{(0)}\ot p_{(0)}
        \\&= \RR(p_{(-1)1}\ot p_{(1)2}\ot q_{(-1)21})\RR(q_{(-1)22}\ot q_{(1)}\ot q_{(-1)1})\RR(p_{(-1)2}\ot p_{(1)1}\ot q_{(-1)3})q_{(0)}\ot p_{(0)}
        \\&\overset{(\ref{cond: comatrix 2 prime})}{=} \RR(p_{(-1)1}\ot p_{(1)2}\ot q_{(1)2})\RR(q_{(-1)2}\ot q_{(1)1}\ot q_{(-1)1})\RR(p_{(-1)2}\ot p_{(1)1}\ot q_{(-1)3})q_{(0)}\ot p_{(0)}
        \\&\overset{(\ref{cond: normalizing condition})}{=} \ep(q_{(-1)1})\ep(q_{(1)1})\RR(p_{(-1)1}\ot p_{(1)2}\ot q_{(1)2})\RR(p_{(-1)2}\ot p_{(1)1}\ot q_{(-1)2})q_{(0)}\ot p_{(0)}
        \\&= \RR(p_{(-1)1}\ot p_{(1)2}\ot q_{(1)})\RR(p_{(-1)2}\ot p_{(1)1}\ot q_{(-1)})q_{(0)}\ot p_{(0)}
        \\&\overset{(\ref{cond: comatrix 1 prime})}{=} \RR(p_{(-1)}\ot p_{(1)2}\ot q_{(1)2})\RR(q_{(1)1}\ot p_{(1)1}\ot q_{(-1)})q_{(0)}\ot p_{(0)}
        \\&\overset{(\ref{cond: cyclic conditions})}{=} \RR(p_{(1)2}\ot q_{(1)2}\ot p_{(-1)})\RR(p_{(1)1}\ot q_{(-1)}\ot q_{(1)1})q_{(0)}\ot p_{(0)}
        \\&\overset{(\ref{cond: comatrix 4 prime})}{=} \RR(p_{(1)}\ot q_{(-1)}\ot p_{(-1)})\ep(q_{(1)})q_{(0)}\ot p_{(0)}
        \\&\overset{(\ref{cond: cyclic conditions})}{=} \RR(p_{(-1)}\ot p_{(1)}\ot q_{(-1)})q_{(0)}\ot p_{(0)} = \Upsilon(p\ot q).\qedhere
    \end{align*}
\end{myproof}

\begin{myproof}[Proof of Theorem \ref{theorem: braidings on bicomodules}]
    Let $V$ and $W$ be $C$-bicomodules, and assume the existence of a braiding $\sigma$ on $(^C\mm^C, \square^C, C)$. Define $\RR$ as in (\ref{eqn: canonical r comatrix}). For any $f\in V^*$ and $g\in W^*$ we consider the morphisms $\hat{f}\colon V\to C\ot C$ and $\hat{g}\colon W\to C\ot C$ in $^C\mm^C$ defined as in Lemma \ref{lemma: induced bicomodule morphism}. We observe that
\begin{align*}
        (g\ot f)e_{W,V}\sigma_{V,W}&= \ep^{\ot 4}(\hat{g}\ot \hat{f})e_{W,V}\sigma_{V,W}
        = \ep^{\ot 4}e_{C^{\ot2},C^{\ot 2}}(\hat{g}\square \hat{f})\sigma_{V,W}
        = \ep^{\ot 4}e_{C^{\ot2},C^{\ot 2}}\sigma_{C^{\ot2},C^{\ot 2}}(\hat{f}\square\hat{g})
\end{align*}
For notational ease, we will refer by slight abuse of notation to $\sigma_{C^{\ot2}, C^{\ot2}}$ simply as $\sigma$. We will also avoid writing inclusions $e_{C^{\ot2},C^{\ot2}}$. Using
    the explicit isomorphism $C\cong C\square^C C$ from Remark \ref{lemma: left right unitors for vec}, it follows that
\begin{align*}
        (g\ot f)e_{W,V}\sigma_{V,W}(v\square w)&=\ep^{\ot 4}\sigma(v_{(-1)}\ot v_{(1)}\square w_{(-1)}\ot w_{(1)})f(v_{(0)})g(w_{(0)})
        \\&= \ep^{\ot 4}\sigma(v_{(-1)}\ot v_{(1)1}\square v_{(1)2}\ot w_{(1)})\ep(w_{(-1)})(g\ot f)e_{W,V}(w_{(0)}\square v_{(0)})\\&= \ep^{\ot 4}\sigma(\id\ot\Delta\ot\id)(v_{(-1)}\ot v_{(1)}\ot w_{(1)})(g\ot f)e_{W,V}(w_{(0)}\square v_{(0)})
        \\&= (g\ot f)e_{W,V}(\RR(v_{(-1)}\ot v_{(1)}\ot w_{(1)})w_{(0)}\square v_{(0)}).
    \end{align*}
    By the non-degeneracy of the evaluation $W^*\ot V^*\ot W\ot V\to \Bbbk$, the fact that $f$ and $g$ were chosen arbitrarily implies that $e_{W,V}\sigma_{V,W}(v\square w)=e_{W,V}(\RR(v_{(-1)}\ot v_{(1)}\ot w_{(1)})w_{(0)}\square v_{(0)})$. Since $e_{W,V}$ is a monomorphism, we obtain the first equality of (\ref{eqn: definition of braiding}). The second equality holds likewise, since $v_{(1)}\square w_{(-1)} = \ep(v_{(1)})w_{(-1)1}\square w_{(-1)2}$. 
    \newline
    The fact that $\sigma_{V,W}(V\square W)\subseteq W\square V$ states that for all $v\square w\in V\square^C W$
    \[
    \RR(v_{(-1)}\ot v_{(1)}\ot w_{(1)})w_{(0)(0)}\ot w_{(0)(1)}\ot v_{(0)} = \RR(v_{(-1)}\ot v_{(1)}\ot w_{(1)})w_{(0)}\ot v_{(0)(-1)}\ot v_{(0)(0)}.
    \]
    In particular, for $p,q,r\in C$, it holds that $p\ot q_1\square q_2\ot r\in C^{\ot 2}\square^C C^{\ot 2}$, and this becomes the condition that
\[
\RR(p_1\ot q_2\ot r_3)q_3\ot r_1\ot r_2\ot p_2\ot q_1 = \RR(p_1\ot q_2\ot r_2)q_3\ot r_1\ot p_2\ot p_3\ot q_1.
\]
    Applying $\ep\ot\ep\ot\id\ot \ep\ot\ep$ results in identity (\ref{cond: comatrix 1 prime}). Let $v\in V$, $w\in W$, and $u\in U$, with $V,W,U$ three $C$-bicomodules, and $p,q,r,u\in C$ from now on.
    The fact that $\sigma_{V,W}$ is a morphism in $^C\mm^C$ translates to
    \[
    \sigma_{V,W}(v\square w)_{(-1)}\ot \sigma_{V,W}(v\square w)_{(0)}\ot \sigma_{V,W}(v\square w)_{(1)}\ = v_{(-1)}\ot \sigma_{V,W}(v_{(0)}\square w_{(0)})\ot w_{(1)}.
    \]
    In the case of $V=W=C\ot C$ and $p\ot q_1\square q_2\ot r \in C^{\ot 2}\square^C C^{\ot 2}$ this becomes
    \begin{align*}
        \RR(p_1\ot q_3\ot r_2)q_4\ot q_5\ot r_1\square p_2\ot q_1\ot q_2 = \RR(p_2\ot q_2\ot r_2)p_1\ot q_3\ot r_1\square p_3\ot q_1\ot r_3.
    \end{align*}
    Applying $\id\ot\ep^{\ot 5}$ respectively $\ep^{\ot 5}\ot \id$ results in condition (\ref{cond: comatrix 2 prime}) respectively (\ref{cond: comatrix 3 prime}).
    The braid relation $\sigma_{V, W\square^C U} = (\id\square \sigma_{V,U})(\sigma_{V,W}\square\id)$ translates, considered for $V=W=U=C\ot C$ and $p\ot q_1\square q_2\ot r_1\square r_2\ot u\in C^{\ot 2}\square^C C^{\ot 2}\square^C C^{\ot 2}$, to
    \begin{align*}
        \RR(p_1\ot q_2\ot u_2)&q_3\ot r_1\square r_2\ot u_1\square p_2\ot q_1
        \\&= \RR(p_1\ot q_2\ot r_2)\RR(p_2\ot r_3\ot u_2)q_3\ot r_1\square r_4\ot u_1\square p_3\ot q_1.
    \end{align*}
    Applying $\ep^{\ot 6}$ results in condition (\ref{cond: comatrix 4 prime}).
    Similarly, the braid relation $\sigma_{V\square^C W, U} = (\sigma_{V,U}\square \id)(\id\square \sigma_{W,U})$
    translates to
    \begin{align*}
        \RR(p_1\ot r_2\ot u_2)&r_3\ot u_1\square p_2\ot q_1\square q_2\ot r_1
        \\&= \RR(q_2\ot r_2\ot u_3)\RR(p_1\ot r_3\ot u_2)r_4\ot u_1\square p_2\ot q_1\square q_3\ot r_1.
    \end{align*}
    Applying $\ep^{\ot 6}$ we obtain condition (\ref{cond: comatrix 5 prime}), since
    \[
        \RR(p\ot r\ot u)\ep(q) = \RR(p\ot r_2\ot u_1)\RR(q\ot r_1\ot u_2) \overset{(\ref{cond: comatrix 2 prime})}{=} \RR(p\ot q_1\ot u_1)\RR(q_2\ot r\ot u_2).
    \]
    Notice that $\sigma_{V,W}^{-1}$ is a braiding on $(^C\mm^C, \square^C, C)$ as well and hence by the above reasoning there exists a linear functional $\Tt\colon C\ot C\ot C\to \Bbbk$
    satisfying conditions (\ref{cond: comatrix 1})--(\ref{cond: comatrix 5}). Then
    \begin{align*}
        v\square w = \sigma_{V,W}^{-1}\sigma_{V,W}(v\square w) = \RR(v_{(-1)}\ot v_{(1)}\ot w_{(1)})\Tt(w_{(0)(-1)}\ot v_{(0)(-1)}\ot v_{(0)(1)})v_{(0)(0)}\square w_{(0)(0)}.
    \end{align*}
    For $V = W = C\ot C$ and an element $p\ot q_1\square q_2\ot r \in C^{\ot 2}\square^C C^{\ot 2}$, this becomes
    \[
    p\ot q_1\square q_2\ot r= \RR(p_1\ot q_3\ot r_2)\Tt(q_4\ot p_2\ot q_2)p_3\ot q_1\square q_5\ot r_1.
    \]
    Applying $\ep^{\ot 4}$ we obtain that
    \begin{align*}
    \ep^{\ot 3}(p\ot q\ot r) &= \RR(p_1\ot q_2\ot r)\Tt(q_3\ot p_2\ot q_1) 
    \\&= \RR(p_1\ot q_{12}\ot r)\Tt(q_2\ot p_2\ot q_{11})\overset{(\ref{cond: comatrix 3 prime})}{=} \RR(p_1\ot q_1\ot r_1)\Tt(q_2\ot p_2\ot r_2).
    \end{align*}
    Hence $\Tt(\tau\ot \id)$ is the right convolution inverse to $\RR$. By symmetry, it holds that $\RR(\tau\ot\id)$ is right inverse to $\Tt$, which is equivalent to stating that $\Tt(\tau\ot \id)$ is left inverse to $\RR$, whence $\RR$ is convolution invertible. Moreover, by condition (\ref{cond: symmetry}) of Proposition \ref{prop: equivalent conditions canonical R-comatrix}, $\Tt(\tau\ot\id) = \RR^{-1} = \RR(\tau\ot\id)$, whence $\Tt = \RR$ and $\sigma_{V,W}^{-1} = \sigma_{W,V}$.

Conversely, assume $\RR\colon C\ot C\ot C \to \Bbbk$ to be a canonical $R$-form of $C$. For $C$-bicomodules $V,W$ define the linear map
    \[
    \sigma_{V,W}\colon V\square^C W \to W\ot V \colon v\square w \mapsto \RR(v_{(-1)}\ot v_{(1)}\ot w_{(1)})w_{(0)}\ot v_{(0)},
    \]
    i.e. $\sigma_{V,W}=\tilde{\sigma}_{V,W}|_{V\square^{C}W}$. Since $v\square w \in V\square^C W$, it holds that $v_{(0)}\ot v_{(1)}\ot w = v\ot w_{(-1)}\ot w_{(0)}$, and hence
    \[
    \sigma_{V,W}(v\square w) = \RR(v_{(-1)}\ot v_{(1)}\ot w_{(1)})w_{(0)}\ot v_{(0)} = \RR(v_{(-1)}\ot w_{(-1)}\ot w_{(1)})w_{(0)}\ot v_{(0)}.
    \]
    Utilizing condition \eqref{cond: comatrix 1 prime} one verifies that $\sigma_{V,W}$ has image in $W\square^C V$. Conditions \eqref{cond: comatrix 2 prime} (resp.\ \eqref{cond: comatrix 3 prime}) translate to the left (resp.\ right) $C$-colinearity of $\sigma_{V,W}$.
    
    \begin{invisible}
    \underline{$\sigma_{V,W}$ has image in $W\square^C V$.}\newline
    Let $v\square w \in V\square^C W$, then we compute that
    \begin{align*}
        &\RR(v_{(-1)}\ot v_{(1)}\ot w_{(1)}) w_{(0)(0)}\ot w_{(0)(1)}\ot v_{(0)}
        \\&= \RR(v_{(-1)}\ot v_{(1)}\ot w_{(1)2})w_{(0)}\ot w_{(1)1}\ot v_{(0)}
        \\&\stackrel{(\ref{cond: comatrix 1 prime})}{=} \RR(v_{(-1)1}\ot v_{(1)}\ot w_{(1)})w_{(0)}\ot v_{(-1)2}\ot v_{(0)}
        \\&= \RR(v_{(-1)}\ot v_{(1)}\ot w_{(1)})w_{(0)}\ot v_{(0)(-1)}\ot v_{(0)(0)}.
    \end{align*}
    Hence $\sigma_{V,W}$ has image in $W\square^C V$.\newline
    \underline{$\sigma_{V,W}$ is a $^C\mm^C$-morphism.}\newline
    Let $v\square w \in V\square^C W$, then we compute that
    \begin{align*}
        c_{V,W}&(v\square w)_{(-1)}\ot c_{V,W}(v\square w)_{(0)}
        \\&= \RR(v_{(-1)}\ot v_{(1)}\ot w_{(1)}) w_{(0)(-1)}\ot w_{(0)(0)}\square v_{(0)}
        \\&= \RR(v_{(-1)}\ot w_{(-1)1}\ot w_{(1)})w_{(-1)2}\ot w_{(0)}\square v_{(0)}
        \\&\stackrel{(\ref{cond: comatrix 2 prime})}{=} \RR(v_{(-1)2}\ot w_{(-1)}\ot w_{(1)})v_{(-1)1}\ot w_{(0)}\square v_{(0)}
        \\&= \RR(v_{(0)(-1)}\ot w_{(-1)}\ot w_{(1)})v_{(-1)}\ot w_{(0)}\square v_{(0)(0)}
        \\&= (v\square w)_{(-1)}\ot \sigma_{V,W}((v\square w)_{(0)}),
    \end{align*}
    hence $\sigma_{V,W}$ is left $C$-colinear. Analogously, one proves that $\sigma_{V,W}$ is a right $C$-colinear morphism by using condition (\ref{cond: comatrix 3 prime}).\newline
    \underline{Braid relations.}\newline
    \end{invisible}
    For the braid relations, let $U$ be a $C$-bicomodule, and $v\square w\square u\in V\square^C W\square^C U$, then
    \begin{align*}
        (\id\square \sigma_{V,U})&(\sigma_{V,W}\square\id)(v\square w \square u)
        \\= &\RR(v_{(-1)}\ot v_{(1)}\ot w_{(1)})\RR(v_{(0)(-1)}\ot v_{(0)(1)}\ot u_{(1)})w_{(0)}\square u_{(0)}\square v_{(0)(0)}
        \\= &\RR(v_{(-1)1}\ot v_{(1)2}\ot w_{(1)})\RR(v_{(-1)2}\ot v_{(1)1}\ot u_{(1)})w_{(0)}\square u_{(0)}\square v_{(0)}
        \\\stackrel{(\ref{cond: comatrix 3 prime})}{=} &\RR(v_{(-1)1}\ot v_{(1)}\ot w_{(1)1})\RR(v_{(-1)2}\ot w_{(1)2}\ot u_{(1)})w_{(0)}\square u_{(0)}\square v_{(0)}
        \\\stackrel{(\ref{cond: comatrix 4 prime})}{=} &\RR(v_{(-1)}\ot v_{(1)}\ot u_{(1)})\ep(w_{(1)})w_{(0)}\square u_{(0)}\square v_{(0)}
        \\= &\RR(v_{(-1)}\ot v_{(1)}\ot u_{(1)})w\square u_{(0)}\square v_{(0)}
        \\= &\RR(v_{(-1)}\ot v_{(1)}\ot (w\square u)_{(1)}) (w\square u)_{(0)}\square v_{(0)}
        \\= &\sigma_{V, W\square^C U}(v\square w\square u).
    \end{align*}
    Similarly, one verifies that $\sigma_{V\square W, U} = (\sigma_{V,U}\square\id)(\id\square \sigma_{W,U})$.\newline\begin{invisible}
    \underline{$\sigma$ is a natural isomorphism.}\newline\end{invisible}
    Finally, we verify that $\sigma$ is a natural isomorphism. One verifies immediately that $\sigma$ is a natural transformation from $\square^C \to (\square^C)^{\op}$. It suffices to prove that $\sigma_{V,W}$ is a linear isomorphism for all $C$-bicomodules $V$ and $W$. Inspired by the converse case, we expect the inverse to $\sigma_{V,W}$ to be $\sigma_{W,V}$.
    \begin{align*}
        \sigma_{W,V}\sigma_{V,W}(v\square w)&= \RR(v_{(-1)}\ot v_{(1)}\ot w_{(1)})\sigma_{W,V}(w_{(0)}\square v_{(0)})
        \\&= \RR(v_{(-1)}\ot v_{(1)}\ot w_{(1)})\RR(w_{(0)(-1)}\ot w_{(0)(1)}\ot v_{(0)(1)})v_{(0)(0)}\square w_{(0)(0)}
        \\&= \RR(v_{(-1)}\ot v_{(1)2}\ot w_{(1)2})\RR(w_{(-1)}\ot w_{(1)1}\ot v_{(1)1})v_{(0)}\square w_{(0)}
        \\&\stackrel{(\ref{cond: cyclic conditions})}{=} \RR(v_{(1)2}\ot w_{(1)2}\ot v_{(-1)})\RR(v_{(1)1}\ot w_{(-1)}\ot w_{(1)1})v_{(0)}\square w_{(0)}
        \\&\stackrel{(\ref{cond: comatrix 4 prime})}{=} \RR(v_{(1)}\ot w_{(-1)}\ot v_{(-1)})\ep(w_{(1)})v_{(0)}\square w_{(0)}
        \\&=\RR(v_{(1)}\ot w_{(-1)}\ot v_{(0)(-1)})v_{(0)(0)}\square w_{(0)}
        \\&\stackrel{(\star)}{=} \RR(w_{(-1)}\ot w_{(0)(-1)}\ot v_{(-1)})v_{(0)}\square w_{(0)(0)}
        \\&= \RR(w_{(-1)1}\ot w_{(-1)2}\ot v_{(-1)})v_{(0)}\square w_{(0)}
        \\&\stackrel{(\ref{cond: normalizing condition})}{=} \ep(w_{(-1)})\ep(v_{(-1)})v_{(0)}\square w_{(0)}
        \\&= v\square w,
    \end{align*}
    and hence $\sigma_{W,V}$ is a right inverse as well by symmetry. 
\end{myproof}

\begin{myproof}[Proof of Proposition \ref{prop: unit canonical R-form iff condition}]
        Let $\RR = f\ot g\ot p$ be a decomposable canonical $R$-form of $C$ in $C^*\ot C^*\ot C^*$. By conditions (\ref{cond: comatrix 4}) respectively (\ref{cond: comatrix 5}) it follows that
        \begin{align*}
            &f\ot g\ot \ep\ot p = (\RR\ot \ep)\ast (f\ot \ep\ot g\ot p),
            \\&f\ot \ep\ot g\ot p= (f\ot g\ot\ep\ot p)\ast(\ep\ot\RR).
        \end{align*}
        Since $\RR$ is invertible this implies that
        \[
            \ep^{\ot 4} = f\ot \ep \ot (p\ast g)\ot \ep,
            \hspace{3em}\ep^{\ot 4} = \ep\ot(g\ast f)\ot \ep\ot p.
        \]
        Consequently, by evaluating both sides on an element $p_1\ot p_2\ot q_1\ot q_2\in C^{\ot 4}$, with $p,q\in C$, these imply that
        \begin{align}
            &\ep\ot\ep = f\ot p\ast g,\label{eqn: monomial1}
            \\&\ep\ot\ep = g\ast f\ot p.\label{eqn: monomial2}
        \end{align}
        By evaluating identities (\ref{eqn: monomial1}) respectively (\ref{eqn: monomial2}) on some element $p_1\ot p_2\in C\ot C$, with $p\in C$, it follows that a priori
        \[
    f\ast p\ast g = \ep, \hspace{3em} g\ast\RR^1\ast\RR^3 = \ep.
        \]
        Hence $g\in C^*$ is invertible. 
        Finally, we compute 
        \begin{align*}
            \RR&= \RR\ast (\ep\ot g\ot\ep)\ast (\ep\ot g^{-1}\ot\ep)
            \\&\overset{(\ref{cond: comatrix 2})}{=} (g\ot\ep\ot\ep)\ast\RR\ast(\ep\ot g^{-1}\ot\ep)
            \\&= (g\ast f) \ot \ep\ot p
            \\&\overset{(\ref{eqn: monomial2})}{=} \ep\ot\ep\ot\ep.
        \end{align*}
        For the second part, notice that by Proposition \ref{prop: equivalent conditions canonical R-comatrix}, $\ep\ot\ep\ot\ep$ is a canonical $R$-form of $C$ if and only if it satisfies conditions (\ref{cond: comatrix 3}) and (\ref{cond: normalizing condition}). The latter trivially holds, while the former is equivalent to stating that $f\ot \ep = \ep\ot f$ for all $f\in C^*$. This is moreover equivalent to $\ep^*\colon \Bbbk\to C^*$ being an epimorphism of rings, see \cite[Proposition 3.3]{agore2014braidings}.\qedhere 
\end{myproof}

\bibliography{preCartierYD}
\bibliographystyle{acm}

\end{document}